\title{Parameter estimation for ergodic linear SDEs from partial and discrete observations}
\author{Masahiro Kurisaki\thanks{Graduate School of Mathematical Sciences, the University of Tokyo\protect\\ email: \texttt{makurisaki@g.ecc.u-tokyo.ac.jp}}}
\numberwithin{equation}{section}
\newtheorem{theorem}{Theorem}[section]
\newtheorem{proposition}[theorem]{Proposition}
\newtheorem{lemma}[theorem]{Lemma}
\newtheorem{remark}{Remark}
\newtheorem{corollary}[theorem]{Corollary}
\begin{document}
\maketitle

\begin{abstract}
  We consider a problem of parameter estimation for the state space model described by linear stochastic differential equations. We assume that an unobservable Ornstein-Uhlenbeck process drives another observable process by the linear stochastic differential equation, and these two processes depend on some unknown parameters.  We construct the quasi-likelihood estimator (QMLE) of the unknown parameters and show asymptotic properties of the estimator.
\end{abstract}

\begin{keywords}
  Partially observed linear model, state space model, hidden Ornstein Uhlenbeck model, Kalman-Bucy filter, quasi-likelihood analysis.
\end{keywords}

\section{Introduction}
On the probability space $(\Omega,\mathcal{F},P)$ with a complete and right-continuous filtration $\{\mathcal{F}_t\}$, we consider a $(d_1+d_2)$-dimensional Gaussian process $(X_t,Y_t)$ satisfying the following stochastic differential equations:
\begin{align}
  \label{eq1.1}&dX_t=-a(\theta_2)X_tdt+b(\theta_2)dW_t^1,\\
  \label{eq1.2}&dY_t=c(\theta_2)X_tdt+\sigma(\theta_1) dW_t^2,
\end{align}
where $W^1$ and $W^2$ are independent $d_1$ and $d_2$-dimensional $\{\mathcal{F}_t\}$-Wiener processes, $\theta_1 \in \Theta_1 \subset \mathbb{R}^{m_1}$ and $\theta_2 \in \Theta_2 \subset \mathbb{R}^{m_2}$ are unknown parameters, and $a,b:\Theta_2 \to M_{d_1}(\mathbb{R}),c:\Theta_2 \to M_{d_2,d_1}(\mathbb{R})$ and $\sigma:\Theta_1 \to M_{d_2}(\mathbb{R})$ are known functions. Here $M_{m,n}(\mathbb{R})$ is the set of $m \times n$ matrices over $\mathbb{R}$ and $M_{n}(\mathbb{R})=M_{n,n}(\mathbb{R})$, $\Theta_1$ and $\Theta_2$ are known parameter spaces. We assume that the process $X$ is unobservable, and the purpose of this article is to construct estimators of $\theta_1$ and $\theta_2$ based on discrete observations of $Y$.

Note that we can not identify $b(\theta_2)$ and $c(\theta_2)$ simultaneously from observation of $\{Y_t\}$. In fact, the system
\begin{align*}
  &dX_t=-a(\theta_2)X_tdt+2b(\theta_2)dW_t^1\\
  &dY_t=\frac{1}{2}c(\theta_2)X_tdt+\sigma(\theta_1) dW_t^2
\end{align*}
generates the same $\{Y_t\}$ as (\ref{eq1.1}) and (\ref{eq1.2}). Therefore, we need to impose some restrictions on $a,b,c,\sigma$ and the dimensions of the parameter spaces.

When $\theta_1$ and $\theta_2$ are known, one can estimate the unobservable state $\{X_t\}$ from observations of $\{Y_t\}$ by the following well-known Kalman-Bucy filter.
\begin{theorem}\label{Kalman-filter} (Theorem 10.2, \cite{Liptser-Shiryaev})\\
  In (\ref{eq1.1}) and (\ref{eq1.2}), let $\sigma(\theta)\sigma(\theta)'$ be positive definite, where the prime means the transpose. Then $m_t=E[X_t|\{Y_t\}_{0\leq s\leq t}]$ and $\gamma_t=E[(X_t-m_t)(X_t-m_t)']$ are the solutions of the equations
  \begin{align}
    \label{eq-mt}&dm_t=-a(\theta_2)m_tdt+\gamma_tc(\theta_2)'\{\sigma(\theta_1)\sigma(\theta_1)'\}^{-1}\{dY_t-c(\theta_2)m_tdt\},\\
    \label{eq-Riccati}&\frac{d\gamma_t}{dt}=-a(\theta_2)\gamma_t-\gamma_ta(\theta_2)'-\gamma_tc(\theta_2)'\{\sigma(\theta_1)\sigma(\theta_1)'\}^{-1}c(\theta_2)\gamma_t+b(\theta_2)b(\theta_2)'.
  \end{align}
\end{theorem}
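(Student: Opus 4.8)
\noindent\emph{Sketch of a proof.} Because (\ref{eq1.1})--(\ref{eq1.2}) is a linear system driven by the independent Wiener processes $W^1$ and $W^2$, the pair $(X_t,Y_t)_{t\ge0}$ is a Gaussian process (given a Gaussian, possibly degenerate, initial law for $X_0$). The plan is to exploit this and derive the filter by the innovations method. First I would record the consequences of joint Gaussianity: writing $\mathcal{F}_t^Y:=\sigma(Y_s:0\le s\le t)$, the conditional law of $X_t$ given $\mathcal{F}_t^Y$ is Gaussian, and since the conditional covariance of one jointly Gaussian vector given another is non-random,
\[
 E\bigl[(X_t-m_t)(X_t-m_t)'\mid\mathcal{F}_t^Y\bigr]=E\bigl[(X_t-m_t)(X_t-m_t)'\bigr]=\gamma_t .
\]
This is the structural fact that makes the filter equations close.

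Next I would introduce the innovations process $\nu_t:=Y_t-\int_0^tc(\theta_2)m_s\,ds$, so that $d\nu_t=c(\theta_2)(X_t-m_t)\,dt+\sigma(\theta_1)\,dW_t^2$ by (\ref{eq1.2}). Using the tower property (which gives $E[X_u-m_u\mid\mathcal{F}_s^Y]=0$ for $u\ge s$) together with the fact that $W^2$ is an $\{\mathcal{F}_t\}$-martingale while $\mathcal{F}_s^Y\subseteq\mathcal{F}_s$, one checks that $\nu$ is a continuous $\{\mathcal{F}_t^Y\}$-martingale with $\langle\nu\rangle_t=\sigma(\theta_1)\sigma(\theta_1)'\,t$; by Lévy's characterization it is, up to the nonsingular matrix factor $\sigma(\theta_1)\sigma(\theta_1)'$, an $\{\mathcal{F}_t^Y\}$-Wiener process, and one verifies that it generates the same filtration as $Y$, i.e.\ $\mathcal{F}_t^\nu=\mathcal{F}_t^Y$.

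Then I would derive (\ref{eq-mt}) and (\ref{eq-Riccati}). Conditioning the integrated form of (\ref{eq1.1}) on $\mathcal{F}_t^Y$ and using the tower property shows that $m_t+\int_0^ta(\theta_2)m_s\,ds$ is an $\{\mathcal{F}_t^Y\}$-martingale, hence — by the martingale representation theorem relative to $\nu$ — equals $\int_0^tD_s\,d\nu_s$ for some predictable $D$; thus $dm_t=-a(\theta_2)m_t\,dt+D_t\,d\nu_t$. The gain $D_t$ is identified from the orthogonality in $L^2$ of the error $X_t-m_t$ to $\mathcal{F}_t^Y$, in particular from $E[(X_t-m_t)\nu_t']\equiv0$: differentiating this identity with Itô's product rule and using $W^1\perp W^2$, the definition $\gamma_t=E[(X_t-m_t)(X_t-m_t)']$, and the conditional Gaussianity above (which makes the gain a deterministic matrix) gives $D_t=\gamma_tc(\theta_2)'\{\sigma(\theta_1)\sigma(\theta_1)'\}^{-1}$, so that substituting $d\nu_t=dY_t-c(\theta_2)m_t\,dt$ yields (\ref{eq-mt}). (Equivalently one may quote the general filtering equation of Fujisaki--Kallianpur--Kunita type and specialise it, the term $E[X_tX_t'\mid\mathcal{F}_t^Y]-m_tm_t'=\gamma_t$ being exactly what makes the equation autonomous.) For the Riccati equation, subtracting $dm_t$ from $dX_t$ and inserting the expression for $d\nu_t$ gives, writing $a=a(\theta_2)$, $b=b(\theta_2)$, $c=c(\theta_2)$, $R=\sigma(\theta_1)\sigma(\theta_1)'$,
\[
 d(X_t-m_t)=-\bigl(a+\gamma_tc'R^{-1}c\bigr)(X_t-m_t)\,dt+b\,dW_t^1-\gamma_tc'R^{-1}\sigma(\theta_1)\,dW_t^2 ;
\]
applying Itô's formula to $(X_t-m_t)(X_t-m_t)'$, taking expectations so the martingale terms drop, and using $E[(X_t-m_t)(X_t-m_t)']=\gamma_t$ and $W^1\perp W^2$ produces exactly (\ref{eq-Riccati}).

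The step I expect to be the main obstacle is the innovations analysis together with the part of the argument that relies on it: rigorously showing that $\nu$ is an $\{\mathcal{F}_t^Y\}$-Wiener process with $\mathcal{F}_t^\nu=\mathcal{F}_t^Y$ — the ``innovations problem'', subtle in general but tractable here because the system is linear — so that the martingale representation theorem applies, and making precise the sense in which the Gaussian structure forces the filter gain to be the deterministic matrix $\gamma_tc(\theta_2)'\{\sigma(\theta_1)\sigma(\theta_1)'\}^{-1}$. Once these points are settled, the identification of $D_t$ and the derivation of the Riccati equation are routine Itô calculus.
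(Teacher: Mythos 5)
This statement is quoted in the paper as a known result (Theorem 10.2 of Liptser--Shiryaev) and is not proved there, so there is no in-paper argument to compare against; your sketch is the standard innovations-method derivation, which is essentially the proof given in the cited reference. The outline is correct: the Gaussianity argument making $\gamma_t$ deterministic, the identification of the gain $D_t=\gamma_t c(\theta_2)'\{\sigma(\theta_1)\sigma(\theta_1)'\}^{-1}$ via $E[(X_t-m_t)\nu_t']\equiv 0$ together with $W^1\perp W^2$, and the It\^{o} computation yielding the Riccati equation all check out, and you have correctly flagged the innovations problem ($\mathcal{F}_t^{\nu}=\mathcal{F}_t^{Y}$) as the one genuinely delicate step.
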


Equation (\ref{eq-Riccati}) is the matrix Riccati equation, which has been examined in the theory of linear quadratic control\citep{mathematical-control-theory}. It is known that (\ref{eq-Riccati}) has the unique positive-semidefinite solution\citep{Liptser-Shiryaev}. Moreover, under proper conditions, one can show that the corresponding algebraic Riccati equation
\begin{align}
  \label{ARE}-a(\theta_2)\gamma-\gamma a(\theta_2)'-\gamma c(\theta_2)'\{\sigma(\theta_1)\sigma(\theta_1)'\}^{-1}c(\theta_2)\gamma+b(\theta_2)b(\theta_2)'=O
\end{align}
has the maximal and minimal solutions\citep{coppel,robust-and-optimal-control}, and the solution of (\ref{eq-Riccati}) converges to the maximal solution of (\ref{ARE}) at an exponential rate\citep{canonical-form-Riccati}. Further details on this topic will be discussed in Section \ref{second-proof-section}.

There are already several studies on parameter estimation in the system (\ref{eq1.1}) and (\ref{eq1.2}) with the Kalman-Bucy filter. For example, \cite{Kutoyants-2004} discusses the ergodic case, \cite{Kutoyants-1994} and \cite{KUTOYANTS2019-smallnoise} small noise cases, and \cite{KUTOYANTS2019-onestep} the one-step estimator. However, all of them assume $d_1=d_2=1$ and need continuous observation of $Y$. The continuous observation case is simpler, because we do not have to estimate $\theta_1$. In fact, we have 
\begin{align*}
  {Y_t}^2=\int_0^tY_tdY_t+\sigma(\theta_1)^2t
\end{align*}
by It\^{o}'s formula and (\ref{eq1.1}), and therefore we can get the exact value of $\sigma(\theta_1)$.

On the other hand, parametric inference for discretely observed stochastic differential equations without an unobservable process has been studied for decades (for example \cite{Sorensen2002},\cite{SHIMIZU2006},\cite{YOSHIDA1992}). Especially, \cite{YOSHIDA2011} developed Ibragimov-Khasminskii theory\citep{Ibragimov} into the quasi-likelihood analysis, and investigated the behavior of the quasi-likelihood estimator and the adaptive Bayes estimator in the ergodic diffusion process. Quasi-likelihood analysis is helpful to discretely observed cases, and many works have been derived from it: see \cite{UCHIDA2012} for the non-ergodic case, \cite{Ogihara2011} for the jump case, \cite{masuda2019} for the L\'{e}vy driven case, \cite{gloter-yoshida} for the degenerate case, \cite{multi-step} for the multi-step estimator,  and \cite{nakakita} for the case with observation noises. 

This paper also makes use of quasi-likelihood analysis to investigate the behaviors of our estimators. In Section \ref{assumption-setcion}, we describe the more precise setup and present asymptotic properties of our estimators, which are main results of this paper.  Then we go on to proofs of these results in Sections \ref{first-proof-section} and \ref{second-proof-section}. We also examine the Riccati differential equation ({\ref{eq-Riccati}}) and algebraic Riccati equation (\ref{ARE}) in Section \ref{second-proof-section}. In section \ref{one-dimensional-case-section}, we discuss the special case where $d_1=d_2=1$. In the one-dimensional case, we can reduce our assumptions to simpler ones. Finally, we show in Section \ref{simulation-section} the result of computational simulation by YUIMA, an package on R, and suggest a way to improve our estimators when the wrong initial value is given.

\section{Notations, assumptions and main results}\label{assumption-setcion}
Let $\theta_1^* \in \mathbb{R}^{m_1}$ and $\theta_2^*\in \mathbb{R}^{m_2}$ be the true values of $\theta_1$ and $\theta_2$, respectively, and define the $(d_1+d_2)$-dimensional Gaussian process $(X_t,Y_t)$ by 
\begin{align}
  \label{2.1}&dX_t=-a(\theta_2^*)X_tdt+b(\theta_2^*)dW_t^1,\\
  \label{2.2}&dY_t=c(\theta_2^*)X_tdt+\sigma(\theta_1^*) dW_t^2,
\end{align}
where $W_1,W_2,a,b,c$ and $\sigma$ are the same as Section 1; $a,b:\Theta_2 \to M_{d_1}(\mathbb{R}),c:\Theta_2 \to M_{d_2,d_1}(\mathbb{R})$ and $\sigma:\Theta_1 \to M_{d_2}(\mathbb{R})$. In this article, we have access to the discrete observations $Y_{ih_n}~(i=0,1,\cdots,n)$, where $h_n$ is some positive constant, and we construct the estimators of $\theta_1$ and $\theta_2$ based on the observations.

We assume that $\Theta_1\subset \mathbb{R}^{m_1}$ and $\Theta_2\subset \mathbb{R}^{m_2}$ are open bounded subsets and that the Sobolev embedding inequality holds on $\Theta=\Theta_1\times \Theta_2$; for any $p>m_1+m_2$ and $f\in C^1(\Theta)$, there exists some constant $C$ depending only on $\Theta$ such that 
\begin{align}
  \label{2.3}\sup_{\theta\in \Theta}|f(\theta)|\leq C\left( \|f\|_{L^p}+\|\partial_{\theta_i}f\|_{L^p} \right).
\end{align}
For example, if each $\Theta_i$ ($i=1,2$) has a Lipchitz boundary, this inequality is valid\citep{Sobolev}. 

Let $Z(\theta)~(\theta \in \Theta=\Theta_1\times \Theta_2)$ be a class of random variables, where $Z(\theta)$ is continuously differentiable with respect to $\theta$.
Then by (\ref{2.3}) and Fubini's theorem, we get for any $p>m_1+m_2$
\begin{align*}
  E\left[ \sup_{\theta\in \Theta}|Z(\theta)|^p \right]
  &\leq C2^{p-1}\left( E\left[ \int_{\Theta_i}|Z(\theta)|^pd\theta_i+\int_{\Theta}|\partial_{\theta}Z(\theta)|^pd\theta_i \right] \right)\\
  &=C2^{p-1}\left( \int_{\Theta_i}E[|Z(\theta)|^p]d\theta+\int_{\Theta}E[|\partial_{\theta}Z(\theta)|^p]d\theta \right)\\
  &\leq C_{p}\sup_{\theta\in\Theta}\left(E[|Z(\theta)|^p]+E[|\partial_{\theta}Z(\theta)|^p]  \right),
\end{align*}
where $C_p$ is some constant depending on $p$ and $\Theta$. This result will be frequently referred to in the following sections.

In what follows, we use the following notations:
\begin{itemize}
  \item $\mathbb{R}_+=[0,\infty),\mathbb{N}=\{1,2,\cdots\}$.
  \item $\Theta=\Theta_1\times \Theta_2$,$\theta_1=(\theta_1^1,\cdots,\theta_1^{m_1}),\theta_2=(\theta_2^1,\cdots,\theta_2^{m_2}),\theta^*=(\theta_1^*,\theta_2^*).$
  \item For any subset $\Xi \subset \mathbb{R}^m$, $\overline{\Xi}$ is the closure of $\Xi$.
  \item For every set of matrices $A$, $B$ and $C$, $A'$ is the transpose of $A$, $A^{\otimes 2}=AA'$, $A[B,C]=B'AC$ and $A[B^{\otimes 2}]=B'AB$.
  \item For every matrix $A$, $|A|$ is the Frobenius norm of $A$. Namely, if $A=(a_{ij})_{1\leq i\leq n,1\leq j\leq m}$, $|A|$ is defined by
  \begin{align*}
    |A|=\sqrt{\sum_{i=1}^n\sum_{j=1}^m a_{ij}^2}.
  \end{align*}
  \item For every matrix $A$, $\lambda_{\min}(A)$ donates the smallest real part of eigenvalues of matrix $A$.
  \item For every symmetric matrix $A$ and $B \in M_{d}(\mathbb{R})$, $A> B$ (resp. $A\geq B$) means that $A-B$ is positive (resp. semi-positive) definite. 
  \item For any open subset $\Xi \subset \mathbb{R}^m$ and $A:\Xi\to M_{d}(\mathbb{R})$ of class $C^k$, $\partial_{\xi}^kA(\xi)$ donates the $k$-dimensional tensor on $M_{d}(\mathbb{R})$ whose $(j_1,j_2,\cdots,j_k)$ entry is $\displaystyle \frac{\partial}{\partial\xi_{j_1}}\cdots \frac{\partial}{\partial\xi_{j_k}}A(\theta_i)$, where $1\leq j_1,\cdots,j_k\leq m$ and $\xi=(\xi_1,\cdots,\xi_m)$.
  \item For every $k$-dimensional tensor $A$ with $(i_1,i_2,\cdots,i_k)$ entry $A_{i_1\cdots i_k} \in M_{d}(\mathbb{R})$ and every matrix $B \in M_d(\mathbb{R})$, $AB$ donates the tensor whose $(i_1,i_2,\cdots,i_k)$ entry is $A_{i_1\cdots i_k}B$. $BA$ is also defined in the same way. 
  \item For any partially differentiable function $f:\Theta_2 \to \mathbb{R}^{d_2}$ and $S \in M_{d_2}(\mathbb{R})$, $S[\partial_{\theta_2}^{\otimes 2}]f(\theta)$ is the matrix whose $(i,j)$-entry is $\displaystyle \frac{\partial}{\partial{\theta_2^i}}f(\theta_2)S_{ij}\frac{\partial}{\partial{\theta_2^j}}f(\theta_2)$. 
  \item If both $A$ and $B$ are matrices with $M_d(\mathbb{R})$ entries, $AB$ is the normal product of matrices.
  \item For every matrix $A$ on $M_d(\mathbb{R})$ with $(i,j)$ entry $A_{ij} \in M_{d}(\mathbb{R})$, $\mathrm{Tr}A$ is a matrix on $\mathbb{R}$ with $(i,j)$ entry $\mathrm{Tr}A_{ij}$.
  \item For every stochastic process $Z$, $\Delta_i Z=Z_{t_{i}}-Z_{t_{i-1}}$.
  \item We write $a^*,b^*,c^*,\sigma^*,\Sigma^*$ and $h$ for $a(\theta_2^*),b(\theta_2^*),c(\theta_2^*),\sigma(\theta_1^*),\Sigma(\theta_1^*)$ and $h_n$.
  \item We designate $\sigma(\theta_1)\sigma(\theta_1)'$ as $\Sigma(\theta_1)$.
  \item $C$ donates a generic positive constant. When $C$ depends on some parameter $p$, we might use $C_p$ instead of $C$.
\end{itemize}

Moreover, we need the following assumptions:
\begin{description}
  \item[{[A1]}] $nh_n \to \infty,~n{h_n}^2 \to 0$ as $n \to \infty$. Moreover, we assume $h_n \leq 1$ for every $n \in \mathbb{N}$.
  \item[{[A2]}] $a,b,c$ and $\sigma$ are of class $C^4$.
\end{description}

Then we can extend $a,b,c$ and $\sigma$ to continuous functions on $\overline{\Theta}_1$ and $\overline{\Theta}_2$.

\begin{description}
  \item[{[A3]}] 
  \begin{align*}
    &\inf_{\theta_2 \in \overline{\Theta}_2}\lambda_{\min}(a(\theta_2))>0\\
    &\inf_{\theta_2 \in \overline{\Theta}_2}\lambda_{\min}(b(\theta_2)^{\otimes 2})>0\\
    &\inf_{\theta_1 \in \overline{\Theta}_1}\lambda_{\min}(\Sigma(\theta_1))>0.
  \end{align*}
  \item[{[A4]}] For any $\theta_1 \in \overline{\Theta}_1$ and $\theta_2 \in \overline{\Theta}_2$, the pair of matrix $(a(\theta_2)', \Sigma(\theta_1)[c(\theta_2)^{\otimes 2}])$ is controllable; i.e. the matrix
  \begin{align*}
    \begin{pmatrix}
      \Sigma(\theta_1)[c(\theta_2)^{\otimes 2}]&a(\theta_2)'\Sigma(\theta_1)[c(\theta_2)^{\otimes 2}]&\cdots&{a(\theta_2)'}^{d_1}\Sigma(\theta_1)[c(\theta_2)^{\otimes 2}]
    \end{pmatrix}
  \end{align*}
  has full row rank.
  
  Moreover, the eigenvalues of the matrix 
  \begin{align}
    \label{def-H-matrix}H(\theta_1,\theta_2)=\begin{pmatrix}
      a(\theta_2)'&\Sigma(\theta_1)^{-1}[c(\theta_2)^{\otimes 2}]\\
      b(\theta_2)^{\otimes 2}&-a(\theta_2)
    \end{pmatrix}
  \end{align}
  are uniformly bounded away from the imaginary axis; i.e. there are some constant $C>0$ such that for any $\theta_1 \in \overline{\Theta}_1$ and $\theta_2 \in \overline{\Theta}_2$ and eigenvalue $\lambda$ of $H(\theta_1,\theta_2)$, it holds
  \begin{align*}
    |\mathrm{Re}(\lambda)|>C.
  \end{align*} 
\end{description}

Now we define $\mathbb{Y}_1$ and $\mathbb{Y}_2$ by
\begin{align}
  \label{def-Y1}&\mathbb{Y}_1(\theta_1)=-\frac{1}{2}\left\{ \mathrm{Tr}\Sigma(\theta_1)^{-1}\Sigma(\theta_1^*)-d_1+\log \frac{\mathrm{det}\Sigma(\theta_1)}{\mathrm{det}\Sigma(\theta_1^*)} \right\}
\end{align}
and
\begin{align}
  \label{def-Y2}\begin{split}
    &\mathbb{Y}_2(\theta_2)=-\frac{1}{2}\mathrm{Tr}\int_{0}^\infty{\Sigma^*}^{-1}\left.\Biggr[ \left\{ \int_0^{s}c(\theta_2)\exp(-\alpha(\theta_2)u)\gamma_{+}(\theta_1^*,\theta_2)c(\theta_2)'{\Sigma^*}^{-1}c^*\right.\right.\\
  &\qquad\qquad\qquad\times \exp(-a^*(s-u))\gamma_+(\theta^*){c^*}'du\\
  &\qquad\qquad\qquad+c(\theta_2)\exp(-\alpha(\theta_2)s)\gamma_{+}(\theta_1^*,\theta_2)c(\theta_2)'\\
  &\qquad\qquad\qquad\left.\left.-c^*\exp(-a^*s)\gamma_+(\theta^*){c^*}' \right.\biggr\}^{\otimes 2} \right.\Biggr][({{\sigma^*}'}^{-1})^{\otimes 2}]ds,
  \end{split}
\end{align}
respectively, where
\begin{align}
  \label{alpha-a-gamma}\alpha(\theta_2)=a(\theta_2)+\gamma_+(\theta_1^*,\theta_2)\Sigma(\theta_1^*)^{-1}[c(\theta_2)^{\otimes 2}],\end{align}
and assume the following condition.
\begin{description}
  \label{A-identify}\item[{[A5]}] There is some positive constant $C>0$ satisfying
  \begin{align}
    \label{Y1-ineq}\mathbb{Y}_1(\theta_1)\leq -C|\theta_1-\theta_1^*|^2
  \end{align}
  and
  \begin{align}
    \label{Y2-ineq}\mathbb{Y}_2(\theta_2)\leq -C|\theta_2-\theta_2^*|^2.
  \end{align}
\end{description}

\begin{remark}
  By (\ref{alpha-a-gamma}), it holds
  \begin{align*}
    &\int_0^{s}c^*\exp(-\alpha(\theta_2^*)u)\gamma_{+}(\theta^*){c^*}'{\Sigma^*}^{-1}c^*\exp(-a^*(s-u))\gamma_+(\theta^*){c^*}'du\\
    =&\int_0^{s}c^*\exp(-\alpha(\theta_2^*)u)\left\{ \alpha(\theta_2^*)-a^* \right\}\exp(-a^*(s-u))\gamma_+(\theta^*){c^*}'du\\
    =&c^*\exp(-\alpha(\theta_2^*)s)-c(\theta_2)\exp(-a^*s)\gamma_+(\theta^*){c^*}',
  \end{align*}
  and therefore $\mathbb{Y}_2(\theta_2)$ has the following expression:
  \begin{align*}
    &\mathbb{Y}_2(\theta_2)=-\frac{1}{2}\mathrm{Tr}\int_{0}^\infty{\Sigma^*}^{-1}\left.\Biggl[ \left\{ \int_0^{s}\{c(\theta_2)\exp(-\alpha(\theta_2)u)\gamma_{+}(\theta_1^*,\theta_2)c(\theta_2)'{\Sigma^*}^{-1}c^*\right.\right.\\
    &\qquad\qquad\qquad-c^*\exp(-\alpha(\theta_2^*)u)\gamma_{+}(\theta^*){c^*}'{\Sigma^*}^{-1}c^*\}du\\
    &\qquad\qquad\qquad+c(\theta_2)\exp(-\alpha(\theta_2)s)\gamma_{+}(\theta_1^*,\theta_2)c(\theta_2)'\\
    &\qquad\qquad\qquad\left.\left.-c^*\exp(-\alpha(\theta_2^*)s)\gamma_{+}(\theta^*){c^*}' \right.\biggr\}^{\otimes 2} \right][({{\sigma^*}'}^{-1})^{\otimes 2}]ds.
  \end{align*}
  In particular, we have $\mathbb{Y}_2(\theta_2^*)=0$.
\end{remark}

Under these assumptions above, we set 
\begin{align}
  &\mathbb{H}_n^1(\theta_1)=-\frac{1}{2}\sum_{j=1}^n\left\{ \frac{1}{h}\Sigma^{-1}(\theta_1)[(\Delta_jY)^{\otimes 2}]+\log \det\Sigma(\theta_1) \right\}  
\end{align}
and
\begin{align*}
  &\Gamma^1=\frac{1}{2}\left[\mathrm{Tr}\{{\Sigma^*}^{-1}\partial_{\theta_1}\Sigma(\theta_1^*)\}\right]^{\otimes 2},
\end{align*}
and we define our estimator of $\theta_1$ as the maximizer of $\mathbb{H}_n^1(\theta_1)$. Note that $\mathrm{Tr}\{{\Sigma^*}^{-1}\partial_{\theta_1}\Sigma(\theta_1^*)\}$ is a vector whose $j$-th entry is $\displaystyle \mathrm{Tr}\left\{{\Sigma^*}^{-1}\frac{\partial}{\partial{\theta_1^j}}\Sigma(\theta_1^*)\right\}$.
Then the following theorem holds:
\begin{theorem}\label{main-theorem-1}
  We assume [A1]-[A5], and for each $n \in \mathbb{N}$, let $\hat{\theta}^n_1$ be a random variable satisfying
  \begin{align*}
    \mathbb{H}_n^1(\hat{\theta}^n_1)=\max_{\theta_1 \in \overline{\Theta}_1}\mathbb{H}_n^1(\theta_1).
  \end{align*} 
  Then for every $p>0$ and any continuous function $f:\mathbb{R}^d \to \mathbb{R}$ such that
  \begin{align*}
    \limsup_{|x|\to \infty}\frac{|f(x)|}{|x|^p}<\infty,
  \end{align*} 
  it holds that 
  \begin{align*}
    E[f(\sqrt{n}(\hat{\theta}^n_1-\theta_1^*))]\to E[f(Z)]~(n \to \infty),
  \end{align*}
  where $Z \sim N(0,(\Gamma^1)^{-1})$.

  In particular, it holds that 
  \begin{align*}
    \sqrt{n}(\hat{\theta}^n_1-\theta_1^*)\xrightarrow{d}N(0,(\Gamma^1)^{-1})~(n \to \infty).
  \end{align*}
\end{theorem}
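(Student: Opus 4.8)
The plan is to run the quasi-likelihood analysis of \cite{Ibragimov} and \cite{YOSHIDA2011} for the random field $\mathbb{H}^1_n$. Concretely, I would verify the standard package of conditions — a uniform law of large numbers for the normalized quasi-log-likelihood together with $L^p$ moment bounds, the identifiability estimate [A5], a central limit theorem for the normalized score at $\theta_1^*$, and local uniform convergence of the observed information to a positive-definite limit — and then invoke the abstract QLA theorem, which delivers the polynomial-type large deviation inequality, the $\sqrt n$-consistency of $\hat\theta_1^n$, and the convergence of moments $E[f(\sqrt n(\hat\theta_1^n-\theta_1^*))]\to E[f(Z)]$ with $Z\sim N(0,(\Gamma^1)^{-1})$, the stated weak convergence being a special case. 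The feature that makes this tractable is that $\mathbb{H}^1_n$ uses the data only through the increments $\Delta_j Y$ and depends neither on $\theta_2$ nor on the hidden process, so it is essentially a Kessler-type Gaussian quasi-likelihood for the diffusion coefficient $\Sigma(\theta_1)$; the unobservable Ornstein--Uhlenbeck component enters only as an additive drift contamination of $\Delta_j Y$, and the requirement $nh_n^2\to 0$ in [A1] is precisely what renders it asymptotically negligible at the $\sqrt n$ scale.

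First I would decompose, using (\ref{2.1})--(\ref{2.2}),
\[
  \Delta_j Y=\xi_j+\eta_j,\qquad \xi_j=\int_{t_{j-1}}^{t_j}c^*X_s\,ds,\quad \eta_j=\sigma^*\,\Delta_j W^2 .
\]
Since $\lambda_{\min}(a^*)>0$ by [A3], $X$ is an ergodic Ornstein--Uhlenbeck process; after absorbing the exponentially decaying effect of the initial condition I may treat $X$ as stationary, so $\sup_t E[|X_t|^p]<\infty$ for every $p$ and hence $E[|\xi_j|^p]\le C_p h^p$. Because $W^1\perp W^2$, the sequence $(\xi_j)_j$ is independent of $(\eta_j)_j$, and after the substitution $\Delta_j W^2=\sqrt h\,\zeta_j$ with i.i.d.\ $\zeta_j\sim N(0,I_{d_2})$ the martingale part $\tfrac1h\Sigma(\theta_1)^{-1}[\eta_j^{\otimes 2}]=\zeta_j'{\sigma^*}'\Sigma(\theta_1)^{-1}\sigma^*\zeta_j$ is i.i.d.\ in $j$ and does not depend on $n$. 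Consequently
\[
  E\!\left[\tfrac1h\,\Sigma(\theta_1)^{-1}[(\Delta_j Y)^{\otimes 2}]\right]=\mathrm{Tr}\!\left(\Sigma(\theta_1)^{-1}\Sigma^*\right)+O(h)
\]
uniformly in $\theta_1\in\overline\Theta_1$, with analogous bounds for the $\theta_1$-derivatives.

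Using this, I would set $\mathbb{U}_n(\theta_1)=\tfrac1n\{\mathbb{H}^1_n(\theta_1)-\mathbb{H}^1_n(\theta_1^*)\}$ and show $\mathbb{U}_n(\theta_1)\to\mathbb{Y}_1(\theta_1)$ of (\ref{def-Y1}) pointwise, by the classical law of large numbers for the i.i.d.\ array $\{\Sigma(\theta_1)^{-1}[\eta_j^{\otimes 2}]\}_j$ plus the $O(h)$ control of the $\xi_j$-terms and the $\log\det$ bookkeeping (here $\mathrm{Tr}({\Sigma^*}^{-1}\Sigma^*)=d_1$ produces the constant in (\ref{def-Y1})). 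The $L^p$-moment estimate recorded in Section \ref{assumption-setcion} (the Sobolev-embedding argument), applied to $\theta_1\mapsto\mathbb{U}_n(\theta_1)-\mathbb{Y}_1(\theta_1)$ and its gradient, upgrades this to $\sup_{\theta_1\in\overline\Theta_1}|\mathbb{U}_n(\theta_1)-\mathbb{Y}_1(\theta_1)|\to0$ in every $L^p$; together with the separation bound [A5], (\ref{Y1-ineq}), this yields the polynomial-type large deviation inequality for $\mathbb{U}_n$, whence the consistency of $\hat\theta^n_1$ with tail probabilities decaying faster than any polynomial in $n^{-1}$.

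It remains to analyze $\mathbb{H}^1_n$ on a shrinking neighbourhood of $\theta_1^*$. For the score, writing $\partial_{\theta_1}\mathbb{H}^1_n(\theta_1^*)$ in terms of $\tfrac1h(\Delta_j Y)^{\otimes 2}-\Sigma^*$, the genuinely random part is an i.i.d.\ mean-zero sum built from the quadratic forms $\zeta_j'{\sigma^*}'\partial_{\theta_1}\Sigma^{-1}(\theta_1^*)\sigma^*\zeta_j$, whose covariance computes, by the fourth-moment identity for Gaussian vectors, to $\Gamma^1=\tfrac12[\mathrm{Tr}\{{\Sigma^*}^{-1}\partial_{\theta_1}\Sigma(\theta_1^*)\}]^{\otimes 2}$; the $\xi_j$-cross terms vanish in mean by independence and the remaining $\xi_j$-contributions are, after normalization by $\sqrt n$, of order $\sqrt{nh_n^2}\to 0$ — the sole use of $nh_n^2\to0$ — so the classical central limit theorem gives $\tfrac1{\sqrt n}\partial_{\theta_1}\mathbb{H}^1_n(\theta_1^*)\xrightarrow{d}N(0,\Gamma^1)$. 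Differentiating the same uniform law of large numbers twice gives $-\tfrac1n\partial^2_{\theta_1}\mathbb{H}^1_n(\theta_1)\to\Gamma^1$ locally uniformly in $\theta_1$, with moment bounds, and $\Gamma^1>0$ follows from [A5]. Feeding the uniform LLN, the score CLT, the information convergence, and [A5] into the QLA theorem of \cite{YOSHIDA2011} gives the asserted convergence of moments and the asymptotic normality. The step I expect to be most delicate is not the CLT itself but securing the $L^p$ (not merely in-probability) bounds on every remainder term containing $\int_{t_{j-1}}^{t_j}c^*X_s\,ds$: this needs uniform-in-$n$ $L^p$ control of functionals of the hidden Ornstein--Uhlenbeck process and its mixing/stationarity, together with the uniformity over $\overline\Theta_1$ needed for the Sobolev argument and the polynomial-type large deviation inequality to go through.
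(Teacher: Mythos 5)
Your proposal is correct and follows essentially the same route as the paper: the decomposition $\Delta_jY=\int_{t_{j-1}}^{t_j}c^*X_s\,ds+\sigma^*\Delta_jW^2$ with $L^p$-control of the drift contamination is exactly the content of Lemma \ref{Y-W-dif-lemma}, the i.i.d.\ Gaussian quadratic-form CLT for the martingale part is Lemma \ref{martingale-lemma}, and the Sobolev-embedding upgrade to uniform $L^p$ bounds followed by an appeal to Theorem 5 of \cite{YOSHIDA2011} is precisely how the paper concludes. The only cosmetic difference is that the paper bounds $\sup_tE[|X_t|^p]$ directly from the explicit solution (Lemma \ref{X-Y-norm}) rather than invoking stationarity, and its remainder bookkeeping is $n^{1/2}h^2+h^{3/2}$ after normalization, which vanishes under [A1] just as your $\sqrt{nh_n^2}$ estimate does.
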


Next we construct the estimator of $\theta_2$, which is the central part of this article. By Assumption [A4] and the corollary of Theorem 6 in \cite{coppel}, for every $\theta_1 \in \overline{\Theta}_1$ and $\theta_2 \in \overline{\Theta}_2$, equation (\ref{ARE}) has the maximal solution $\gamma=\gamma_+(\theta_1,\theta_2)$ and minimal solution $\gamma=\gamma_-(\theta_1,\theta_2)$, where $\gamma_+(\theta_1,\theta_2)>\gamma_-(\theta_1,\theta_2)$. The meaning of the maximal and minimal solutions is that for any symmetric solution $\gamma$ of (\ref{ARE}), it holds $\gamma_-\leq \gamma \leq \gamma_+$.

  Now we replace $\gamma_t$ with $\gamma_+(\theta_1,\theta_2)$ in (\ref{eq-mt}), and define $m_t(\theta_1,\theta_2;m_0)$ by
\begin{align}
  \label{eq-m}\begin{cases}
    dm_t=-a(\theta_2)m_tdt+\gamma_{+}(\theta_1,\theta_2)c(\theta_2)'\{\sigma(\theta_1)\sigma(\theta_1)'\}^{-1}\{dY_t-c(\theta_2)m_tdt\}\\
    m_0(\theta_1,\theta_2;m_0)=m_0,
  \end{cases}
\end{align}
where $m_0 \in \mathbb{R}^{d_1}$ is an arbitrary initial estimated value of $X_0$.

Due to It\^{o}'s formula, the solution of (\ref{eq-m}) can be written as
\begin{align}
  \label{def-m}\begin{split}
    &m_t(\theta_1,\theta_2)=\exp\left( -\alpha(\theta_1,\theta_2)t\right)m_0\\
  &+\int_0^t\exp\left( -\alpha(\theta_1,\theta_2)(t-s)\right)\gamma_+(\theta_1,\theta_2)c(\theta_2)'\Sigma(\theta_1)^{-1}dY_s,
  \end{split}  
\end{align}
where
\begin{align}
  \label{def-alpha}\alpha(\theta_1,\theta_2)=a(\theta_2)+\gamma_+(\theta_1,\theta_2)\Sigma(\theta_1)^{-1}[c(\theta_2)^{\otimes 2}].
\end{align}
The eigenvalues of $\alpha(\theta_1,\theta_2)$ coincides with those of $H(\theta_1,\theta_2)$ in (\ref{def-H-matrix}) with positive real part (see \cite{robust-and-optimal-control}), so there exists some constant $C>0$ such that for any $\theta_1 \in \Theta_1$ and $\theta_2 \in \Theta_2$,
\begin{align*}
  \inf_{\lambda \in \sigma(\alpha(\theta_1,\theta_2))}\mathrm{Re}\lambda>C,
\end{align*}
where $\sigma(\alpha(\theta_1,\theta_2))$ is the set of all eigenvalues of $\alpha(\theta_1,\theta_2)$.

According to (\ref{def-m}), we set for $i,n \in \mathbb{N}$,
\begin{align}
  \label{def-m-hat}\begin{split}
   &\hat{m}_i^n(\theta_2;m_0)=\exp\left( -\alpha(\hat{\theta}_1^n,\theta_2)t_i\right)m_0\\
  &+\sum_{j=1}^i\exp\left( -\alpha(\hat{\theta}_1^n,\theta_2)(t_i-t_{j-1})\right)\gamma_+(\hat{\theta}_1^n,\theta_2)c(\theta_2)'\Sigma(\hat{\theta}_1^n)^{-1}\Delta_jY,
  \end{split}\\
  \label{def-H2}
  \begin{split}
    &\mathbb{H}_n^2(\theta_2;m_0)=\frac{1}{2}\sum_{i=1}^n\left\{-h\Sigma(\hat{\theta}_1^n)^{-1}[(c(\theta_2)\hat{m}_{j-1}^n(\theta_2))^{\otimes 2}]\right.\\
  &\left.+\hat{m}_{j-1}^n(\theta_2)'c(\theta_2)'\Sigma(\hat{\theta}_1^n)^{-1}\Delta_jY+\Delta_jY'\Sigma(\hat{\theta}_1^n)^{-1}c(\theta_2)\hat{m}_{j-1}^n(\theta_2)\right\},
  \end{split}
\end{align}
and
\begin{align}
   \label{def-Gamma2}\begin{split}
    &\Gamma^2=\mathrm{Tr}\int_0^\infty{\Sigma^*}^{-1}[\partial_{\theta_2}^{\otimes 2}]\left\{\int_0^sc(\theta_2)\exp(-\alpha(\theta_2)u)\gamma_{+}(\theta_2)c(\theta_2)'{\Sigma^*}^{-1}c^*\right.\\
    &\qquad\qquad\qquad\times \exp(-a^*(s-u))\gamma_+(\theta^*){c^*}'du\\
    &\qquad\qquad\qquad\left.\left.+c(\theta_2)\exp(-\alpha(\theta_2)s)\gamma_{+}(\theta_2)c(\theta_2)'\right.\biggr\}\right|_{\theta_2=\theta_2^*}ds,
   \end{split}
\end{align}
where $\hat{\theta}_1^n$ is the estimator of $\theta_1$ defined in Theorem \ref{main-theorem-1}. 
Then the following theorem holds:
\begin{theorem}\label{main-theorem-2}
  We assume [A1]-[A5], and let $m_0 \in \mathbb{R}^{d_1}$ be an arbitrary initial value and $\hat{\theta}^n_2=\hat{\theta}^n_2(m_0)$ be a random variable satisfying
  \begin{align*}
    \mathbb{H}_n^2(\hat{\theta}^n_2)=\max_{\theta_2 \in \overline{\Theta}_2}\mathbb{H}_n^2(\theta_2)
  \end{align*} 
  for each $n \in \mathbb{N}$. Moreover, let $\Gamma^2$ be positive definite. Then for any $p>0$ and continuous function $f:\mathbb{R}^d \to \mathbb{R}$ such that
  \begin{align*}
    \limsup_{|x|\to \infty}\frac{|f(x)|}{|x|^p}<\infty,
  \end{align*} 
  it holds that 
  \begin{align*}
    E[f(\sqrt{t_n}(\hat{\theta}^n_2-\theta_2^*))]\to E[f(Z)]~(n \to \infty),
  \end{align*}
  where $Z \sim N(0,(\Gamma^2)^{-1})$.

  In particular, it holds that 
  \begin{align*}
    \sqrt{t_n}(\hat{\theta}^n_2-\theta_2^*)\xrightarrow{d}N(0,(\Gamma^2)^{-1})~(n \to \infty).
  \end{align*}
\end{theorem}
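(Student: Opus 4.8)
The plan is to carry out the quasi-likelihood analysis of \cite{YOSHIDA2011} for the random field $\theta_2\mapsto\mathbb{H}_n^2(\theta_2;m_0)$ of (\ref{def-H2}), rescaled at the rate $\sqrt{t_n}$. I would establish (i) a local quadratic expansion of $\mathbb{H}_n^2$ around $\theta_2^*$ with limiting information $\Gamma^2$, together with a martingale central limit theorem for the corresponding score, and (ii) a polynomial-type large deviation inequality for the likelihood-ratio random field $\mathbb{Z}_n(u)=\exp\{\mathbb{H}_n^2(\theta_2^*+u/\sqrt{t_n};m_0)-\mathbb{H}_n^2(\theta_2^*;m_0)\}$. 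Granted these and $\Gamma^2>0$, the Ibragimov--Khasminskii/Yoshida theorem yields $L^q$-boundedness of $\sqrt{t_n}(\hat\theta_2^n-\theta_2^*)$ for every $q>0$, convergence of its moments to those of $N(0,(\Gamma^2)^{-1})$, and hence $E[f(\sqrt{t_n}(\hat\theta_2^n-\theta_2^*))]\to E[f(Z)]$ for every continuous $f$ of polynomial growth; the asserted weak convergence is the special case of bounded continuous $f$. All uniform-in-$\theta_2$ remainder bounds below will come from the Sobolev inequality (\ref{2.3}) and the moment estimate stated right after it (the coefficients being $C^4$ by [A2], which is ample for $m_2$-dimensional $\Theta_2$), combined with standard $L^p$-estimates for linear SDEs and the Burkholder--Davis--Gundy inequality.

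The first step, and the one I expect to be the main obstacle, is to control the filter. I would compare the discretized plug-in filter $\hat m_i^n(\theta_2;m_0)$ of (\ref{def-m-hat}) with the continuous-time filter $m_{t_i}(\theta_1^*,\theta_2)$ of (\ref{def-m}) (built with the stationary Riccati solution $\gamma_+$), bounding in $L^p$, uniformly in $\theta_2$, the two error sources: the Euler-type discretization of the exponentially weighted stochastic integral, and the substitution of $\hat\theta_1^n$ for $\theta_1^*$ inside $\alpha$, $\gamma_+$ and $\Sigma^{-1}$. For the discretization I use that the real parts of the eigenvalues of $\alpha(\theta_1,\theta_2)$ are bounded below by a positive constant over $\overline\Theta$ (noted after (\ref{def-alpha})), so that both the effect of $m_0$ and the accumulated discretization error are geometrically small, and [A1] ($nh_n^2\to0$, $h_n\le1$). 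For the plug-in I invoke Theorem \ref{main-theorem-1}: $\hat\theta_1^n$ is $\sqrt n$-consistent, and since [A1] forces $h_n\to0$, the rate $\sqrt n$ is faster than $\sqrt{t_n}=\sqrt{nh_n}$; a Taylor expansion of $\mathbb{H}_n^2$ in $\theta_1$ together with the crude bound $\partial_{\theta_1}\mathbb{H}_n^2=O_P(t_n)$ then shows that replacing $\hat\theta_1^n$ by $\theta_1^*$ changes $\mathbb{H}_n^2$ by $O_P(\sqrt n\,h_n)=o_P(1)$, uniformly on compacts of the rescaled parameter --- this replacement is also what restores the adapted (martingale) structure that the full-sample estimator $\hat\theta_1^n$ would otherwise destroy. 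Finally I record that, for fixed $\theta_2$, the triple $(X_t,m_t(\theta_1^*,\theta_2),\partial_{\theta_2}m_t(\theta_1^*,\theta_2))$ solves a linear SDE with block-triangular drift whose diagonal blocks are $-a^*$, $-\alpha(\theta_1^*,\theta_2)$, $-\alpha(\theta_1^*,\theta_2)$; by [A3]--[A4] this system is exponentially ergodic with an explicit Gaussian stationary law, and its transient from the arbitrary initialization contributes only $O_P(1)$ to all subsequent sums, hence is negligible after normalization. This stationary system is the source of every limit below.

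Next I expand $\mathbb{H}_n^2$. Using the (asymptotic) innovation representation $dY_t-c^*m_t(\theta_1^*,\theta_2^*)dt=\sigma^*d\bar W_t$ for a Wiener process $\bar W$ of the observation filtration (valid up to an exponentially decaying transient, absorbed as above), I substitute $\Delta_jY=\int_{t_{j-1}}^{t_j}c^*m_s(\theta^*)\,ds+\sigma^*\Delta_j\bar W$ into (\ref{def-H2}) and complete the square; after the Step-1 replacements, $\mathbb{H}_n^2(\theta_2;\cdot)-\mathbb{H}_n^2(\theta_2^*;\cdot)$ equals, up to $o_P(1)$ uniformly on compacts of the rescaled parameter, a bias part $-\tfrac12\sum_j h\,{\Sigma^*}^{-1}[(c(\theta_2)m_{t_{j-1}}(\theta_1^*,\theta_2)-c^*m_{t_{j-1}}(\theta^*))^{\otimes2}]$ plus a martingale part linear in the $\Delta_j\bar W$. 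Taking $\theta_2=\theta_2^*+u/\sqrt{t_n}$ and Taylor-expanding in $u$: the bias part converges, by the ergodic theorem for the stationary augmented system of Step 1 (discretization error killed by $h_n\to0$), to $-\tfrac12\Gamma^2[u^{\otimes2}]$, where one checks that the Hessian $-\partial_{\theta_2}^2\mathbb{Y}_2(\theta_2^*)$ equals $\Gamma^2$ by using the algebraic Riccati identity of the Remark (i.e. $\alpha(\theta_2^*)-a^*=\gamma_+(\theta^*){\Sigma^*}^{-1}[{c^*}^{\otimes2}]$) to rewrite the kernels of (\ref{def-Y2}) and (\ref{def-Gamma2}) as stationary covariances of $(X,m,\partial_{\theta_2}m)$; and the martingale part, a sum of martingale differences with predictable quadratic variation converging (again by ergodicity) to $\Gamma^2$ and obeying Lindeberg's condition since $\mathrm{Var}(\Delta_j\bar W)=h_n\to0$, converges in law to $N(0,\Gamma^2)$. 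Thus $\mathbb{H}_n^2(\theta_2^*+u/\sqrt{t_n};\cdot)-\mathbb{H}_n^2(\theta_2^*;\cdot)=\Delta_n[u]-\tfrac12\Gamma^2[u^{\otimes2}]+o_P(1)$ with $\Delta_n\xrightarrow{d}N(0,\Gamma^2)$, jointly over $u$ in compacts.

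For the polynomial large deviation inequality I would combine the uniform law of large numbers $\tfrac1{t_n}\{\mathbb{H}_n^2(\theta_2;\cdot)-\mathbb{H}_n^2(\theta_2^*;\cdot)\}\to\mathbb{Y}_2(\theta_2)$ on $\overline\Theta_2$ (ergodic theorem for the augmented system, made uniform via (\ref{2.3})), the identifiability bound $\mathbb{Y}_2(\theta_2)\le-C|\theta_2-\theta_2^*|^2$ of [A5], and $L^p$-bounds for $\mathbb{H}_n^2$ and $\partial_{\theta_2}\mathbb{H}_n^2$ (again via the Sobolev-type moment inequality, [A2], linear-SDE and BDG estimates); the standard argument of \cite{Ibragimov} and \cite{YOSHIDA2011} then gives, for every $L>0$, a bound of the form $P(\sup_{|u|\ge r}\mathbb{Z}_n(u)\ge e^{-r})\le C_L r^{-L}$, the supremum being over $u$ with $\theta_2^*+u/\sqrt{t_n}\in\overline\Theta_2$. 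With the local expansion of the previous paragraph and $\Gamma^2>0$, the quasi-likelihood analysis theorem of \cite{YOSHIDA2011} gives the claimed convergence of moments. Besides the compound filter estimate of Step 1 (discretization plus $\hat\theta_1^n$-plug-in, in $L^p$, uniformly in $\theta_2$, with sufficient decay in the number of steps), the other point requiring genuine work is the exact identification of these ergodic limits with the intricate expressions $\mathbb{Y}_2$ of (\ref{def-Y2}) and $\Gamma^2$ of (\ref{def-Gamma2}), which rests on using the algebraic Riccati equation (\ref{ARE}) to simplify the stationary covariance of the joint $(X,m,\partial_{\theta_2}m)$ system.
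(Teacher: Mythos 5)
Your proposal is correct and follows essentially the same route as the paper: both reduce the claim to Theorem 5 of \cite{YOSHIDA2011} by (a) bounding, uniformly in $\theta_2$ and in $L^p$, the discretization error of the plug-in filter and the effect of substituting $\hat\theta_1^n$ for $\theta_1^*$ (Propositions \ref{m-m-hat-diff} and \ref{bold-H-diff}; the $O_P(n^{1/2}h_n)$ order you predict for the plug-in effect is exactly the term that appears there), (b) discarding an exponentially decaying transient so as to work with a stationary, $\overline W$-adapted surrogate (the tilde objects (\ref{def-Y-tilde})--(\ref{def-m-tilde})), (c) a martingale CLT for the score together with a Sobolev-inequality-based uniform LLN, and (d) the polynomial large deviation inequality supplied by [A5]. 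The one step you handle by a genuinely different device is the identification of the ergodic limits: you propose to read off $\mathbb{Y}_2$ and $\Gamma^2$ as stationary covariances of the augmented linear system $(X,m,\partial_{\theta_2}m)$, simplified via the algebraic Riccati equation (\ref{ARE}), which amounts to solving a Lyapunov equation for the joint stationary law and then matching it to (\ref{def-Y2}) and (\ref{def-Gamma2}). The paper never forms the augmented system: it writes $\mu_t(\theta_2)$ explicitly as a double stochastic convolution against the innovation process and computes $E\bigl[{\Sigma^*}^{-1}[\{c(\theta_2)\mu_t(\theta_2)-c^*\mu_t(\theta_2^*)\}^{\otimes2}]\bigr]$ by the It\^o isometry (Proposition \ref{c-mu-diff}), so the kernels of (\ref{def-Y2}) and (\ref{def-Gamma2}) appear verbatim with no further algebra, the Riccati identity being used only in the Remark to symmetrize $\mathbb{Y}_2$. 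Your route is more conceptual and would generalize more readily, at the cost of the nontrivial bookkeeping needed to match the Lyapunov solution to the stated kernels; the paper trades that for an explicit Gaussian computation. The only point your sketch underestimates is the uniformity in $\theta_2$ of the LLN for the martingale fluctuation of the quadratic term: before the Sobolev inequality can be applied one must justify differentiating a stochastic integral in the parameter (Lemma \ref{differentiable-lemma}, used in Proposition \ref{Y-tilde-Y-diff}); this is a technical refinement rather than a gap.
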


\begin{remark}
  (1) In order to calculate $\hat{m}_i^n$, one can use the autoregressive formula
  \begin{align*}
    \hat{m}_{i+1}^n(\theta_2;m_0)=&\exp\left( -\alpha(\hat{\theta}_1^n,\theta_2)h\right)\hat{m}_{i}^n(\theta_2;m_0)\\
    &+\exp\left( -\alpha(\hat{\theta}_1^n,\theta_2)h\right)\gamma_+(\hat{\theta}_1^n,\theta_2)c(\theta_2)'\Sigma(\hat{\theta}_1^n)^{-1}\Delta_{i+1}Y.
  \end{align*}
  (2) One can obtain $\gamma(\theta_1,\theta_2)$ in the following way (see \cite{robust-and-optimal-control} for details). Let $v_1,v_2,\cdots, v_{d_1}$ be generalized eigenvectors of $H(\theta_1,\theta_2)$ in (\ref{def-H-matrix}) with positive real part eigenvalues. Note that $H(\theta_1,\theta_2)$ has $d_1$ eigenvalues (with multiplicity) in the right half-plane and $d_1$ in the left half-plane. We define the matrices $X_1(\theta_1,\theta_2)$ and $X_2(\theta_1,\theta_2)$ by
  \begin{align*}
    \begin{pmatrix}
      v_1&v_2&\cdots&v_{d_1}
    \end{pmatrix}=\begin{pmatrix}
      X_1(\theta_1,\theta_2)\\X_2(\theta_1,\theta_2)
    \end{pmatrix}.
  \end{align*}
  Then $X_1(\theta_1,\theta_2)$ is invertible and it holds $\gamma_+(\theta_1,\theta_2)=X_2(\theta_1,\theta_2)X_1(\theta_1,\theta_2)^{-1}$.\\
  (3) $\mathbb{H}^2(\theta_2)$ can be interpreted as a approximated log-likelihood function with $\theta_1$ given. In fact, if $X_t=X_t(\theta)$ and $Y_t=Y_t(\theta)$ are generated by (\ref{eq1.1}) and (\ref{eq1.2}), and we set $m_0=E[X_0|Y_0]$ and $\gamma_0=E[(m_0-X_0)^{\otimes 2}]$, then it follows $m_t(\theta)=E[X_t(\theta)|\{Y_s(\theta)\}_{0\leq s\leq t}]$ by Theorem \ref{Kalman-filter}. Thus by the innovation theorem\citep{Kallianpur}, we can replace $X_t(\theta)$ with $m_t(\theta)$ in Equation (\ref{eq1.2}), and consider the equation
\begin{align*}
  dY_t(\theta)=c(\theta_2)m_t(\theta)dt+\sigma(\theta_1)d\overline{W}_t
\end{align*}
where $\overline{W}$ is a $d_2$-dimensional Wiener process. We can approximate this equation as 
  \begin{align*}
    \Delta_i Y(\theta)\approx c(\theta_2)m_{t_{i-1}}(\theta)h+\sigma(\theta_1)\Delta_i \overline{W},
  \end{align*}
  when $h \approx 0$. Then we obtain the approximated likelihood function
  \begin{align*}
    p(\theta)\approx&\prod_{i=1}^n\frac{1}{(2\pi h)^{\frac{d}{2}}\{\mathrm{det}\Sigma(\theta_1)\}^{-\frac{1}{2}}}\\
    &\times \exp\left( -\frac{1}{2h}\Sigma(\theta_1)^{-1}\left[ (\Delta_iY-c(\theta_2)m_{t_{i-1}}(\theta)h)^{\otimes 2} \right] \right).
  \end{align*} 
\end{remark}

\section{Proof of Theorem \ref{main-theorem-1}}\label{first-proof-section}
In this section, we prove Theorem \ref{main-theorem-1}, which can be proved in the same way as the diffusion case in \cite{YOSHIDA2011}. 
\begin{lemma}\label{BDG}
  Let $\{W_t\}$ be a $d$-dimensional $\{\mathcal{F}_t\}$-Wiener process.\\
  (1) Let $f:\mathbb{R}_+ \to \mathbb{R}^m$ be a measurable function. Then for any $p\geq1$ and $0\leq s\leq t$, it holds
  \begin{align*}
    \left( \int_s^t|f(u)|du \right)^p\leq (t-s)^{p-1}\int_s^t|f(u)|^pdu
  \end{align*}
  (2) Let $\{A_t\}$ be a $M_{k,d}(\mathbb{R})$-valued progressively measurable process and $\{W_t\}$ be a $d$-dimensional Wiener process. Then for every $0\leq s\leq t\leq T$ and $p \geq 2$, it holds
  \begin{align*}
    E\left[\sup_{s\leq t \leq T}\left|\int_s^t A_udW_u\right|^p \right]&\leq C_{p,d,k}E\left[\left(\int_s^T|A_u|^2du\right)^{\frac{p}{2}}  \right]\\
    &\leq C_{p,d,k}(T-s)^{\frac{p}{2}-1}\int_s^tE[|A_u|^p]du.
  \end{align*}
\end{lemma}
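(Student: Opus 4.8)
The plan is to dispatch both parts of Lemma~\ref{BDG} by elementary means, using part~(1) as an ingredient in part~(2).

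For part~(1), I would simply apply H\"older's inequality on $[s,t]$ with conjugate exponents $p$ and $p/(p-1)$ to the pair $|f(u)|$ and $1$, giving
\begin{align*}
  \int_s^t|f(u)|\,du\leq\left(\int_s^t|f(u)|^p\,du\right)^{1/p}(t-s)^{(p-1)/p},
\end{align*}
and then raise both sides to the $p$-th power. (Equivalently, this is Jensen's inequality for the convex function $x\mapsto x^p$ against the normalized measure $du/(t-s)$.)

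For the first inequality in part~(2), the key step is to reduce the matrix-valued stochastic integral to its scalar components. Write $M_t=\int_s^tA_u\,dW_u$, so that each component $M_t^{(i)}=\sum_{j=1}^d\int_s^t(A_u)_{ij}\,dW_u^j$ is a continuous local martingale with quadratic variation $\langle M^{(i)}\rangle_T=\sum_{j=1}^d\int_s^T(A_u)_{ij}^2\,du$; summing over $i$ gives $\sum_{i=1}^k\langle M^{(i)}\rangle_T=\int_s^T|A_u|^2\,du$ with $|\cdot|$ the Frobenius norm. Using $|M_t|^p=\bigl(\sum_i|M_t^{(i)}|^2\bigr)^{p/2}\le k^{p/2-1}\sum_i|M_t^{(i)}|^p$ (valid since $p\ge2$), the scalar Burkholder--Davis--Gundy inequality applied to each $M^{(i)}$, and the bound $\sum_i\langle M^{(i)}\rangle_T^{p/2}\le\bigl(\sum_i\langle M^{(i)}\rangle_T\bigr)^{p/2}$, I would obtain
\begin{align*}
  E\!\left[\sup_{s\le t\le T}|M_t|^p\right]\le k^{p/2-1}\sum_{i=1}^kC_pE\!\left[\langle M^{(i)}\rangle_T^{p/2}\right]\le C_{p,d,k}E\!\left[\left(\int_s^T|A_u|^2\,du\right)^{p/2}\right],
\end{align*}
which is the first estimate (with the Frobenius norm, $d$ does not in fact enter the constant). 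For the second estimate, I would apply part~(1) pathwise with $p$ replaced by $p/2\ (\ge1)$ and $|f(u)|$ replaced by $|A_u|^2$, giving $\bigl(\int_s^T|A_u|^2\,du\bigr)^{p/2}\le(T-s)^{p/2-1}\int_s^T|A_u|^p\,du$, and then take expectations, interchanging $E$ and the time integral by Tonelli's theorem.

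I do not expect a genuine obstacle here; the only points requiring care are that the hypothesis $p\ge2$ is used twice (once so that the stated form of scalar BDG applies, once for the $\ell^2$-versus-$\ell^p$ comparison of the component vector), and that the joint measurability of $(u,\omega)\mapsto|A_u(\omega)|^p$, which follows from progressive measurability of $A$, is what licenses Tonelli's theorem in the last step.
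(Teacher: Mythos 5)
Your proposal is correct and follows essentially the same route as the paper: H\"older (equivalently Jensen) for part (1), reduction to the scalar Burkholder--Davis--Gundy inequality for part (2), and then part (1) applied with exponent $p/2$ to $|A_u|^2$ together with Tonelli for the final estimate. The only cosmetic difference is that you keep each component $M^{(i)}=\sum_j\int (A_u)_{ij}\,dW_u^j$ as a single continuous local martingale and compute its bracket, whereas the paper splits further into the individual $(i,j)$ entries before invoking BDG; both are valid and yield the same bound.
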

\begin{proof}
  (1) By H\"{o}lder's inequality, we obtain
  \begin{align*}
    \int_s^t|f(u)|du&\leq \left(\int_s^t|f(u)|^{p}du\right)^{\frac{1}{p}} \left(\int_s^tdu\right)^{1-\frac{1}{p}}\\
    &=(t-s)^{1-\frac{1}{p}}\left(\int_s^t|f(u)|^{p}du\right)^{\frac{1}{p}},
  \end{align*}
  and it shows the desired inequality.\\
  (2) Let $A_t^{(ij)}$ be the $(i,j)$ entry of $A_t$, and $W_t^{(j)}$ be the $j$-th element of $W_t$. Then the Burkholder-Davis-Gundy inequality gives
  \begin{align*}
    E\left[\sup_{s\leq t\leq T}\left|\int_s^t A_udW_u\right|^p\right]
    &=E\left[\sup_{s\leq t\leq T}\left\{ \sum_{i=1}^k\left( \sum_{j=1}^d\int_{s}^tA_u^{(ij)}dW_u^{(j)} \right)^2 \right\}^\frac{p}{2}\right]\\
    &\leq C_{p,d,k}\sum_{i=1}^k \sum_{j=1}^dE\left[\sup_{s\leq t\leq T}\left|\int_{s}^tA_u^{(ij)}dW_u^{(j)} \right|^p\right]\\
    &\leq C_{p,d,k}\sum_{i=1}^k \sum_{j=1}^dE\left[\left|\int_{s}^T(A_u^{(ij)})^2du \right|^{\frac{p}{2}}\right]\\
    &\leq C_{p,d,k}E\left[\left|\int_{s}^T\sum_{i=1}^k \sum_{j=1}^d(A_u^{(ij)})^2du \right|^{\frac{p}{2}}\right]\\
    &=C_{p,d,k}E\left[\left|\int_{s}^T|A_u|^2du \right|^{\frac{p}{2}}\right].
  \end{align*}
  Hence we have proved the first inequality, and together with (1) we obtain the second one.
\end{proof}

\begin{lemma}\label{matrix-exponential}
  Let $A$ be a $d\times d$ matrix having eigenvalues $\lambda_1,\cdots,\lambda_k$. Then for all $\epsilon>0$, there exists some constant $C_{\epsilon,d}$ depending on $\epsilon$ and $d$ such that
  \begin{align*}
    |\exp(At)|\leq C_{\epsilon,d}(1+|A|^{d-1})e^{(\lambda_{\max}+\epsilon)t}~(t \geq 0),
  \end{align*}
  where
  \begin{align*}
    \lambda_{\max}=\max_{i=1,\cdots,k}\mathrm{Re}\lambda_k.
  \end{align*}
\end{lemma}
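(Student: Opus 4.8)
The plan is to diagonalize $A$ as far as possible by a \emph{unitary} change of basis — the Schur decomposition — and then estimate the resulting upper-triangular exponential entry by entry. The reason a unitary conjugation is the right tool (rather than, say, passing to the Jordan form) is that the Frobenius norm $|\cdot|$ is unitarily invariant, so no uncontrolled condition number of an eigenvector matrix ever enters, and the final constant stays uniform in $A$.

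First I would write $A=UTU^{*}$ with $U$ unitary and $T$ upper triangular, the diagonal of $T$ being the eigenvalues $\lambda_1,\dots,\lambda_d$ of $A$ repeated according to multiplicity. Then $\exp(At)=U\exp(Tt)U^{*}$, and unitary invariance gives $|\exp(At)|=|\exp(Tt)|$ and $|T|=|A|$, so it suffices to bound the entries of the upper-triangular matrix $\exp(Tt)$. For $i\le j$ the path expansion of powers of an upper-triangular matrix yields
\[
  (\exp(Tt))_{ij}=\sum_{m=0}^{j-i}\ \sum_{i=i_0<i_1<\cdots<i_m=j}\Bigl(\prod_{l=0}^{m-1}T_{i_l i_{l+1}}\Bigr)\,\bigl[\lambda_{i_0},\dots,\lambda_{i_m}\bigr]e^{\cdot t},
\]
where $[\,\cdot\,]e^{\cdot t}$ denotes the (confluent) divided difference of $z\mapsto e^{zt}$. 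By the Hermite–Genocchi representation this divided difference equals $t^{m}\int_{\Sigma_m}e^{t\sum_l s_l\lambda_{i_l}}\,ds$ over the unit simplex $\Sigma_m$ of volume $1/m!$; since $\mathrm{Re}\,\lambda_{i_l}\le\lambda_{\max}$ for every $l$, it is bounded in modulus by $\tfrac{t^{m}}{m!}e^{\lambda_{\max}t}$.

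Then I would combine the pieces. Using $\sup_{t\ge0}t^{m}e^{-\epsilon t}=(m/\epsilon)^{m}e^{-m}$ for $0\le m\le d-1$, each divided difference is at most $C_{\epsilon,d}\,e^{(\lambda_{\max}+\epsilon)t}$; the product $\prod_{l}|T_{i_l i_{l+1}}|$ is at most $|T|^{m}\le 1+|T|^{d-1}=1+|A|^{d-1}$; and the number of strictly increasing index paths from $i$ to $j$ is at most $2^{d}$. Hence $|(\exp(Tt))_{ij}|\le C_{\epsilon,d}(1+|A|^{d-1})e^{(\lambda_{\max}+\epsilon)t}$, and summing over the at most $d^{2}$ entries (absorbing the factor into a new $C_{\epsilon,d}$) gives $|\exp(Tt)|\le C_{\epsilon,d}(1+|A|^{d-1})e^{(\lambda_{\max}+\epsilon)t}$, which is the assertion.

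The main obstacle — really the only subtlety — is ensuring the constant depends on $\epsilon$ and $d$ alone: every ingredient above has been chosen with that in mind, namely the unitary invariance of $|\cdot|$, the universal bound $\tfrac{t^{m}}{m!}e^{\lambda_{\max}t}$ on the exponential's divided differences, and the purely combinatorial path count. The one technical point to handle carefully is coinciding eigenvalues, for which the divided differences must be read in the confluent sense; the Hermite–Genocchi integral remains valid there, so no separate argument is needed. (Alternatively, replacing $A$ by $A-(\lambda_{\max}+\epsilon)I$ reduces everything to the case $\lambda_{\max}<-\epsilon$ and a bound $\sup_{t\ge0}|\exp(At)|\le C_{\epsilon,d}(1+|A|^{d-1})$, but this is no easier than the direct estimate above.)
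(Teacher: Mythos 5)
Your proof is correct, and while it starts from the same place as the paper --- a Schur decomposition $A=U^{*}TU$, exploited precisely because the Frobenius norm is unitarily invariant --- the estimation of $\exp(Tt)$ itself is done by a genuinely different (and in fact more careful) route. The paper splits $T=D+N$ into diagonal plus strictly upper-triangular parts and writes $\exp((D+N)t)=\exp(Dt)\exp(Nt)$, then bounds $|\exp(Nt)|$ by the finite sum $\sum_{k}|N|^{k}t^{k}/k!$ using $N^{d}=O$; this factorization of the exponential is only valid when $D$ and $N$ commute, which they generally do not in a Schur form (e.g.\ $T=\bigl(\begin{smallmatrix}1&1\\0&2\end{smallmatrix}\bigr)$), so strictly speaking the paper's argument has a gap there, although the polynomial-times-exponential shape of the resulting bound, and hence the lemma, is still correct. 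Your entrywise expansion of $\exp(Tt)$ via the Opitz path formula, with the confluent divided differences of $z\mapsto e^{zt}$ controlled by the Hermite--Genocchi integral $\bigl|\,[\lambda_{i_0},\dots,\lambda_{i_m}]e^{\cdot t}\bigr|\le \tfrac{t^{m}}{m!}e^{\lambda_{\max}t}$, avoids the commutation issue entirely while still keeping every constant dependent only on $\epsilon$ and $d$ (path counts, entry-versus-norm bounds, and $\sup_{t\ge0}t^{m}e^{-\epsilon t}$). The price is that you need the two classical identities (Opitz and Hermite--Genocchi), which are heavier machinery than the paper intends to invoke; the payoff is a proof that is actually airtight, including in the case of repeated eigenvalues, which you correctly flag as the one point needing the confluent reading.
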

\begin{proof}
  Let 
  \begin{align*}
    A=U^*(D+N)U,~D=\mathrm{diag}(\lambda_1,\lambda_2,\cdots,\lambda_{d})
  \end{align*}
  be a Schur decomposition of $A$, where $\lambda_1,\lambda_2,\cdots,\lambda_{d}$ are the eigenvalues of $A$, $U$ is an unitary matrix, and $N$ is a strictly upper triangular matrix. Then we have
  \begin{align*}
    |\exp(At)|&=|\exp((D+N)t)|=|\exp(Dt)\exp(Nt)|\\
    &\leq |\exp(Dt)||\exp(Nt)|\\
    &\leq C_de^{\lambda_{\max}t}\sum_{k=1}^{d-1}\frac{|N|^{k}}{k!}t^{k}\\
    &\leq C_de^{\lambda_{\max}t}\sum_{k=1}^{d-1}\frac{|A|^{k}}{k!}t^{k}\\
    &\leq C_de^{(\lambda_{\max}+\epsilon)t}\sum_{k=1}^{d-1}\frac{|A|^{k}}{k!}t^{k}e^{-\epsilon t}\\
    &\leq C_{\epsilon,d}(1+|A|^{d-1})e^{(\lambda_{\max}+\epsilon)t},
  \end{align*}
  noting that $U$ is unitary, $N^{d}=O$, and $|A|=|D+N|\geq N$. 
\end{proof}

\begin{lemma}\label{X-Y-norm}
  For any $s,t\geq 0$ such that $0\leq t-s\leq 1$ and $p \geq 1$, it holds
  \begin{align}
    &\label{X-norm}\sup_{t\geq 0}E[|X_t|^p]\leq C_p,\\
    &E[|Y_s-Y_t|^p]\leq C_p|s-t|^{\frac{p}{2}}    
  \end{align}
  and
  \begin{align}
    &\label{X-conti}E[|X_s-X_t|^p]\leq C_p|s-t|^{\frac{p}{2}}.
  \end{align}
\end{lemma}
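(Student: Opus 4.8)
The plan is to prove the three bounds in Lemma~\ref{X-Y-norm} by exploiting the explicit Gaussian structure of $(X_t,Y_t)$. First I would recall that $X$ is an Ornstein--Uhlenbeck process with the closed form
\begin{align*}
  X_t=\exp(-a^*t)X_0+\int_0^t\exp(-a^*(t-u))b^*dW_u^1.
\end{align*}
For the first bound $\sup_{t\geq 0}E[|X_t|^p]\leq C_p$, I would split into the deterministic term and the stochastic integral. Assumption~[A3] gives $\lambda_{\min}(a^*)>0$, so Lemma~\ref{matrix-exponential} yields $|\exp(-a^*t)|\leq C(1+|a^*|^{d_1-1})e^{-\lambda t}$ for some $\lambda>0$, hence $|\exp(-a^*t)X_0|^p$ is bounded by $C_pe^{-p\lambda t}|X_0|^p$, which is bounded uniformly in $t$ (assuming $X_0$ has finite $p$-th moments; in the ergodic setting one typically takes $X_0$ with the stationary law, or at least $L^p$). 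For the stochastic integral, Lemma~\ref{BDG}(2) bounds its $p$-th moment by $C_p\left(\int_0^t|\exp(-a^*(t-u))b^*|^2\,du\right)^{p/2}$, and the exponential decay makes this integral bounded uniformly in $t$. Adding the two pieces gives the uniform bound \eqref{X-norm}.

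Next, for the increment bounds, I would write, for $s\leq t$ with $t-s\leq 1$,
\begin{align*}
  X_t-X_s=(\exp(-a^*(t-s))-I)X_s+\int_s^t\exp(-a^*(t-u))b^*dW_u^1.
\end{align*}
The stochastic integral has $p$-th moment bounded by $C_p(t-s)^{p/2}$ via Lemma~\ref{BDG}(2) together with the boundedness of $|\exp(-a^*(t-u))b^*|$ on $0\leq t-u\leq 1$. For the first term, I would use $|\exp(-a^*(t-s))-I|\leq C|t-s|$ (since $t-s\leq 1$, obtained from the integral representation $\exp(-a^*(t-s))-I=-\int_0^{t-s}a^*\exp(-a^*v)\,dv$ and Lemma~\ref{matrix-exponential}), combined with the uniform moment bound \eqref{X-norm} on $X_s$; this contributes $C_p|t-s|^p\leq C_p|t-s|^{p/2}$. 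Summing gives \eqref{X-conti}. The bound for $Y$ is analogous: from \eqref{2.2}, $Y_t-Y_s=\int_s^t c^*X_u\,du+\int_s^t\sigma^*dW_u^2$, so Lemma~\ref{BDG}(1) and \eqref{X-norm} control the drift part by $C_p|t-s|^p$ and Lemma~\ref{BDG}(2) controls the martingale part by $C_p|t-s|^{p/2}$; since $|t-s|\leq 1$ the first is dominated by the second.

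The main obstacle, such as it is, is bookkeeping rather than conceptual: one must be careful that the constants $C_p$ in \eqref{X-norm} are genuinely uniform in $t$ over all of $\mathbb{R}_+$ (this is where [A3] and the exponential decay are essential — without stability the second moment of the stochastic integral would grow), and that the passage from $|t-s|^p$ to $|t-s|^{p/2}$ is only legitimate because $|t-s|\leq 1$, which is precisely why the hypothesis $0\leq t-s\leq 1$ is imposed. I would also need to handle the case $p<2$ in Lemma~\ref{BDG}(2) by first proving the estimates for $p\geq 2$ and then applying Jensen's (or Lyapunov's) inequality to descend to $p\geq 1$.
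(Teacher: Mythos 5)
Your proposal is correct and follows essentially the same route as the paper: the explicit Ornstein--Uhlenbeck representation of $X_t$, Lemma \ref{matrix-exponential} with [A3] for the uniform-in-$t$ decay, Lemma \ref{BDG} for the drift and martingale parts of the increments, and the restriction $|t-s|\leq 1$ to absorb $|t-s|^p$ into $|t-s|^{p/2}$. The only cosmetic difference is that you decompose $X_t-X_s$ via the flow form $(\exp(-a^*(t-s))-I)X_s+\int_s^t\exp(-a^*(t-u))b^*dW_u^1$ whereas the paper treats it exactly as it treats $Y_t-Y_s$ (integrating the SDE directly); both yield the same bound, and your explicit remark about descending from $p\geq 2$ to $p\geq 1$ via Jensen is a point the paper leaves implicit.
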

\begin{proof}
  By It\^{o}'s formula, the solution of (\ref{2.1}) can be expressed as
  \begin{align}
    \label{sol-X}X_t=\exp(-a^*t)X_0+\int_0^t\exp(-a^*(t-s))b^*dW_s^1,
  \end{align}
  where $\exp$ is the matrix exponential. Hence by Lemmas \ref{BDG} and \ref{matrix-exponential}, we have
  \begin{align*}
    E[|X_t|^p]\leq &|\exp(-a^*t)|^pE[|X_0|^p]\\
    &+C_p\left(\int_0^t|b^*|^2|\exp(-a^*(t-s))|^2ds\right)^{\frac{p}{2}}\\
    \leq &C_pe^{-\eta pt}+C_p\left(\int_0^te^{-2\eta s}ds\right)^{\frac{p}{2}}\leq C_p
  \end{align*}
  for some constant $\eta>0$. Therefore for $s\leq t$ we obtain
  \begin{align*}
    E[|Y_t-Y_s|^p]&=E\left[ \left|c^*\int_s^tX_udu+\sigma^*(W_t^2-W_s^2)\right|^p \right]\\
    &\leq C_p\left( (t-s)^{p-1}\int_s^tE[|X_u|^p]du+(t-s)^{\frac{p}{2}} \right)\\
    &\leq C_p\left( (t-s)^{p}+(t-s)^{\frac{p}{2}} \right)\\
    &\leq C_p(t-s)^{\frac{p}{2}}.
  \end{align*}
  We can show (\ref{X-conti}) in the same way.
\end{proof}

\begin{lemma}\label{Y-W-dif-lemma}
  For every $p\geq 2$ and $A \in M_{d_1}(\mathbb{R})$, it holds
  \begin{align*}
    E\left[ \left|\sum_{j=1}^nA[(\Delta_jY)^{\otimes2}]-\sum_{j=1}^nA[(\sigma^*\Delta_jW^2)^{\otimes2}]\right|^p \right]^\frac{1}{p}\leq C_p|A|(nh^2+n^\frac{1}{2}h^\frac{3}{2}).
  \end{align*}
\end{lemma}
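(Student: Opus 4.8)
The plan is to separate the drift and martingale parts of each increment and reduce everything to moment bounds on the drift part. Write $\Delta_jY=\xi_j+\eta_j$, where
\[
  \xi_j=c^*\int_{t_{j-1}}^{t_j}X_s\,ds,\qquad \eta_j=\sigma^*\Delta_jW^2 ,
\]
so that, expanding the quadratic form and cancelling the $\eta_j^{\otimes2}$ contribution,
\[
  A\big[(\Delta_jY)^{\otimes2}\big]-A\big[(\sigma^*\Delta_jW^2)^{\otimes2}\big]
  =A[\xi_j^{\otimes2}]+A[\xi_j,\eta_j]+A[\eta_j,\xi_j].
\]
By Minkowski's inequality it then suffices to bound the $L^p(\Omega)$-norms of the three sums $\sum_{j=1}^nA[\xi_j^{\otimes2}]$, $\sum_{j=1}^nA[\xi_j,\eta_j]$ and $\sum_{j=1}^nA[\eta_j,\xi_j]$ by $C_p|A|\,nh^2$, $C_p|A|\,n^{1/2}h^{3/2}$ and $C_p|A|\,n^{1/2}h^{3/2}$ respectively.

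For the first sum I would use $|A[\xi_j^{\otimes2}]|\le|A|\,|\xi_j|^2$ together with the bound $E[|\xi_j|^{2p}]\le C_ph^{2p}$. The latter follows from Lemma \ref{BDG}(1), which gives $|\xi_j|^{2p}\le|c^*|^{2p}h^{2p-1}\int_{t_{j-1}}^{t_j}|X_s|^{2p}\,ds$, combined with $\sup_{t\ge0}E[|X_t|^{2p}]\le C_p$ from Lemma \ref{X-Y-norm}. Then Minkowski's inequality yields
\[
  E\Big[\Big|\sum_{j=1}^nA[\xi_j^{\otimes2}]\Big|^p\Big]^{1/p}\le\sum_{j=1}^nE\big[|A[\xi_j^{\otimes2}]|^p\big]^{1/p}\le|A|\sum_{j=1}^nE[|\xi_j|^{2p}]^{1/p}\le C_p|A|\,nh^2 .
\]

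The cross terms are the crux of the argument, and the difficulty is that $\xi_j$ is $\mathcal{F}_{t_j}$-measurable but not $\mathcal{F}_{t_{j-1}}$-measurable, so $\sum_jA[\xi_j,\eta_j]$ is not a sum of martingale increments for $\{\mathcal{F}_t\}$. I would get around this by using the independence of $W^1$ and $W^2$: set $\mathcal{G}_j=\sigma(X_0)\vee\sigma(W^1_s:s\ge0)\vee\sigma(W^2_s:s\le t_j)$, so that $\xi_j$ is $\mathcal{G}_{j-1}$-measurable while $\eta_j=\sigma^*\Delta_jW^2$ is independent of $\mathcal{G}_{j-1}$ and has mean zero; hence $M_k:=\sum_{j=1}^kA[\xi_j,\eta_j]$ is a $\{\mathcal{G}_k\}$-martingale. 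The discrete Burkholder--Davis--Gundy inequality, $|A[\xi_j,\eta_j]|\le|A|\,|\xi_j|\,|\eta_j|$, Minkowski in $L^{p/2}(\Omega)$, and the independence of $\xi_j$ and $\eta_j$ then give
\[
  E\big[|M_n|^p\big]^{2/p}\le C_pE\Big[\Big(\sum_{j=1}^n|A[\xi_j,\eta_j]|^2\Big)^{p/2}\Big]^{2/p}\le C_p|A|^2\sum_{j=1}^n\big(E[|\xi_j|^p]\,E[|\eta_j|^p]\big)^{2/p}.
\]
Using $E[|\xi_j|^p]\le C_ph^p$ as before and the Gaussian bound $E[|\eta_j|^p]\le C_ph^{p/2}$, the right-hand side is at most $C_p|A|^2\,nh^3$, i.e. $E[|M_n|^p]^{1/p}\le C_p|A|\,n^{1/2}h^{3/2}$; the sum $\sum_jA[\eta_j,\xi_j]$ is treated the same way (with $A$ replaced by its transpose). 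Combining the three estimates with the triangle inequality gives the asserted bound $C_p|A|(nh^2+n^{1/2}h^{3/2})$.

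In short, the only non-routine step is the martingale treatment of the cross terms, which hinges on enlarging the filtration to $\{\mathcal{G}_k\}$ --- permissible precisely because $W^1$ and $W^2$ are independent. An equivalent route is to condition on $\sigma(X_0,W^1)$: under this conditioning $\sum_jA[\xi_j,\eta_j]$ is a centred Gaussian with variance $h\sum_j\xi_j'A\Sigma^*A'\xi_j\le h|A|^2|\Sigma^*|\sum_j|\xi_j|^2$, and controlling its $p$-th moment by that variance to the power $p/2$ leads to the same bound.
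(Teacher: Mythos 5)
Your proof is correct, and the decomposition into the quadratic term $\sum_jA[\xi_j^{\otimes2}]$ plus two cross terms, with the quadratic term handled via H\"older and $\sup_tE[|X_t|^{2p}]\leq C_p$, matches the paper exactly. Where you diverge is in the cross terms. The paper does not enlarge the filtration: it writes $\int_{t_{j-1}}^{t_j}X_t\,dt=\int_{t_{j-1}}^{t_j}(X_t-X_{t_{j-1}})\,dt+X_{t_{j-1}}h$, absorbs the first piece into an $O(nh^2)$ error via Cauchy--Schwarz and the continuity estimate $E[|X_t-X_{t_{j-1}}|^{2p}]\leq C_p h^{p}$, and then treats $h\sum_jX_{t_{j-1}}'{c^*}'A\sigma^*\Delta_jW^2$ as a genuine $\{\mathcal{F}_{t_j}\}$-martingale sum via Lemma \ref{BDG}(2), which yields the $n^{1/2}h^{3/2}$ contribution. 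You instead pass to the enlarged filtration $\mathcal{G}_k=\sigma(X_0)\vee\sigma(W^1)\vee\sigma(W^2_s:s\leq t_k)$, under which the \emph{whole} cross term is already a martingale difference sequence, so no splitting is needed; the justification that $\Delta_jW^2$ is independent of $\mathcal{G}_{j-1}$ does hold here because $W^1$ and $W^2$ are independent $\{\mathcal{F}_t\}$-Wiener processes and $X_0$ is $\mathcal{F}_0$-measurable, and you correctly flag this as the crux. Your route is slightly cleaner (the cross term contributes only $n^{1/2}h^{3/2}$, with no extra $nh^2$ piece) and the conditional-Gaussian variant you sketch is an equally valid alternative; the paper's route is marginally more robust in that it uses only adaptedness of $X$ and the $\{\mathcal{F}_t\}$-Wiener property of $W^2$, not the full independence structure. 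Both arrive at the stated bound $C_p|A|(nh^2+n^{1/2}h^{3/2})$.
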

\begin{proof}
  First we get
  \begin{align*}
    &E\left[ \left|\sum_{j=1}^nA[(\Delta_jY)^{\otimes2}]-\sum_{j=1}^nA[(\sigma^*\Delta_jW^2)^{\otimes2}]\right|^p \right]^\frac{1}{p}\nonumber\\
    =&E\left[ \left|\sum_{j=1}^nA\left[\left(c^*\int_{t_{j-1}}^{t_j}X_tds+\sigma^*\Delta_jW^2\right)^{\otimes2}\right]-\sum_{j=1}^nA[(\sigma^*\Delta_jW^2)^{\otimes2}]\right|^p \right]^\frac{1}{p}\nonumber\\
    \leq&E\left[ \left|\sum_{j=1}^nA\left[\left(c^*\int_{t_{j-1}}^{t_j}X_tds\right)^{\otimes2}\right]\right|^p \right]^\frac{1}{p}\nonumber\\
    &+E\left[ \left|\sum_{j=1}^nA\left[c^*\int_{t_{j-1}}^{t_j}X_tds,\sigma^*\Delta_jW^2\right]\right|^p \right]^\frac{1}{p}\nonumber\\
    &+E\left[ \left|\sum_{j=1}^nA\left[\sigma^*\Delta_jW^2,c^*\int_{t_{j-1}}^{t_j}X_tds\right]\right|^p \right]^\frac{1}{p}.
  \end{align*}
  For the first term of the rightest-hand side, we obtain by Lemmas \ref{BDG} and \ref{X-Y-norm}
  \begin{align*}
    &E\left[ \left|\sum_{j=1}^nA\left[\left(c^*\int_{t_{j-1}}^{t_j}X_tds\right)^{\otimes2}\right]\right|^p \right]^\frac{1}{p}\\
    \leq &|A||c^*| \sum_{j=1}^nE\left[ \left|\int_{t_{j-1}}^{t_j}X_tds\right|^{2p} \right]^\frac{1}{p}\\
    \leq &|A||c^*|\sum_{j=1}^nh^{2p-1}\left(\int_{t_{j-1}}^{t_j}E[ \left|X_t\right|^{2p} ]\right)^\frac{1}{p}ds\\
    \leq &C_p|A|nh^2.
  \end{align*}
  For the second and third terms, it holds
  \begin{align*}
    &E\left[ \left|\sum_{j=1}^nA\left[c^*\int_{t_{j-1}}^{t_j}X_tds,\sigma^*\Delta_jW^2\right]\right|^p \right]^\frac{1}{p}\\
    \leq &E\left[ \left|\sum_{j=1}^nA\left[c^*\int_{t_{j-1}}^{t_j}(X_t-X_{t_{j-1}})ds,\sigma^*\Delta_jW^2\right]\right|^p \right]^\frac{1}{p}\\
    &+E\left[ \left|\sum_{j=1}^nA\left[c^*X_{t_{j-1}}h,\sigma^*\Delta_jW^2\right]\right|^p \right]^\frac{1}{p}\\
    \leq &|A||c^*||\sigma^*|\sum_{j=1}^nE\left[\left|\int_{t_{j-1}}^{t_j}(X_t-X_{t_{j-1}})ds\right|^p|\Delta_jW^2|^p\right]^\frac{1}{p}\\
    &+hE\left[ \left|\sum_{j=1}^nX_{t_{j-1}}'{c^*}'A\sigma^*\Delta_jW^2\right|^p \right]^\frac{1}{p}\\
    \leq &|A||c^*||\sigma^*|\sum_{j=1}^nE\left[\left|\int_{t_{j-1}}^{t_j}(X_t-X_{t_{j-1}})ds\right|^{2p}\right]^\frac{1}{2p}E\left[|\Delta_jW^2|^{2p}\right]^\frac{1}{2p}\\
    &+hE\left[ \left|\sum_{j=1}^nX_{t_{j-1}}'{c^*}'A\sigma^*\Delta_jW^2\right|^p \right]^\frac{1}{p}\\
    \leq &C_p|A|\sum_{j=1}^n\left( h^{2p-1}\int_{t_{j-1}}^{t_j}E[|X_u-X_{t_{j-1}}|^{2p}]du \right)^\frac{1}{2p}h^\frac{1}{2}\\
    &+C_p|A|h\left((nh)^{\frac{p}{2}-1}\sum_{j=1}^nE[|X_{t_{j-1}}|^p]h\right)^\frac{1}{p}\\
    \leq &C_p|A|(nh^2+n^\frac{1}{2}h^\frac{3}{2}).
  \end{align*}
  Therefore we get the desired result.
\end{proof}

\begin{lemma}\label{martingale-lemma}
  Let $A_k \in M_{d_1}(\mathbb{R})~(k=1,2,\cdots,d)$, $A=(A_1,\cdots,A_d)$ and 
  \begin{align*}
    M_n(A)=\sum_{j=1}^n\left\{\frac{1}{h}A[(\Delta_jW^2)^{\otimes 2}]-\mathrm{Tr}A\right\}.
  \end{align*}
  Then it holds that
  \begin{align}
    \label{M-norm}E[|M_n(A)|^p]\leq C_p|A|\sqrt{n}
  \end{align}
  and
  \begin{align}
    \label{M-CLT}\frac{1}{\sqrt{n}}M_n \xrightarrow{d} N(0,2(\mathrm{Tr}A)^{\otimes 2})~(n\to\infty).
  \end{align}
\end{lemma}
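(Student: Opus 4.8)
The plan is to note that $M_n(A)=\sum_{j=1}^n\xi_j$ with $\xi_j=\frac1hA[(\Delta_jW^2)^{\otimes2}]-\mathrm{Tr}A$ is a sum of i.i.d., centered, $\mathbb R^d$-valued summands whose common law is a quadratic functional of a Gaussian vector, so that (\ref{M-norm}) follows from a Burkholder--Davis--Gundy estimate and (\ref{M-CLT}) from the classical multivariate central limit theorem, the limiting covariance being computed by Wick's (Isserlis') formula. The $k$-th component of $\xi_j$ is $\frac1h(\Delta_jW^2)'A_k\Delta_jW^2-\mathrm{Tr}A_k$; setting $Z_j=h^{-1/2}\Delta_jW^2$, the $Z_j$ are i.i.d.\ $N(0,I_{d_2})$ and $\xi_j$ is the fixed measurable function $Z\mapsto(Z'A_kZ-\mathrm{Tr}A_k)_{k=1}^d$ of $Z_j$, hence the $\xi_j$ are i.i.d.\ with mean $0$, and $M_n(A)$ is an $\{\mathcal F_{t_j}\}$-martingale. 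Each coordinate $Z'A_kZ-\mathrm{Tr}A_k$ lies in the second Gaussian chaos: diagonalising $(A_k+A_k')/2$ writes it as $\sum_i\mu_i^{(k)}(\chi_i^2-1)$ with $\chi_i$ i.i.d.\ standard normal, and since $\sum_i|\mu_i^{(k)}|\le C_{d_2}|A_k|$ and $|A|^2=\sum_k|A_k|^2$, one obtains for every $p\ge1$ that $E[|\xi_1|^p]^{1/p}\le C_{p,d}|A|$ (equivalently, by Gaussian hypercontractivity).

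For (\ref{M-norm}) I would rewrite $M_n(A)$ as a continuous It\^o integral so that Lemma \ref{BDG} applies directly. By It\^o's formula, for each $j$,
\begin{align*}
  (\Delta_jW^2)^{\otimes2}-hI_{d_2}=\int_{t_{j-1}}^{t_j}\left\{(W^2_t-W^2_{t_{j-1}})(dW^2_t)'+dW^2_t\,(W^2_t-W^2_{t_{j-1}})'\right\},
\end{align*}
and therefore, with $\eta_n(t)=t_{j-1}$ for $t\in[t_{j-1},t_j)$,
\begin{align*}
  M_n(A)=\frac1h\int_0^{t_n}G_t\,dW^2_t,
\end{align*}
where $G_t\in M_{d,d_2}(\mathbb R)$ has $k$-th row $(W^2_t-W^2_{\eta_n(t)})'(A_k+A_k')$, so that $|G_t|\le2|A|\,|W^2_t-W^2_{\eta_n(t)}|$. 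Lemma \ref{BDG}(2) then yields, for $p\ge2$,
\begin{align*}
  E[|M_n(A)|^p]\le\frac{C_p|A|^p}{h^p}E\left[\left(\sum_{j=1}^n\int_{t_{j-1}}^{t_j}|W^2_t-W^2_{t_{j-1}}|^2\,dt\right)^{p/2}\right],
\end{align*}
and combining $\left(\sum_{j=1}^na_j\right)^{p/2}\le n^{p/2-1}\sum_{j=1}^na_j^{p/2}$, Lemma \ref{BDG}(1), and $E[|W^2_t-W^2_{t_{j-1}}|^p]\le C_p(t-t_{j-1})^{p/2}$ gives $E[|M_n(A)|^p]\le C_p|A|^pn^{p/2}$; the range $1\le p<2$ reduces to $p=2$ via $\|\cdot\|_{L^p}\le\|\cdot\|_{L^2}$. (One can equally well skip this representation and invoke the discrete Burkholder inequality $E[|M_n(A)|^p]^{1/p}\le C_pE[(\sum_j|\xi_j|^2)^{p/2}]^{1/p}$ together with the moment bound on $\xi_1$.)

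For (\ref{M-CLT}), $\frac1{\sqrt n}M_n(A)=\frac1{\sqrt n}\sum_{j=1}^n\xi_j$ is a normalised sum of i.i.d.\ centered square-integrable random vectors, so the classical multivariate CLT gives $\frac1{\sqrt n}M_n(A)\xrightarrow{d}N(0,V)$ with $V=\mathrm{Cov}(\xi_1)$; by Isserlis' formula for the fourth moments of $Z_1\sim N(0,I_{d_2})$,
\begin{align*}
  V_{kl}=E\left[(Z_1'A_kZ_1-\mathrm{Tr}A_k)(Z_1'A_lZ_1-\mathrm{Tr}A_l)\right]=\mathrm{Tr}(A_kA_l)+\mathrm{Tr}(A_kA_l'),
\end{align*}
which is the $(k,l)$-entry of $2(\mathrm{Tr}A)^{\otimes2}$ (for the symmetric matrices entering the statement, $V_{kl}=2\mathrm{Tr}(A_kA_l)$), and (\ref{M-CLT}) follows.

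The argument is essentially routine; the one step needing a little care is the uniform-in-$A$ $L^p$-bound on the Gaussian quadratic form $\xi_1$ (via spectral decomposition, or hypercontractivity of the second chaos), after which (\ref{M-norm}) is a mechanical application of Lemma \ref{BDG} and (\ref{M-CLT}) of the Lindeberg--L\'evy CLT together with Wick's formula. I therefore do not anticipate a genuine obstacle in this lemma.
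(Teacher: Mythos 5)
Your overall strategy coincides with the paper's: the summands $\xi_j=\frac1hA[(\Delta_jW^2)^{\otimes2}]-\mathrm{Tr}A$ are i.i.d.\ centered martingale differences, the moment bound comes from a Burkholder-type inequality, and (\ref{M-CLT}) from the classical i.i.d.\ central limit theorem with the covariance computed from Gaussian fourth moments. Your detour through the It\^o representation of $(\Delta_jW^2)^{\otimes2}-hI$ followed by Lemma \ref{BDG}(2) is correct but is only a more laborious substitute for the discrete Burkholder inequality that the paper applies directly to the martingale difference array; your parenthetical alternative (discrete Burkholder plus the $L^p$-bound on the Gaussian quadratic form) is exactly the paper's route. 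Note also that (\ref{M-norm}) must be read as a bound on $E[|M_n(A)|^p]^{1/p}$ — that is what you prove, what the paper's own computation yields, and what is used downstream in Lemma \ref{CLT-lemma=Y}.

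The one substantive problem is your last step. Isserlis' formula gives, as you correctly write,
\begin{align*}
V_{kl}=E\left[(Z_1'A_kZ_1-\mathrm{Tr}A_k)(Z_1'A_lZ_1-\mathrm{Tr}A_l)\right]=\mathrm{Tr}(A_kA_l)+\mathrm{Tr}(A_kA_l'),
\end{align*}
but this is \emph{not} the $(k,l)$-entry of $2(\mathrm{Tr}A)^{\otimes2}$: by the paper's notational conventions that entry is $2\,\mathrm{Tr}A_k\,\mathrm{Tr}A_l$, and $\mathrm{Tr}(A_kA_l)\neq\mathrm{Tr}A_k\,\mathrm{Tr}A_l$ in general (take $A_k=A_l=I_{d_2}$ with $d_2\geq2$: then $V_{kk}=2d_2$ while $2(\mathrm{Tr}A_k)^2=2d_2^2$). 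The two expressions agree only when $d_2=1$, so your identification of $V$ with the stated covariance is a non sequitur, and your write-up of (\ref{M-CLT}) is incomplete as it stands. You should be aware that the paper's own proof makes the same conflation — it asserts $h^{-2}E\bigl[\{A[(\Delta_jW^2)^{\otimes2}]\}^{\otimes2}\bigr]=3(\mathrm{Tr}A)^{\otimes2}$, which again holds only for $d_2=1$ — so your Isserlis computation has in effect located an error in the statement of the lemma rather than proved it: the correct limiting covariance is the matrix with entries $\mathrm{Tr}(A_kA_l)+\mathrm{Tr}(A_kA_l')$. Rather than forcing the Isserlis output into the form $2(\mathrm{Tr}A)^{\otimes2}$, you should either restrict to the scalar case $d_2=1$ (where everything you wrote is correct) or state and propagate the corrected covariance.
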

\begin{proof}
  On account of
  \begin{align*}
    E\left[ \frac{1}{h}A[(\Delta_jW^2)^{\otimes 2}]-\mathrm{Tr}A\middle|\mathcal{F}_{t_{j-1}} \right]
    =\frac{1}{h}E\left[ A[(\Delta_jW^2)^{\otimes 2}]\right]-\mathrm{Tr}A=0,
  \end{align*}
  $\{A[(\Delta_jW^2)^{\otimes 2}]/h-\mathrm{Tr}A\}_j$ is a martingale difference sequence with respect to $\{\mathcal{F}_{t_j}\}$. Hence the Burkholder inequality gives 
  \begin{align*}
    E[|M_n|^p]^\frac{p}{2}&\leq C_pE\left[ \left| \sum_{j=1}^n\frac{1}{h}A[(\Delta_jW^2)^{\otimes 2}]-\mathrm{Tr}A \right|^{2p} \right]^\frac{1}{2p}\\
    &\leq C_p\sum_{j=1}^nE\left[ \left| \frac{1}{h}A[(\Delta_jW^2)^{\otimes 2}]-\mathrm{Tr}A \right|^{2p} \right]^\frac{1}{2p}\\
    &\leq C_pn\left\{ |A|^{2p}E\left[ \left|\frac{1}{h}W_h^2\right|^{4p}+|A|^2p \right] \right\}\\
    &\leq C_p|A|n,
  \end{align*}
  and we obtain (\ref{M-norm}).\\
  Moreover, due to the fact that $\{A[(\Delta_jW^2)^{\otimes 2}]/h-\mathrm{Tr}A\}_j$ is independent and identically distributed, we have
  \begin{align*}
    &E\left[ \left( \frac{1}{h}A[(\Delta_jW^2)^{\otimes 2}]-\mathrm{Tr}A \right)^{\otimes 2} \right]\\
    =&\frac{1}{h^2}E\left[  \{A[(\Delta_jW^2)^{\otimes 2}]\}^{\otimes 2} \right]-\frac{1}{h}E[A[(\Delta_jW^2)^{\otimes 2}]]'\mathrm{Tr}A\\
    &-(\mathrm{Tr}A)'\frac{1}{h}E[A[(\Delta_jW^2)^{\otimes 2}]]+(\mathrm{Tr}A)^{\otimes 2}\\
    =&3(\mathrm{Tr}A)^{\otimes 2}-2(\mathrm{Tr}A)^{\otimes 2}+(\mathrm{Tr}A)^{\otimes 2}=2(\mathrm{Tr}A)^{\otimes 2}.
  \end{align*}
  Thus we obtain (\ref{M-CLT}).
\end{proof}

By Lemmas \ref{Y-W-dif-lemma} and \ref{martingale-lemma}, we get the following lemma.
\begin{lemma}\label{CLT-lemma=Y}
  Let $A_k \in M_{d_1}(\mathbb{R})~(k=1,2,\cdots,d)$, $A=(A_1,\cdots,A_d)$ and 
  \begin{align*}
    L_n(A)=\sum_{j=1}^n\left\{\frac{1}{h}A[(\Delta_jY)^{\otimes 2}]-\mathrm{Tr}A\Sigma\right\}.
  \end{align*}
  Then it holds that
  \begin{align}
    E[|L_n(A)|^p]\leq C_p|A|(nh+n^\frac{1}{2}h^\frac{1}{2}+n^\frac{1}{2})
  \end{align}
  and
  \begin{align}
    \frac{1}{\sqrt{n}}L_n \xrightarrow{d} N(0,2(\mathrm{Tr}A\Sigma)^{\otimes 2})~(n\to\infty).
  \end{align}
\end{lemma}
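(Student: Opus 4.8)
The plan is to deduce this lemma from Lemmas \ref{Y-W-dif-lemma} and \ref{martingale-lemma} by isolating the diffusion-noise part of $\Delta_j Y$. Put $\tilde A := ({\sigma^*}' A_1 \sigma^*, \dots, {\sigma^*}' A_d \sigma^*)$. Since $A_k[(\sigma^*\Delta_j W^2)^{\otimes 2}] = (\Delta_j W^2)' {\sigma^*}' A_k \sigma^* (\Delta_j W^2) = ({\sigma^*}' A_k \sigma^*)[(\Delta_j W^2)^{\otimes 2}]$ and, by the cyclic property of the trace, $\mathrm{Tr}({\sigma^*}' A_k \sigma^*) = \mathrm{Tr}(A_k \sigma^* {\sigma^*}') = \mathrm{Tr}(A_k \Sigma)$ (here $\Sigma = \Sigma^* = \sigma^* {\sigma^*}'$), one gets the decomposition
\begin{align*}
  L_n(A) = \frac{1}{h} \sum_{j=1}^n \Bigl( A[(\Delta_j Y)^{\otimes 2}] - A[(\sigma^* \Delta_j W^2)^{\otimes 2}] \Bigr) + M_n(\tilde A),
\end{align*}
where $M_n(\tilde A)$ is exactly the martingale sum treated in Lemma \ref{martingale-lemma}. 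Note also $|A_k| \le |A|$ and $|\tilde A| \le |\sigma^*|^2 |A|$.

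For the moment bound I would apply Lemma \ref{Y-W-dif-lemma} to each of the $d$ matrices $A_k$ and combine the resulting estimates (using $|v|^p \le d^{p-1}\sum_k |v_k|^p$ for the $\mathbb{R}^d$-valued sums), which bounds the $L^p$-norm of the first term above by $C_p |A| h^{-1}(nh^2 + n^{1/2}h^{3/2}) = C_p |A|(nh + n^{1/2}h^{1/2})$; meanwhile Lemma \ref{martingale-lemma} bounds the $L^p$-norm of $M_n(\tilde A)$ by $C_p |\tilde A| n^{1/2} \le C_p |A| n^{1/2}$. Adding these gives the claimed inequality with right-hand side $C_p |A|(nh + n^{1/2}h^{1/2} + n^{1/2})$; the case $p < 2$ follows from this by Jensen's inequality.

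For the central limit theorem, note that [A1] gives $h = h_n \to 0$ and $n h_n^2 \to 0$, so after dividing by $\sqrt n$ the first term in the decomposition has $L^p$-norm $O(n^{1/2}h + h^{1/2}) = O((nh^2)^{1/2} + h^{1/2}) \to 0$ and therefore tends to $0$ in probability. By Lemma \ref{martingale-lemma}, $n^{-1/2} M_n(\tilde A) \xrightarrow{d} N(0, 2(\mathrm{Tr}\tilde A)^{\otimes 2})$, and $\mathrm{Tr}\tilde A = \mathrm{Tr} A \Sigma$ by the trace identity above, so the limit law is $N(0, 2(\mathrm{Tr} A \Sigma)^{\otimes 2})$. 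Slutsky's theorem then yields $n^{-1/2} L_n(A) \xrightarrow{d} N(0, 2(\mathrm{Tr} A \Sigma)^{\otimes 2})$.

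Since all the genuine probabilistic content is already contained in Lemmas \ref{Y-W-dif-lemma} and \ref{martingale-lemma}, I do not anticipate a real obstacle. The only points needing care are the bookkeeping when passing from the matrix-valued statement of Lemma \ref{Y-W-dif-lemma} to the $\mathbb{R}^d$-valued $L_n(A)$, the two elementary identities $A_k[(\sigma^* w)^{\otimes 2}] = ({\sigma^*}' A_k \sigma^*)[w^{\otimes 2}]$ and $\mathrm{Tr}({\sigma^*}' A_k \sigma^*) = \mathrm{Tr}(A_k \Sigma)$, and verifying that $n^{1/2}h \to 0$ and $h^{1/2} \to 0$ follow from [A1].
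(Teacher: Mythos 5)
Your proposal is correct and follows exactly the route the paper intends: the paper gives no written proof beyond the remark that the lemma follows from Lemmas \ref{Y-W-dif-lemma} and \ref{martingale-lemma}, and your decomposition $L_n(A)=h^{-1}\sum_j\{A[(\Delta_jY)^{\otimes2}]-A[(\sigma^*\Delta_jW^2)^{\otimes2}]\}+M_n(\tilde A)$ with $\tilde A_k={\sigma^*}'A_k\sigma^*$ and the trace identity $\mathrm{Tr}({\sigma^*}'A_k\sigma^*)=\mathrm{Tr}(A_k\Sigma)$ is precisely the bookkeeping that makes that remark rigorous. The moment bound and the Slutsky argument for the CLT are both sound under [A1].
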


\begin{lemma}\label{third-differential-lemma}
  For every $p>0$, it holds
  \begin{align*}
    \sup_{n \in \mathbb{N}}E\left[ \left|\frac{1}{n}\sup_{\theta_1 \in \Theta_1}\partial_{\theta_1}^3 \mathbb{H}_n^1(\theta_1)\right|^p \right]<\infty
  \end{align*}
\end{lemma}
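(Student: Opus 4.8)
The idea is that the parameter $\theta_1$ enters $\partial_{\theta_1}^3\mathbb{H}_n^1$ only through \emph{deterministic} coefficients that are bounded on $\overline{\Theta}_1$, so after pulling these out the supremum over $\theta_1$ disappears and one is left with a single average of increments of $Y$ to be estimated in $L^p$.

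First I would differentiate explicitly. Since $\mathbb{H}_n^1(\theta_1)=-\frac12\sum_{j=1}^n\{h^{-1}\Sigma(\theta_1)^{-1}[(\Delta_jY)^{\otimes2}]+\log\det\Sigma(\theta_1)\}$,
\[
\partial_{\theta_1}^3\mathbb{H}_n^1(\theta_1)=-\frac12\sum_{j=1}^n\Big\{\tfrac1h\,\partial_{\theta_1}^3\Sigma(\theta_1)^{-1}[(\Delta_jY)^{\otimes2}]+\partial_{\theta_1}^3\log\det\Sigma(\theta_1)\Big\}.
\]
By [A2] the map $\theta_1\mapsto\Sigma(\theta_1)=\sigma(\theta_1)\sigma(\theta_1)'$ is of class $C^4$, and by [A3] it takes values in the uniformly positive-definite (hence invertible) matrices on $\overline{\Theta}_1$; consequently $\Sigma(\theta_1)^{-1}$ and $\log\det\Sigma(\theta_1)$, together with their $\theta_1$-derivatives up to order $3$, are bounded on $\overline{\Theta}_1$, say by a constant $C$.

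Given this, I would bound pointwise and then remove the supremum. For every $\omega$ and every $\theta_1\in\Theta_1$, since $|S[v^{\otimes2}]|\le|S||v|^2$,
\[
\Big|\tfrac1n\partial_{\theta_1}^3\mathbb{H}_n^1(\theta_1)\Big|\le \frac{C}{2nh}\sum_{j=1}^n|\Delta_jY|^2+\frac{C}{2},
\]
and the right-hand side no longer depends on $\theta_1$, so $\sup_{\theta_1\in\Theta_1}|\tfrac1n\partial_{\theta_1}^3\mathbb{H}_n^1(\theta_1)|$ is dominated by it. (One could instead invoke the Sobolev-embedding consequence stated just after (\ref{2.3}), which would additionally require the fourth $\theta_1$-derivative of $\mathbb{H}_n^1$ — available since $\sigma$ is $C^4$ — but this is unnecessary here precisely because the $\theta_1$-dependence sits in deterministic bounded coefficients.)

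Finally I would estimate the $L^p$ norm of the average. For $p\ge1$, Minkowski's inequality and Lemma \ref{X-Y-norm} (using $h=h_n\le1$ from [A1], which gives $E[|\Delta_jY|^{2p}]\le C_p h^{p}$) yield
\[
E\Big[\Big(\tfrac{1}{nh}\sum_{j=1}^n|\Delta_jY|^2\Big)^p\Big]^{1/p}\le \frac{1}{nh}\sum_{j=1}^nE\big[|\Delta_jY|^{2p}\big]^{1/p}\le \frac{1}{nh}\cdot n\cdot C_p h=C_p,
\]
uniformly in $n$. Combining the last two displays gives $\sup_{n}E[|\tfrac1n\sup_{\theta_1\in\Theta_1}\partial_{\theta_1}^3\mathbb{H}_n^1(\theta_1)|^p]<\infty$ for $p\ge1$, and for $0<p<1$ the claim follows from the case $p=1$ by Jensen's inequality. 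The only step that is not purely mechanical is the uniform boundedness of the third $\theta_1$-derivatives of $\Sigma^{-1}$ and $\log\det\Sigma$ on $\overline{\Theta}_1$, which is where [A2] ($C^4$-regularity) and [A3] (uniform positive-definiteness, hence invertibility) are used; everything else is a direct moment computation in which the factor $1/h$ is exactly compensated by the order-$h$ size of $E[|\Delta_jY|^2]$, and the factor $1/n$ turns the sum into a bounded average.
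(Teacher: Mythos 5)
Your proof is correct, but it takes a genuinely different route from the paper's. The paper does not exploit the fact that the $\theta_1$-dependence sits entirely in deterministic coefficients: it keeps the supremum, bounds $E[|\frac1n\partial_{\theta_1}^3\mathbb{H}_n^1(\theta_1)|^p]$ for each fixed $\theta_1$ by first \emph{centering} the quadratic form (writing $\frac1h\partial_{\theta_1}^3\Sigma^{-1}(\theta_1)[(\Delta_jY)^{\otimes2}]$ minus its limiting trace and invoking Lemmas \ref{Y-W-dif-lemma} and \ref{martingale-lemma}, i.e.\ Lemma \ref{CLT-lemma=Y}), proves the analogous bound for $\partial_{\theta_1}^4\mathbb{H}_n^1$, and then moves the supremum inside the expectation via the Sobolev embedding (\ref{2.3}). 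Your argument dispenses with both the centering and the Sobolev step: since $\partial_{\theta_1}^3\Sigma^{-1}$ and $\partial_{\theta_1}^3\log\det\Sigma$ are deterministic and uniformly bounded on $\overline{\Theta}_1$ by [A2]--[A3], the supremum is dominated pointwise by $\frac{C}{nh}\sum_j|\Delta_jY|^2+C$, and Minkowski plus $E[|\Delta_jY|^{2p}]\le C_ph^p$ (Lemma \ref{X-Y-norm}) finishes it. This is more elementary and avoids needing the fourth-derivative estimate; the price is that it works only because $\mathbb{H}_n^1$ has this special product structure, whereas the paper's centering-plus-Sobolev template is the one that generalizes to $\mathbb{H}_n^2$, where the random part genuinely depends on the parameter. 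Both proofs rest on the same implicit reading of [A2] (that the derivatives of $\sigma$ up to the required order are bounded on $\overline{\Theta}_1$), so you are not assuming anything the paper does not.
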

\begin{proof}
  It is enough to prove the inequality for sufficiently large $p$. By Lemmas \ref{martingale-lemma} and \ref{Y-W-dif-lemma} and Assumptions [A2] and [A4] we get
  \begin{align*}
    &E\left[\left|\frac{1}{n}\partial_{\theta_1}^3\mathbb{H}_n^1(\theta_1)\right|^p\right]^\frac{1}{p}\\
    =&E\left[ \left|\frac{1}{2n}\sum_{j=1}^n\left\{ \frac{1}{h}\partial_{\theta_1}^3\Sigma^{-1}(\theta_1)[(\Delta_jY)^{\otimes 2}]+\partial_{\theta_1}^3\log \det\Sigma(\theta_1) \right\}\right|^p \right]^\frac{1}{p}\\
    \leq&E\left[ \left|\frac{1}{2nh}\sum_{j=1}^n\left\{ \partial_{\theta_1}^3\Sigma^{-1}(\theta_1)[(\Delta_jY)^{\otimes 2}-\mathrm{Tr}\partial_{\theta_1}^3\Sigma^{-1}(\theta_1) \right\}\right|^p \right]^\frac{1}{p}\\
    +&\frac{1}{2}\left\{ |\mathrm{Tr}\partial_{\theta_1}^3\Sigma^{-1}(\theta_1)|+|\partial_{\theta_1}^3\log \det\Sigma(\theta_1)| \right\}\\
    \leq &C_p|\partial_{\theta_1}^3\Sigma^{-1}(\theta_1)|(h+n^{-\frac{1}{2}}h^\frac{1}{2}+n^{-\frac{1}{2}})\\
    &+\frac{1}{2}\left\{ |\mathrm{Tr}\partial_{\theta_1}^3\Sigma^{-1}(\theta_1)|+|\partial_{\theta_1}^3\log \det\Sigma(\theta_1)| \right\}\\
    \leq &C_p,
  \end{align*}
  and similarly
  \begin{align*}
    E\left[\left|\frac{1}{n}\partial_{\theta_1}^4\mathbb{H}_n^1(\theta_1)\right|^p\right]^\frac{1}{p}\leq C_p.
  \end{align*}
  Thus we get the desired result for $p>d_1$ by the Sobolev inequality.
\end{proof}

\begin{proof}[\bf{Proof of Theorem \ref{main-theorem-1}}]
  Let
  \begin{align*}
    &\Delta_n^1=\frac{1}{\sqrt{n}}\partial_{\theta_1}\mathbb{H}_n^1(\theta_1^*),\\
    &\Gamma_n^1=-\frac{1}{n}\partial_{\theta_1}^2\mathbb{H}_n^1(\theta_1^*)
  \end{align*}
  and
  \begin{align*}
    \mathbb{Y}_n^1(\theta_1)=\frac{1}{n}\{\mathbb{H}_n^1(\theta_1)-\mathbb{H}_n^1(\theta_1^*)\}.
  \end{align*}
  Then 
  \begin{align*}
    \Delta_n^1
    &=-\frac{1}{2\sqrt{n}}\sum_{j=1}^n\left\{ \frac{1}{h}\partial_{\theta_1}\Sigma^{-1}(\theta_1^*)[(\Delta_jY)^{\otimes 2}]+\frac{\partial_{\theta_1}\det \Sigma(\theta_1^*)}{\det \Sigma^*} \right\}\\
    &=\frac{1}{2\sqrt{n}}\sum_{j=1}^n\left\{ \frac{1}{h}{\Sigma^*}^{-1}\partial_{\theta_1}\Sigma(\theta_1^*){\Sigma^*}^{-1}[(\Delta_jY)^{\otimes 2}]-\mathrm{Tr}{\Sigma^*}^{-1}\partial_{\theta_1}\Sigma(\theta_1^*)\right\},
  \end{align*}
  and hence by Lemma \ref{CLT-lemma=Y}, we obtain 
  \begin{align}
    \label{Delta-norm}E[|\Delta_n^1|^p]\leq C_p 
  \end{align}
  and
  \begin{align}
    \label{Delta-CLT}\Delta_n^1 \xrightarrow{d} N(0,(\Gamma^1)^{-1}) ~(n \to \infty).
  \end{align}
  By the same lemma, it follows that 
  \begin{align}
    \label{Gamma-dif}\begin{aligned}
      &E\left[ \left|n^\frac{1}{2}(\Gamma_n^1-\Gamma^1)\right|^p \right]^\frac{1}{p}\\
    =&E\left[ \left|n^\frac{1}{2}\left[\frac{1}{n}\sum_{j=1}^n\left\{ \frac{1}{h}\partial_{\theta_1}^2\Sigma^{-1}(\theta_1^*)[(\Delta_jY)^{\otimes 2}]+\partial_{\theta_1}^2\log\det \Sigma(\theta_1^*) \right\}-\Gamma^1\right]\right|^p \right]^\frac{1}{p}\\
    \leq &C_p(n^\frac{1}{2}h^2+h^\frac{3}{2}+1)=O(1),
    \end{aligned}
  \end{align}
  noting that 
  \begin{align*}
    &\mathrm{Tr}\partial_{\theta_1}^2\Sigma^{-1}(\theta_1^*)\Sigma^*\\
    =&\mathrm{Tr}\left\{2\{{\Sigma^*}^{-1}\partial_{\theta_1}\Sigma(\theta_1^*)\}^{\otimes2}-\Sigma^{-1}\frac{\partial}{\partial\theta_{1}^i}\frac{\partial}{\partial\theta_{1}^j}\Sigma(\theta_1^*)\right\}
  \end{align*}
  is equal to $-\partial_{\theta_1}^2\log\det \Sigma(\theta_1^*)+\Gamma^1$.\\
  Moreover, we can show
  \begin{align*}
  &\sup_{\theta_1 \in \Theta_1}E\left[ (n^\frac{1}{2}|\mathbb{Y}_n^1(\theta_1)-\mathbb{Y}^1(\theta_1)|)^p \right]^\frac{1}{p}\\
  = &\sup_{\theta_1 \in \Theta_1}E\left[ \left|-\frac{1}{2\sqrt{n}}\sum_{j=1}^n\left\{\frac{1}{h}\{\Sigma^{-1}(\theta_1)-\Sigma^{-1}(\theta_1^*)\}[(\Delta_jY)^{\otimes 2}]\right.\right.\right.\\
  &\left.\left.\left.-\mathrm{Tr}\{\Sigma(\theta_1)^{-1}-I\}\right\}\right|^p \right]^\frac{1}{p}\\
  \leq &C_p(n^\frac{1}{2}h+h+1)=O(1)
  \end{align*}
  and in the same way
  \begin{align*}
    \sup_{\theta_1 \in \Theta_1}E\left[ (n^\frac{1}{2}|\partial_{\theta_1}\{\mathbb{Y}_n^1(\theta_1)-\mathbb{Y}^1(\theta_1)\}|)^p \right]^\frac{1}{p}
    =O(1).
  \end{align*}
  Thus by the Sobolev inequality, it holds for $p>d_1$
  \begin{align}
    \label{Y-dif}E\left[ \sup_{\theta_1 \in \Theta_1}(n^\frac{1}{2}|\mathbb{Y}_n^1(\theta_1)-\mathbb{Y}^1(\theta_1)|)^p \right]^\frac{1}{p}
    =O(1).
  \end{align}
  Then we have proved the theorem by the assumption [A5], Lemma \ref{third-differential-lemma}, (\ref{Delta-norm}), (\ref{Delta-CLT}), (\ref{Gamma-dif}), (\ref{Y-dif}) and Theorem 5 in \cite{YOSHIDA2011}.
\end{proof}

\section{Proof of Theorem \ref{main-theorem-2}}\label{second-proof-section}
In this section, we write $m_t(\theta_2)$, $\hat{m}_i^n(\theta_2)$, $\mathbb{H}_n^2(\theta_2)$, $\gamma_+(\theta_2)$ and $\alpha(\theta_2)$ instead of $m_t(\theta_1^*,\theta_2;m_0)$, $\hat{m}_i^n(\theta_2;m_0)$, $\mathbb{H}_n^2(\theta_2;m_0)$, $\gamma_+(\theta_1^*,\theta_2)$ and $\alpha(\theta_1^*,\theta_2)$, respectively, for simplicity.

Moreover, let $m_t^*=E[X_t|\{Y_t\}_{0\leq s\leq t}]$ and $\gamma_t^*=E[(X_t-m_t)(X_t-m_t)']$. Then by Theorem \ref{Kalman-filter}, they are the solutions of
\begin{align}
  &\label{eq-m*}dm_t^*=-a^*m_t^*dt+\gamma_t^*{c^*}'{\Sigma^*}^{-1}\{dY_t-c^*m_t^*dt\}\\
  &\label{eq-gamma}\frac{d\gamma_t^*}{dt}=-a^*\gamma_t^*-\gamma_t^*(a^*)'-{\Sigma^*}^{-1}[(c^*\gamma_t^*)^{\otimes 2}]+{b^*}^{\otimes 2}.
\end{align}

We start with discussing properties of $\gamma_+(\theta_1,\theta_2)$ and $\gamma_t^*$.
\begin{proposition}
  The maximal solution of (\ref{ARE}) $\gamma_+(\theta_1,\theta_2)$ is of class $C^4$.
\end{proposition}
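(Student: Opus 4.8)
The plan is to obtain the conclusion from the implicit function theorem applied to the algebraic Riccati equation (\ref{ARE}), regarded as an equation in $\gamma$ with parameter $(\theta_1,\theta_2)$. Let $\mathcal{S}_{d_1}$ denote the linear space of real symmetric $d_1\times d_1$ matrices, and define $F:\Theta\times\mathcal{S}_{d_1}\to\mathcal{S}_{d_1}$ by
\[
  F(\theta_1,\theta_2,\gamma)=-a(\theta_2)\gamma-\gamma a(\theta_2)'-\gamma c(\theta_2)'\Sigma(\theta_1)^{-1}c(\theta_2)\gamma+b(\theta_2)b(\theta_2)'.
\]
Since $a,b,c$ are $C^4$ by [A2] and $\theta_1\mapsto\Sigma(\theta_1)^{-1}$ is $C^4$ on $\overline{\Theta}_1$ by [A3] (matrix inversion being smooth where $\Sigma(\theta_1)$ is invertible), and since $\gamma$ enters $F$ polynomially, $F$ is of class $C^4$. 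By definition of $\gamma_+$ we have $F(\theta_1,\theta_2,\gamma_+(\theta_1,\theta_2))=O$ on $\overline{\Theta}$, so it suffices to verify that the $\gamma$-Jacobian of $F$ at $\gamma=\gamma_+(\theta_1,\theta_2)$ is invertible, and then to identify the local solution branch produced by the implicit function theorem with $\gamma_+$.

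First I would compute the partial derivative of $F$ in $\gamma$. A short computation, using the symmetry of $\gamma_+$ and of $\Sigma(\theta_1)^{-1}$, gives, for $H\in\mathcal{S}_{d_1}$,
\[
  \partial_\gamma F(\theta_1,\theta_2,\gamma_+)[H]=-\alpha(\theta_1,\theta_2)H-H\,\alpha(\theta_1,\theta_2)',
\]
with $\alpha(\theta_1,\theta_2)=a(\theta_2)+\gamma_+(\theta_1,\theta_2)\Sigma(\theta_1)^{-1}[c(\theta_2)^{\otimes2}]$ the matrix from (\ref{def-alpha}). This is, up to sign, the Lyapunov operator $H\mapsto\alpha H+H\alpha'$, whose eigenvalues are the sums $\lambda+\mu$ of pairs of eigenvalues of $\alpha(\theta_1,\theta_2)$. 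As recalled after (\ref{def-alpha}), the eigenvalues of $\alpha(\theta_1,\theta_2)$ are precisely those eigenvalues of $H(\theta_1,\theta_2)$ in (\ref{def-H-matrix}) with positive real part, which by [A4] all satisfy $\mathrm{Re}\,\lambda>C>0$; hence $\mathrm{Re}(\lambda+\mu)>2C>0$ for every such pair, so the operator is invertible on $M_{d_1}(\mathbb{R})$. It maps $\mathcal{S}_{d_1}$ into itself, and if $\alpha X+X\alpha'=S$ with $S$ symmetric then $X'$ solves the same equation, whence $X=X'$ by injectivity; thus $\partial_\gamma F(\theta_1,\theta_2,\gamma_+)$ is an isomorphism of $\mathcal{S}_{d_1}$.

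The implicit function theorem then yields, about any $(\theta_1^0,\theta_2^0)\in\Theta$, a neighborhood $U$ and a $C^4$ map $\widetilde{\gamma}:U\to\mathcal{S}_{d_1}$ with $\widetilde{\gamma}(\theta_1^0,\theta_2^0)=\gamma_+(\theta_1^0,\theta_2^0)$, $F(\theta_1,\theta_2,\widetilde{\gamma}(\theta_1,\theta_2))=O$ on $U$, and $\widetilde{\gamma}$ the unique solution of (\ref{ARE}) near $\gamma_+(\theta_1^0,\theta_2^0)$. The one step that really needs care is showing $\widetilde{\gamma}\equiv\gamma_+$ on $U$: a priori the implicit function theorem only pins down the branch locally. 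I would resolve this using the characterization of $\gamma_+$ as the unique stabilizing solution of (\ref{ARE}), i.e.\ the one for which $\sigma(\alpha(\theta_1,\theta_2))$ lies in the open right half-plane (see the discussion after (\ref{def-alpha}) and \cite{coppel,robust-and-optimal-control}). By continuity of $\widetilde{\gamma}$ and of $a,c,\Sigma$, the eigenvalues of $a(\theta_2)+\widetilde{\gamma}(\theta_1,\theta_2)\Sigma(\theta_1)^{-1}[c(\theta_2)^{\otimes2}]$ vary continuously in $(\theta_1,\theta_2)$ and equal $\sigma(\alpha(\theta_1^0,\theta_2^0))$ at the base point, so after shrinking $U$ they remain in the open right half-plane; hence $\widetilde{\gamma}(\theta_1,\theta_2)$ is stabilizing and therefore coincides with $\gamma_+(\theta_1,\theta_2)$ on $U$. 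Since $(\theta_1^0,\theta_2^0)$ was arbitrary, $\gamma_+$ agrees with a $C^4$ map near every point of $\Theta$, i.e.\ $\gamma_+\in C^4(\Theta)$. (The same computation, together with the uniform spectral bound in [A4] which makes the Lyapunov inverse uniformly bounded on $\overline{\Theta}$ and the known continuity and boundedness of $\gamma_+$ on $\overline{\Theta}$, also shows that $\partial_{(\theta_1,\theta_2)}^k\gamma_+$ is bounded on $\Theta$ for $k\le4$, which is what is actually used in Section \ref{second-proof-section}.)
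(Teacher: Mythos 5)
Your proposal is correct and follows essentially the same route as the paper: compute the $\gamma$-derivative of the Riccati map at $\gamma_+$, identify it as the Lyapunov operator $T\mapsto\alpha T+T\alpha'$, show it is invertible because all eigenvalues of $\alpha(\theta_1,\theta_2)$ have positive real part, apply the implicit function theorem, and identify the local $C^4$ branch with $\gamma_+$ via the uniqueness of the stabilizing solution. The only cosmetic difference is that you work on the space of symmetric matrices (with the extra check that the Lyapunov operator restricts to an isomorphism there), whereas the paper works on all of $M_{d_1}(\mathbb{R})$.
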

\begin{proof}
  Let $\theta^0=(\theta_1^0,\theta_2^0) \in \Theta_1\times \Theta_2$, and we consider the mapping $f:M_{d_1}(\mathbb{R}) \to M_{d_1}(\mathbb{R})$ such that
  \begin{align*}
    f:X\mapsto a(\theta_2^0)X+Xa(\theta_2^0)'+\Sigma(\theta_1^0)^{-1}[(c(\theta_2^0)X)^{\otimes 2}]-b(\theta_2^0)^{\otimes 2}.
  \end{align*}
  Since for every $T \in M_{d_1}(\mathbb{R})$, we have
  \begin{align*}
    f(X+T)-f(T)=&\left\{a(\theta_2^0)+X'\Sigma(\theta_1^0)^{-1}[c(\theta_2^0)^{\otimes 2}]\right\}T\\
    &+T\left\{a(\theta_2^0)'+\Sigma(\theta_1)^{-1}[c(\theta_2^0)^{\otimes 2}]X\right\}\\
    &+\Sigma(\theta_1^0)^{-1}[(c(\theta_2^0)T)^{\otimes 2}]
  \end{align*}
  and
  \begin{align*}
    \lim_{|T|\to 0}\frac{|\Sigma(\theta_1^0)^{-1}[(c(\theta_2^0)T)^{\otimes 2}]|}{|T|}=0,
  \end{align*}
  the differential of $f$ at $X=\gamma_+(\theta^0)$ is given by
  \begin{align*}
    (df)_{\gamma_+(\theta^0)}:T \mapsto \alpha(\theta_0)T+T\alpha(\theta_0),
  \end{align*}
  where $\alpha$ is defined by (\ref{def-alpha}).

  If $(df)_{\gamma_+(\theta^0)}$ is not injective, $\alpha(\theta_0)$ has eigenvalues $\mu$ and $\lambda$ such that $\mu+\overline{\lambda}=0$(see lemma 2.7 in \cite{robust-and-optimal-control}). However, noting that $\gamma_+(\theta_1,\theta_2)$ is the unique symmetric solution of $f(X)=O$ such that $-\alpha(\theta_0)$ is stable\citep{coppel,robust-and-optimal-control}, there are no such eigenvalues. Therefore $(df)_{\gamma_+(\theta^0)}$ is injective, and by the implicit function theorem, there exists a neighborhood $U \subset \Theta_1 \times \Theta_2$ containing $\theta^0$ and a mapping $\phi:U \to M_{d_1}(\mathbb{R})$ of class $C^4$ such that
  \begin{align*}
    \phi(\theta^0)=\gamma_+(\theta^0),~~f(\phi(\theta))=O~(\theta \in U).
  \end{align*}

  Since $-a(\theta_2)-\phi(\theta)\Sigma(\theta_1)^{-1}[c(\theta_2)^{\otimes 2}]$ is stable at $\theta=(\theta_1,\theta_2)=\theta^0$, it is also stable on a neighborhood of $\theta^0$. Thus by the uniqueness of $\gamma_+$, we obtain $\gamma_+(\theta)=\phi(\theta)$ on that neighborhood and therefore the desired result.  
\end{proof}

By this proposition, Theorem \ref{main-theorem-1} and the mean value theorem, we get the following corollary.
\begin{corollary}
  For any $p\geq 1$, it holds
  \begin{align*}
    &E\left[\sup_{\theta_2 \in \Theta_2}|\gamma_+(\hat{\theta}_1^n,\theta_2)-\gamma_+(\theta_1^*,\theta_2)|^p \right]^\frac{1}{p}\leq Cn^{-\frac{1}{2}}
  \end{align*}
  and
  \begin{align*}
    E\left[\sup_{\theta_2 \in \Theta_2}|\alpha(\hat{\theta}_1^n,\theta_2)-\alpha(\theta_1^*,\theta_2)|^p \right]^\frac{1}{p}\leq Cn^{-\frac{1}{2}}.
  \end{align*}
\end{corollary}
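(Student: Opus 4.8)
The plan is to reduce the statement to two ingredients that are already in hand: the $C^4$-regularity of the maximal solution $\gamma_+$ established in the preceding Proposition, and the uniform polynomial-moment control of $\hat\theta_1^n$ furnished by Theorem \ref{main-theorem-1}. First I would observe that the implicit-function-theorem argument proving that Proposition uses only that [A4] holds, and [A4] is imposed on all of $\overline{\Theta}_1\times\overline{\Theta}_2$; running it at boundary points as well (with neighbourhoods taken in $\mathbb{R}^{m_1}\times\mathbb{R}^{m_2}$) shows $\gamma_+$ extends to a $C^4$ map on an open neighbourhood of the compact set $\overline{\Theta}_1\times\overline{\Theta}_2$, so that
\[
K_1:=\sup_{(\theta_1,\theta_2)\in\overline{\Theta}_1\times\overline{\Theta}_2}|\partial_{\theta_1}\gamma_+(\theta_1,\theta_2)|<\infty .
\]
Since $a$ and $c$ are $C^4$ by [A2] and $\Sigma^{-1}$ is $C^4$ and bounded on $\overline{\Theta}_1$ by [A2]--[A3], the identity $\alpha(\theta_1,\theta_2)=a(\theta_2)+\gamma_+(\theta_1,\theta_2)\Sigma(\theta_1)^{-1}[c(\theta_2)^{\otimes 2}]$ from (\ref{def-alpha}) shows $\alpha(\cdot,\theta_2)$ is likewise $C^1$ with $K_2:=\sup|\partial_{\theta_1}\alpha|<\infty$.

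Next, for each fixed $\theta_2\in\overline{\Theta}_2$ the mean value theorem (integrating $\partial_{\theta_1}\gamma_+$ along a path in $\overline{\Theta}_1$ joining $\hat\theta_1^n$ to $\theta_1^*$; such a path of length $\le \kappa\,|\hat\theta_1^n-\theta_1^*|$ exists because $\Theta_1$, having Lipschitz boundary, is quasiconvex with some constant $\kappa$) gives
\[
|\gamma_+(\hat\theta_1^n,\theta_2)-\gamma_+(\theta_1^*,\theta_2)|\le \kappa K_1\,|\hat\theta_1^n-\theta_1^*|,
\]
and the analogous bound with $\kappa K_2$ for $\alpha$, both uniform in $\theta_2$ and $n$. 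Taking $\sup_{\theta_2\in\Theta_2}$, raising to the $p$-th power, taking expectations and then $p$-th roots yields
\[
E\Big[\sup_{\theta_2\in\Theta_2}|\gamma_+(\hat\theta_1^n,\theta_2)-\gamma_+(\theta_1^*,\theta_2)|^p\Big]^{1/p}\le \kappa K_1\,E\big[|\hat\theta_1^n-\theta_1^*|^p\big]^{1/p},
\]
and likewise for $\alpha$.

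Finally I would apply Theorem \ref{main-theorem-1} with the choice $f(x)=|x|^p$, which satisfies the required growth bound: it gives $E[|\sqrt n(\hat\theta_1^n-\theta_1^*)|^p]\to E[|Z|^p]<\infty$, hence $\sup_{n}n^{p/2}E[|\hat\theta_1^n-\theta_1^*|^p]<\infty$, i.e. $E[|\hat\theta_1^n-\theta_1^*|^p]^{1/p}\le C n^{-1/2}$ for all $n$. Combined with the previous display this gives both asserted estimates. The one point that needs care --- and the only place the argument is more than bookkeeping --- is the first step: ensuring $\partial_{\theta_1}\gamma_+$ (and then $\partial_{\theta_1}\alpha$) is bounded uniformly up to the boundary of the parameter set, and that this gradient bound converts into an increment bound uniform in $\theta_2$ even though $\Theta_1$ need not be convex. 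Both are settled by the compactness of $\overline{\Theta}$, the validity of [A4] on the closure, and the Lipschitz-boundary hypothesis already assumed on the $\Theta_i$; everything after that is H\"older's inequality together with the moment bound from Theorem \ref{main-theorem-1}.
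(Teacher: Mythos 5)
Your proposal is correct and follows essentially the same route as the paper, which derives the corollary from the $C^4$-regularity of $\gamma_+$ (the preceding proposition), the mean value theorem, and the moment bound $E[|\hat\theta_1^n-\theta_1^*|^p]^{1/p}\leq C n^{-1/2}$ obtained from Theorem \ref{main-theorem-1} with $f(x)=|x|^p$. The only difference is that you explicitly address the uniformity of the gradient bound up to the boundary and the possible non-convexity of $\Theta_1$ via quasiconvexity, details the paper leaves implicit.
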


\begin{proposition}\label{gamma-definite}
  For every $\theta_1 \in \overline{\Theta}_1$ and $\theta_2 \in \overline{\Theta}_2$,
  \begin{align}
    \label{gamma-positive}\gamma_+(\theta_1,\theta_2)>0
  \end{align}
  and
  \begin{align}
    \label{gamma-negative}\gamma_-(\theta_1,\theta_2)<0.
  \end{align}
\end{proposition}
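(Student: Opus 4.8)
The plan is to prove the two inequalities by exploiting the monotonicity structure of the algebraic Riccati equation together with the controllability assumption [A4] and the positivity of $b(\theta_2)^{\otimes 2}$ from [A3]. First I would establish $\gamma_+(\theta_1,\theta_2) > 0$. The maximal solution $\gamma_+$ is positive semi-definite (this is the solution to which the Riccati flow $\gamma_t$ converges, and $\gamma_t$ stays positive semi-definite). To upgrade semi-definiteness to definiteness, suppose $\gamma_+ v = 0$ for some nonzero vector $v$. Plugging into the algebraic Riccati equation (\ref{ARE}) and pairing with $v$ on both sides, the terms $-a(\theta_2)\gamma_+ - \gamma_+ a(\theta_2)'$ and $-\gamma_+ c(\theta_2)'\Sigma(\theta_1)^{-1}c(\theta_2)\gamma_+$ all vanish when sandwiched by $v$ (since $\gamma_+ v = 0$), leaving $v' b(\theta_2)^{\otimes 2} v = 0$, which contradicts $\inf \lambda_{\min}(b(\theta_2)^{\otimes 2}) > 0$. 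Hence $\gamma_+ > 0$.

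Next I would prove $\gamma_-(\theta_1,\theta_2) < 0$. The key observation is a symmetry: if $\gamma$ solves (\ref{ARE}) then $-\gamma$ solves the "dual" algebraic Riccati equation obtained by replacing $a(\theta_2)$ with $-a(\theta_2)$ (equivalently, the equation associated with the Hamiltonian matrix $-H(\theta_1,\theta_2)$), and under this correspondence the minimal solution $\gamma_-$ of (\ref{ARE}) maps to the maximal solution of the dual equation. More precisely, writing $\delta = -\gamma$, equation (\ref{ARE}) becomes $a(\theta_2)\delta + \delta a(\theta_2)' - \delta c(\theta_2)'\Sigma(\theta_1)^{-1}c(\theta_2)\delta + b(\theta_2)^{\otimes 2} = O$, i.e. $-(-a(\theta_2))\delta - \delta(-a(\theta_2))' - \delta c(\theta_2)'\Sigma(\theta_1)^{-1}c(\theta_2)\delta + b(\theta_2)^{\otimes 2} = O$. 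This is again an algebraic Riccati equation of the standard form with $a$ replaced by $-a$; its coefficients still satisfy the controllability condition (controllability of $(a',\Sigma[c^{\otimes 2}])$ is equivalent to controllability of $(-a', \Sigma[c^{\otimes 2}])$ since the controllability matrices have the same row space up to sign changes of blocks), and the eigenvalues of $-H(\theta_1,\theta_2)$ are still off the imaginary axis. Hence this dual equation has a maximal solution $\delta_+$, and one checks that $\gamma_-(\theta_1,\theta_2) = -\delta_+(\theta_1,\theta_2)$. Then applying the already-proved inequality (\ref{gamma-positive}) to the dual problem yields $\delta_+ > 0$, whence $\gamma_- = -\delta_+ < 0$.

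The main obstacle will be carefully justifying the duality correspondence $\gamma_- \leftrightarrow -\delta_+$, i.e., that negating the maximal solution of the dual equation really produces the minimal solution of the original one and not merely some symmetric solution. For this I would appeal to the characterization used already in the excerpt: the maximal solution $\gamma_+$ is the unique symmetric solution for which $-\alpha = -a - \gamma_+ \Sigma^{-1}[c^{\otimes 2}]$ is stable (cited from \cite{coppel,robust-and-optimal-control}), and dually the minimal solution is the unique symmetric solution for which $a + \gamma_- \Sigma^{-1}[c^{\otimes 2}]$ is stable (the "anti-stabilizing" solution). Under $\delta = -\gamma$ and $a \mapsto -a$ the closed-loop matrix $-a - \gamma_- \Sigma^{-1}[c^{\otimes 2}]$ becomes $-(-a) - \delta_+ \Sigma^{-1}[c^{\otimes 2}]$, so stability of one is exactly stability of the other; this pins down $\gamma_- = -\delta_+$. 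Alternatively, since we only need strict negative-definiteness and not the precise identification, it would suffice to note that by the maximal/minimal characterization $\gamma_- \le \gamma$ for every symmetric solution $\gamma$, in particular $\gamma_- \le -\delta_+ < 0$ because $-\delta_+$ is itself a symmetric solution of (\ref{ARE}); this shortcut avoids the uniqueness argument entirely and still gives (\ref{gamma-negative}).
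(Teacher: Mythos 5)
Your argument is correct in substance but follows a genuinely different route from the paper. The paper's proof is a direct integral representation: it rewrites (\ref{ARE}) as the Lyapunov equation $\alpha\gamma_+ + \gamma_+\alpha' = \Sigma(\theta_1)^{-1}[c(\theta_2)^{\otimes 2}][\gamma_+^{\otimes 2}] + b(\theta_2)^{\otimes 2}$ with $\alpha = a + \gamma_+\Sigma(\theta_1)^{-1}[c(\theta_2)^{\otimes 2}]$, and, using the stability of $-\alpha$, obtains $\gamma_+ = \int_{-\infty}^0 \exp(\alpha t)\{\cdots\}\exp(\alpha' t)\,dt$, which is strictly positive definite because the integrand dominates $\exp(\alpha t)\,b^{\otimes 2}\exp(\alpha' t)$ by [A3]; the claim for $\gamma_-$ follows ``in the same way'' from the anti-stability of the closed-loop matrix associated with the minimal solution. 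This yields strict definiteness in one step, with no need to first establish semi-definiteness. Your two-stage argument (semi-definiteness plus the kernel argument using $b^{\otimes 2}>0$) and the duality $\gamma\mapsto-\gamma$, $a\mapsto-a$ for the minimal solution are both sound, and your closing shortcut ($-\delta_+$ is itself a symmetric solution of (\ref{ARE}), hence $\gamma_-\le-\delta_+<0$) neatly avoids having to identify which solution of the dual equation $-\gamma_-$ is. The one step you should tighten is $\gamma_+\ge 0$ (and likewise $\delta_+\ge 0$ for the dual problem): justifying it by ``the Riccati flow converges to $\gamma_+$'' is circular within this paper, because the convergence result (Theorem \ref{Riccati-converge}, invoked in Proposition \ref{gamma-converge}) requires $\gamma_0-\gamma_-$ to be nonsingular, and the paper establishes that nonsingularity precisely from $\gamma_-<0$, i.e.\ from the proposition you are proving. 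This is repairable --- take $\gamma_0=\lambda I$ with $\lambda$ large enough that $\gamma_0-\gamma_->0$, or cite the positive semi-definiteness of the maximal (stabilizing) solution directly from \cite{coppel} or \cite{robust-and-optimal-control} --- but as written it is the weak link; the paper's integral representation sidesteps the issue entirely.
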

\begin{proof}
  Noting that for $A$ and $\gamma \in M_{d_1}(\mathbb{R})$,
    \begin{align*}
      \frac{d}{dt}(\exp(At)\gamma\exp(A't))=\exp(At)(A\gamma+\gamma A')\exp(A't),
    \end{align*}
    and the equation (\ref{ARE}) is equivalent to
    \begin{align*}
      &\left\{a(\theta_2)+\gamma\Sigma(\theta_1)^{-1}[c(\theta_2)^{\otimes 2}]\right\}\gamma+\gamma\left\{a(\theta_2)+\gamma\Sigma(\theta_1)^{-1}[c(\theta_2)^{\otimes 2}]\right\}'\\
      &=\gamma\Sigma(\theta_1)^{-1}[c(\theta_2)^{\otimes 2}]\gamma+b(\theta_2)^{\otimes 2},
    \end{align*}
    we obtain
    \begin{align*}
      &\gamma_+(\theta_1,\theta_2)\\
      =&\int_{-\infty}^0\exp(\alpha(\theta_1,\theta_2)t)\{\alpha(\theta_1,\theta_2)\gamma+\gamma\alpha(\theta_1,\theta_2)'\}\exp(\alpha(\theta_1,\theta_2)'t)dt\\
       =&\int_{-\infty}^0\exp(\alpha(\theta_1,\theta_2)t)\left\{ \Sigma(\theta_1)^{-1}[c(\theta_2)^{\otimes 2}][\gamma_+(\theta_1,\theta_2)^{\otimes 2}]+b(\theta_2)^{\otimes 2} \right\}\\
       &\qquad\times \exp(\alpha(\theta_1,\theta_2)t)dt>0
    \end{align*}
    by assumption [A3], (\ref{def-alpha}) and the stability of $-\alpha(\theta_1,\theta_2)$. In the same way, we can show $\gamma_-(\theta_1,\theta_2)<0.$
\end{proof}

Combining this result with assumption [A3], (\ref{def-alpha}) and Lemma \ref{matrix-exponential}, we obtain the following corollary. 
\begin{corollary}\label{alpha-exponential}
  There exists some constant $C_1>0$ and $C_2>0$ such that
  \begin{align*}
    \sup_{\theta_1\in \Theta_1,\theta_2 \in \Theta_2}|\exp(-\alpha(\theta_1,\theta_2))|\leq C_1e^{-C_2t}.
  \end{align*}
\end{corollary}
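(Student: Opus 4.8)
The plan is to deduce the corollary from Lemma~\ref{matrix-exponential} applied to the matrix $A=-\alpha(\theta_1,\theta_2)$; to make the resulting estimate uniform in $(\theta_1,\theta_2)$ I need two things, both uniform over $\Theta_1\times\Theta_2$: a strictly negative upper bound on $\lambda_{\max}(-\alpha(\theta_1,\theta_2))$, and a finite bound on $|\alpha(\theta_1,\theta_2)|$. The first is already available: by the discussion following~(\ref{def-alpha}), the eigenvalues of $\alpha(\theta_1,\theta_2)$ are exactly the eigenvalues of the Hamiltonian matrix $H(\theta_1,\theta_2)$ lying in the open right half-plane, and by [A4] there is a constant $C>0$, independent of $(\theta_1,\theta_2)$, with $\mathrm{Re}\,\lambda>C$ for every such eigenvalue. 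Since the eigenvalues of $-\alpha(\theta_1,\theta_2)$ are the negatives of those of $\alpha(\theta_1,\theta_2)$, it follows that $\lambda_{\max}(-\alpha(\theta_1,\theta_2))<-C$ for all $(\theta_1,\theta_2)\in\Theta_1\times\Theta_2$.

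For the second ingredient, write $\alpha=a+\gamma_+\Sigma^{-1}[c^{\otimes 2}]$ as in~(\ref{def-alpha}). The matrices $a(\theta_2)$ and $c(\theta_2)$ are bounded on the bounded set $\Theta_2$ by continuity ([A2]); $|\Sigma(\theta_1)^{-1}|$ is bounded by [A3], since $\inf_{\theta_1\in\overline{\Theta}_1}\lambda_{\min}(\Sigma(\theta_1))>0$; and $\gamma_+(\theta_1,\theta_2)$ is positive by Proposition~\ref{gamma-definite} and, more to the point, bounded above, because $\gamma_+$ is continuous on the compact set $\overline{\Theta}_1\times\overline{\Theta}_2$ (e.g.\ from the formula $\gamma_+=X_2X_1^{-1}$ of the remark after Theorem~\ref{main-theorem-2}, the Riesz spectral projection defining $X_1,X_2$ depending continuously on $(\theta_1,\theta_2)$ thanks to the uniform spectral gap in [A4]; alternatively the implicit-function-theorem argument used to prove the $C^4$-regularity of $\gamma_+$ applies at every point of $\overline{\Theta}_1\times\overline{\Theta}_2$ under [A2]--[A4]). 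Hence $M:=\sup_{\theta_1\in\Theta_1,\theta_2\in\Theta_2}|\alpha(\theta_1,\theta_2)|<\infty$. Establishing this uniform bound on $|\alpha|$ (equivalently, on $|\gamma_+|$) over the parameter space is the only step that requires care; everything else is a direct application of the two cited results.

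Combining the two ingredients, Lemma~\ref{matrix-exponential} applied with $\epsilon=C/2$ gives, for every $(\theta_1,\theta_2)\in\Theta_1\times\Theta_2$ and $t\geq 0$,
\begin{align*}
  |\exp(-\alpha(\theta_1,\theta_2)t)|
  &\leq C_{C/2,d_1}\left(1+|\alpha(\theta_1,\theta_2)|^{d_1-1}\right)e^{\left(\lambda_{\max}(-\alpha(\theta_1,\theta_2))+\frac{C}{2}\right)t}\\
  &\leq C_{C/2,d_1}\left(1+M^{d_1-1}\right)e^{-\frac{C}{2}t},
\end{align*}
and since the right-hand side does not depend on $(\theta_1,\theta_2)$, taking the supremum over $(\theta_1,\theta_2)$ on the left yields the assertion with $C_1=C_{C/2,d_1}(1+M^{d_1-1})$ and $C_2=C/2$.
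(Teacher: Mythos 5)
Your proof is correct and follows essentially the same route the paper intends: the paper derives this corollary in one line by combining Proposition \ref{gamma-definite}, [A3], (\ref{def-alpha}) and Lemma \ref{matrix-exponential}, relying on the fact (stated after (\ref{def-alpha})) that the eigenvalues of $\alpha(\theta_1,\theta_2)$ are the right-half-plane eigenvalues of $H(\theta_1,\theta_2)$ and hence uniformly bounded away from the imaginary axis by [A4]. You have simply filled in the details the paper leaves implicit, in particular the uniform bound on $|\alpha|$ via continuity of $\gamma_+$ on the compact closure of the parameter space.
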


Now we go on to the convergence of $\gamma_t^*$. Concerning the convergence rate of Riccati equations, \cite{canonical-form-Riccati} presents the following result.
\begin{theorem}\label{Riccati-converge}(Section 5, \cite{canonical-form-Riccati})\\
  Let $A,B,C \in M_d(\mathbb{R})$ and consider the equation
  \begin{align*}
    \frac{d}{dP}(t)=-A-P(t)B-B'P(t)-P(t)CP(t).
  \end{align*}
  Moreover, assume $C$ is symmetric, $C \leq 0$, $(B,C)$ is controllable and the matrix
  \begin{align*}
    H=\begin{pmatrix}
      B&C\\
      -A&-B'
    \end{pmatrix}
  \end{align*}
  has no pure imaginary eigenvalues.\\
  Then if $P_0-P^+$ is non-singular, then it holds for any $\epsilon>0$ that
  \begin{align*}
    |P(t)-P^-| \leq Ce^{2(r+\epsilon)t} ~(t\to \infty)
  \end{align*}
  and if $P_0-P^-$ is non-singular, then it holds for any $\epsilon>0$ that
  \begin{align*}
    |P(t)-P^+| \leq Ce^{-2(r-\epsilon)t} ~(t\to -\infty),
  \end{align*}
  where $P^+$ and $P^-$ are the maximal and minimal solutions of the algebraic Riccati equation
  \begin{align*}
    A+PB+B'P+PCP=O
  \end{align*}
  respectively, $r<0$ is the maximum real part of the eigenvalues of $B+CP^+$.
\end{theorem}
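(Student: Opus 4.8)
The plan is to linearize the Riccati flow via Radon's lemma and then read off the decay rates from the spectral splitting of $H$. First I would attach to the equation the linear system $\frac{d}{dt}\binom{U}{V}=H\binom{U}{V}$ and record the standard fact: if $\binom{U(t)}{V(t)}=\exp(Ht)\binom{I}{P_0}$ and $U(t)$ is invertible, then $P(t)=V(t)U(t)^{-1}$ solves $\frac{dP}{dt}=-A-PB-B'P-PCP$ with $P(0)=P_0$ (an immediate computation from $\dot U=BU+CV$, $\dot V=-AU-B'V$). So everything reduces to controlling $\exp(Ht)\binom{I}{P_0}$ together with the invertibility of $U(t)$.

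Next I would fix the spectral structure of $H$. Using $C=C'$ (and, in the cases of interest, $A=A'$), $H$ is similar to $-H'$, so its spectrum is symmetric about the imaginary axis; combined with the hypothesis that $H$ has no pure imaginary eigenvalue, this gives exactly $d$ eigenvalues with negative real part and $d$ with positive real part. Controllability of $(B,C)$ and $C\le O$ then force the stable and anti-stable $H$-invariant subspaces to be graphs over the first $d$ coordinates, spanned by the columns of $\binom{I}{P^+}$ and $\binom{I}{P^-}$ respectively, where $P^+$ and $P^-$ are the maximal and minimal solutions of the algebraic Riccati equation and $B+CP^{\pm}$ is the restriction of $H$ to the respective subspace (see \cite{robust-and-optimal-control,coppel}). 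In particular $B+CP^+$ is the stable block, so its eigenvalue abscissa $r$ is negative, and by the spectral symmetry every anti-stable eigenvalue has real part $\ge-r>0$.

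Then I would decompose $\binom{I}{P_0}$ along the stable and anti-stable subspaces; a short linear-algebra computation shows that its anti-stable component has full rank precisely when $P_0-P^+$ is non-singular, and its stable component has full rank precisely when $P_0-P^-$ is non-singular. Assume $P_0-P^+$ non-singular. For $t\to\infty$ the factor $\exp(Ht)$ amplifies the anti-stable directions, so $\exp(Ht)\binom{I}{P_0}$ is a perturbation, of size $O(e^{2(r+\epsilon)t})$, of a curve lying in the anti-stable subspace: the stable part has norm $\lesssim e^{(r+\epsilon)t}$, the inverse of the anti-stable part has norm $\lesssim e^{(r+\epsilon)t}$ (since the anti-stable eigenvalues all have real part $\ge-r$), and the product of these two bounds is $e^{2(r+\epsilon)t}$. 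This keeps $U(t)$ invertible and yields $P(t)=V(t)U(t)^{-1}=P^-+O(e^{2(r+\epsilon)t})$. For the sharp rate I would run the estimate through $Q(t)=P(t)-P^-$: subtracting the algebraic equation for $P^-$ from the differential equation gives the linearized equation $\dot Q=-Q(B+CP^-)-(B+CP)'Q$, and once $Q\to O$ is known, a Gronwall argument using that $-(B+CP^-)$ is stable with abscissa $r$ gives $\|Q(t)\|\le Ce^{2(r+\epsilon)t}$. The case $t\to-\infty$, with limit $P^+$ and hypothesis $P_0-P^-$ non-singular, is the symmetric argument: there $\exp(Ht)$ amplifies the stable directions, so one needs the stable component of $\binom{I}{P_0}$ to have full rank.

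The main obstacle lives in that last step, and has two parts. First, one must show $U(t)$ never becomes singular on $[0,\infty)$, i.e. $P(t)$ does not blow up in finite time; this is where $C\le O$ (which makes the induced flow on the Lagrangian-type Grassmannian well behaved) and the non-singularity hypothesis (which keeps the trajectory in the chart on which $P$ is defined) are genuinely used. Second, extracting the \emph{sharp} exponent $2(r+\epsilon)$ rather than a cruder rate is delicate, because the anti-stable block of $\exp(Ht)$ has eigenvalues spread over an interval of real parts, so conjugating the remainder by it in the naive factorization loses the gain; the fix is to close the estimate on the linearized equation for $Q=P-P^-$, where only $-(B+CP^-)$ — with abscissa exactly $r$ — appears on both sides, via a bootstrap that simultaneously controls $\|Q(t)\|$ and the drift perturbation $\|C(P(t)-P^-)\|$.
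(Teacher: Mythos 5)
The paper offers no proof of this statement: it is imported verbatim as a known result from Section 5 of the cited reference on the canonical form of Riccati differential equations and is used as a black box in the proof of Proposition \ref{gamma-converge}. So there is nothing in the paper to compare your argument against, and I can only assess it on its own merits. What you describe is the standard linearization proof (and, as far as I can tell, essentially the argument of the cited source): Radon's lemma $P(t)=V(t)U(t)^{-1}$ with $(U,V)$ obtained by applying $\exp(Ht)$ to the block column with blocks $I$ and $P_0$; identification of the stable and anti-stable invariant subspaces of $H$ with the graphs of $P^{+}$ and $P^{-}$, on which $H$ acts as $B+CP^{\pm}$; the observation that the anti-stable (resp.\ stable) component of the initial block column is invertible precisely when $P_0-P^{+}$ (resp.\ $P_0-P^{-}$) is; and the factorization $P(t)-P^{-}=(P^{+}-P^{-})e^{(B+CP^{+})t}X\,U(t)^{-1}$ with $U(t)^{-1}$ asymptotic to $Y^{-1}e^{-(B+CP^{-})t}$ up to a bounded factor. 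Since the spectral symmetry puts the anti-stable eigenvalues at real parts $\geq -r$, both factors decay like $e^{(r+\epsilon)t}$ and the product already gives the exponent $2(r+\epsilon)$; the Gronwall bootstrap on $Q=P-P^{-}$ that you add at the end is therefore unnecessary. Two points in your sketch remain thin and would need to be filled in for a complete proof. First, the similarity of $H$ to $-H'$ (hence the symmetric spectrum and the count of exactly $d$ stable eigenvalues) requires $A$ to be symmetric; the theorem as transcribed assumes only $C$ symmetric, so you are right to flag this, and the statement should really be read with $A=A'$ (as it is in the paper's application, where the constant term is $-b^{\otimes 2}$). Second, the assertion that $U(t)$ stays invertible for every finite $t\geq 0$ — equivalently that $P(t)$ does not escape in finite time — is named as the main obstacle but not actually argued; this is where the sign condition on $C$ and a monotonicity or comparison argument for the Riccati flow genuinely enter, and it deserves more than a parenthetical.
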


\begin{proposition}\label{gamma-converge}
  For any $\epsilon>0$, there exists some constant $C>0$ such that
  \begin{align*}
    |\gamma_t^*-\gamma_+(\theta^*)|\leq Ce^{-2\{\lambda_{\min}(\alpha(\theta_2^*))-\epsilon\}t}.
  \end{align*}
  In particular, $|\gamma_t^*|$ is bounded.
\end{proposition}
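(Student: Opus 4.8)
The plan is to recognize (\ref{eq-gamma}) as an instance of the equation in Theorem \ref{Riccati-converge} and to read off the conclusion. Because (\ref{eq-gamma}) is a filtering Riccati equation, its quadratic term has the opposite sign to the hypothesis $C\le0$ of Theorem \ref{Riccati-converge}, so I would first reverse time: put $\rho(t):=\gamma_{-t}^*$ for $t\le0$, which makes sense since $\gamma_s^*$, being the Kalman-Bucy filter error covariance, is finite and of class $C^1$ for all $s\ge0$. A direct computation from (\ref{eq-gamma}) shows that $\rho$ is the solution, with initial value $\rho(0)=\gamma_0^*$, of the equation in Theorem \ref{Riccati-converge} with coefficients
\[
  A={b^*}^{\otimes 2},\qquad B=-(a^*)',\qquad C=-{c^*}'{\Sigma^*}^{-1}c^*,
\]
and the associated algebraic equation $A+PB+B'P+PCP=O$ is, up to an overall minus sign, exactly (\ref{ARE}) at $\theta=\theta^*$; hence its maximal and minimal solutions are $\gamma_+(\theta^*)$ and $\gamma_-(\theta^*)$.

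Next I would verify the hypotheses of Theorem \ref{Riccati-converge} for these coefficients. The matrix $C$ is symmetric and $C\le0$, since ${\Sigma^*}^{-1}>0$ by [A3]. The Hamiltonian matrix built from $A,B,C$ as in Theorem \ref{Riccati-converge} equals $-H(\theta_1^*,\theta_2^*)$, with $H$ as in (\ref{def-H-matrix}), so its eigenvalues are the negatives of those of $H(\theta_1^*,\theta_2^*)$ and hence, by the second part of [A4], stay uniformly away from the imaginary axis; in particular there is no purely imaginary eigenvalue. Controllability of $(B,C)$ is equivalent to controllability of $((a^*)',{c^*}'\Sigma^* c^*)$ — the first part of [A4] at $\theta^*$ — because controllability is insensitive to sign changes and depends on a matrix only through its column space, while $\mathrm{Im}({c^*}'{\Sigma^*}^{-1}c^*)=\mathrm{Im}({c^*}')=\mathrm{Im}({c^*}'\Sigma^* c^*)$ (each equals $(\ker c^*)^\perp$). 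Finally, $\rho(0)-\gamma_-(\theta^*)=\gamma_0^*-\gamma_-(\theta^*)$ is non-singular: $\gamma_0^*=E[(X_0-E[X_0\mid Y_0])^{\otimes 2}]$ is positive semidefinite, while $\gamma_-(\theta^*)<0$ by Proposition \ref{gamma-definite}, so their difference is positive definite.

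With the hypotheses checked, the $t\to-\infty$ branch of Theorem \ref{Riccati-converge} gives, for every $\epsilon>0$, an exponential bound on $|\rho(t)-\gamma_+(\theta^*)|$ whose rate is governed by $r<0$, the largest real part of the eigenvalues of $B+C\gamma_+(\theta^*)$. Here $B+C\gamma_+(\theta^*)=-\bigl((a^*)'+{c^*}'{\Sigma^*}^{-1}c^*\gamma_+(\theta^*)\bigr)=-\alpha(\theta_2^*)'$, and by the remark following (\ref{def-alpha}) every eigenvalue of $\alpha(\theta_2^*)$ has real part at least $\lambda_{\min}(\alpha(\theta_2^*))>0$, so $r=-\lambda_{\min}(\alpha(\theta_2^*))$. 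Reversing the time change and enlarging the constant to cover a compact initial interval then yields $|\gamma_t^*-\gamma_+(\theta^*)|\le Ce^{-2\{\lambda_{\min}(\alpha(\theta_2^*))-\epsilon\}t}$ for all $t\ge0$ (the arbitrary $\epsilon$ also absorbs the polynomial-in-$t$ factors from Lemma \ref{matrix-exponential}), and boundedness of $|\gamma_t^*|$ is then immediate from $|\gamma_t^*|\le|\gamma_+(\theta^*)|+C$. I expect the delicate step to be exactly this reduction: noticing that the time reversal is forced by the sign of the quadratic term, that $\gamma_+(\theta^*)$ — the maximal, and in the filtering convention the \emph{anti}-stabilizing, solution of (\ref{ARE}) — is the relevant limit, and that the resulting decay rate is governed by the spectrum of $\alpha(\theta_2^*)$, which lies strictly in the right half-plane by [A4].
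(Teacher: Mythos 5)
Your proposal is correct and follows essentially the same route as the paper: both reduce the statement to Theorem \ref{Riccati-converge} applied to (\ref{eq-gamma}) and identify the only nontrivial hypothesis as the non-singularity of $\gamma_0^*-\gamma_-(\theta^*)$, which both arguments obtain from $\gamma_0^*\geq 0$ and $\gamma_-(\theta^*)<0$ (Proposition \ref{gamma-definite}). You simply make explicit several steps the paper leaves implicit (the time reversal forced by the sign of the quadratic term, the verification of controllability and the Hamiltonian condition via [A4], and the identification $B+CP^+=-\alpha(\theta_2^*)'$ giving the rate), all of which check out.
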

\begin{proof}
  According to (\ref{eq-gamma}) and Theorem \ref{Riccati-converge}, it is enough show that $\gamma_0^*-\gamma_-(\theta^*)$ is non-singular, where $\gamma_-(\theta_1,\theta_2)$ is the minimal solution of (\ref{ARE}). If we assume $\gamma_0^*-\gamma_-(\theta^*)$ is singular, there exists $x \in \mathbb{R}^{d_1}\backslash \{0\}$ such that $\{\gamma_0^*-\gamma_-(\theta^*)\}x=0$, and we get $x\gamma_0^*x=x\gamma_-(\theta^*)x$. However, since $\gamma_0^*\geq 0$ and we have $\gamma_-(\theta^*)<0$ by Proposition \ref{gamma-definite}, that is a contradiction.
\end{proof}

Next we consider the innovation process
\begin{align*}
  \overline{W}_t=(\sigma^*)^{-1}\left( Y_t-\int_0^tc^*m_s^*ds \right).
\end{align*}
Note that the right-hand side is well-defined since $\{m_t^*\}$ has a progressively measurable modification, and that $\overline{W}_t$ is also a Wiener process\citep{Kallianpur}. Since $Y_t$ is the solution of
\begin{align}
  \label{innovation}dY_t=c^*m_t^*dt+\sigma^*d\overline{W}_t,
\end{align}
we obtain together with (\ref{eq-m*})
\begin{align*}
  dm_t^*=-a^*m_t^*dt+\gamma_t^*{c^*}'{{\sigma^*}'}^{-1}d\overline{W}_t.
\end{align*}
Therefore It\^{o}'s formula gives 
\begin{align}
  \label{sol-m^*}m_t^*=\exp(-a^*t)m_0^*+\int_0^t\exp(-a^*(t-s))\gamma_s^*{c^*}'{{\sigma^*}'}^{-1}d\overline{W}_s.
\end{align}
Moreover, using Proposition \ref{gamma-converge}, we can show for any $p\geq 1$,
\begin{align}
  \label{m*-norm}\sup_{t\geq 0}E[|m_t^*|^p]\leq C_p
\end{align}
and
\begin{align}
  \label{m*-conti}\sup_{0\leq t-s\leq 1}E[|m_t^*-m_s^*|^p]\leq C_p(t-s)^{\frac{p}{2}}
\end{align}
in the same way as Lemma \ref{X-Y-norm}.

\begin{lemma}\label{pre-ineq}
  For $j=0,1,2,\cdots$ and $\theta \in\Theta$, let $Z_j(\theta)$ be a $M_{k,l}(\mathbb{R})$-valued and $\mathcal{F}_{t_j}$-measurable random variable, and $U(\theta)$ be an $M_{l,d}(\mathbb{R})$-valued random variable. Moreover, we assume $Z_j(\theta)$ is continuously differentiable with respect to $\theta$. Then for any $n \in \mathbb{N}$ and $p>m_1+m_2$, it holds
    \begin{align*}
      &E\left[\sup_{\theta \in \Theta}\left|\sum_{j=1}^nZ_{j-1}(\theta)U(\theta)\Delta_jW\right|^p \right]\\
      \leq &C_{d,k,l}E\left[\sup_{\theta \in \Theta}\left|U(\theta)\right|^{2p}\right]^\frac{1}{2}\\
      &\times \sup_{\theta \in \Theta}\left\{E\left[ \left\{\sum_{j=1}^n|Z_{j-1}(\theta)|^2h\right\}^{p}\right]+E\left[ \left\{\sum_{j=1}^n\left|\partial_{\theta}Z_{j-1}(\theta)\right|^2h\right\}^{p}\right]  \right\}^\frac{1}{2}.
    \end{align*}
\end{lemma}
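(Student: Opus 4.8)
The plan is to rewrite the random sum so that the underlying martingale structure becomes visible and the (non‑differentiable) factor $U(\theta)$ is isolated, and then to reduce to a single $\theta$ by the Sobolev moment bound of Section~\ref{assumption-setcion} and apply Lemma~\ref{BDG}.

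First I would expand in coordinates. Writing $(Z_{j-1}(\theta))_{\cdot s}\in\mathbb{R}^k$ for the $s$-th column of $Z_{j-1}(\theta)$, $(U(\theta))_{sr}$ for the $(s,r)$-entry of $U(\theta)$, and $(\Delta_jW)_r$ for the increment of the $r$-th scalar component of $W$, one checks directly that
\[
  \sum_{j=1}^n Z_{j-1}(\theta)U(\theta)\Delta_jW
  =\sum_{s=1}^{l}\sum_{r=1}^{d}(U(\theta))_{sr}\,M_n^{s,r}(\theta),\qquad
  M_n^{s,r}(\theta):=\sum_{j=1}^{n}(Z_{j-1}(\theta))_{\cdot s}\,(\Delta_jW)_r .
\]
For each $(s,r)$ the process $j\mapsto\sum_{i\le j}(Z_{i-1}(\theta))_{\cdot s}(\Delta_iW)_r$ is an $\mathbb{R}^k$-valued $\{\mathcal{F}_{t_j}\}$-martingale with predictable factor $(Z_{j-1}(\theta))_{\cdot s}$, and $M_n^{s,r}(\cdot)$ inherits the $C^1$ regularity of $Z_{j-1}(\cdot)$. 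Applying the Cauchy--Schwarz inequality in the indices $(s,r)$,
\[
  \Bigl|\sum_{j=1}^n Z_{j-1}(\theta)U(\theta)\Delta_jW\Bigr|
  \le|U(\theta)|\Bigl(\sum_{s,r}|M_n^{s,r}(\theta)|^2\Bigr)^{1/2},
\]
so taking $\sup_{\theta\in\Theta}$, raising to the $p$-th power, and using Cauchy--Schwarz on $\Omega$ give
\[
  E\Bigl[\sup_{\theta\in\Theta}\Bigl|\sum_{j=1}^n Z_{j-1}(\theta)U(\theta)\Delta_jW\Bigr|^p\Bigr]
  \le E\Bigl[\sup_{\theta\in\Theta}|U(\theta)|^{2p}\Bigr]^{1/2}\,
  E\Bigl[\sup_{\theta\in\Theta}\Bigl(\sum_{s,r}|M_n^{s,r}(\theta)|^2\Bigr)^{p}\Bigr]^{1/2}.
\]

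It then remains to estimate the second factor. Since there are only $ld$ pairs $(s,r)$, $\bigl(\sum_{s,r}|M_n^{s,r}(\theta)|^2\bigr)^p\le(ld)^{p-1}\sum_{s,r}|M_n^{s,r}(\theta)|^{2p}$, so I would bound $E[\sup_\theta|M_n^{s,r}(\theta)|^{2p}]$ using the Sobolev moment inequality of Section~\ref{assumption-setcion} with exponent $2p$ (legitimate since $M_n^{s,r}(\cdot)$ is $C^1$ and $2p>m_1+m_2$), thereby reducing matters to $\sup_\theta E[|M_n^{s,r}(\theta)|^{2p}]$ and $\sup_\theta E[|\partial_\theta M_n^{s,r}(\theta)|^{2p}]$. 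Both are controlled by Lemma~\ref{BDG}(2): viewing $M_n^{s,r}(\theta)$, respectively each $\partial_{\theta_i}M_n^{s,r}(\theta)=\sum_j(\partial_{\theta_i}Z_{j-1}(\theta))_{\cdot s}(\Delta_jW)_r$, as the value at time $t_n$ of the It\^{o} integral against the scalar Wiener process $W^{(r)}$ of the piecewise-constant progressively measurable integrand equal to $(Z_{j-1}(\theta))_{\cdot s}$, respectively $(\partial_{\theta_i}Z_{j-1}(\theta))_{\cdot s}$, on $(t_{j-1},t_j]$, one obtains
\[
  E[|M_n^{s,r}(\theta)|^{2p}]\le C_p\,E\Bigl[\Bigl(\sum_{j=1}^n|Z_{j-1}(\theta)|^2h\Bigr)^{p}\Bigr],\qquad
  E[|\partial_\theta M_n^{s,r}(\theta)|^{2p}]\le C_p\,E\Bigl[\Bigl(\sum_{j=1}^n|\partial_\theta Z_{j-1}(\theta)|^2h\Bigr)^{p}\Bigr],
\]
after using $|(Z_{j-1}(\theta))_{\cdot s}|\le|Z_{j-1}(\theta)|$, $|\partial_{\theta_i}Z_{j-1}(\theta)|\le|\partial_\theta Z_{j-1}(\theta)|$, and summing over the $m_1+m_2$ partial derivatives. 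Collecting the estimates and taking a square root yields the asserted inequality (with a constant depending also on $p$, $m_1$, $m_2$).

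The computation is routine once this decomposition is in place, and I regard the first rewriting as the only genuinely delicate point: because $U(\theta)$ is sandwiched between the predictable factor $Z_{j-1}(\theta)$ and the increment $\Delta_jW$, it cannot be pulled out of a single martingale transform, and it is not assumed differentiable in $\theta$; the coordinate expansion $\sum_{s,r}(U(\theta))_{sr}M_n^{s,r}(\theta)$ simultaneously isolates this random factor and exhibits $ld$ bona fide martingales to which Lemma~\ref{BDG} and the Sobolev bound apply. (A minor technical point: $|M_n^{s,r}(\theta)|^{2p}=(|M_n^{s,r}(\theta)|^2)^p$ is $C^1$ in $\theta$ because $p\ge1$ and $M_n^{s,r}(\cdot)$ is $C^1$, so the Sobolev moment inequality is indeed applicable.)
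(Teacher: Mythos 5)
Your proof is correct and follows essentially the same route as the paper: expand entrywise so that $U(\theta)$ factors out of the martingale transform, apply Cauchy--Schwarz on $\Omega$ to separate $U$, and then control $E[\sup_\theta|\cdot|^{2p}]$ of the remaining martingale sums via the Sobolev moment inequality of Section~\ref{assumption-setcion} combined with Lemma~\ref{BDG}(2). The only difference is cosmetic (you apply Cauchy--Schwarz over the matrix indices before passing to scalar components, the paper does it termwise), and your remark that the constant also depends on $p$ is accurate.
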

\begin{proof}
  Let $Z_j^{(ij)}$, $U^{(ij)}$ and $(Z_jU)^{(ij)}$ be the $(i,j)$ entries of $Z_j$, $U$ and $Z_jU$, respectively, and $W^{(j)}$ be the $j$-th element of $W^{(j)}$. Then we have
  \begin{align}
    \label{eq8}\begin{split}
      &E\left[ \sup_{\theta \in \Theta}\left|\sum_{j=1}^nZ_{j-1}(\theta)U(\theta)\Delta_jW\right|^p \right]\\
    = &E\left[ \sup_{\theta \in \Theta}\left\{\sum_{p=1}^k\left(\sum_{j=1}^n\sum_{q=1}^d(Z_{j-1}U)^{(pq)}(\theta)\Delta_jW^{(q)}\right)^2 \right\}^{\frac{p}{2}}\right]\\
    =&E\left[ \sup_{\theta \in \Theta}\left\{\sum_{p=1}^k\left(\sum_{j=1}^n\sum_{q=1}^d\sum_{r=1}^lZ_{j-1}^{(pr)}(\theta)U^{(rq)}(\theta)\Delta_jW^{(q)}\right)^2 \right\}^{\frac{p}{2}}\right]\\
    \leq &C_{d,k,l}\sum_{p=1}^k\sum_{q=1}^d\sum_{r=1}^lE\left[ \sup_{\theta \in \Theta}\left|\sum_{j=1}^nZ_{j-1}^{(pr)}(\theta)U^{(rq)}(\theta)\Delta_jW^{(q)}\right|^p \right]\\
    \leq &C_{d,k,l}\sum_{p=1}^k\sum_{q=1}^d\sum_{r=1}^lE\left[\sup_{\theta \in \Theta}\left|U^{(rq)}(\theta)\right|^{2p}\right]^\frac{1}{2}E\left[\sup_{\theta \in \Theta}\left|\sum_{j=1}^nZ_{j-1}^{(pr)}(\theta)\Delta_jW^{(q)}\right|^{2p} \right]^\frac{1}{2}\\
    \leq &C_{d,k,l}E\left[\sup_{\theta \in \Theta}\left|U(\theta)\right|^{2p}\right]^\frac{1}{2}\sum_{p=1}^k\sum_{r=1}^lE\left[\sup_{\theta \in \Theta}\left|\sum_{j=1}^nZ_{j-1}^{(pr)}(\theta)\Delta_jW^{(q)}\right|^{2p} \right]^\frac{1}{2}.
    \end{split}    
  \end{align}
  Moreover, the Sobolev inequality and the Burkholder-Davis-Gundy inequality gives
  \begin{align}
    \label{eq9}\begin{split}
      &E\left[\sup_{\theta \in \Theta}\left|\sum_{j=1}^nZ_{j-1}^{(pr)}(\theta)\Delta_jW^{(q)}\right|^{2p} \right]\\
    \leq &C_p\sup_{\theta \in \Theta}\left\{E\left[ \left|\sum_{j=1}^nZ_{j-1}^{(pr)}(\theta)\Delta_jW^{(q)}\right|^{2p}\right]+E\left[ \left|\sum_{j=1}^n\frac{\partial}{\partial\theta}Z_{j-1}^{(pr)}(\theta)\Delta_jW^{(q)}\right|^{2p}\right]  \right\}\\
    \leq &C_p\sup_{\theta \in \Theta}\left\{E\left[ \left|\sum_{j=1}^nZ_{j-1}^{(pr)}(\theta)^2h\right|^{p}\right]+E\left[ \left|\sum_{j=1}^n\left\{\frac{\partial}{\partial\theta}Z_{j-1}^{(pr)}(\theta)\right\}^2h\right|^{p}\right]  \right\}\\
    \leq &C_p\sup_{\theta \in \Theta}\left\{E\left[ \left(\sum_{j=1}^n|Z_{j-1}(\theta)|^2h\right)^{p}\right]+E\left[ \left(\sum_{j=1}^n\left|\partial_{\theta}Z_{j-1}(\theta)\right|^2h\right)^{p}\right]  \right\}.
    \end{split}    
  \end{align}
  By (\ref{eq8}) and (\ref{eq9}), we obtain the desired result.
\end{proof}

\begin{lemma}\label{differentiable-lemma}
  For every $\theta \in \Theta$, let $\{Z_t(\theta)\}$ be a $\mathbb{M}_{d,d_1}(\mathbb{R})$-valued progressively measurable process. Moreover, we assume $Z_t(\theta)$ is differentiable with respect to $\theta$, and for any $T>0, p>0$ and $\theta,\theta' \in \Theta$
  \begin{align*}
    &\sup_{0\leq t\leq T}E\left[|Z_t(\theta)-Z_t(\theta')|^p \right]\leq C_{T,p}|\theta-\theta'|^p,\\
    &\sup_{0\leq t\leq T}E\left[|\partial_{\theta}Z_t(\theta)-\partial_{\theta}Z_t(\theta')|^p \right]\leq C_{T,p}|\theta-\theta'|^p.
  \end{align*}
  Then $\{\xi_{\cdot}(\theta)\}_{\theta \in \Theta}$ with $\displaystyle \xi_t(\theta)=\int_0^tZ_t(\theta)d\overline{W}_s$ has a modification $\{\tilde{\xi}_{\cdot}(\theta)\}_{\theta \in \Theta}$ which is continuously differentiable with respect to $\theta$. Moreover, it holds almost surely for any $t\geq 0$ and $\theta \in \Theta$
  \begin{align*}
    \partial_{\theta}\tilde{\xi}_t(\theta)=\int_0^t\partial_{\theta}Z_t(\theta)d\overline{W}_s.
  \end{align*}
\end{lemma}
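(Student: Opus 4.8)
The plan is to identify the modification $\tilde\xi$ explicitly as a continuous–in–$\theta$ version of $\xi$, and to show that its $\theta$–derivative is the parametrized stochastic integral $\eta_t^k(\theta):=\int_0^t\partial_{\theta_k}Z_s(\theta)\,d\overline{W}_s$ (here and below $\partial_{\theta_k}$ denotes the partial derivative in the $k$-th coordinate of $\theta\in\mathbb{R}^{m_1+m_2}$, and $e_k$ the corresponding unit vector). It suffices to work on a fixed interval $[0,T]$ and intersect the resulting null sets over $T\in\mathbb{N}$. First I would record, using $Z_s(\theta)-Z_s(\theta')=\int_0^1\partial_\theta Z_s(\theta'+u(\theta-\theta'))(\theta-\theta')\,du$, Jensen's inequality in $u$, the two Lipschitz hypotheses, and Lemma \ref{BDG}(2) applied with the Wiener process $\overline{W}$, the moment bounds
\[
  E\Big[\sup_{t\le T}|\xi_t(\theta)-\xi_t(\theta')|^p\Big]+E\Big[\sup_{t\le T}|\eta_t^k(\theta)-\eta_t^k(\theta')|^p\Big]\le C_{T,p}\,|\theta-\theta'|^p\qquad(p\ge 2),
\]
which also rely on $\sup_{\theta}\sup_{s\le T}\big(E[|Z_s(\theta)|^p]+E[|\partial_\theta Z_s(\theta)|^p]\big)<\infty$ (a consequence of the hypotheses, the boundedness of $\Theta$, and the integrability that makes $\xi$ well defined). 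Choosing $p>m_1+m_2$ and applying Kolmogorov's continuity theorem on $\Theta$ — equivalently the Sobolev inequality (\ref{2.3}), as used throughout the paper — produces modifications $\tilde\xi_\cdot(\theta),\tilde\eta^k_\cdot(\theta)$ that are Hölder continuous in $\theta$ as $C([0,T])$-valued maps; in particular $\theta\mapsto\tilde\eta_t^k(\theta)$ is continuous for each $t$.

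Next, for $h\ne0$ small enough that $\theta+he_k\in\Theta$, the (pathwise) fundamental theorem of calculus for $Z_s$ gives
\[
  D_t(\theta,h):=\frac{\xi_t(\theta+he_k)-\xi_t(\theta)}{h}=\int_0^t\Big(\int_0^1\partial_{\theta_k}Z_s(\theta+vhe_k)\,dv\Big)d\overline{W}_s,
\]
and the right-hand side is still meaningful at $h=0$, where it equals $\eta_t^k(\theta)$. The integrand $(\theta,h)\mapsto\int_0^1\partial_{\theta_k}Z_s(\theta+vhe_k)\,dv$ inherits the Lipschitz-in-$L^p$ bound of $\partial_\theta Z_s$ uniformly in $v$, so the argument of the previous paragraph, now run on the index set $\{(\theta,h)\}\subset\mathbb{R}^{m_1+m_2+1}$ with $p>m_1+m_2+1$, yields a modification $\widetilde D_t(\theta,h)$ jointly continuous in $(\theta,h)$ with $\widetilde D_t(\theta,0)=\tilde\eta_t^k(\theta)$. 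Since $\widetilde D_t(\theta,h)$ and $h^{-1}(\tilde\xi_t(\theta+he_k)-\tilde\xi_t(\theta))$ are both continuous in $(\theta,h)$ for $h\ne0$ and agree almost surely for each fixed $(\theta,h)$, they agree, off one null set, for all $\theta$ and all $h\ne0$; letting $h\to0$ and using joint continuity gives $\partial_{\theta_k}\tilde\xi_t(\theta)=\widetilde D_t(\theta,0)=\tilde\eta_t^k(\theta)$ for all $t\le T$, $\theta$ and $k$. As $\tilde\eta^k$ is continuous in $\theta$, $\tilde\xi_t(\cdot)$ is $C^1$ with $\partial_\theta\tilde\xi_t(\theta)=\int_0^t\partial_\theta Z_s(\theta)\,d\overline{W}_s$, which is the assertion. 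An alternative for this last step is a stochastic Fubini computation giving $\tilde\xi_t(\theta+he_k)-\tilde\xi_t(\theta)=\int_0^h\tilde\eta_t^k(\theta+re_k)\,dr$ almost surely — first for fixed $(\theta,h,k)$, then for all of them by continuity — after which the ordinary fundamental theorem of calculus finishes.

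The individual estimates (the $|\theta-\theta'|^p$ and $|h-h'|^p$ bounds) are immediate from Lemma \ref{BDG}(2) and the hypotheses, so the real content is the passage from these pointwise-in-parameter bounds to an almost-sure statement valid simultaneously in all parameters and all $t$; that is precisely what the continuity/Kolmogorov step (equivalently (\ref{2.3})) accomplishes, and it is the main thing to get right. The only further points needing care are the justification of the joint $(\theta,h)$-continuity argument — or, in the Fubini variant, the interchange of $\int_0^h dr$ with $\int_0^t d\overline{W}_s$, which is legitimate by progressive measurability and the $L^p$ bounds above — and the standard remark that the pathwise $\theta$-derivative of $Z_s$ may be taken to agree with the continuous modification of $\partial_\theta Z_s$, which is what licenses the pathwise fundamental theorem of calculus used for $D_t(\theta,h)$.
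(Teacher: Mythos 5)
Your proposal is correct and follows essentially the same route as the paper: represent the difference quotients as stochastic integrals over the extended parameter $(\theta,h)$, obtain $L^p$-Lipschitz bounds via Lemma \ref{BDG}(2), upgrade to a jointly continuous modification by Kolmogorov/Sobolev, and identify the limit at $h=0$ with $\int_0^t\partial_\theta Z_s(\theta)\,d\overline{W}_s$. The only cosmetic differences are that you include $h=0$ in the parametrized family from the outset (which makes the final identification automatic, where the paper instead uses an $L^p$-limit plus an a.s.\ convergent subsequence) and that you intersect null sets over $T\in\mathbb{N}$ where the paper uses a weighted norm on $C(\mathbb{R}_+;M_{d,d_1}(\mathbb{R}))$.
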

\begin{proof}
  For any matrix valued function $\phi$ on $\mathbb{R}^{m_1+m_2}$ and $\epsilon>0$, let
  \begin{align*}
    \Delta^j\phi(\theta;\epsilon)=\frac{1}{\epsilon}\{\xi_t(\theta+\epsilon e_j)-\xi_t(\theta)\},
  \end{align*} 
  where $e_1,\cdots,e_{m_1+m_2}$ is the standard basis of $\mathbb{R}^{m_1+m_2}$.
  Then for $\theta,\theta'\in \Theta,\epsilon,\epsilon'>0$ and $p\geq 1$, we have 
  \begin{align*}
    &\sup_{0\leq t\leq T}E\left[\left|\Delta^jZ_t(\theta;\epsilon)-\Delta^jZ_t(\theta;\epsilon')\right|^p\right]\\
    =&\sup_{0\leq t\leq T}E\left[\left|\int_0^1\frac{\partial}{\partial\theta^j}Z_t(\theta+u\epsilon e_j)du-\int_0^1\frac{\partial}{\partial\theta^j}Z_s(\theta+u\epsilon' e_j)du\right|^p\right]\\
    \leq &\int_0^1\sup_{0\leq t\leq T}E\left[ \left|\frac{\partial}{\partial\theta^j}Z_t(\theta+u\epsilon e_j)-\frac{\partial}{\partial\theta^j}Z_t(\theta'+u\epsilon' e_j)\right| \right]du\\
    \leq &C_{p,T}(|\theta-\theta'|+|\epsilon-\epsilon'|),
  \end{align*}
  where $\theta=(\theta^1,\cdots,\theta^{m_1+m_2})$.

  Hence by Lemma \ref{BDG}, it follows for any $\theta,\theta' \in \Theta,\epsilon,\epsilon'>0$ and $N \in \mathbb{N}$
  \begin{align*}
    &E\left[ \sup_{0\leq t \leq N}\left|\Delta^j \xi_t(\theta;\epsilon)-\Delta^j \xi_t(\theta';\epsilon')\right|^p \right]\\
    =&E\left[ \sup_{0\leq t \leq N}\left|\int_{0}^t\{\Delta^jZ_t(\theta;\epsilon)-\Delta^j Z_t(\theta';\epsilon')\}d\overline{W}_s\right|^p \right]\\
    \leq &C_pN^{\frac{p}{2}-1}\int_{0}^NE\left[\left|\Delta^jZ_t(\theta;\epsilon)-\Delta^j Z_t(\theta';\epsilon')\right|^p\right]ds\\
    \leq &C_{p,N}(|\theta-\theta'|+|\epsilon-\epsilon'|).
  \end{align*}
  Now for this $C_{p,N}$, we take a sequence $\alpha_N>0~(N \in \mathbb{N})$ so that
  \begin{align*}
    S_p=\sum_{n=1}^\infty \alpha_NC_{p,N}<\infty,~~\sum_{n=1}^\infty \alpha_N<\infty,
  \end{align*}
  and define the norm on $C(\mathbb{R}_+;M_{d,d_1}(\mathbb{R}))$ by
  \begin{align*}
    \|A\|=\sum_{N=1}^\infty \alpha_N\left( \sup_{0\leq t \leq N}|A(s)|\wedge 1 \right).
  \end{align*}
  Then the topology induced by this norm is equivalent to the topology of uniform convergence, and we have
  \begin{align}
    \label{delta-xi-continuity}E\left[ \left\|\Delta^j \xi.(\theta;\epsilon)-\Delta^j \xi.(\theta';\epsilon')\right\|^p \right]\leq C_p(|\theta-\theta'|+|\epsilon-\epsilon'|).
  \end{align}
  Therefore, by the the Kolmogorov continuity theorem, $\{\Delta^j \xi.(\theta;\epsilon)\}_{\theta \in \Theta,0<|\epsilon|\leq 1}$ has a uniformly continuous modification $\{\zeta.(\theta;\epsilon)\}_{\theta \in \Theta,0<|\epsilon|\leq 1}$. Because of the uniform continuity, $\zeta.(\theta;\epsilon)$ can be extended to a continuous process on $\theta \in \Theta,|\epsilon|\leq 1$.

  On the other hand, we can show in the same way that $\{\xi.(\theta;\epsilon)\}_{\theta \in \Theta}$ has a continuous modification $\{\tilde{\xi}.(\theta;\epsilon)\}_{\theta \in \Theta}$. Then $\Delta^j\tilde{\xi}.(\theta;\epsilon)$ and $\zeta.(\theta;\epsilon)$ are both continuous modifications of $\Delta^j \xi.(\theta;\epsilon)$, and thus they are indistinguishable. Therefore almost surely for any $t \geq 0$ and $\theta \in \Theta$,
  \begin{align*}
    \frac{\partial \tilde{\xi}_t}{\partial\theta_j}(\theta)=\lim_{\epsilon \to 0}\frac{\xi_t(\theta+\epsilon e_j)-\xi(\theta)}{\epsilon}=\lim_{\epsilon \to 0}\Delta^j \xi_t(\theta;\epsilon)
  \end{align*}
  exists. The continuity of $\displaystyle \frac{\partial \xi_t}{\partial\theta_j}(\theta)$ follows from the continuity of $\zeta.(\theta,\epsilon)$.

  Moreover, by the assumption and Lemma \ref{BDG} (2), we have for $p\geq 2$,
  \begin{align*}
    &E\left[ \left|\int_0^t\left\{\frac{1}{\epsilon}(Z_s(\theta+\epsilon e_j)-Z_s(\theta))-\frac{\partial}{\partial \theta^j}Z_s(\theta)\right\}d\overline{W}_s\right|^p \right]\\
    =&E\left[ \left|\int_0^t\left\{\frac{\partial Z_s}{\partial\theta_j}(\theta+\eta_s\epsilon e_j)-\frac{\partial}{\partial \theta^j}Z_s(\theta)\right\}d\overline{W}_s\right|^p \right]\\
    \leq &C_pt^{\frac{p}{2}-1}\int_0^t E\left[ \left|\frac{\partial Z_s}{\partial\theta_j}(\theta+\eta_s\epsilon e_j)-\frac{\partial Z_s}{\partial \theta^j}(\theta)\right|^p \right]ds\\
    \leq &C_{p,t}\epsilon \to 0~(\epsilon \to 0),
  \end{align*}
  where $0\leq \eta_s\leq 1$. This means
  \begin{align*}
    \Delta^j\xi_t(\theta;\epsilon)\to \int_0^s\frac{\partial}{\partial \theta^j}Z_s(\theta)ds~~(\epsilon \to 0)
  \end{align*}
  in $L^p$, and hence there exists a subsequence $\{\epsilon_n\}_{n \in \mathbb{N}}$ such that $\epsilon_n \to 0$ and
  \begin{align*}
    \Delta^j\xi_t(\theta;\epsilon_n)\xrightarrow{\mathrm{a.s.}} \int_0^s\frac{\partial}{\partial \theta^j}Z_s(\theta)ds~~(n \to \infty).
  \end{align*}
  Therefore we obtain almost surely 
  \begin{align*}
    \frac{\partial }{\partial \theta^j}\tilde{\xi}_t(\theta)=\Delta^j\tilde{\xi}_t(\theta;0)=\int_0^s\frac{\partial}{\partial \theta^j}Z_s(\theta)ds.
  \end{align*}

\end{proof}

\begin{lemma}
  (1) For $j \in \mathbb{N}$, let $f_j:[t_{j-1},t_j]\times \Theta \to M_{k,d_2}(\mathbb{R})$ be of class $C^1$. Then for any $p>m_1+m_2$, it holds
  \begin{align}
    \begin{split}
      \label{ineq-lemma1}&E\left[ \sup_{\theta \in\Theta}\left|\sum_{j=1}^i\int_{t_{j-1}}^{t_j}f_{j-1}(s,\theta)dY_s\right|^p \right]^{\frac{1}{p}}\\
    \leq &C_p\sup_{\theta \in \Theta}\left\{\sum_{j=1}^i\int_{t_{j-1}}^{t_j}|f_j(s,\theta)|ds+\sum_{j=1}^i\int_{t_{j-1}}^{t_j}|\partial_{\theta}f_j(s,\theta)|ds\right.\\
    &\left.+\left(\sum_{j=1}^i\int_{t_{j-1}}^{t_j}|f_{j-1}(s,\theta)|^2ds\right)^{\frac{1}{2}}
    +\left(\sum_{j=1}^i\int_{t_{j-1}}^{t_j}|\partial_{\theta}f_{j-1}(s,\theta)|^2ds\right)^{\frac{1}{2}}\right\},
    \end{split}
  \end{align}
  where $C_p$ is a constant which depends only on $p$.\\
  (2) For $j=0,1,2,\cdots$ and $\theta \in\Theta$, let $Z_j(\theta)$ be a $M_{k,l}(\mathbb{R})$-valued and $\mathcal{F}_{t_j}$-measurable random variable, and $U(\theta)$ be an $M_{l,d}(\mathbb{R})$-valued random variable. Moreover, we assume $Z_j(\theta)$ is continuously differentiable with respect to $\theta$. Then for any $p>m_1+m_2$, it holds
  \begin{align}
    \begin{split}
    \label{ineq-lemma2}&E\left[ \sup_{\theta \in\Theta}\left|\sum_{j=1}^iZ_{j-1}(\theta)U(\theta)\Delta_j Y\right|^p \right]^{\frac{1}{p}}\\
    \leq &C_pE\left[\sup_{\theta \in\Theta}|U(\theta)|^{4p}\right]^\frac{1}{4p}\\
    &\times \sup_{\theta \in \Theta}\left\{ \sum_{j=1}^iE\left[|Z_{j-1}(\theta)|^{2p}\right]^\frac{1}{2p}h+\sum_{j=1}^iE\left[|\partial_{\theta}Z_{j-1}(\theta)|^{2p}\right]^\frac{1}{2p}h\right\}\\
    &+C_{p}E\left[\sup_{\theta \in \Theta}\left|U(\theta)\right|^{2p}\right]^\frac{1}{2p}\\
    &\times \sup_{\theta \in \Theta}\left\{ \sum_{j=1}^nE[|Z_{j-1}(\theta)|^{2p}]^\frac{1}{p}h+\sum_{j=1}^nE[\left|\partial_{\theta}Z_{j-1}(\theta)\right|^{2p}]^\frac{1}{p}h  \right\}^\frac{1}{2}.
    \end{split}
  \end{align}
\end{lemma}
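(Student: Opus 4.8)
The plan is to reduce both inequalities to \emph{pointwise-in-$\theta$} moment bounds by means of the Sobolev embedding estimate displayed right after (\ref{2.3}), after substituting $dY_s=c^*X_s\,ds+\sigma^*\,dW_s^2$: the $X$-driven drift part will be controlled by the uniform moment bound $\sup_{t}E[|X_t|^p]\le C_p$ of Lemma \ref{X-Y-norm}, and the Brownian part by the Burkholder--Davis--Gundy inequality of Lemma \ref{BDG}.

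For (1), set
\[
 Z(\theta)=\sum_{j=1}^i\int_{t_{j-1}}^{t_j}f_{j-1}(s,\theta)c^*X_s\,ds ,\qquad
 M(\theta)=\sum_{j=1}^i\int_{t_{j-1}}^{t_j}f_{j-1}(s,\theta)\sigma^*\,dW_s^2 ,
\]
so that we must bound $E[\sup_\theta|Z(\theta)+M(\theta)|^p]^{1/p}$. Both $Z$ and $M$ are of class $C^1$ in $\theta$ --- for $Z$ by differentiating under the $ds$-integral, and for $M$ by the argument of Lemma \ref{differentiable-lemma}, which simplifies considerably because the integrand here is deterministic, yielding $\partial_\theta M(\theta)=\sum_j\int_{t_{j-1}}^{t_j}\partial_\theta f_{j-1}(s,\theta)\sigma^*\,dW_s^2$. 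The consequence of (\ref{2.3}) therefore applies to each of $Z$ and $M$ and reduces the task to estimating, for fixed $\theta$, the $L^p$-norms of $Z(\theta),\partial_\theta Z(\theta),M(\theta),\partial_\theta M(\theta)$. For $Z(\theta)$, Minkowski's integral inequality together with Lemma \ref{X-Y-norm} gives $E[|Z(\theta)|^p]^{1/p}\le C_p\sum_j\int_{t_{j-1}}^{t_j}|f_{j-1}(s,\theta)|\,ds$, and the same with $\partial_\theta f_{j-1}$ for $\partial_\theta Z(\theta)$. For $M(\theta)$, Lemma \ref{BDG}(2) gives $E[|M(\theta)|^p]^{1/p}\le C_p\bigl(\sum_j\int_{t_{j-1}}^{t_j}|f_{j-1}(s,\theta)|^2\,ds\bigr)^{1/2}$, and the same for $\partial_\theta M(\theta)$. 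Summing the four estimates and taking $\sup_\theta$ yields (\ref{ineq-lemma1}).

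For (2), split
\[
 \sum_{j=1}^i Z_{j-1}(\theta)U(\theta)\Delta_jY
 =\sum_{j=1}^i Z_{j-1}(\theta)U(\theta)c^*\!\!\int_{t_{j-1}}^{t_j}\!\!X_s\,ds
 \;+\;\sum_{j=1}^i Z_{j-1}(\theta)U(\theta)\sigma^*\Delta_jW^2 .
\]
The martingale sum is handled by a direct application of Lemma \ref{pre-ineq} with the non-adapted factor there taken to be $U(\theta)\sigma^*$ and the adapted weights $Z_{j-1}(\theta)$; taking $p$-th roots and then estimating the inner sums $\sum_j|Z_{j-1}(\theta)|^2h$ and $\sum_j|\partial_\theta Z_{j-1}(\theta)|^2h$ by Minkowski's inequality puts the result into the form of the second block on the right of (\ref{ineq-lemma2}). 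The drift sum is treated by the same componentwise device as in the proof of Lemma \ref{pre-ineq}: write out the entries, peel off $\sup_\theta$ of the entries of $U(\theta)$ by H\"older's inequality with exponent $4p$, apply the consequence of (\ref{2.3}) to the remaining $C^1$-in-$\theta$ sums built from entries of $Z_{j-1}(\theta)$, and bound each resulting pointwise term by H\"older (exponent $2p$ on $Z_{j-1}$, exponent $4p$ on $\int_{t_{j-1}}^{t_j}X_s\,ds$) together with $E[(\int_{t_{j-1}}^{t_j}|X_s|\,ds)^q]\le C_qh^q$, which follows from Lemma \ref{BDG}(1) and Lemma \ref{X-Y-norm}; this produces the first block of (\ref{ineq-lemma2}).

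The main obstacle is organizational rather than conceptual. One must keep $\sup_{\theta}$ on the outside at every stage, which forces all the random quantities to be treated as $C^1$-in-$\theta$ random fields and hence requires the differentiation-under-the-It\^{o}-integral result (Lemma \ref{differentiable-lemma}) to be in place before any Sobolev argument; and one must track the moment exponents carefully, because the factor $U(\theta)$ --- which, unlike $Z_j(\theta)$, is neither assumed differentiable in $\theta$ nor adapted --- has to be separated off by Cauchy--Schwarz/H\"older \emph{before} the Sobolev estimate is invoked, and in such a way that it ends up only under the $L^{2p}$- and $L^{4p}$-norms that appear in (\ref{ineq-lemma2}). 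Once the splittings and H\"older exponents above are fixed, each remaining piece is a routine application of Lemmas \ref{BDG}, \ref{X-Y-norm} and \ref{pre-ineq} and of the estimate following (\ref{2.3}).
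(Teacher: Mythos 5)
Your argument is correct, and its skeleton coincides with the paper's: split the $dY$-integral into a drift part and a martingale part, control the drift by Minkowski/H\"older together with a uniform moment bound, control the martingale by Burkholder--Davis--Gundy (through Lemma \ref{pre-ineq} in part (2)), and pass to $\sup_{\theta}$ via the Sobolev estimate following (\ref{2.3}). The one genuine difference is the choice of semimartingale decomposition: you write $dY_s=c^*X_s\,ds+\sigma^*\,dW_s^2$ from the original system and invoke $\sup_t E[|X_t|^p]\leq C_p$ from Lemma \ref{X-Y-norm}, whereas the paper uses the innovation representation (\ref{innovation}), $dY_s=c^*m_s^*\,ds+\sigma^*\,d\overline{W}_s$, and the analogous bound (\ref{m*-norm}) for the filter $m_t^*$. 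Both routes work, since $X$ and $m^*$ satisfy the same moment estimates and $W^2$ and $\overline{W}$ are both Wiener processes; your version has the small advantage that $W^2$ is an $\{\mathcal{F}_t\}$-Wiener process by hypothesis, so the martingale-difference structure required in Lemma \ref{pre-ineq} for the merely $\mathcal{F}_{t_{j-1}}$-measurable weights $Z_{j-1}(\theta)$ is immediate, while the paper's version keeps the lemma aligned with Lemma \ref{differentiable-lemma} and with the subsequent propositions, which are all phrased in terms of $\overline{W}$ and $m^*$ (and with $\tilde{Y}$, which is only defined through $\overline{W}$). Your exponent bookkeeping in part (2) --- H\"older with exponents $2p$, $4p$, $4p$ in the drift block and $2p$ on $U(\theta)$ in the martingale block, followed by Minkowski on $E[(\sum_j|Z_{j-1}(\theta)|^2h)^p]$ --- reproduces the paper's estimates exactly, so the two proofs yield the same right-hand side in (\ref{ineq-lemma1}) and (\ref{ineq-lemma2}).
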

\begin{proof}
  (1) By Lemma \ref{differentiable-lemma}, we can assume for every $j$
  \begin{align*}
    \int_{t_{j-1}}^{t_j}f_{j-1}(s,\theta)dY_s
    =\int_{t_{j-1}}^{t_j}f_{j-1}(s,\theta)c^*m_s^*ds+\int_{t_{j-1}}^{t_j}f_{j-1}(s,\theta)\sigma^*\overline{W}_s
  \end{align*}
  is continuously differentiable, and 
  \begin{align*}
    \partial_{\theta}\int_{t_{j-1}}^{t_j}f_{j-1}(s,\theta)dY_s=\int_{t_{j-1}}^{t_j}\partial_{\theta}f_{j-1}(s,\theta)dY_s.
  \end{align*}
  Therefore by the Sobolev inequality and (\ref{innovation}),
  \begin{align}
      &E\left[ \sup_{\theta \in\Theta}\left|\sum_{j=1}^i\int_{t_{j-1}}^{t_j}f_{j-1}(s,\theta)dY_s\right|^p \right]^{\frac{1}{p}}
    \nonumber\\
    \leq &C_p\sup_{\theta \in\Theta}\left(E\left[\left|\sum_{j=1}^i\int_{t_{j-1}}^{t_j}f_{j-1}(s,\theta)dY_s\right|^p \right]^{\frac{1}{p}}\right.\nonumber\\
    &\left.+E\left[\left|\sum_{j=1}^i\int_{t_{j-1}}^{t_j}\partial_{\theta}f_{j-1}(s,\theta)dY_s\right|^p \right]^{\frac{1}{p}}\right)\nonumber\\
    \label{eq1}\begin{split}
    \leq &C_p\sup_{\theta \in\Theta}\left(E\left[ \left|\sum_{j=1}^i\int_{t_{j-1}}^{t_j}f_{j-1}(s,\theta)c^*m_s^*ds\right|^p \right]^{\frac{1}{p}}\right.\\
    &+E\left[ \left|\sum_{j=1}^i\int_{t_{j-1}}^{t_j}f_{j-1}(s,\theta)\sigma^*d\overline{W}_s\right|^p \right]^{\frac{1}{p}}\\
    &+E\left[ \left|\sum_{j=1}^i\int_{t_{j-1}}^{t_j}\partial_{\theta}f_{j-1}(s,\theta)c^*m_s^*ds\right|^p \right]^{\frac{1}{p}}\\
    &\left.+E\left[ \left|\sum_{j=1}^i\int_{t_{j-1}}^{t_j}\partial_{\theta}f_{j-1}(s,\theta)\sigma^*d\overline{W}_s\right|^p \right]^{\frac{1}{p}}\right).
    \end{split}
  \end{align}
  In order to bound the first term of (\ref{eq1}), we set
  \begin{align*}
    f(s,\theta)=\sum_{j=1}^i f_j(s,\theta)1_{(t_{j-1},t_j]}(s).
  \end{align*}
  Then we have 
  \begin{align*}
    \begin{split}
      &E\left[\left|\sum_{j=1}^i\int_{t_{j-1}}^{t_j}f_{j-1}(s,\theta)m_s^*ds\right|^p  \right]
    =E\left[ \left|\int_{0}^{t_{i}}f(s,\theta)c^*m_s^*ds\right|^p \right]\\
    \leq &E\left[ \left(\int_{0}^{t_{i}}|f(s,\theta)c^*m_s^*|ds\right)^p \right]\\
    \leq &|c^*|^pE\left[\left|\left(\int_{0}^{t_{i}}|f(s,\theta)|^\frac{1}{p}|m_s^*|^pds\right)^\frac{1}{p}\left(\int_0^{t_i}|f(s,\theta)|ds\right)^{1-\frac{1}{p}}\right|^p \right]\\
    \leq &|c^*|^p\left(\int_0^{t_i}|f(s,\theta)|ds\right)^{p-1}\int_{0}^{t_{i}}|f(s,\theta)|E[|m_s^*|^p]ds\\
    \leq &C_p\left(\int_0^{t_i}|f(s,\theta)|ds\right)^{p}
    =C_p\left( \sum_{j=1}^i\int_{t_{j-1}}^{t_j}|f_j(s,\theta)|ds \right)^p.
    \end{split}    
  \end{align*}
  In the same way, it holds for the third term
  \begin{align*}
    E\left[\left|\sum_{j=1}^i\int_{t_{j-1}}^{t_j}\partial_{\theta}Z_{j-1}(s,\theta)c^*m_s^*ds\right|^p  \right]^{\frac{1}{p}}\leq C_p\left( \sum_{j=1}^i\int_{t_{j-1}}^{t_j}|\partial_{\theta}f_j(s,\theta)|ds \right)^p.
  \end{align*}

  Next by Lemma \ref{BDG} (2), we obtain for the second term
  \begin{align*}
    \begin{split}
      &E\left[ \left|\sum_{j=1}^i\int_{t_{j-1}}^{t_j}f_{j-1}(s,\theta)\sigma^*d\overline{W}_{s}\right|^p \right]
    = E\left[ \left|\int_{0}^{t_i}f(s,\theta)\sigma^*d\overline{W}_{s}\right|^p \right]^{\frac{1}{p}}\\
    \leq &C_p \left(\int_{0}^{t_i}\sum_{j=1}^i|f(s,\theta)|^2ds\right)^{\frac{1}{2}} 
    =C_p\left(\sum_{j=1}^i\int_{t_{j-1}}^{t_j}|f_{j-1}(s,\theta)|^2ds\right)^{\frac{1}{2}}
    \end{split}    
  \end{align*}
  and in the same way it holds for the fourth term
  \begin{align*}
    E\left[ \left|\sum_{j=1}^i\int_{t_{j-1}}^{t_j}\partial_{\theta}f_{j-1}(s,\theta)d\overline{W}_{s}\right|^p \right]
    \leq C_p\left(\sum_{j=1}^i\int_{t_{j-1}}^{t_j}|\partial_{\theta}f_{j-1}(s,\theta)|^2ds\right)^{\frac{1}{2}}.
  \end{align*}
  We complete the proof by the above inequalities.\\
  (2) By the Sobolev inequality and (\ref{innovation}),
  \begin{align}
    &E\left[ \sup_{\theta \in\Theta}\left|\sum_{j=1}^iZ_{j-1}(\theta)U(\theta)\Delta_j Y\right|^p \right]^{\frac{1}{p}}\nonumber\\
    \begin{split}
      \leq &C_p\left( E\left[\sup_{\theta \in\Theta} \left|\sum_{j=1}^iZ_{j-1}(\theta)\int_{t_{j-1}}^{t_j}U(\theta)c^*m_s^*ds\right|^p \right]^{\frac{1}{p}}\right.\\
      &\left.+E\left[\sup_{\theta \in\Theta}\left|\sum_{j=1}^iZ_{j-1}(\theta)U(\theta)\sigma^*(W_{t_j}-W_{t_{j-1}})\right|^p \right]^{\frac{1}{p}}\right).
    \end{split}
  \end{align}
  For the first term of the right-hand side, it follows from Lemma \ref{BDG} (1), (\ref{m*-norm}) and the Sobolev inequality
  \begin{align*}
    &E\left[\sup_{\theta \in\Theta}\left|\sum_{j=1}^iZ_{j-1}(\theta)\int_{t_{j-1}}^{t_j}U(\theta)c^*m_s^*ds\right|^p \right]^{\frac{1}{p}}\\
    \leq &E\left[ \sum_{j=1}^i\left|\sup_{\theta \in\Theta}\int_{t_{j-1}}^{t_j}Z_{j-1}(\theta)U(\theta)c^*m_s^*ds\right|^p \right]^{\frac{1}{p}}\\
    \leq &\left( \sum_{j=1}^ih^{p-1}\int_{t_{j-1}}^{t_j}E\left[\sup_{\theta \in\Theta}|Z_{j-1}(\theta)U(\theta)c^*m_s^*|^p\right]ds \right)^{\frac{1}{p}}\\
    \leq &|c|^*\left( \sum_{j=1}^ih^{p-1}\int_{t_{j-1}}^{t_j}E\left[\sup_{\theta \in\Theta}|Z_{j-1}(\theta)|^{2p}\right]^\frac{1}{2}\right.\\
    &\left.\qquad\qquad\times E\left[\sup_{\theta \in\Theta}|U(\theta)|^{4p}\right]^\frac{1}{4}E[|m_s^*|^{4p}]^\frac{1}{4}ds \right)^{\frac{1}{p}}\\
    \leq &C_pE\left[\sup_{\theta \in\Theta}|U(\theta)|^{4p}\right]^\frac{1}{4}\left( \sum_{j=1}^ih^{p}E\left[\sup_{\theta \in\Theta}|Z_{j-1}(\theta)|^{2p}\right]^\frac{1}{2} \right)^{\frac{1}{p}}\\
    \leq &C_pE\left[\sup_{\theta \in\Theta}|U(\theta)|^{4p}\right]^\frac{1}{4}\\
    &\times \left( \sum_{j=1}^ih^{p}\sup_{\theta \in\Theta}\left\{E\left[|Z_{j-1}(\theta)|^{2p}\right]+E\left[|\partial_{\theta}Z_{j-1}(\theta)|^{2p}\right]\right\}^\frac{1}{2} \right)^{\frac{1}{p}}\\
    \leq &C_pE\left[\sup_{\theta \in\Theta}|U(\theta)|^{4p}\right]^\frac{1}{4}\\
    &\times \sup_{\theta \in\Theta}\sum_{j=1}^i\left\{E\left[|Z_{j-1}(\theta)|^{2p}\right]^\frac{1}{2p}+E\left[|\partial_{\theta}Z_{j-1}(\theta)|^{2p}\right]^\frac{1}{2p}\right\}h.
  \end{align*}

  As for the second term, we have by Lemma \ref{pre-ineq}
  \begin{align*}
    &E\left[\sup_{\theta \in\Theta}\left|\sum_{j=1}^iZ_{j-1}(\theta)U(\theta)\sigma^*(W_{t_j}-W_{t_{j-1}})\right|^p \right]^{\frac{1}{p}}\\
    \leq &C_{p}E\left[\sup_{\theta \in \Theta}\left|U(\theta)\right|^{2p}\right]^\frac{1}{2p}\\
      &\times \sup_{\theta \in \Theta}\left\{E\left[ \left(\sum_{j=1}^n|Z_{j-1}(\theta)|^2h\right)^{p}\right]+E\left[ \left(\sum_{j=1}^n\left|\partial_{\theta}Z_{j-1}(\theta)\right|^2h\right)^{p}\right]  \right\}^\frac{1}{2p}\\
      \leq &C_{p}E\left[\sup_{\theta \in \Theta}\left|U(\theta)\right|^{2p}\right]^\frac{1}{2p}\\
      &\times \sup_{\theta \in \Theta}\left\{E\left[ \left(\sum_{j=1}^n|Z_{j-1}(\theta)|^2h\right)^{p}\right]^\frac{1}{p}+E\left[ \left(\sum_{j=1}^n\left|\partial_{\theta}Z_{j-1}(\theta)\right|^2h\right)^{p}\right]^\frac{1}{p}  \right\}^\frac{1}{2}\\
      \leq &C_{p}E\left[\sup_{\theta \in \Theta}\left|U(\theta)\right|^{2p}\right]^\frac{1}{2p}\\
      &\times \sup_{\theta \in \Theta}\left\{ \sum_{j=1}^nE[|Z_{j-1}(\theta)|^{2p}]^\frac{1}{p}h+\sum_{j=1}^nE[\left|\partial_{\theta}Z_{j-1}(\theta)\right|^{2p}]^\frac{1}{p}h  \right\}^\frac{1}{2}.
  \end{align*}
  Thus we completed the proof.
\end{proof}

\begin{proposition}\label{m-hat-norm}
  For any $p>m_1+m_2$, it holds
  \begin{align*}
    &\sup_{i \in \mathbb{N}}E\left[\sup_{\theta_2 \in \Theta_2}|\hat{m}_i^n(\theta_2)|^{p}\right]<\infty\\
    &\sup_{i \in \mathbb{N}}E\left[\sup_{\theta_2 \in \Theta_2}\left|\partial_{\theta_2}\hat{m}_i^n(\theta_2)\right|^{p}\right]<\infty\\
    &\sup_{i \in \mathbb{N}}E\left[\sup_{\theta_2 \in \Theta_2}\left|\partial_{\theta_2}^2\hat{m}_i^n(\theta_2)\right|^{p}\right]<\infty
  \end{align*}
  and
  \begin{align*}
    \sup_{i \in \mathbb{N}}E\left[\sup_{\theta_2 \in \Theta_2}\left|\partial_{\theta_2}^3\hat{m}_i^n(\theta_2)\right|^{p}\right]<\infty.
  \end{align*}
\end{proposition}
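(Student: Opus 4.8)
The plan is to use that, under the notational convention of this section, $\hat{m}_i^n(\theta_2)$ denotes $\hat{m}_i^n(\theta_1^*,\theta_2;m_0)$ with $\theta_1^*$ frozen, so that the increments $\Delta_jY$ carry no $\theta_2$-dependence; hence
\begin{align*}
\hat{m}_i^n(\theta_2)=\exp(-\alpha(\theta_2)t_i)m_0+\sum_{j=1}^i\exp(-\alpha(\theta_2)(t_i-t_{j-1}))\,\gamma_+(\theta_2)c(\theta_2)'\Sigma(\theta_1^*)^{-1}\,\Delta_jY
\end{align*}
is, for each fixed $\omega$, a $C^4$ function of $\theta_2$ on $\Theta_2$ (by [A2] and the proposition above stating that $\gamma_+$ is of class $C^4$), so $\partial_{\theta_2}^k\hat{m}_i^n$ for $k\le 3$ may be computed by differentiating the finitely many summands. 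By the Leibniz rule, and treating each scalar component of the resulting tensors separately, $\partial_{\theta_2}^k\hat{m}_i^n(\theta_2)$ is a finite linear combination, over $k_1+k_2=k$, of $\bigl(\partial_{\theta_2}^{k_1}\exp(-\alpha(\theta_2)t_i)\bigr)m_0$ and of $\sum_{j=1}^i\bigl(\partial_{\theta_2}^{k_1}\exp(-\alpha(\theta_2)(t_i-t_{j-1}))\bigr)\bigl(\partial_{\theta_2}^{k_2}[\gamma_+(\theta_2)c(\theta_2)'\Sigma(\theta_1^*)^{-1}]\bigr)\Delta_jY$.

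Next I would bound the two ingredients. Differentiating the identity $\partial_\theta e^{A(\theta)\tau}=\int_0^\tau e^{A(\theta)s}\,\partial_\theta A(\theta)\,e^{A(\theta)(\tau-s)}\,ds$ repeatedly, and using Corollary \ref{alpha-exponential} together with the boundedness on $\overline{\Theta}_2$ of the derivatives $\partial_{\theta_2}^j\alpha(\theta_2)$ for $j\le 4$ (which follows from [A2], [A3] and the $C^4$-regularity of $\gamma_+$), one obtains for $k_1\le 4$
\begin{align*}
\sup_{\theta_2\in\overline{\Theta}_2}\bigl|\partial_{\theta_2}^{k_1}\exp(-\alpha(\theta_2)\tau)\bigr|\le C(1+\tau)^{k_1}e^{-C_2\tau}\qquad(\tau\ge 0).
\end{align*}
Since $\sup_{t\ge 0}(1+t)^{k_1}e^{-C_2t}<\infty$, the initial-value terms are bounded uniformly in $i$, and the discrete sums $h\sum_{m=1}^i(1+mh)^{k_1}e^{-C_2mh}$ and $h\sum_{m=1}^i(1+mh)^{2k_1}e^{-2C_2mh}$ are bounded uniformly in $n$ and $i$ (using $h=h_n\le 1$ and comparison with $\int_0^\infty(1+t)^{2k_1}e^{-C_2t}\,dt$). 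Moreover $\sup_{\theta_2\in\overline{\Theta}_2}\bigl|\partial_{\theta_2}^{k_2}[\gamma_+(\theta_2)c(\theta_2)'\Sigma(\theta_1^*)^{-1}]\bigr|<\infty$ for $k_2\le 4$, again by [A2], [A3] and the $C^4$-regularity of $\gamma_+$.

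Finally I would apply inequality (\ref{ineq-lemma2}) to each Leibniz term with $Z_{j-1}(\theta_2)=\partial_{\theta_2}^{k_1}\exp(-\alpha(\theta_2)(t_i-t_{j-1}))$ — a deterministic, hence $\mathcal{F}_{t_{j-1}}$-measurable, quantity whose $\theta_2$-derivative $\partial_{\theta_2}^{k_1+1}\exp(-\alpha(\theta_2)(t_i-t_{j-1}))$ again obeys the polynomial-times-exponential bound because $k_1+1\le 4$ — and $U(\theta_2)=\partial_{\theta_2}^{k_2}[\gamma_+(\theta_2)c(\theta_2)'\Sigma(\theta_1^*)^{-1}]$, which is bounded with bounded $\theta_2$-derivative because $k_2+1\le 4$; here $k_1+k_2=k\le 3$ forces $k_1+1,k_2+1\le 4$, which is precisely where the $C^4$ hypotheses enter. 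Since $Z_{j-1}$ is deterministic given $\theta_2$ we have $E[|Z_{j-1}(\theta_2)|^{2p}]^{1/(2p)}=|Z_{j-1}(\theta_2)|$, so in (\ref{ineq-lemma2}) the factors involving $U$ reduce to deterministic constants and every sum over $j$ on the right-hand side reduces to $\sum_j|Z_{j-1}(\theta_2)|\,h$, $\sum_j|\partial_{\theta_2}Z_{j-1}(\theta_2)|\,h$, $\sum_j|Z_{j-1}(\theta_2)|^2h$ or $\sum_j|\partial_{\theta_2}Z_{j-1}(\theta_2)|^2h$, each bounded uniformly in $n$, $i$ and $\theta_2$ by the estimates of the second paragraph. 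Hence every Leibniz term is bounded in $L^p$ uniformly in $n$ and $i$; summing the finitely many such terms and adding the deterministic initial-value contribution gives the four bounds of the proposition for all $p>m_1+m_2$. The genuine obstacle is the matrix-exponential derivative estimate of the second paragraph and the verification that its attendant discrete sums stay bounded as $h_n\to 0$; everything else is bookkeeping plus direct appeals to (\ref{ineq-lemma2}), Corollary \ref{alpha-exponential}, [A2] and [A3].
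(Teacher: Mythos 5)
Your overall strategy coincides with the paper's: reduce to the sums $\sum_{j\le i}\partial_{\theta_2}^{k_1}\exp(-\alpha(\cdot)(t_i-t_{j-1}))\,(\text{coefficient})\,\Delta_jY$, establish polynomial-times-exponential bounds on the derivatives of the matrix exponential via the integral representation of $\partial_\theta e^{A(\theta)\tau}$ and Corollary \ref{alpha-exponential}, check that the discrete sums $\sum_j(t_i-t_{j-1})^{k}e^{-C(t_i-t_{j-1})}h$ stay bounded uniformly in $n$ and $i$, and feed everything into (\ref{ineq-lemma2}). That is exactly what the paper does.

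There is, however, one slip in your setup. The sectional convention abbreviates $\hat{m}_i^n(\theta_2;m_0)$ to $\hat{m}_i^n(\theta_2)$, but by (\ref{def-m-hat}) this object is built from $\alpha(\hat{\theta}_1^n,\theta_2)$, $\gamma_+(\hat{\theta}_1^n,\theta_2)$ and $\Sigma(\hat{\theta}_1^n)^{-1}$ --- the \emph{random} first-stage estimator, not $\theta_1^*$; the convention freezes $\theta_1$ at $\theta_1^*$ only for $m_t$, $\gamma_+$ and $\alpha$. Consequently your $Z_{j-1}(\theta_2)=\partial_{\theta_2}^{k_1}\exp(-\alpha(\theta_2)(t_i-t_{j-1}))$ is neither deterministic nor $\mathcal{F}_{t_{j-1}}$-measurable, so the simplification $E[|Z_{j-1}(\theta_2)|^{2p}]^{1/(2p)}=|Z_{j-1}(\theta_2)|$ and the very applicability of (\ref{ineq-lemma2}) fail as stated; nor can the $\hat{\theta}_1^n$-dependence be absorbed into the single factor $U(\theta)$ of that lemma, since the exponential depends on $j$. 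The repair is the one the paper makes: dominate $\sup_{\theta_2}|\cdot|$ evaluated at $\hat{\theta}_1^n$ by $\sup_{(\theta_1,\theta_2)\in\Theta}|\cdot|$ with $\theta_1$ a free deterministic variable, and then run your argument verbatim, since all of your bounds (the derivative estimates for $\exp(-\alpha(\theta_1,\theta_2)\tau)$, the uniform boundedness of $\partial_{\theta_2}^{k_2}[\gamma_+c'\Sigma^{-1}]$, and the Riemann-sum bounds) already hold uniformly over $\overline{\Theta}_1\times\overline{\Theta}_2$. With that correction the proof is complete and is essentially the paper's.
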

\begin{proof}
  We only prove the first one; the rest can be shown in the same way. By (\ref{def-m-hat}) and the stability of $-\alpha(\theta_1,\theta_2)$, it is enough to show
  \begin{align*}
    \sup_{i \in \mathbb{N}}E\left[\left|\sup_{\theta=(\theta_1,\theta_2)\in\Theta}\sum_{j=1}^i\exp\left( -\alpha(\theta)(t_i-t_{j-1})\right)\right.\right.\\
    \left.\left.\gamma_+(\theta)c(\theta_2)'\Sigma(\theta_1)^{-1}\Delta_jY\right.\Biggr|  \right.\Biggr]<\infty.
  \end{align*}
  To accomplish this, it is enough to show 
  \begin{align}
    \label{eq3}&\sum_{j=1}^i\left|\exp\left( -\alpha(\theta)(t_i-t_{j-1})\right)\gamma_+(\theta)c(\theta_2)'\Sigma(\theta_1)^{-1}\right|h<C\\
    \label{eq4}&\sum_{j=1}^i\left|\exp\left( -\alpha(\theta)(t_i-t_{j-1})\right)\gamma_+(\theta)c(\theta_2)'\Sigma(\theta_1)^{-1}\right|^2h<C\\
    \label{eq5}&\sum_{j=1}^i\left|\partial_{\theta}\left\{\exp\left( -\alpha(\theta)(t_i-t_{j-1})\right)\gamma_+(\theta)c(\theta_2)'\Sigma(\theta_1)^{-1}\right\}\right|h<C\\
    \label{eq6}&\sum_{j=1}^i\left|\partial_{\theta}\left\{\exp\left( -\alpha(\theta)(t_i-t_{j-1})\right)\gamma_+(\theta)c(\theta_2)'\Sigma(\theta_1)^{-1}\right\}\right|^2h<C
  \end{align}
  according to (\ref{ineq-lemma2}).

  Using Corollary \ref{alpha-exponential}, we can show (\ref{eq3}):
  \begin{align*}
    &\sum_{j=1}^i\left|\exp\left( -\alpha(\theta)(t_i-t_{j-1})\right)\gamma_+(\theta)c(\theta_2)'\Sigma(\theta_1)^{-1}\right|h\\
    \leq &\sum_{j=1}^i\left|\exp\left( -\alpha(\theta)(t_i-t_{j-1})\right)\right|h\\
    \leq &\sum_{j=1}^iC_1e^{-C_2(t_i-t_{j-1})}h\\
    \leq &C_1\int_0^{t_i}e^{-C_2(t_i-s)}ds\leq \frac{C_1}{C_2},
  \end{align*}
  where $C_1$ and $C_2$ are positive constants. In the same way, we obtain (\ref{eq4})-(\ref{eq6}) noting that
  \begin{align*}
    &\sum_{j=1}^i(t_i-t_{j-1})e^{-C(t_i-t_{j-1})}h\leq \sum_{j=1}^i2e^{-\frac{1}{2}C(t_i-t_{j-1})}h,\\
    &\sum_{j=1}^i(t_i-t_{j-1})^2e^{-C(t_i-t_{j-1})}h\leq \sum_{j=1}^i8e^{-\frac{1}{2}C(t_i-t_{j-1})}h
  \end{align*}
  and it holds by \cite{MatrixExponentialNote} 
  \begin{align*}
    &\left|\partial_{\theta}\exp\left( -\alpha(\theta)(t_i-t_{j-1})\right)\right|\\
    =&\left|-\int_0^1\exp(-s\alpha(\theta)(t_i-t_{j-1}))\partial_{\theta}\alpha(\theta)(t_i-t_{j-1})\right.\\
    &\left.\exp(-(1-s)\alpha(\theta)(t_i-t_{j-1}))ds\right.\biggr|\\
    \leq &C(t_i-t_{j-1})e^{-C(t_i-t_{j-1})}.
  \end{align*}
\end{proof}

\begin{proposition}\label{m-m-hat-diff}
  For any $n,i \in \mathbb{N}$ and $p>m_1+m_2$,
    \begin{align*}
      E\left[\sup_{\theta_2 \in \Theta_2}|m_{t_i}(\theta_2)-\hat{m}_{i}^n(\theta_2)|^{p}\right]^\frac{1}{p}\leq C_p(n^{-\frac{1}{2}}+h).
    \end{align*}
\end{proposition}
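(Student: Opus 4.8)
\emph{Strategy.} I would split the error into a discretization part and an estimation (plug-in) part by inserting the auxiliary quantity obtained from $\hat m_i^n$ by keeping the true value $\theta_1^*$ in place of $\hat\theta_1^n$:
\[
  \tilde m_i^n(\theta_2)=\exp(-\alpha(\theta_2)t_i)m_0+\sum_{j=1}^i\exp\!\big(-\alpha(\theta_2)(t_i-t_{j-1})\big)\gamma_+(\theta_2)c(\theta_2)'{\Sigma^*}^{-1}\Delta_jY ,
\]
so that $m_{t_i}(\theta_2)-\hat m_i^n(\theta_2)=\{m_{t_i}(\theta_2)-\tilde m_i^n(\theta_2)\}+\{\tilde m_i^n(\theta_2)-\hat m_i^n(\theta_2)\}$. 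The first brace is the discretization error, which I will bound by $C_ph$ in $L^p$ uniformly in $\theta_2$ and $i$; the second is the estimation error, which I will bound by $C_pn^{-1/2}$.

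\emph{Discretization error.} From (\ref{def-m}),
\[
  m_{t_i}(\theta_2)-\tilde m_i^n(\theta_2)=\sum_{j=1}^i\int_{t_{j-1}}^{t_j}\!\big(\exp(-\alpha(\theta_2)(t_i-s))-\exp(-\alpha(\theta_2)(t_i-t_{j-1}))\big)\gamma_+(\theta_2)c(\theta_2)'{\Sigma^*}^{-1}\,dY_s .
\]
Writing the integrand factor as $\exp(-\alpha(\theta_2)(t_i-s))\big(I-\exp(-\alpha(\theta_2)(s-t_{j-1}))\big)$ and using $0\le s-t_{j-1}\le h\le1$ together with Corollary \ref{alpha-exponential} and $|I-\exp(-\alpha(\theta_2)u)|\le u\sup_{0\le v\le u}|\alpha(\theta_2)\exp(-\alpha(\theta_2)v)|$, the integrand is bounded deterministically by $Ch\,e^{-C(t_i-s)}$, and — differentiating the matrix exponentials, using [A2] and the proposition that $\gamma_+$ is of class $C^4$ — its $\theta_2$-derivative by $Ch(1+(t_i-s))e^{-C(t_i-s)}$. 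Since $\int_0^{t_i}(1+(t_i-s))^ke^{-C(t_i-s)}ds\le C$ uniformly in $i$, feeding these bounds into inequality (\ref{ineq-lemma1}) gives $E[\sup_{\theta_2}|m_{t_i}(\theta_2)-\tilde m_i^n(\theta_2)|^p]^{1/p}\le C_ph$.

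\emph{Estimation error.} The term of $\tilde m_i^n-\hat m_i^n$ coming from $m_0$ is $\big(\exp(-\alpha(\theta_1^*,\theta_2)t_i)-\exp(-\alpha(\hat\theta_1^n,\theta_2)t_i)\big)m_0$; by the identity $e^X-e^Y=\int_0^1e^{sX}(X-Y)e^{(1-s)Y}ds$ and Corollary \ref{alpha-exponential} (valid for every $\theta_1\in\overline\Theta_1$, hence for the random $\hat\theta_1^n$), this is at most $C|m_0|\,t_ie^{-Ct_i}|\alpha(\hat\theta_1^n,\theta_2)-\alpha(\theta_1^*,\theta_2)|$, which is $O(n^{-1/2})$ in $L^p$ by the corollary preceding Proposition \ref{gamma-definite}. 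For the remaining sum, put $F_j(\theta_1;\theta_2)=\exp(-\alpha(\theta_1,\theta_2)(t_i-t_{j-1}))\gamma_+(\theta_1,\theta_2)c(\theta_2)'\Sigma(\theta_1)^{-1}$ and Taylor-expand in $\theta_1$ about $\theta_1^*$,
\[
  F_j(\hat\theta_1^n;\theta_2)-F_j(\theta_1^*;\theta_2)=\partial_{\theta_1}F_j(\theta_1^*;\theta_2)(\hat\theta_1^n-\theta_1^*)+\tfrac12\,\partial_{\theta_1}^2F_j(\xi_j^n;\theta_2)\big[(\hat\theta_1^n-\theta_1^*)^{\otimes2}\big] ,
\]
with $\xi_j^n$ on the segment $[\theta_1^*,\hat\theta_1^n]$. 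The leading coefficient $\partial_{\theta_1}F_j(\theta_1^*;\theta_2)$ is \emph{deterministic} and satisfies $|\partial_{\theta_1}F_j(\theta_1^*;\theta_2)|+|\partial_{\theta_2}\partial_{\theta_1}F_j(\theta_1^*;\theta_2)|\le C(1+(t_i-t_{j-1})^2)e^{-C(t_i-t_{j-1})}$, so $\sum_j|\partial_{\theta_1}F_j(\theta_1^*;\theta_2)|h$ and $\sum_j|\partial_{\theta_1}F_j(\theta_1^*;\theta_2)|^2h$ (and the same with $\partial_{\theta_2}$) are bounded uniformly in $i$; hence inequality (\ref{ineq-lemma2}), applied to each directional derivative $\partial_{\theta_1^k}F_j(\theta_1^*;\cdot)$ with $U\equiv I$, gives $E[\sup_{\theta_2}|\sum_j\partial_{\theta_1^k}F_j(\theta_1^*;\theta_2)\Delta_jY|^p]^{1/p}\le C_p$, and summing over $k$ against the weights $(\hat\theta_1^n-\theta_1^*)_k$, with $E[|\hat\theta_1^n-\theta_1^*|^q]^{1/q}\le C_qn^{-1/2}$ from Theorem \ref{main-theorem-1} and H\"older, yields an $O(n^{-1/2})$ contribution. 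For the quadratic remainder, $|\partial_{\theta_1}^2F_j(\xi_j^n;\theta_2)|\le G_j:=C(1+(t_i-t_{j-1})^2)e^{-C(t_i-t_{j-1})}$ with $\sum_jG_j\le C/h$, so it is dominated by $|\hat\theta_1^n-\theta_1^*|^2\sum_jG_j|\Delta_jY|$; H\"older on the $j$-sum together with $E[|\Delta_jY|^{2p}]\le C_ph^p$ (Lemma \ref{X-Y-norm}) gives $E[(\sum_jG_j|\Delta_jY|)^{2p}]^{1/(2p)}\le C_ph^{-1/2}$, and since $E[|\hat\theta_1^n-\theta_1^*|^{4p}]^{1/(2p)}\le C_pn^{-1}$ the remainder is $O(n^{-1}h^{-1/2})=O(n^{-1/2}(nh)^{-1/2})$, which is $\le C_pn^{-1/2}$ once $nh\ge1$; the finitely many smaller $n$ are absorbed into the constant. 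Adding the three pieces, $E[\sup_{\theta_2}|\tilde m_i^n(\theta_2)-\hat m_i^n(\theta_2)|^p]^{1/p}\le C_pn^{-1/2}$, and together with the discretization bound the proposition follows.

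\emph{Main obstacle.} The delicate point is the sum in the estimation error: bounding $F_j(\hat\theta_1^n;\theta_2)-F_j(\theta_1^*;\theta_2)$ merely by $(\text{Lipschitz constant})\times|\hat\theta_1^n-\theta_1^*|$ destroys the martingale cancellation in $\sum_j[\,\cdot\,]\Delta_jY$ and only produces $O((nh)^{-1/2})$, which is \emph{not} $O(n^{-1/2}+h)$. The Taylor expansion is what preserves a deterministic (hence adapted) leading coefficient, so that (\ref{ineq-lemma2}) supplies the $O(1)$ bound before one pays the $O(n^{-1/2})$ of the estimation error; the crude treatment then suffices for the quadratic remainder precisely because $nh\to\infty$ makes $n^{-1}h^{-1/2}=o(n^{-1/2})$. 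A minor technicality is that the Taylor expansion uses $F_j(\cdot;\theta_2)$ on the segment $[\theta_1^*,\hat\theta_1^n]$; since $\gamma_+,\alpha,\Sigma^{-1}$ are $C^4$ on a neighbourhood of $\overline\Theta_1$ (by the implicit-function argument proving $\gamma_+\in C^4$ together with the openness of the conditions in [A4]), they extend to $C^4$ maps on a convex set containing $\overline\Theta_1$, on which the expansion and the bounds on $\partial_{\theta_1}^2F_j$ are valid.
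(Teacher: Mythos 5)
Your proof is correct, and its skeleton coincides with the paper's: the same splitting into a Riemann-sum discretization error at $\theta_1^*$ (bounded by $C_ph$ via (\ref{ineq-lemma1})) plus a plug-in error from replacing $\theta_1^*$ by $\hat\theta_1^n$ (bounded by $C_pn^{-1/2}$), and the same treatment of the $m_0$-term. The only genuine divergence is in the plug-in error of the stochastic sum. The paper splits $F_j(\hat\theta_1^n;\theta_2)-F_j(\theta_1^*;\theta_2)$ \emph{exactly} into two pieces --- the difference of the factors $\gamma_+c'\Sigma^{-1}$ with the exponential kept at $\theta_1^*$, and the difference of the matrix exponentials written as $(B-A)(t_i-t_{j-1})\int_0^1\exp(-[B+(A-B)u](t_i-t_{j-1}))du$ --- so that in each piece a $j$-independent random factor of size $O_{L^p}(n^{-1/2})$ multiplies a $j$-indexed family whose sum against $\Delta_jY$ is $O_{L^{2p}}(1)$ by (\ref{ineq-lemma2}), the residual dependence of that family on $\hat\theta_1^n$ being killed by taking the supremum over $\theta_1\in\Theta_1$. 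You instead Taylor-expand $F_j$ in $\theta_1$ to second order: the linear term has the deterministic coefficient $\partial_{\theta_1}F_j(\theta_1^*;\cdot)$, which preserves the martingale structure and yields $O(n^{-1/2})$ just as in the paper, while the quadratic remainder is killed crudely as $O(n^{-1}h^{-1/2})=O(n^{-1/2}(nh)^{-1/2})$. Both mechanisms are sound; the paper's factorization is exact and does not spend the hypothesis $nh\to\infty$ at this step, whereas your remainder bound does (harmlessly, by [A1]). Your diagnosis of why a one-step Lipschitz bound fails --- it only produces $O((nh)^{-1/2})$, which does not imply $O(n^{-1/2}+h)$ --- is precisely the right observation, and the convexity caveat you raise about the Taylor segment applies equally to the paper's own repeated use of the mean value theorem between $\theta_1^*$ and $\hat\theta_1^n$; your proposed $C^4$ extension to a convex neighbourhood is an adequate repair for both.
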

\begin{proof}
  By (\ref{def-m}) and (\ref{def-m-hat}), we have
  \begin{align}
    &E\left[\sup_{\theta_2 \in \Theta_2}|m_{t_i}(\theta_2)-\hat{m}_{i}^n(\theta_2)|^{p}\right]^\frac{1}{p}\nonumber\\
    \leq &E\left[ \sup_{\theta_2 \in \Theta_2}\left|\left\{\exp(-\alpha(\theta_1^*,\theta_2)t)-\exp(-\alpha(\hat{\theta}_1^n,\theta_2)t)\right\}m_0\right|^p \right]^\frac{1}{p}\nonumber\\
    &+E\left[\sup_{\theta_2 \in \Theta_2}\left|\int_0^{t_i}\exp\left( -\alpha(\theta_1^*,\theta_2)(t_i-s)\right)\gamma_+(\theta_1^*,\theta_2)c(\theta_2)'{\Sigma^*}^{-1}dY_s\right.\right.\nonumber\\
    &\left.\left.\qquad-\sum_{j=1}^i\exp\left( -\alpha(\hat{\theta}_1^n,\theta_2)(t_i-t_{j-1})\right)\gamma_+(\hat{\theta}_1^n,\theta_2)c(\theta_2)'\Sigma(\hat{\theta}_1^n)^{-1}\Delta_jY\right|^p \right]^\frac{1}{p}\nonumber\\
      \leq &E\left[ \sup_{\theta_2 \in \Theta_2}\left|\left\{\exp(-\alpha(\theta_1^*,\theta_2)t)-\exp(-\alpha(\hat{\theta}_1^n,\theta_2)t)\right\}m_0\right|^p \right]^\frac{1}{p}\nonumber\\
    &+E\left.\Biggl[\sup_{\theta_2 \in \Theta_2}\left.\Biggl|\int_0^{t_i}\exp\left( -\alpha(\theta_1^*,\theta_2)(t_i-s)\right)\gamma_+(\theta_1^*,\theta_2)c(\theta_2)'{\Sigma^*}^{-1}dY_s\right.\right.\nonumber\\
    &\left.\left.\qquad-\sum_{j=1}^i\exp\left( -\alpha(\theta_1^*,\theta_2)(t_i-t_{j-1})\right)\gamma_+(\theta_1^*,\theta_2)c(\theta_2)'{\Sigma^*}^{-1}\Delta_jY\right.\Biggr|^p \right.\Biggr]^\frac{1}{p}\nonumber\\
    &+E\left[ \sup_{\theta_2 \in \Theta_2}\left|\sum_{j=1}^i\exp\left( -\alpha(\theta_1^*,\theta_2)(t_i-t_{j-1})\right)\right.\right.\nonumber\\
    &\left.\left.\left\{\gamma_+(\theta_1^*,\theta_2)c(\theta_2)'{\Sigma^*}^{-1}
    -\gamma_+(\hat{\theta}_1^n,\theta_2)c(\theta_2)'\Sigma(\hat{\theta}_1^n)^{-1}\right\}\Delta_jY\right.\Biggr|^p \right.\Biggr]^\frac{1}{p}\nonumber\\
    &+E\left[ \sup_{\theta_2 \in \Theta_2}\left|\sum_{j=1}^i\left\{\exp\left( -\alpha(\theta_1^*,\theta_2)(t_i-t_{j-1})\right)\right.\right.\right.\nonumber\\
    \label{eq7}&\qquad \left.\left.\left.-\exp\left( -\alpha(\hat{\theta}_1^n,\theta_2)(t_i-t_{j-1})\right)\right\}\gamma_+(\hat{\theta}_1^n,\theta_2)c(\theta_2)'\Sigma(\hat{\theta}_1^n)^{-1}\Delta_jY\right.\Biggr|^p\right.\Biggr]^\frac{1}{p}.
  \end{align}
  The first term of the right-hand side can be bounded by the mean value theorem and Theorem \ref{main-theorem-1} :
  \begin{align}
    \label{eq10}\begin{split}
      &E\left[ \sup_{\theta_2 \in \Theta_2}\left|\left\{\exp(-\alpha(\theta_1^*,\theta_2)t)-\exp(-\alpha(\hat{\theta}_1^n,\theta_2)t)\right\}m_0\right|^p \right]^\frac{1}{p}\\
    \leq &CE\left[ |\hat{\theta}_1^n-\theta_1^*|^p \right]^\frac{1}{p}\leq Cn^{-\frac{1}{2}}.
    \end{split}
  \end{align}

  Next we evaluate the second term using (\ref{ineq-lemma1}). Noting that by the mean value theorem and Lemma \ref{matrix-exponential}, we have 
    \begin{align*}
      &\left|\exp(-\alpha(\theta_1^*,\theta_2)(t_i-s))-\exp(-\alpha(\theta_1^*,\theta_2)(t_i-t_{j-1}))\right|\\
      =&\left|\alpha(\theta_1^*,\theta_2)\exp(-\alpha(\theta_1^*,\theta_2)(t_i-u))(s-t_{j-1})\right|\\
      \leq &Ce^{-C(t_i-u)}(s-t_{j-1})\\
      \leq &Ce^{-C(t_i-s)}h
    \end{align*}
    and
    \begin{align*}
      &\left|(t_i-s)\exp(-\alpha(\theta_1^*,\theta_2)(t_i-s))-(t_i-t_{j-1})\exp(-\alpha(\theta_1^*,\theta_2)(t_i-t_{j-1}))\right|\\
      \leq &|(t_{j-1}-s)\exp(-\alpha(\theta_1^*,\theta_2)(t_i-s)|\\
      &+(t_i-t_{j-1})\left|\exp(-\alpha(\theta_1^*,\theta_2)(t_i-s))-\exp(-\alpha(\theta_1^*,\theta_2)(t_i-t_{j-1}))\right|\\
      \leq &Ce^{-C(t_i-s)}h+C(t_i-t_{j-1})e^{-C(t_i-s)}h\\
      =&Ce^{-C(t_i-s)}h+C(t_i-s)e^{-C(t_i-s)}h+C(s-t_{j-1})e^{-C(t_i-s)}h\\
      \leq &Ce^{-C(t_i-s)}h+Ce^{-\frac{1}{2}C(t_i-s)}h+Ce^{-C(t_i-s)}h^2\\
      \leq &Ce^{-C(t_i-s)}h,
    \end{align*}
    where $t_{j-1}\leq u\leq s\leq t_j$, it follows from (\ref{ineq-lemma1})
    \begin{align}
      \label{eq11}
        &E\left.\Biggl[\sup_{\theta_2 \in \Theta_2}\left.\Biggl|\int_0^{t_i}\exp\left( -\alpha(\theta_1^*,\theta_2)(t_i-s)\right)\gamma_+(\theta_1^*,\theta_2)c(\theta_2)'{\Sigma^*}^{-1}dY_s\right.\right.\nonumber\\
        &\left.\left.\qquad-\sum_{j=1}^i\exp\left( -\alpha(\theta_1^*,\theta_2)(t_i-t_{j-1})\right)\gamma_+(\theta_1^*,\theta_2)c(\theta_2)'{\Sigma^*}^{-1}\Delta_jY\right|^p \right]^\frac{1}{p}\nonumber\\
        =&E\left[\sup_{\theta_2 \in \Theta_2} \left|\sum_{j=1}^i\int_{t_{j-1}}^{t_j}\left\{ \exp(-\alpha(\theta_1^*,\theta_2)(t_i-s))\right.\right.\right.\nonumber\\
        &\left.\left.\left.-\exp(-\alpha(\theta_1^*,\theta_2)(t_i-t_{j-1}))\right\}\gamma_+(\theta_1^*,\theta_2)c(\theta_2)'{\Sigma^*}^{-1}dY_s \right.\Biggr|^p \right.\Biggr]^\frac{1}{p}\nonumber\\
        \leq &C_p\sup_{\theta_2 \in \Theta_2}\left\{\sum_{j=1}^i\int_{t_{j-1}}^{t_j}|\exp(-\alpha(\theta_1^*,\theta_2)(t_i-s))\right.\nonumber\\
        &\qquad\qquad\qquad-\exp(-\alpha(\theta_1^*,\theta_2)(t_i-t_{j-1}))|ds\nonumber\\
        &+\sum_{j=1}^i\int_{t_{j-1}}^{t_j}|\partial_{\theta_2}\alpha(\theta_1^*,\theta_2)(t_i-s)\exp(-\alpha(\theta_1^*,\theta_2)(t_i-s))\nonumber\\
        &\qquad\qquad-\partial_{\theta_2}\alpha(\theta_1^*,\theta_2)(t_i-t_{j-1})\exp(-\alpha(\theta_1^*,\theta_2)(t_i-t_{j-1}))|ds\nonumber\\
        &+\left(\sum_{j=1}^i\int_{t_{j-1}}^{t_j}|\exp(-\alpha(\theta_1^*,\theta_2)(t_i-s))-\exp(-\alpha(\theta_1^*,\theta_2)(t_i-t_{j-1}))|^2ds\right)^\frac{1}{2}\nonumber\\
        &+\left(\sum_{j=1}^i\int_{t_{j-1}}^{t_j}|\partial_{\theta_2}\alpha(\theta_1^*,\theta_2)(t_i-s)\exp(-\alpha(\theta_1^*,\theta_2)(t_i-s))\right.\nonumber\\
        &\left.\left.\qquad\qquad-\partial_{\theta_2}\alpha(\theta_1^*,\theta_2)(t_i-t_{j-1})\exp(-\alpha(\theta_1^*,\theta_2)(t_i-t_{j-1}))|^2ds\right.\Biggr)^\frac{1}{2}\right.\Biggr\}\nonumber\\
        \leq &C_p\sum_{j=1}^i\int_{t_{j-1}}^{t_j}e^{-C(t_i-s)}dsh+C_p\left( \sum_{j=1}^ie^{-C(t_i-s)}h^2 \right)^\frac{1}{2}\nonumber\\
        \leq &C_p\int_0^{t_i}e^{-C(t_i-s)}dsh+C_p\left( \int_0^{t_i}e^{-C(t_i-s)}dsh^2 \right)^\frac{1}{2}\nonumber\\
        \leq &C_ph.    
    \end{align}

    As for the third term, in the same way as Proposition \ref{m-hat-norm}, we have
    \begin{align*}
      &E\left[ \sup_{\theta_2 \in \Theta_2}\left|\sum_{j=1}^i\exp\left( -\alpha(\theta_1^*,\theta_2)(t_i-t_{j-1})\right)\right.\right.\\
    &\left.\left.\left\{\gamma_+(\theta_1^*,\theta_2)c(\theta_2)'{\Sigma^*}^{-1}
    -\gamma_+(\hat{\theta}_1^n,\theta_2)c(\theta_2)'\Sigma(\hat{\theta}_1^n)^{-1}\right\}\Delta_jY\right.\Biggr|^p \right.\Biggr]^\frac{1}{p}\leq C_pn^{-\frac{1}{2}},
    \end{align*}    
    since it holds
    \begin{align}
      \label{eq12}E\left[\sup_{\theta_2 \in \Theta_2}\left|\gamma_+(\theta_1^*,\theta_2)c(\theta_2)'{\Sigma^*}^{-1}
      -\gamma_+(\hat{\theta}_1^n,\theta_2)c(\theta_2)'\Sigma(\hat{\theta}_1^n)^{-1}\right|^p \right]^\frac{1}{p}\leq C_pn^{-\frac{1}{2}}
    \end{align}
    by the mean value theorem and Theorem \ref{main-theorem-1}.

    Finally, we consider the forth term of (\ref{eq7}).
    Noting that it follows from Lemma \ref{matrix-exponential} and the stability of $-\alpha(\theta_1,\theta_2)$, 
    \begin{align*}
      &\left|\exp\left( -\left[\alpha(\theta_1,\theta_2)+\left\{ \alpha(\theta_1^*,\theta_2)-\alpha(\theta_1,\theta_2) \right\}u\right](t_i-t_{j-1})\right)\right|\\
      =&\left|\exp\left( -\alpha(\theta_1,\theta_2)(1-u)(t_i-t_{j-1})\right)||\exp\left(\alpha(\theta_1^*,\theta_2)u(t_i-t_{j-1})\right)\right|\\
      \leq &Ce^{-C(1-u)(t_{i}-t_{j-1})}e^{-Cu(t_{i}-t_{j-1})}=Ce^{-C(t_i-t_{j-1})},
    \end{align*}   
    we have
    \begin{align*}
      &\sum_{j=1}^i\left.\biggl|(t_i-t_{j-1})\right.\\
      &\left.\int_0^1\exp\left( -\left[\alpha(\theta_1,\theta_2)+\left\{ \alpha(\theta_1^*,\theta_2)-\alpha(\theta_1,\theta_2) \right\}u\right](t_i-t_{j-1})\right)du\right|h\\
      \leq &\sum_{j=1}^i(t_i-t_{j-1})\\
      &\int_0^1\left|\exp\left( -\left[\alpha(\theta_1,\theta_2)+\left\{ \alpha(\theta_1^*,\theta_2)-\alpha(\theta_1,\theta_2) \right\}u\right](t_i-t_{j-1})\right)\right|duh\\
      \leq &C\sum_{j=1}^i(t_i-t_{j-1})e^{-C(t_i-t_{j-1})}h\leq C.
    \end{align*}
    In the same way, we obtain the boundedness of 
    \begin{align*}
      &\sum_{j=1}^i\left.\biggl|(t_i-t_{j-1})\right.\\
      &\left.\times \partial_{(\theta_1,\theta_2)}\int_0^1\exp\left( -\left[\alpha(\theta_1,\theta_2)+\left\{ \alpha(\theta_1^*,\theta_2)-\alpha(\theta_1,\theta_2) \right\}u\right](t_i-t_{j-1})\right)du\right|h,\\
      &\sum_{j=1}^i\left.\biggl|(t_i-t_{j-1})\right.\\
      &\left.\times \int_0^1\exp\left( -\left[\alpha(\theta_1,\theta_2)+\left\{ \alpha(\theta_1^*,\theta_2)-\alpha(\theta_1,\theta_2) \right\}u\right](t_i-t_{j-1})\right)du\right|^2h\\
    \end{align*}
    and
    \begin{align*}
      &\sum_{j=1}^i\left.\biggl|(t_i-t_{j-1})\right.\\
      &\left.\times \partial_{(\theta_1,\theta_2)}\int_0^1\exp\left( -\left[\alpha(\theta_1,\theta_2)+\left\{ \alpha(\theta_1^*,\theta_2)-\alpha(\theta_1,\theta_2) \right\}u\right](t_i-t_{j-1})\right)du\right|^2h.
    \end{align*}
    Thus by (\ref{ineq-lemma2}) we obtain
    \begin{align*}
      &\sum_{j=1}^iE\left[\sup_{\theta_2 \in \Theta_2}\left.\biggl|(t_i-t_{j-1})\right.\right.\\
      &\int_0^1\exp\left( -\left[\alpha(\hat{\theta}_1^n,\theta_2)+\left\{ \alpha(\theta_1^*,\theta_2)-\alpha(\hat{\theta}_1^n,\theta_2) \right\}u\right](t_i-t_{j-1})\right)du\\
      &\left.\left.\gamma_+(\hat{\theta}_1^n,\theta_2)c(\theta_2)'\Sigma(\hat{\theta}_1^n)^{-1}\Delta_jY\right.\biggr|^p \right.\Biggr]\\
      \leq &\sum_{j=1}^iE\left[\sup_{\theta_1\in \Theta_1,\theta_2 \in \Theta_2}\left.\biggl|(t_i-t_{j-1})\right.\right.\\
      &\int_0^1\exp\left( -\left[\alpha(\theta_1,\theta_2)+\left\{ \alpha(\theta_1^*,\theta_2)-\alpha(\theta_1,\theta_2) \right\}u\right](t_i-t_{j-1})\right)du\\
      &\left.\left.\gamma_+(\theta_1,\theta_2)c(\theta_2)'\Sigma(\theta_1)^{-1}\Delta_jY\right.\biggr|^p \right]\\
      \leq &C_p.
    \end{align*}
    Therefore it follows
    \begin{align}
      \label{eq13}\begin{split}
        &E\left[ \sup_{\theta_2 \in \Theta_2}\left|\sum_{j=1}^i\left.\Bigl\{\exp\left( -\alpha(\theta_1^*,\theta_2)(t_i-t_{j-1})\right)\right.\right.\right.\\
    &\qquad \left.\left.\left.-\exp\left( -\alpha(\hat{\theta}_1^n,\theta_2)(t_i-t_{j-1})\right)\right\}\gamma_+(\hat{\theta}_1^n,\theta_2)c(\theta_2)'\Sigma(\hat{\theta}_1^n)^{-1}\Delta_jY\right.\Biggr|^p\right.\Biggr]^\frac{1}{p}\\    
    =&E\left[ \sup_{\theta_2 \in \Theta_2}\left|\sum_{j=1}^i \left\{\alpha(\theta_1^*,\theta_2)-\alpha(\hat{\theta}_1^n,\theta_2)\right\}(t_i-t_{j-1})\right.\right.\\    
    &\int_0^1\exp\left( -\left[\alpha(\hat{\theta}_1^n,\theta_2)+\left\{ \alpha(\theta_1^*,\theta_2)-\alpha(\hat{\theta}_1^n,\theta_2) \right\}u\right](t_i-t_{j-1})\right)du\\
    &\qquad \left.\left.\gamma_+(\hat{\theta}_1^n,\theta_2)c(\theta_2)'\Sigma(\hat{\theta}_1^n)^{-1}\Delta_jY\right.\Biggl|^p\right]^\frac{1}{p}\\
    \leq &C_pE\left[ \sup_{\theta_2 \in \Theta_2}\left|\alpha(\theta_1^*,\theta_2)-\alpha(\hat{\theta}_1^n,\theta_2)\right|^{2p}\right]^\frac{1}{2p}\\
    \leq &C_pn^{-\frac{1}{2}}.
      \end{split}      
    \end{align}

    Now we completed the proof by (\ref{eq7})-(\ref{eq13}). 
    \end{proof}

    Next, we replace $m_0^*$ and $\gamma_s^*$ with $m_0$ and $\gamma_+(\theta^*)$ in (\ref{sol-m^*}), and introduce
    \begin{align}
      \label{def-tilde-m*}\tilde{m}_t^*=\exp(-a^*t)m_0+\int_0^t\exp(-a^*(t-s))\gamma_+(\theta^*){c^*}'{{\sigma^*}'}^{-1}d\overline{W}_s.
    \end{align}
    Furthermore, we consider for every $n,i \in \mathbb{N}$,
    \begin{align}
      \label{def-Y-tilde}&\tilde{Y}_t=Y_0+\int_0^tc^*\tilde{m}_s^*ds+\sigma^*\overline{W}_t,\\
      \label{def-m-tilde}\begin{split}
        &\tilde{m}_i^n(\theta_2)=\exp\left( -\alpha(\hat{\theta}_1^n,\theta_2)t_i\right)m_0\\
       &+\sum_{j=1}^i\exp\left( -\alpha(\hat{\theta}_1^n,\theta_2)(t_i-t_{j-1})\right)\gamma_+(\hat{\theta}_1^n,\theta_2)c(\theta_2)'\Sigma(\hat{\theta}_1^n)^{-1}\Delta_j\tilde{Y}.
       \end{split}
    \end{align}
    and
    \begin{align}
      \label{def-tilde-delta}\tilde{\Delta}_iY=c^*\tilde{m}_{i-1}(\theta_2^*)h+\sigma^*\Delta_i\overline{W}.
    \end{align}
    Then in the same way as Proposition \ref{m-hat-norm}, it holds for any $p>m_1+m_2$
      \begin{align}
        \label{m-tilde-norm}\sup_{i \in \mathbb{N}}E\left[\sup_{\theta_2 \in \Theta_2}|\partial_{\theta_2}^k\tilde{m}_i^n(\theta_2)|^{p}\right]<\infty~(k=0,1,2,3).
      \end{align}

    \begin{proposition}\label{m*-m-diff}
      For any $p>0$ and $t\geq 0$, it holds
        \begin{align*}
          E\left[ |m_t^*-\tilde{m}_t^*|^p \right]^\frac{1}{p}\leq C_pe^{-Ct}.
        \end{align*}
    \end{proposition}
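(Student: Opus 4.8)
The plan is to work directly from the explicit integral representations (\ref{sol-m^*}) and (\ref{def-tilde-m*}). Subtracting them gives
\[
  m_t^*-\tilde m_t^*=\exp(-a^*t)(m_0^*-m_0)+\int_0^t\exp(-a^*(t-s))\{\gamma_s^*-\gamma_+(\theta^*)\}{c^*}'{{\sigma^*}'}^{-1}\,d\overline W_s,
\]
so it suffices to bound the $L^p$ norm of each of the two terms on the right by an exponentially decaying function of $t$ and then combine them.

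For the first term, assumption [A3] gives $\lambda_{\min}(a^*)>0$, so by Lemma \ref{matrix-exponential} there is $\eta>0$ with $|\exp(-a^*t)|\le Ce^{-\eta t}$; since $m_0^*=E[X_0\mid Y_0]$ and $X_0$ is Gaussian, Jensen's inequality gives $E[|m_0^*|^p]\le E[|X_0|^p]<\infty$, whence $E[|\exp(-a^*t)(m_0^*-m_0)|^p]^{1/p}\le C_pe^{-\eta t}$. For the stochastic integral I would first note that its integrand is deterministic, because $\gamma_s^*$ solves the deterministic Riccati ODE (\ref{eq-gamma}); thus no stochastic-calculus input beyond the Burkholder--Davis--Gundy inequality of Lemma \ref{BDG}(2) is needed, and that inequality bounds the $L^p$ norm by $C_p\bigl(\int_0^t|\exp(-a^*(t-s))|^2|\gamma_s^*-\gamma_+(\theta^*)|^2\,ds\bigr)^{1/2}$ up to the constant $|{c^*}'{{\sigma^*}'}^{-1}|$. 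Inserting $|\exp(-a^*(t-s))|\le Ce^{-\eta(t-s)}$ together with the estimate $|\gamma_s^*-\gamma_+(\theta^*)|\le Ce^{-\nu s}$ from Proposition \ref{gamma-converge} (with $\nu=2\{\lambda_{\min}(\alpha(\theta_2^*))-\epsilon\}>0$ for $\epsilon>0$ small enough, using that $-\alpha(\theta_2^*)$ is stable), the integral reduces to $\int_0^t e^{-2\eta(t-s)}e^{-2\nu s}\,ds$, an elementary convolution of two exponentials, which is bounded by $Ce^{-2\kappa t}$ for any $\kappa<\min(\eta,\nu)$. Taking square roots and adding the boundary-term bound yields the claim with $C$ any fixed positive number below $\min(\eta,\nu)$.

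The argument is essentially bookkeeping, and there is no serious obstacle; the only points needing a little care are (i) recording that $\gamma_s^*$ is deterministic, which is what keeps the estimate elementary, and (ii) the convolution integral $\int_0^t e^{-2\eta(t-s)}e^{-2\nu s}\,ds$ in the resonant case $\eta=\nu$, where it equals $te^{-2\eta t}$ and one absorbs the polynomial factor into a slightly slower exponential $e^{-2\kappa t}$ with $\kappa<\eta$ — this is why the statement is phrased with an unspecified constant $C$ rather than a sharp rate. It is also worth noting that $m_t^*-\tilde m_t^*$ does not involve the estimator $\hat\theta_1^n$, so, unlike the companion estimates in Propositions \ref{m-hat-norm} and \ref{m-m-hat-diff}, Theorem \ref{main-theorem-1} is not used here.
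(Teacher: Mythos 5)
Your proof is correct and follows essentially the same route as the paper: the same decomposition into the initial-condition term $\exp(-a^*t)(m_0^*-m_0)$ and the stochastic integral of $\exp(-a^*(t-s))\{\gamma_s^*-\gamma_+(\theta^*)\}{c^*}'{{\sigma^*}'}^{-1}$, bounded respectively by Lemma \ref{matrix-exponential} with [A3] and by Lemma \ref{BDG}(2) combined with Proposition \ref{gamma-converge}, followed by the same exponential convolution estimate. Your explicit remark about absorbing the polynomial factor $te^{-2\eta t}$ in the resonant case is in fact slightly more careful than the paper's final step, which writes $C_p(e^{-Ct}+te^{-Ct})\leq C_pe^{-Ct}$ with the understanding that the constant in the exponent is allowed to shrink.
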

    \begin{proof}
      By (\ref{sol-m^*}), (\ref{def-tilde-m*}), Lemmas \ref{BDG} and \ref{matrix-exponential}, Proposition \ref{gamma-converge} and the stability of $\alpha$, we obtain
      \begin{align*}
        &E\left[ |m_t^*-\tilde{m}_t^*|^p \right]^\frac{1}{p}\\
        \leq &E\left[ |\exp(-a^*t)(m_0^*-m_0)|^p \right]^\frac{1}{p}\\
        &+E\left[\left|\int_0^t\exp(-a^*(t-s))\{\gamma_s^*-\gamma_+(\theta^*)\}{c^*}'(\sigma^*)^{-1}d\overline{W}_s\right|^p\right]^\frac{1}{p}\\
        \leq &C_{p}|\exp(-a^*t)|
        +C_{p}\left(\int_0^t|\exp(-a^*(t-s))|^2|\gamma_s^*-\gamma_+(\theta^*)|^2ds\right)^{\frac{1}{2}}\\
        \leq &C_{p}e^{-Ct}
        +C_{p}\left(\int_0^te^{-C(t-s)}e^{-Cs}ds\right)^{\frac{1}{2}}\\
        \leq &C_p(e^{-Ct}+te^{-Ct})\leq C_pe^{-Ct}.
      \end{align*}
    \end{proof}

    \begin{proposition}\label{exp-Ydiff}
      Let $A:\Theta \to M_{d_1,d_2}(\mathbb{R})$ be a continuous mapping. Then for any $i,n \in \mathbb{N}, p>0$ and $k=0,1,2,\cdots$, it holds
      \begin{align*}
        E\left[ \sup_{\theta_1\in \Theta_1,\theta_2 \in \Theta_2}\left|\sum_{j=1}^i(t_i-t_{j-1})^k\exp\left( -\alpha(\theta_1,\theta_2)(t_i-t_{j-1})\right)\right.\right.\\
        \left.\left.\times A(\theta_1,\theta_2)(\Delta_j\tilde{Y}-\Delta_jY)\right.\Biggr|^p \right.\Biggr]^\frac{1}{p}\leq C_{p,k,A}e^{-Ct_i}.
      \end{align*}
    \end{proposition}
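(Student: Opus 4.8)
The plan is to use that the increment difference $\Delta_j\tilde Y-\Delta_jY$ does not depend on $\theta$, so that the supremum over $\Theta$ may be pulled inside the sum and the whole estimate reduces to combining two exponential decays. It suffices to treat $p\ge1$, the case $p<1$ following from Jensen's inequality.

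First I would rewrite the difference of increments: by (\ref{innovation}) and (\ref{def-Y-tilde}),
\begin{align*}
  \Delta_j\tilde Y-\Delta_jY=\int_{t_{j-1}}^{t_j}c^*\left(\tilde m_s^*-m_s^*\right)ds,
\end{align*}
so Minkowski's integral inequality and Proposition \ref{m*-m-diff} give, for some $C_2>0$,
\begin{align*}
  E\left[\left|\Delta_j\tilde Y-\Delta_jY\right|^p\right]^{1/p}\le|c^*|\int_{t_{j-1}}^{t_j}E\left[\left|\tilde m_s^*-m_s^*\right|^p\right]^{1/p}ds\le C_p\,h\,e^{-C_2 t_{j-1}}.
\end{align*}

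Next, since $\Delta_j\tilde Y-\Delta_jY$ is free of $\theta$, the triangle inequality and submultiplicativity of the Frobenius norm bound the random variable inside the expectation of the claim by
\begin{align*}
  \sum_{j=1}^i(t_i-t_{j-1})^k\left(\sup_{\theta\in\Theta}\left|\exp\left(-\alpha(\theta_1,\theta_2)(t_i-t_{j-1})\right)\right|\right)\left(\sup_{\theta\in\Theta}|A(\theta_1,\theta_2)|\right)\left|\Delta_j\tilde Y-\Delta_jY\right|.
\end{align*}
Corollary \ref{alpha-exponential} bounds the matrix exponential by $C_1'e^{-C_1(t_i-t_{j-1})}$ uniformly in $\theta$, $A$ is bounded on $\overline{\Theta}$ by continuity, and the polynomial factor is absorbed via $t^ke^{-C_1t}\le C_ke^{-C_1t/2}$, so the $j$-th coefficient is at most $C_{k,A}e^{-C_1(t_i-t_{j-1})/2}$. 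Taking $L^p$-norms (Minkowski for sums) and inserting the first-step bound, the left-hand side of the claimed inequality is at most $C_{p,k,A}\sum_{j=1}^i h\,e^{-\frac{C_1}{2}(t_i-t_{j-1})}e^{-C_2 t_{j-1}}$.

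Finally I would handle the exponent bookkeeping. With $C_0=\min(C_1/4,C_2)$ one has $e^{-\frac{C_1}{4}(t_i-t_{j-1})}e^{-C_2t_{j-1}}\le e^{-C_0(t_i-t_{j-1})}e^{-C_0t_{j-1}}=e^{-C_0t_i}$, so the bound above is at most $C_{p,k,A}\,e^{-C_0t_i}\sum_{j=1}^i h\,e^{-\frac{C_1}{4}(t_i-t_{j-1})}$; the residual sum is bounded uniformly in $n$ by $\int_0^{t_i}e^{-\frac{C_1}{4}(t_i-s)}ds+h\le C$ since $h=h_n\le1$ by [A1], and relabelling $C_0$ as $C$ gives the claim. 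There is no genuine obstacle here: the substance is already contained in Corollary \ref{alpha-exponential} and Proposition \ref{m*-m-diff}, and the only step needing care is this last accounting, so that the global factor $e^{-Ct_i}$ separates off cleanly while the $j$-sum stays bounded independently of $n$.
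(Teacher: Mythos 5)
Your proof is correct and follows essentially the same route as the paper's: both rewrite $\Delta_j\tilde Y-\Delta_jY=c^*\int_{t_{j-1}}^{t_j}(\tilde m_s^*-m_s^*)\,ds$, invoke Proposition \ref{m*-m-diff} for the decay $e^{-Ct_{j-1}}$, pull the ($\theta$-deterministic) exponential and $A$ out via Corollary \ref{alpha-exponential}, and combine the two exponential rates to extract $e^{-Ct_i}$ while absorbing the polynomial factor $(t_i-t_{j-1})^k$ into a slightly smaller exponent. The only cosmetic differences are your use of Minkowski's integral inequality in place of the paper's H\"older step (Lemma \ref{BDG}(1)) and a slightly different split of the exponents in the final bookkeeping.
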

    \begin{proof}
      By Lemma \ref{BDG} and Proposition \ref{m*-m-diff}
      \begin{align*}
        &E\left[ \sup_{\theta_1\in \Theta_1,\theta_2 \in \Theta_2}\left|\sum_{j=1}^i(t_i-t_{j-1})^k\exp\left( -\alpha(\theta_1,\theta_2)(t_i-t_{j-1})\right)\right.\right.\\
        &\left.\left.\times A(\theta_1,\theta_2)(\Delta_j\tilde{Y}-\Delta_jY)\right.\Biggr|^p \right]^\frac{1}{p}\\
        =&E\left[ \sup_{\theta_1\in \Theta_1,\theta_2 \in \Theta_2}\left|\sum_{j=1}^i(t_i-t_{j-1})^k\exp\left( -\alpha(\theta_1,\theta_2)(t_i-t_{j-1})\right)\right.\right.\\
        &\left.\left.\times A(\theta_1,\theta_2)c^*\int_{t_{j-1}}^{t_j}\{\tilde{m}_s^*-m_s^*\}ds\right.\Biggr|^p \right]^\frac{1}{p}\\
        \leq &\sum_{j=1}^i(t_i-t_{j-1})^k\sup_{\theta_1\in \Theta_1,\theta_2 \in \Theta_2}\left|\exp\left( -\alpha(\theta_1,\theta_2)(t_i-t_{j-1})\right)\right|\\
        &\times \left|A(\theta_1,\theta_2)\right||c^*|E\left[\left|\int_{t_{j-1}}^{t_j} \tilde{m}_s^*-m_s^*ds\right|^p \right]^\frac{1}{p}\\
        \leq &C_p\sum_{j=1}^i(t_i-t_{j-1})^ke^{-C(t_i-t_{j-1})}
        \left(h^{p-1}\int_{t_{j-1}}^{t_j}E\left[\left|\tilde{m}_s^*-m_s^*\right|^p \right]ds\right)^\frac{1}{p}\\
        \leq &C_p\sum_{j=1}^i(t_i-t_{j-1})^ke^{-C(t_i-t_{j-1})}e^{-Ct_{j-1}}h\\
        \leq &C_pe^{-Ct_i}\sum_{j=1}^i(t_i-t_{j-1})^k\\
        \leq &C_pe^{-Ct_i}\int_{t_{-1}}^{t_{i-1}}(t_i-s)^kds=C_pe^{-Ct_i}\frac{(t_i+h)^k-h^k}{k}\\
        \leq &C_{p,k}e^{-Ct_i}.
      \end{align*}
    \end{proof}

    By (\ref{def-m-hat}) and (\ref{def-m-tilde}), we obtain the following corollaries.
    \begin{corollary}\label{m-tilde-m-hat-diff}
      For any $i,n \in \mathbb{N}, p>0$ and $k=0,1,2,3,4$, it holds
      \begin{align*}
        E\left[\sup_{\theta_2\in\Theta_2}\left|\partial_{\theta_2}^k\{\tilde{m}_{i}^n(\theta)-\hat{m}_{i}^n(\theta)\}\right|^p \right]^\frac{1}{p}\leq  C_pe^{-Ct_i}.
      \end{align*}
    \end{corollary}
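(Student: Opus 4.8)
The plan is to reduce the difference $\tilde m_i^n-\hat m_i^n$ to a single weighted sum of the increments $\Delta_j\tilde{Y}-\Delta_j Y$ and then invoke Proposition~\ref{exp-Ydiff}. Comparing (\ref{def-m-hat}) and (\ref{def-m-tilde}), the two $m_0$-terms are identical and the summands carry the same deterministic coefficients, so
\[
\tilde m_i^n(\theta_2)-\hat m_i^n(\theta_2)=\sum_{j=1}^i\exp\left(-\alpha(\hat{\theta}_1^n,\theta_2)(t_i-t_{j-1})\right)\gamma_+(\hat{\theta}_1^n,\theta_2)c(\theta_2)'\Sigma(\hat{\theta}_1^n)^{-1}\,(\Delta_j\tilde{Y}-\Delta_j Y).
\]
For $k=0$ this is exactly the quantity bounded in Proposition~\ref{exp-Ydiff} with $A(\theta_1,\theta_2)=\gamma_+(\theta_1,\theta_2)c(\theta_2)'\Sigma(\theta_1)^{-1}$, which is continuous on $\Theta$ by [A2] and the $C^4$-regularity of $\gamma_+$; specializing $\theta_1=\hat{\theta}_1^n$ preserves the bound, giving $E[\sup_{\theta_2\in\Theta_2}|\tilde m_i^n(\theta_2)-\hat m_i^n(\theta_2)|^p]^{1/p}\le C_p e^{-Ct_i}$.

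For $k\ge 1$ I would differentiate term by term: each summand above is a finite product of $C^4$ functions of $\theta_2$ (using [A2] and the $C^4$-regularity of $\gamma_+$) times the fixed random vector $\Delta_j\tilde{Y}-\Delta_j Y$, hence the whole finite sum is $C^4$ in $\theta_2$ and $\partial_{\theta_2}^k$ passes inside. Applying the Leibniz rule together with the integral representation of the derivatives of $\exp(-\alpha(\theta_1,\theta_2)(t_i-t_{j-1}))$ in $\theta_2$ already used in Propositions~\ref{m-hat-norm} and~\ref{m-m-hat-diff} (following \cite{MatrixExponentialNote}), $\partial_{\theta_2}^k$ of the $j$-th summand becomes a finite linear combination of terms of the form
\[
(t_i-t_{j-1})^{l}\,E_j(\hat{\theta}_1^n,\theta_2)\,B(\hat{\theta}_1^n,\theta_2)\,(\Delta_j\tilde{Y}-\Delta_j Y),
\]
where $0\le l\le k$, $B$ is a continuous matrix-valued function on $\Theta$ assembled from $\gamma_+$, $c$, $\Sigma^{-1}$ and the derivatives of $\alpha$, and $E_j(\theta_1,\theta_2)$ is either $\exp(-\alpha(\theta_1,\theta_2)(t_i-t_{j-1}))$ or an integral over a cube $[0,1]^{r}$ ($r\le k$) of a product of factors of the shape $\exp(-s\alpha(\theta_1,\theta_2)(t_i-t_{j-1}))$ with $s\in[0,1]$. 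Since $\inf_{\Theta}\lambda_{\min}(\alpha)>0$ (Corollary~\ref{alpha-exponential}) and the Frobenius norm is submultiplicative, $|E_j(\theta_1,\theta_2)|\le C e^{-C(t_i-t_{j-1})}$ uniformly on $\Theta$, which is exactly the estimate exploited in the proof of Proposition~\ref{m-m-hat-diff}. Hence each such term is dominated in norm by $C(t_i-t_{j-1})^{l}e^{-C(t_i-t_{j-1})}|B(\hat{\theta}_1^n,\theta_2)(\Delta_j\tilde{Y}-\Delta_j Y)|$, i.e. precisely the kind of summand treated in Proposition~\ref{exp-Ydiff}; applying that proposition with its $k$ set to $l$ and its $A$ set to $B$ (equivalently, re-running its short proof, whose only input about the matrix exponential is exactly this bound) yields $C_{p,k}e^{-Ct_i}$ for each term, and summing the finitely many terms gives the corollary.

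The only delicate point is the second step: organizing the Leibniz/chain-rule expansion of $\partial_{\theta_2}^k$ so that every resulting term is genuinely of the form to which Proposition~\ref{exp-Ydiff} applies, and checking that $B$ stays continuous while $E_j$ always obeys the exponential bound. This is bookkeeping rather than a real obstacle, since the requisite decay estimates for the derivatives of $\exp(-\alpha(\theta_1,\theta_2)(t_i-t_{j-1}))$ in $\theta_2$ and for the products of matrix exponentials occurring in those derivatives have already been established in the proofs of Propositions~\ref{m-hat-norm} and~\ref{m-m-hat-diff}; one only has to record that Proposition~\ref{exp-Ydiff} may be invoked with $A$ replaced by each of the finitely many continuous coefficients $B$ produced.
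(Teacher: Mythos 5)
Your argument is correct and is exactly the one the paper intends: the paper states this corollary without proof as an immediate consequence of (\ref{def-m-hat}), (\ref{def-m-tilde}) and Proposition \ref{exp-Ydiff}, and the same reduction of $\partial_{\theta_2}^k\{\hat{m}_{i}^n(\theta_2)-\tilde{m}_{i}^n(\theta_2)\}$ to sums of the form $\sum_{j}\partial_{\theta_2}^l\exp(-\alpha(\hat{\theta}_1^n,\theta_2)(t_i-t_{j-1}))A_1(\theta)(\Delta_jY-\Delta_j\tilde{Y})$ appears explicitly in the proof of Proposition \ref{m-hat-m-tilde-sum-by-Y}. Your bookkeeping for the derivatives of the matrix exponential matches what the paper does in Propositions \ref{m-hat-norm} and \ref{m-m-hat-diff}, so nothing is missing.
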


    \begin{corollary}\label{delta-tilde-delta-diff}
      For any $i,n \in \mathbb{N}$ and $p>m_1+m_2$, it holds 
      \begin{align*}
        E\left[ |\Delta_iY-\tilde{\Delta}_iY|^p \right]^\frac{1}{p}\leq C_p(h^{\frac{3}{2}}+n^{-\frac{1}{2}}h+e^{-Ct_i}h).
      \end{align*}
    \end{corollary}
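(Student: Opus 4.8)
The plan is to use the innovation representation (\ref{innovation}) to rewrite the observed increment $\Delta_iY$ in a form directly comparable with (\ref{def-tilde-delta}). Integrating (\ref{innovation}) over $[t_{i-1},t_i]$ gives $\Delta_iY = c^*\int_{t_{i-1}}^{t_i}m_s^*\,ds + \sigma^*\Delta_i\overline{W}$, so the Brownian parts of $\Delta_iY$ and $\tilde{\Delta}_iY$ cancel exactly and
\[
  \Delta_iY-\tilde{\Delta}_iY \;=\; c^*\int_{t_{i-1}}^{t_i}\bigl(m_s^*-\tilde{m}_{i-1}(\theta_2^*)\bigr)\,ds .
\]
Thus no genuine stochastic-integral estimate is needed: the whole bound comes from the fact that the integration is over an interval of length $h$, which supplies the overall factor $h$.

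Next I would apply Lemma \ref{BDG}~(1) (a pathwise H\"older inequality followed by Fubini) to obtain
\[
  E\left[|\Delta_iY-\tilde{\Delta}_iY|^p\right]^{\frac1p}\;\le\;|c^*|\,h\,\sup_{t_{i-1}\le s\le t_i}E\left[\bigl|m_s^*-\tilde{m}_{i-1}(\theta_2^*)\bigr|^p\right]^{\frac1p},
\]
and it remains to bound $E[\,|m_s^*-\tilde{m}_{i-1}(\theta_2^*)|^p\,]^{1/p}$, uniformly for $s\in[t_{i-1},t_i]$, by $C_p(h^{1/2}+n^{-1/2}+e^{-Ct_i})$. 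I would split this increment as $(m_s^*-m_{t_{i-1}}^*)+(m_{t_{i-1}}^*-\tilde{m}_{t_{i-1}}^*)+(\tilde{m}_{t_{i-1}}^*-\tilde{m}_{i-1}(\theta_2^*))$. The first term is controlled by the time-regularity estimate (\ref{m*-conti}), giving $C_p|s-t_{i-1}|^{1/2}\le C_ph^{1/2}$; the second term is exactly Proposition \ref{m*-m-diff}, giving $C_pe^{-Ct_{i-1}}\le C_pe^{-Ct_i}$ because $h\le1$; and the third term, comparing the idealized continuous filter $\tilde{m}^*$ with the discretized filter built from $\hat{\theta}_1^n$, is handled exactly as in Proposition \ref{m-m-hat-diff} and Corollary \ref{m-tilde-m-hat-diff} — the mean-value theorem together with Theorem \ref{main-theorem-1} produces the $n^{-1/2}$ from the plug-in of $\hat{\theta}_1^n$, Corollary \ref{alpha-exponential} handles the matrix-exponential weights, and the time discretization of the driving integral contributes a further power of $h$. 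Collecting the three pieces and multiplying by $h$ gives $C_p(h^{3/2}+n^{-1/2}h+e^{-Ct_i}h)$.

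No step is a real obstacle here: the cancellation of the innovation Brownian motion, the uniform moment and continuity bounds (\ref{m*-norm})--(\ref{m*-conti}), the exponential relaxation $m_t^*\to\tilde{m}_t^*$ of Proposition \ref{m*-m-diff}, and the $\sqrt{n}$-rate of Theorem \ref{main-theorem-1} are all in hand. The only thing requiring a little care is to compare $\tilde{m}_{i-1}(\theta_2^*)$ against the correct intermediate object so that each of the three error sources — $h^{1/2}$ from time-regularity, $n^{-1/2}$ from the estimated parameter, and $e^{-Ct_i}$ from the relaxation of the Riccati solution — is isolated before the factor $h$ is pulled out of the time integral.
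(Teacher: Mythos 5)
Your proposal is correct and follows essentially the same route as the paper: cancel the Brownian parts via the innovation representation, pull the factor $h$ out of the time integral with Lemma \ref{BDG}(1), and telescope $m_s^*-\tilde{m}_{i-1}^n(\theta_2^*)$ through intermediate filters, using (\ref{m*-conti}) for the $h^{1/2}$ piece, Proposition \ref{m*-m-diff} for the $e^{-Ct_i}$ piece, and the $\sqrt{n}$-rate of Theorem \ref{main-theorem-1} plus the discretization error for the $n^{-1/2}+h$ piece. The only cosmetic difference is the choice of intermediate objects: the paper passes through $m_{t_{i-1}}(\theta^*)$ and $\hat{m}_{i-1}^n(\theta^*)$ (Proposition \ref{m-m-hat-diff} and Corollary \ref{m-tilde-m-hat-diff}), whereas you pass through $\tilde{m}_{t_{i-1}}^*=\mu_{t_{i-1}}(\theta_2^*)$, whose distance to $\tilde{m}_{i-1}^n(\theta_2^*)$ is exactly Proposition \ref{mu-m-tilde-diff}; both legs are covered by results already established by the same arguments.
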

    \begin{proof}
      By (\ref{innovation}), (\ref{def-tilde-delta}), (\ref{m*-conti}), Lemma \ref{BDG}, Propositions \ref{m-m-hat-diff} and \ref{m*-m-diff} and Corollary \ref{m-tilde-m-hat-diff}, we have
      \begin{align*}
        &E\left[ |\Delta_iY-\tilde{\Delta}_iY|^p \right]^\frac{1}{p}\leq E\left[ \left|c^*\int_{t_{i-1}}^{t_i}\{m_s^*-\tilde{m}_{i-1}(\theta_2^*)\}ds\right|^p \right]^\frac{1}{p}\\
        \leq &|c^*|\left\{E\left[ \left|\int_{t_{i-1}}^{t_i}\{m_s^*-m_{t_{i-1}}^*\}ds\right|^p \right]^\frac{1}{p}+E[|\{m_{t_{i-1}}^*-m_{t_{i-1}}(\theta^*)\}h|^p]^\frac{1}{p}\right.\\
        &+E[|\{m_{t_{i-1}}(\theta^*)-\hat{m}_{i-1}^n(\theta^*)\}h|^p]^\frac{1}{p}
        \left.+E[|\{\hat{m}_{i-1}^n(\theta^*)-\tilde{m}_{i-1}^n(\theta^*)\}h|^p]^\frac{1}{p}\right.\Biggr\}\\
        \leq &|c^*|\left\{\left(h^{p-1}\int_{t_{i-1}}^{t_i}E\left[ \left|m_s^*-m_{t_{i-1}}^*\right|^p \right]ds\right)^\frac{1}{p}+E[|m_{t_{i-1}}^*-m_{t_{i-1}}(\theta^*)h|^p]^\frac{1}{p}\right.\\
        &+E[|\{m_{t_{i-1}}(\theta^*)-\hat{m}_{i-1}^n(\theta^*)\}h|^p]^\frac{1}{p}
        \left.+E[|\{\hat{m}_{i-1}^n(\theta^*)-\tilde{m}_{i-1}^n(\theta^*)\}h|^p]^\frac{1}{p}\right.\Biggr\}\\
        \leq &C_p\{(h^{p-1}\times h^{\frac{p}{2}}h)^\frac{1}{p}+e^{-Ct_i}h+(n^{-\frac{1}{2}}+h)h+e^{-Ct_i}h\}\\
        \leq &C_p(h^{\frac{3}{2}}+n^{-\frac{1}{2}}h+e^{-Ct_i}h).
      \end{align*}
    \end{proof}

    \begin{proposition}\label{m-hat-m-tilde-sum-by-Y}
      Let $A:\Theta \to M_{d_1,d_2}(\mathbb{R})$ be a continuous mapping. Then for any $n \in \mathbb{N}$, $p>m_1+m_2$ and $k=0,1,2,3$
      \begin{align*}
        E\left[\sup_{\theta_2 \in \Theta_2}\left|\sum_{i=1}^n\partial_{\theta_2}^k\{\hat{m}_{i-1}^n(\theta_2)-\tilde{m}_{i-1}^n(\theta_2)\}A(\theta_2)\Delta_i Y\right|^{p}\right]<C_p.
      \end{align*}
    \end{proposition}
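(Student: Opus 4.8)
The plan is to reduce the bound to the moment inequality (\ref{ineq-lemma2}). The difficulty is that $\hat{\theta}_1^n$ is $\mathcal{F}_{t_n}$-measurable, not $\mathcal{F}_{t_{i-1}}$-measurable, so the summands do not have the adapted structure required by (\ref{ineq-lemma2}); to get around this I would first replace $\hat{\theta}_1^n$ by a free deterministic parameter $\theta_1$ and pass to the supremum over $\theta_1$. Concretely, for $\theta=(\theta_1,\theta_2)\in\Theta$ let $\hat{m}_i^n(\theta)$ and $\tilde{m}_i^n(\theta)$ denote the right-hand sides of (\ref{def-m-hat}) and (\ref{def-m-tilde}) with $\hat{\theta}_1^n$ replaced by $\theta_1$, and set $Z_{i-1}(\theta)=\partial_{\theta_2}^k\{\hat{m}_{i-1}^n(\theta)-\tilde{m}_{i-1}^n(\theta)\}$; since $\hat{\theta}_1^n\in\overline{\Theta}_1$, the quantity in the statement is dominated by $E[\sup_{\theta\in\Theta}|\sum_{i=1}^n Z_{i-1}(\theta)A(\theta_2)\Delta_iY|^p]$. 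Now $Z_{i-1}(\theta)$ is built from the $\mathcal{F}_{t_{i-1}}$-measurable increments $\Delta_j\tilde{Y},\Delta_jY$ $(j\leq i-1)$ and deterministic $C^4$ matrix functions of $\theta$ (the $\exp(-\alpha(\theta)t_{i-1})m_0$ terms cancel in the difference), so it is $\mathcal{F}_{t_{i-1}}$-measurable and continuously differentiable, and (\ref{ineq-lemma2}) can be applied with $U(\theta)=A(\theta_2)$, which is deterministic with $\sup_{\theta_2\in\overline{\Theta}_2}|A(\theta_2)|<\infty$ by continuity.

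It then remains to bound the four sums on the right-hand side of (\ref{ineq-lemma2}) uniformly in $n$; this reduces to the pointwise estimates $E[|Z_{j-1}(\theta)|^{2p}]^{1/2p}\leq C_pe^{-Ct_{j-1}}$ and $E[|\partial_\theta Z_{j-1}(\theta)|^{2p}]^{1/2p}\leq C_pe^{-Ct_{j-1}}$. I would obtain these exactly as Corollary \ref{m-tilde-m-hat-diff} is obtained: differentiating (\ref{def-m-hat}) and (\ref{def-m-tilde}) and using the integral representation of the derivatives of $t\mapsto\exp(-\alpha(\theta)t)$ together with the uniform stability of $-\alpha$ (Corollary \ref{alpha-exponential}), one writes $\partial_\theta^l\{\hat{m}_{j-1}^n(\theta)-\tilde{m}_{j-1}^n(\theta)\}$, for any multi-index $l$ with $|l|\leq 4$, as a finite combination of sums of the form treated in Proposition \ref{exp-Ydiff}, whence $E[\sup_{\theta\in\Theta}|\partial_\theta^l\{\hat{m}_{j-1}^n(\theta)-\tilde{m}_{j-1}^n(\theta)\}|^p]^{1/p}\leq C_pe^{-Ct_{j-1}}$, which contains the two required bounds (covering $Z_{j-1}=\partial_{\theta_2}^k\{\cdots\}$ and $\partial_\theta Z_{j-1}$, since $k\leq 3$).

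Finally, plugging these into (\ref{ineq-lemma2}) and using $h\leq 1$ (Assumption [A1]), each sum $\sum_{j=1}^n e^{-Ct_{j-1}}h$ is a Riemann sum bounded by $e^{C}\int_0^\infty e^{-Cs}\,ds$, and likewise $\sum_{j=1}^n e^{-2Ct_{j-1}}h$ and its square root are bounded independently of $n$; this yields $E[\sup_{\theta_2\in\Theta_2}|\sum_{i=1}^n\partial_{\theta_2}^k\{\hat{m}_{i-1}^n(\theta_2)-\tilde{m}_{i-1}^n(\theta_2)\}A(\theta_2)\Delta_iY|^p]\leq C_p$, which is the assertion. The main obstacle is the opening step: removing the $\mathcal{F}_{t_n}$-measurable $\hat{\theta}_1^n$ from inside the martingale-type sum by passing to a supremum over the deterministic parameter $\theta_1$, so that (\ref{ineq-lemma2}) becomes applicable; once this is done, the rest is the by-now familiar interplay between the exponential decay coming from the stability of $-\alpha$ and the estimate of Proposition \ref{exp-Ydiff}.
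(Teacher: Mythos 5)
Your proposal is correct and follows essentially the same route as the paper: both reduce the claim to the moment inequality (\ref{ineq-lemma2}) by replacing $\hat{\theta}_1^n$ with a free parameter $\theta_1$ and taking the supremum over $\Theta$, and both obtain the needed per-index bounds $E[\sup_{\theta}|\partial_\theta^l\{\hat{m}_{i-1}^n-\tilde{m}_{i-1}^n\}|^p]^{1/p}\leq C_pe^{-Ct_{i-1}}$ from Proposition \ref{exp-Ydiff} (the mechanism behind Corollary \ref{m-tilde-m-hat-diff}) before summing the resulting geometric-type series. If anything, your final summation step, which keeps the factor $h$ in $\sum_j e^{-Ct_{j-1}}h$, is slightly more careful than the paper's displayed bound $C_p\sum_i e^{-Ct_i}$.
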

    \begin{proof}
      By (\ref{def-m-hat}) and (\ref{def-m-tilde}), we have
      \begin{align*}
        &\hat{m}_{i-1}^n(\theta_2)-\tilde{m}_{i-1}^n(\theta_2)\\
        =&\sum_{j=1}^i\exp\left( -\alpha(\hat{\theta}_1^n,\theta_2)(t_i-t_{j-1})\right)\gamma_+(\hat{\theta}_1^n,\theta_2)c(\theta_2)'\Sigma(\hat{\theta}_1^n)^{-1}(\Delta_jY-\Delta_j\tilde{Y}).
      \end{align*}
      Hence for every $k=0,1,2,3$, $\partial_{\theta_2}^k\{\hat{m}_{i-1}^n(\theta_2)-\tilde{m}_{i-1}^n(\theta_2)\}$ is a sum of the form
      \begin{align*}
        \sum_{j=1}^i\partial_{\theta_2}^l\exp\left( -\alpha(\hat{\theta}_1^n,\theta_2)(t_i-t_{j-1})\right)A_1(\theta)(\Delta_jY-\Delta_j\tilde{Y})\\
        (l=0,1,2,3),
      \end{align*}
      where $A_1$ is a $M_{d_1,d_2}(\mathbb{R})$-valued $k$-dimensional tensor of class $C^1$. Thus if we set
      \begin{align*}
        \Phi(\theta)=\sum_{j=1}^i\partial_{\theta_2}^l\exp\left( -\alpha(\theta_1,\theta_2)(t_i-t_{j-1})\right)A_1(\theta)(\Delta_jY-\Delta_j\tilde{Y}),
      \end{align*}
      it is enough to show
      \begin{align}
        \label{eq14}E\left[\sup_{\theta \in \Theta}\left|\sum_{i=1}^n\Phi(\theta)\Delta_i Y\right|^{p}\right]^\frac{1}{p}<C_p.
      \end{align}
      Since by \cite{MatrixExponentialNote}, we have
      \begin{align*}
        E\left[\sup_{\theta \in \Theta}|\Phi(\theta)|^p  \right]^\frac{1}{p}\leq C_pe^{-Ct_i}
      \end{align*}
      and
      \begin{align*}
        E\left[\sup_{\theta \in \Theta}|\partial_{\theta}\Phi(\theta)|^p  \right]^\frac{1}{p}\leq C_pe^{-Ct_i}.
      \end{align*}
      Thus  it holds by (\ref{ineq-lemma2}) and Proposition \ref{exp-Ydiff}
      \begin{align*}
        E\left[\sup_{\theta \in \Theta}\left|\sum_{i=1}^n\Phi(\theta)\Delta_i Y\right|^{p}\right]^\frac{1}{p}
        \leq C_p\sum_{i=1}^ne^{-C{t_i}}\leq C_p.
      \end{align*}
      Hence we obtain (\ref{eq14}).
    \end{proof}

    \begin{proposition}\label{m-hat-dY-norm}
      Let $Z$ be a $M_{d_2}(\mathbb{R})$-valued random variable. Then for any $n \in \mathbb{N}, k=0,1,2,3$ and $p>m_1+m_2$ it holds
      \begin{align*}
        &E\left[ \left|\sup_{\theta_2 \in \Theta_2}\sum_{i=1}^n\partial_{\theta_2}^k\{\hat{m}_{i-1}^n(\theta_2)'c(\theta_2)'\}Z\Delta_jY \right|^p\right]^\frac{1}{p}\\
        \leq &C_p\left( E\left[ |A|^{4p} \right]^\frac{1}{4p}nh+E\left[ |A|^{2p} \right]^\frac{1}{2p}(nh)^\frac{1}{2} \right).
      \end{align*}
    \end{proposition}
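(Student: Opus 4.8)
The plan is to derive the estimate from inequality (\ref{ineq-lemma2}). The first thing I would deal with is that $\hat m_{i-1}^n(\theta_2)$ carries the estimator $\hat\theta_1^n$ through $\alpha(\hat\theta_1^n,\theta_2)$, $\gamma_+(\hat\theta_1^n,\theta_2)$ and $\Sigma(\hat\theta_1^n)^{-1}$, so it is $\mathcal F_{t_n}$-measurable rather than $\mathcal F_{t_{i-1}}$-measurable and (\ref{ineq-lemma2}) does not apply verbatim. As in the proof of Proposition \ref{m-hat-norm}, I would free $\theta_1$: let $g_{i-1}^n(\theta_1,\theta_2)$ be the right-hand side of (\ref{def-m-hat}) with $\hat\theta_1^n$ replaced by $\theta_1$, so that $\hat m_{i-1}^n(\theta_2)=g_{i-1}^n(\hat\theta_1^n,\theta_2)$, and set $Z_{i-1}(\theta)=\partial_{\theta_2}^k\{g_{i-1}^n(\theta_1,\theta_2)'c(\theta_2)'\}$ for $\theta=(\theta_1,\theta_2)\in\Theta$. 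Then
\begin{align*}
  \sup_{\theta_2\in\Theta_2}\left|\sum_{i=1}^n\partial_{\theta_2}^k\{\hat m_{i-1}^n(\theta_2)'c(\theta_2)'\}Z\Delta_iY\right|
  \leq \sup_{\theta\in\Theta}\left|\sum_{i=1}^nZ_{i-1}(\theta)Z\Delta_iY\right|,
\end{align*}
and now $Z_{i-1}(\theta)$ is $\mathcal F_{t_{i-1}}$-measurable (it depends only on $\Delta_1Y,\dots,\Delta_{i-1}Y$) and continuously differentiable in $\theta$, the latter because $a,b,c,\sigma$ are of class $C^4$ by [A2], $\gamma_+$ is of class $C^4$ by the proposition established above, and $k\leq 3$. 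Since $\partial_{\theta_2}^k$ produces only finitely many ($\leq m_2^k$) tensor entries, each an $M_{1,d_2}(\mathbb R)$-valued, $\mathcal F_{t_{i-1}}$-measurable, $C^1$ random variable, I would apply (\ref{ineq-lemma2}) to each tensor component in the role of $Z_{j-1}(\theta)$ and with $U(\theta)\equiv Z$, and sum the resulting bounds.

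This gives
\begin{align*}
  &E\left[\sup_{\theta\in\Theta}\left|\sum_{i=1}^nZ_{i-1}(\theta)Z\Delta_iY\right|^p\right]^\frac1p
  \leq C_pE\left[|Z|^{4p}\right]^\frac1{4p}\sup_{\theta\in\Theta}\sum_{i=1}^n\left\{E\left[|Z_{i-1}(\theta)|^{2p}\right]^\frac1{2p}+E\left[|\partial_\theta Z_{i-1}(\theta)|^{2p}\right]^\frac1{2p}\right\}h\\
  &\qquad\qquad+C_pE\left[|Z|^{2p}\right]^\frac1{2p}\sup_{\theta\in\Theta}\left\{\sum_{i=1}^nE\left[|Z_{i-1}(\theta)|^{2p}\right]^\frac1ph+\sum_{i=1}^nE\left[|\partial_\theta Z_{i-1}(\theta)|^{2p}\right]^\frac1ph\right\}^\frac12,
\end{align*}
so it remains to check $\sup_i\sup_{\theta\in\Theta}E[|Z_{i-1}(\theta)|^{2p}]<\infty$ and $\sup_i\sup_{\theta\in\Theta}E[|\partial_\theta Z_{i-1}(\theta)|^{2p}]<\infty$. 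Both follow from the estimate proved in Proposition \ref{m-hat-norm}: that argument, based only on Corollary \ref{alpha-exponential} (uniform stability of $-\alpha(\theta_1,\theta_2)$) and (\ref{ineq-lemma2}), actually yields $\sup_iE[\sup_{\theta\in\Theta}|\partial_{\theta_2}^lg_i^n(\theta_1,\theta_2)|^q]<\infty$ for every $q>m_1+m_2$ and $l=0,1,2,3$, and by the same reasoning together with the $C^4$-regularity it also covers $l=4$, which is what is needed to differentiate $Z_{i-1}$ once more when $k=3$; multiplying by the bounded $C^4$ function $c(\theta_2)'$ and its derivatives then gives the required uniform bounds. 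Feeding these into the display, each inner sum $\sum_{i=1}^n(\cdots)h$ is $\leq C_pnh$, so the first term is $\leq C_pE[|Z|^{4p}]^{1/(4p)}nh$ and the bracketed quantity in the second term is $\leq C_pnh$, making the second term $\leq C_pE[|Z|^{2p}]^{1/(2p)}(nh)^{1/2}$. This is the asserted bound (with $A=Z$).

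The hard part is not any delicate analytic estimate but the two reduction steps: recognizing that (\ref{ineq-lemma2}) is exactly the tool required, turning the $\hat\theta_1^n$-dependent and hence non-predictable integrand into an $\mathcal F_{t_{i-1}}$-measurable one by freeing $\theta_1$ and passing to a supremum over $\Theta$, and confirming that the uniform-in-$(i,\theta)$ moment bounds for $\hat m_i^n$ and its derivatives — including the fourth derivative when $k=3$ — are indeed available from (a routine extension of) Proposition \ref{m-hat-norm}. Once these are in place the inequality is immediate.
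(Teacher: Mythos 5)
Your proposal is correct and follows essentially the same route as the paper: the paper likewise frees $\theta_1$ (writing $\partial_{\theta_2}^k\{\hat m_i^n(\theta_2)'c(\theta_2)'\}$ as an $\mathcal F_{t_i}$-measurable expression $\Psi_i(\theta_1,\theta_2)$ evaluated at $\theta_1=\hat\theta_1^n$), establishes the uniform moment bounds $E[|\Psi_i(\theta)|^p]\leq C_p$ and $E[|\partial_\theta\Psi_i(\theta)|^p]\leq C_p$ exactly as in Proposition \ref{m-hat-norm}, and then applies (\ref{ineq-lemma2}). Your explicit remark that the $k=3$ case requires a fourth-order derivative bound (covered by [A2] and the $C^4$-regularity of $\gamma_+$) is a point the paper leaves implicit, but it is not a deviation in method.
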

    \begin{proof}
      By (\ref{def-m-hat}), $\partial_{\theta_2}^k\{\hat{m}_{i}^n(\theta_2)'c(\theta_2)'\}$ is a sum of the form
      \begin{align*}
        &A_i(\hat{\theta}_1^n,\theta_2)\exp\left( -\alpha(\hat{\theta}_1^n,\theta_2)t_i\right)\\
  &+\sum_{l=0}^k\sum_{j=1}^i\partial_{\theta_2}^l\exp\left( -\alpha(\hat{\theta}_1^n,\theta_2)(t_i-t_{j-1})\right)B_i(\hat{\theta}_1^n,\theta_2)\Delta_jY,
      \end{align*}
      where $A_i$ and $B_i$ are $k$-dimensional tensor valued continuously differentiable mappings on $\Theta$. Thus if we set
      \begin{align*}
        \Psi_i(\theta)=\Psi_i(\theta_1,\theta_2)=&A_i(\theta)\exp\left( -\alpha(\theta)t_i\right)\\
        &+\sum_{l=0}^k\sum_{j=1}^i\partial_{\theta_2}^l\exp\left( -\alpha(\theta_1,\theta_2)(t_i-t_{j-1})\right)B_i(\theta)\Delta_jY,
      \end{align*}
      it is enough to show
      \begin{align}
        \label{eq22}E\left[ \left|\sup_{\theta \in \Theta}\sum_{i=1}^n\Psi_{i-1}(\theta)Z\Delta_jY \right|^p\right]^\frac{1}{p}\leq C_p\left( E\left[ |A|^{4p} \right]^\frac{1}{4p}nh+E\left[ |A|^{2p} \right]^\frac{1}{2p}(nh)^\frac{1}{2} \right).
      \end{align}
      In the same way as Proposition \ref{m-hat-norm}, we first obtain
      \begin{align*}
        E\left[ \left|\Psi_i(\theta)\right|^p \right]\leq C_p
      \end{align*}
      and
      \begin{align*}
        E\left[ \left|\partial_{\theta}\Psi_i(\theta)\right|^p \right]\leq C_p.
      \end{align*}
      Therefore noting that $\Psi_i(\theta)$ is $\mathcal{F}_{t_{i-1}}$-measurable, we obtain (\ref{eq22}) by (\ref{ineq-lemma2}).
    \end{proof}

    Next, we define $\tilde{\mathbb{H}}_n^2$, $\tilde{\Delta}_n^2$, $\tilde{\Gamma}_n^2$ and $\tilde{\mathbb{Y}}_n^2$ by
    \begin{align}
      \label{def-H2-tilde}\begin{split}
        &\tilde{\mathbb{H}}_n^2(\theta_2)=\frac{1}{2}\sum_{i=1}^n\left\{-h{\Sigma^*}^{-1}[(c(\theta_2)\tilde{m}_{i-1}^n(\theta_2))^{\otimes 2}]\right.\\
      &\left.+\tilde{m}_{i-1}^n(\theta_2)'c(\theta_2)'{\Sigma^*}^{-1}\tilde{\Delta}_jY+\tilde{\Delta}_jY'{\Sigma^*}^{-1}c(\theta_2)\tilde{m}_{i-1}^n(\theta_2)\right\}
      \end{split}\\
      &\label{def-bold-Y-tilde}\tilde{\mathbb{Y}}_n^2(\theta_2)=\frac{1}{t_n}\{\tilde{\mathbb{H}}_n^2(\theta_2)-\tilde{\mathbb{H}}_n^2(\theta_2^*)\}\\
      &\label{def-tilde-delta-n}\tilde{\Delta}_n^2=\frac{1}{\sqrt{t_n}}\partial_{\theta}\tilde{\mathbb{H}}_n^2(\theta_2^*),
    \end{align}
    and
    \begin{align}\label{def-tilde-gamma}\tilde{\Gamma}_n^2=-\frac{1}{t_n}\partial_{\theta}^2\tilde{\mathbb{H}}_n^2(\theta_2^*),
    \end{align}
    respectively.

    \begin{proposition}\label{bold-H-diff}
      For any $n \in \mathbb{N}$, $p>m_1+m_2$ and $k=0,1,2,3$, it holds
      \begin{align*}
        E\left[ \sup_{\theta_2 \in \Theta_2}\left|\partial_{\theta_2}^k\{\mathbb{H}_n(\theta_2)-\tilde{\mathbb{H}}_n(\theta_2)\}\right|^p \right]^\frac{1}{p}\leq C_p(nh^{\frac{3}{2}}+n^\frac{1}{2}h+1).
      \end{align*}
    \end{proposition}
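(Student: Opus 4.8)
The plan is to expand $\partial_{\theta_2}^k\{\mathbb{H}_n^2(\theta_2)-\tilde{\mathbb{H}}_n^2(\theta_2)\}$ by the Leibniz rule and to pass from $\mathbb{H}_n^2$ to $\tilde{\mathbb{H}}_n^2$ through three successive substitutions — replacing $\Sigma(\hat{\theta}_1^n)^{-1}$ by ${\Sigma^*}^{-1}$, then $\hat{m}_{i-1}^n(\theta_2)$ by $\tilde{m}_{i-1}^n(\theta_2)$, then $\Delta_iY$ by $\tilde{\Delta}_iY$ — so that the difference splits into a fixed number of sums, each of which isolates exactly one substitution. Every such sum is, up to bounded continuous $\theta_2$-dependent factors supplied by $c(\theta_2)$ and the constant matrix ${\Sigma^*}^{-1}$, of one of two shapes: a ``quadratic'' sum carrying the factor $h$ in front (arising from the first summand of (\ref{def-H2}) and (\ref{def-H2-tilde})), or a ``linear'' sum of stochastic-integral type (arising from the last two summands), in each case with $\hat{m}$, $\tilde{m}$ or one of their $\theta_2$-derivatives and with $\Delta_iY$ or $\tilde{\Delta}_iY$. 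It therefore suffices to bound each of these two shapes for each substitution.

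First I would treat the substitution $\Sigma(\hat{\theta}_1^n)^{-1}\rightsquigarrow{\Sigma^*}^{-1}$. By the mean value theorem and Theorem \ref{main-theorem-1}, $E[|\Sigma(\hat{\theta}_1^n)^{-1}-{\Sigma^*}^{-1}|^p]^{1/p}\le C_pn^{-1/2}$, and this factor does not depend on $\theta_2$. For the quadratic sum, H\"older's inequality together with Proposition \ref{m-hat-norm} yields the bound $C_p\,h\sum_{i=1}^n n^{-1/2}=C_pn^{1/2}h$; for the linear sum, Proposition \ref{m-hat-dY-norm} applied with $Z=\Sigma(\hat{\theta}_1^n)^{-1}-{\Sigma^*}^{-1}$ gives $C_p(n^{-1/2}\cdot nh+n^{-1/2}(nh)^{1/2})=C_p(n^{1/2}h+h^{1/2})$. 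Since $h\le 1$ by [A1], both are within the asserted rate. Next, for $\hat{m}_{i-1}^n\rightsquigarrow\tilde{m}_{i-1}^n$ one uses Corollary \ref{m-tilde-m-hat-diff}, namely $E[\sup_{\theta_2}|\partial_{\theta_2}^l\{\hat{m}_{i-1}^n(\theta_2)-\tilde{m}_{i-1}^n(\theta_2)\}|^p]^{1/p}\le C_pe^{-Ct_{i-1}}$. In the quadratic sum one writes $(c\hat{m})^{\otimes2}-(c\tilde{m})^{\otimes2}=c(\hat{m}-\tilde{m})(c\hat{m})'+c\tilde{m}(c(\hat{m}-\tilde{m}))'$, bounds its $L^p$-norm by $C_pe^{-Ct_{i-1}}$ via Proposition \ref{m-hat-norm} and (\ref{m-tilde-norm}), and obtains $C_p\,h\sum_{i=1}^ne^{-Ct_{i-1}}\le C_p$; in the linear sum this is exactly the content of Proposition \ref{m-hat-m-tilde-sum-by-Y}, which gives $O(1)$.

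Finally the substitution $\Delta_iY\rightsquigarrow\tilde{\Delta}_iY$, which carries the dominant contribution. Here only $\tilde{m}$ (and the continuous factors $c$, ${\Sigma^*}^{-1}$) multiplies $\Delta_iY-\tilde{\Delta}_iY$, so there is no ready-made lemma; I would apply the Sobolev inequality (\ref{2.3}) in $\theta_2$ to reduce to bounding $E[|\cdot|^p]$ (and $E[|\partial_{\theta_2}(\cdot)|^p]$) for fixed $\theta_2$, then use the triangle inequality in $i$ together with H\"older, (\ref{m-tilde-norm}) and Corollary \ref{delta-tilde-delta-diff}, getting $C_p\sum_{i=1}^n(h^{3/2}+n^{-1/2}h+e^{-Ct_i}h)\le C_p(nh^{3/2}+n^{1/2}h+1)$ because $\sum_ie^{-Ct_i}h$ is bounded. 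Collecting the three contributions and using $h\le 1$ yields the claim.

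The main difficulty is organisational rather than analytic: after the Leibniz expansion one must keep track of the many terms produced by the three substitutions and recognise each as an instance already covered by Propositions \ref{m-hat-norm}, \ref{m-hat-dY-norm}, \ref{m-hat-m-tilde-sum-by-Y} or by Corollaries \ref{m-tilde-m-hat-diff}, \ref{delta-tilde-delta-diff}. The only genuinely new estimate is the $\Delta_iY\rightsquigarrow\tilde{\Delta}_iY$ bound in the linear sum, and it is precisely this term that produces the $nh^{3/2}$ rate appearing in the statement.
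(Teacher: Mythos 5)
Your proposal is correct and follows essentially the same route as the paper: the same telescoping decomposition into the three substitutions $\Sigma(\hat{\theta}_1^n)^{-1}\rightsquigarrow{\Sigma^*}^{-1}$, $\hat{m}\rightsquigarrow\tilde{m}$, $\Delta_iY\rightsquigarrow\tilde{\Delta}_iY$, handled respectively by Theorem \ref{main-theorem-1} with Propositions \ref{m-hat-norm} and \ref{m-hat-dY-norm}, by Corollary \ref{m-tilde-m-hat-diff} and Proposition \ref{m-hat-m-tilde-sum-by-Y}, and by Corollary \ref{delta-tilde-delta-diff}, with the same term-by-term rates ($n^{1/2}h+h^{1/2}$, $O(1)$, and $nh^{3/2}+n^{1/2}h+1$). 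You also correctly identify the $\Delta_iY\rightsquigarrow\tilde{\Delta}_iY$ piece as the source of the dominant $nh^{3/2}$ contribution, exactly as in the paper's treatment of the last two terms of its decomposition.
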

    \begin{proof}
      We only consider the case of $k=0$. The rest is the same. By (\ref{def-H2}) and (\ref{def-H2-tilde}), 
      \begin{align}
        &E\left[ \sup_{\theta_2 \in \Theta_2}\left|\mathbb{H}_n(\theta_2)-\tilde{\mathbb{H}}_n(\theta_2)\right|^p \right]^\frac{1}{p}\nonumber\\
        \leq &E\left[\sup_{\theta_2 \in \Theta_2}\left|\frac{1}{2}h\sum_{i=1}^n\{\Sigma(\hat{\theta}_1^n)^{-1}-{\Sigma^*}^{-1}\}[(c(\theta_2)\hat{m}_{j-1}^n(\theta_2))^{\otimes 2}]\right|^p \right]^\frac{1}{p}\nonumber\\
        &+E\left[\sup_{\theta_2 \in \Theta_2}\left|\frac{1}{2}\sum_{i=1}^n\hat{m}_{j-1}^n(\theta_2)'c(\theta_2)'\{\Sigma(\hat{\theta}_1^n)^{-1}-{\Sigma^*}^{-1}\}\Delta_jY\right|^p \right]^\frac{1}{p}\nonumber\\       
        &+E\left[\sup_{\theta_2 \in \Theta_2}\left|\frac{1}{2}\sum_{i=1}^n\Delta_jY'\{\Sigma(\hat{\theta}_1^n)^{-1}-{\Sigma^*}^{-1}\}c(\theta_2)\hat{m}_{j-1}^n(\theta_2)\right|^p \right]^\frac{1}{p}\nonumber\\       
        &+E\left[\sup_{\theta_2 \in \Theta_2}\left|\frac{1}{2}h\sum_{i=1}^n\left\{{\Sigma^*}^{-1}[(c(\theta_2)\hat{m}_{j-1}^n(\theta_2))^{\otimes 2}]\right.\right.\right.\nonumber\\
        &\left.\left.\left.\qquad\qquad\qquad-{\Sigma^*}^{-1}[(c(\theta_2)\tilde{m}_{j-1}^n(\theta_2))^{\otimes 2}]\right\}\right.\biggr|^p \right]^\frac{1}{p}\nonumber\\
        &+E\left[\sup_{\theta_2 \in \Theta_2}\left|\frac{1}{2}\sum_{i=1}^n\{\hat{m}_{j-1}^n(\theta_2)'c(\theta_2)'{\Sigma^*}^{-1}\Delta_jY\right.\right.\nonumber\\
        &\left.\left.\qquad\qquad\qquad-c(\theta_2)\tilde{m}_{j-1}^n(\theta_2){\Sigma^*}^{-1}\tilde{\Delta}_jY\}\right.\biggr|^p \right]^\frac{1}{p}\nonumber\\
        &+E\left[\sup_{\theta_2 \in \Theta_2}\left|\frac{1}{2}\sum_{i=1}^n\{\Delta_jY'{\Sigma^*}^{-1}c(\theta_2)\hat{m}_{j-1}^n(\theta_2)\right.\right.\nonumber\\
        \label{eq16}&\left.\left.\qquad\qquad\qquad-\tilde{\Delta}_jY'{\Sigma^*}^{-1}c(\theta_2)\tilde{m}_{j-1}^n(\theta_2)\}\right.\biggr|^p \right]^\frac{1}{p}.       
      \end{align}

      For the first three terms of the right-hand side, we have by Theorem \ref{main-theorem-1} and Proposition \ref{m-hat-norm}
      \begin{align*}
        &E\left[\sup_{\theta_2 \in \Theta_2}\left|\frac{1}{2}h\sum_{i=1}^n\{\Sigma(\hat{\theta}_1^n)^{-1}-{\Sigma^*}^{-1}\}[(c(\theta_2)\hat{m}_{j-1}^n(\theta_2))^{\otimes 2}]\right|^p \right]^\frac{1}{p}\\
        \leq &\sum_{i=1}^nE\left[\sup_{\theta_2 \in \Theta_2}\left|\frac{1}{2}h\{\Sigma(\hat{\theta}_1^n)^{-1}-{\Sigma^*}^{-1}\}[(c(\theta_2)\hat{m}_{j-1}^n(\theta_2))^{\otimes 2}]\right|^p \right]^\frac{1}{p}\\
        \leq &\frac{1}{2}h\sum_{i=1}^nE\left[\left|\Sigma(\hat{\theta}_1^n)^{-1}-{\Sigma^*}^{-1}]\right|^{2p} \right]^\frac{1}{2p}E\left[\sup_{\theta_2 \in \Theta_2}\left|c(\theta_2)\hat{m}_{j-1}^n(\theta_2)\right|^{4p} \right]^\frac{1}{2p}\\
        \leq &C_pn^\frac{1}{2}h,
      \end{align*}
      and by Proposition \ref{m-hat-dY-norm}
      \begin{align*}
        &E\left[\sup_{\theta_2 \in \Theta_2}\left|\frac{1}{2}\sum_{i=1}^n\hat{m}_{j-1}^n(\theta_2)'c(\theta_2)'\{\Sigma(\hat{\theta}_1^n)^{-1}-{\Sigma^*}^{-1}\}\Delta_jY\right|^p \right]^\frac{1}{p}\\
        &\leq C_pn^{-\frac{1}{2}}\{nh+(nh)^\frac{1}{2}\}\leq C_p(n^{-\frac{1}{2}}h+h^\frac{1}{2}).
      \end{align*}
      In the same way, the third term can be bounded by $C_p(n^{-\frac{1}{2}}h+h^\frac{1}{2})$.

      Furthermore, making use of Proposition \ref{m-hat-norm}, (\ref{m-tilde-norm}) and Corollary \ref{m-tilde-m-hat-diff}, we can bound the fourth term by $\displaystyle C_p\sum_{i=1}^nhe^{-Ct_i}\leq C_ph$, noting that
      \begin{align*}
        &\Sigma(\hat{\theta}_1^n)^{-1}[(c(\theta_2)\hat{m}_{j-1}^n(\theta_2))^{\otimes 2}]-\Sigma(\hat{\theta}_1^n)^{-1}[(c(\theta_2)\tilde{m}_{j-1}^n(\theta_2))^{\otimes 2}]\\
        =&\{\hat{m}_{j-1}^n(\theta_2)+\tilde{m}_{j-1}^n(\theta_2)\}'c(\theta_2)'\Sigma(\hat{\theta}_1^n)^{-1}c(\theta_2)\{\hat{m}_{j-1}^n(\theta_2)-\tilde{m}_{j-1}^n(\theta_2)\}\\
        &+\{\hat{m}_{j-1}^n(\theta_2)-\tilde{m}_{j-1}^n(\theta_2)\}'c(\theta_2)'\Sigma(\hat{\theta}_1^n)^{-1}\tilde{m}_{j-1}^n(\theta_2)\\
        &+\tilde{m}_{j-1}^n(\theta_2)'c(\theta_2)'\Sigma(\hat{\theta}_1^n)^{-1}\{\hat{m}_{j-1}^n(\theta_2)-\tilde{m}_{j-1}^n(\theta_2)\}.
      \end{align*}

     Finally, the last two terms can be bounded by $\displaystyle C_p+C_p\sum_{i=1}^n(h^{\frac{3}{2}}+n^{-\frac{1}{2}}h+e^{-Ct_i}h)\leq C_p(1+nh^{\frac{3}{2}}+n^\frac{1}{2}h+h)$ due to the Corollary \ref{delta-tilde-delta-diff}, Proposition \ref{m-hat-m-tilde-sum-by-Y} and the identity
     \begin{align*}
      &\hat{m}_{j-1}^n(\theta_2)'c(\theta_2)'\Sigma(\hat{\theta}_1^n)^{-1}\Delta_jY
      -\tilde{m}_{j-1}^n(\theta_2)'c(\theta_2)'\Sigma(\hat{\theta}_1^n)^{-1}\tilde{\Delta}_jY\\
      =&\{\hat{m}_{j-1}^n(\theta_2)-\tilde{m}_{j-1}^n(\theta_2)\}'c(\theta_2)'\Sigma(\hat{\theta}_1^n)^{-1}\Delta_jY\\
      &+\tilde{m}_{j-1}^n(\theta_2)'c(\theta_2)'\Sigma(\hat{\theta}_1^n)^{-1}\{\Delta_jY-\tilde{\Delta}_jY\}.
     \end{align*}

     Putting it all together, we obtain
     \begin{align*}
      E\left[ \sup_{\theta_2 \in \Theta_2}\left|\mathbb{H}_n(\theta_2)-\tilde{\mathbb{H}}_n(\theta_2)\right|^p \right]^\frac{1}{p}
      \leq &C_p(1+nh^{\frac{3}{2}}+n^\frac{1}{2}h+h^\frac{1}{2}+h)\\
      \leq &C_p(1+nh^{\frac{3}{2}}+n^\frac{1}{2}h).
     \end{align*}
    \end{proof}

    \begin{proposition}\label{tilde-Dalta-norm}
      For any $p\geq 2$, it holds 
        \begin{align*}
          \sup_{n \in \mathbb{N}}E\left[ |\tilde{\Delta}_n|^p \right]<\infty.
        \end{align*}
    \end{proposition}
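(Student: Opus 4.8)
The plan is to differentiate $\tilde{\mathbb{H}}_n^2$, recognise $\tilde{\Delta}_n=\tilde{\Delta}_n^2=\frac{1}{\sqrt{t_n}}\partial_{\theta_2}\tilde{\mathbb{H}}_n^2(\theta_2^*)$ as $\frac{1}{\sqrt{t_n}}$ times a discrete stochastic integral against the innovation process $\overline{W}$, and then bound the $L^p$-norm of that integral by $C_p\,t_n^{1/2}$ by a Burkholder-type estimate, so that the normalisation by $\sqrt{t_n}$ gives the uniform bound. It is enough to treat large $p$, say $p>m_1+m_2$; the remaining values of $p$ follow by Jensen's inequality.

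First I would compute the quasi-score. Differentiating (\ref{def-H2-tilde}), using that the last two summands there are transposes of each other and that $\Sigma^*$ is symmetric, one gets
\begin{align*}
  \partial_{\theta_2}\tilde{\mathbb{H}}_n^2(\theta_2)=\sum_{i=1}^n\left(\partial_{\theta_2}\{c(\theta_2)\tilde{m}_{i-1}^n(\theta_2)\}\right)'{\Sigma^*}^{-1}\left\{\tilde{\Delta}_iY-h\,c(\theta_2)\tilde{m}_{i-1}^n(\theta_2)\right\},
\end{align*}
where $\partial_{\theta_2}\{c(\theta_2)\tilde{m}_{i-1}^n(\theta_2)\}$ is the Jacobian in $\theta_2$. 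Evaluating at $\theta_2=\theta_2^*$ and inserting $\tilde{\Delta}_iY=c^*\tilde{m}_{i-1}^n(\theta_2^*)h+\sigma^*\Delta_i\overline{W}$ from (\ref{def-tilde-delta}), the $O(h)$ drift part of $\tilde{\Delta}_iY$ cancels the $-h\,c^*\tilde{m}_{i-1}^n(\theta_2^*)$ term, leaving
\begin{align*}
  \partial_{\theta_2}\tilde{\mathbb{H}}_n^2(\theta_2^*)=\sum_{i=1}^n\chi_{i-1}^n\,\Delta_i\overline{W},\qquad \chi_{i-1}^n:=\left(\partial_{\theta_2}\{c(\theta_2)\tilde{m}_{i-1}^n(\theta_2)\}\big|_{\theta_2=\theta_2^*}\right)'{\Sigma^*}^{-1}\sigma^*.
\end{align*}
This is the only place the precise construction of $\tilde{\mathbb{H}}_n^2$ and $\tilde{\Delta}_iY$ is used; it is the discrete counterpart of the fact that the score of a correctly specified Gaussian likelihood is centred (cf.\ the remark following Theorem \ref{main-theorem-2}).

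The summands $\chi_{i-1}^n\Delta_i\overline{W}$ would form a martingale difference sequence were it not that $\tilde{m}_{i-1}^n$, hence $\chi_{i-1}^n$, depends on the estimator $\hat{\theta}_1^n$, which is not $\mathcal{F}_{t_{i-1}}$-measurable. I would get around this by freezing $\hat{\theta}_1^n$: write $\chi_{i-1}^n(\theta_1)$ for the quantity obtained by replacing $\hat{\theta}_1^n$ with a free parameter $\theta_1\in\overline{\Theta}_1$ (it is $\mathcal{F}_{t_{i-1}}$-measurable, since $\tilde{Y}$ is an adapted functional of $\overline{W}$), so that
\begin{align*}
  |\tilde{\Delta}_n|\leq \frac{1}{\sqrt{t_n}}\sup_{\theta_1\in\overline{\Theta}_1}\left|\sum_{i=1}^n\chi_{i-1}^n(\theta_1)\Delta_i\overline{W}\right|.
\end{align*}
Applying Lemma \ref{pre-ineq} to the right-hand side (with $\overline{W}$ in place of $W$, the factor $\partial_{\theta_2}\{c(\theta_2)\tilde{m}_{i-1}^n(\theta_2)\}\big|_{\theta_2^*}(\theta_1)$ in the role of $Z_{i-1}(\theta_1)$, and the constant matrix ${\Sigma^*}^{-1}\sigma^*$ as $U$), then Lemma \ref{BDG}(1), and the bounds $\sup_iE[\sup_{\theta_1,\theta_2}|\partial_{\theta_2}^k\tilde{m}_i^n(\theta_2)|^q]<\infty$ of (\ref{m-tilde-norm}) — which, as in the proof of Proposition \ref{m-hat-norm}, hold with $\hat{\theta}_1^n$ replaced by a free $\theta_1$, also for the mixed derivatives in $(\theta_1,\theta_2)$, and which together with [A2] control $\chi_{i-1}^n(\theta_1)$ and its $\theta_1$-derivative uniformly in $i,n,\theta_1$ — gives $E[\sup_{\theta_1}|\sum_{i=1}^n\chi_{i-1}^n(\theta_1)\Delta_i\overline{W}|^p]\leq C_p(nh)^{p/2}=C_p\,t_n^{p/2}$. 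Dividing by $t_n^{p/2}$ yields $\sup_nE[|\tilde{\Delta}_n|^p]\leq C_p<\infty$.

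I expect the substance to lie in two conceptual points rather than in any single difficult estimate. The first is organising the differentiation so that the $O(h)$ drift terms visibly cancel — this is what forces the pairing of the $-h[\,\cdot\,]^{\otimes2}$ term with the plug-in mean $c^*\tilde{m}_{i-1}^n(\theta_2^*)h$ hidden inside $\tilde{\Delta}_iY$, and is the reason $\tilde{\mathbb{H}}_n^2$, $\tilde{Y}$ and $\tilde{\Delta}_iY$ are set up as they are. The second is recovering the martingale (hence Burkholder) structure despite the non-adapted $\hat{\theta}_1^n$, which rests on the $\theta_1$-indexed supremum and Lemma \ref{pre-ineq}, and which in turn requires $\tilde{m}_i^n$ to be $C^1$ in $\theta_1$ with moments bounded uniformly in $i,n,\theta_1$ — a fact obtained from the $C^4$-regularity of $\gamma_+,\alpha,\Sigma$ in $\theta_1$, Corollary \ref{alpha-exponential}, and the argument of Proposition \ref{m-hat-norm}. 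Once these are in place, the moment bookkeeping and the $\sqrt{t_n}$-scaling are routine.
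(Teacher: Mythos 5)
Your proposal is correct and follows essentially the same route as the paper: differentiate $\tilde{\mathbb{H}}_n^2$, use the cancellation of the $O(h)$ drift against the mean part of $\tilde{\Delta}_iY$ to write $\tilde{\Delta}_n^2=\frac{1}{\sqrt{t_n}}\sum_{i=1}^n\partial_{\theta_2}\tilde{M}_{i-1}^n(\theta_2^*)'{{\sigma^*}'}^{-1}\Delta_i\overline{W}$, and then bound this by Lemma \ref{BDG} together with the moment bounds (\ref{m-tilde-norm}). Your additional device of freezing $\hat{\theta}_1^n$ into a free parameter $\theta_1$ and invoking Lemma \ref{pre-ineq} addresses the non-adaptedness of the integrand, a point the paper's proof passes over silently when applying the Burkholder-type bound, so your version is if anything the more carefully justified one.
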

    \begin{proof}
      If we set $\tilde{M}_j^n(\theta_2)=c(\theta_2)\tilde{m}_j^n(\theta)$, we have
      \begin{align}
        \label{tilde-Delta-formula}\begin{split}
          \tilde{\Delta}_n^2=&\frac{1}{2\sqrt{t_n}}\sum_{i=1}^n\left\{ \partial_{\theta_2}\tilde{M}_{i-1}^n(\theta_2)'{{\sigma^*}'}^{-1}\Delta_i\overline{W}+\Delta_i \overline{W}'{\sigma^*}^{-1}\partial_{\theta_2}\tilde{M}_{i-1}^n(\theta_2) \right\}\\
        =&\frac{1}{\sqrt{t_n}}\sum_{i=1}^n\left\{ \partial_{\theta_2}\tilde{M}_i^n(\theta_2)'{{\sigma^*}'}^{-1}\Delta_i \overline{W} \right\}.
        \end{split}        
      \end{align}
      by (\ref{def-tilde-delta}), (\ref{def-tilde-delta-n}) and (\ref{def-captal-M}). Thus by Lemma \ref{BDG} and (\ref{m-tilde-norm}),
      \begin{align*}
        E\left[ |\tilde{\Delta}_n|^p \right]^\frac{2}{p}
        &\leq \frac{1}{{t_n}^\frac{p}{2}}E\left[ \left(\sum_{i=1}^n|\partial_{\theta_2}\tilde{M}_i^n(\theta_2)'{{\sigma^*}'}^{-1}|^2h\right)^\frac{p}{2} \right]^\frac{2}{p}\\
        &\leq \frac{1}{{t_n}^\frac{p}{2}}C_p\sum_{i=1}^nE\left[ |\partial_{\theta_2}\tilde{M}_i^n(\theta_2)'{{\sigma^*}'}^{-1}|^2 \right]^\frac{2}{p}h\\
        &\leq \frac{1}{{t_n}^\frac{p}{2}}\times C_pnh=C_p.
      \end{align*}
    \end{proof}

    Next, we define the process $\{\mu_t\}$ by replacing $Y$ with $\tilde{Y}$ (therefore $m_t^*$ with $m_t(\theta^*)$ and $\gamma_t^*$ with $\gamma_+(\theta^*)$) in (\ref{def-m});
    \begin{align}
      \label{def-mu}\begin{split}
        &\mu_t(\theta_2)=\exp\left( -\alpha(\theta_2)t\right)m_0\\
      &+\int_0^t\exp\left( -\alpha(\theta_2)(t-s)\right)\gamma_+(\theta_2)c(\theta_2)'{\Sigma^*}^{-1}d\tilde{Y}_s.
      \end{split}  
    \end{align}
    Then as $m_t$ is the solution of (\ref{eq-m}), so $\mu_t$ is the solution of
    \begin{align}
      \label{eq-mu}\begin{cases}
        d\mu_t(\theta_2)=-\alpha(\theta_2)\mu_tdt+\gamma_{+}(\theta_2)c(\theta_2)'{\Sigma^*}^{-1}d\tilde{Y}_t\\
        \mu_0(\theta_2)=m_0.
      \end{cases}
    \end{align}
    Moreover, it holds $\mu_t(\theta_2^*)=\tilde{m}_t^*$ since by (\ref{def-m-tilde}) $\tilde{m}_t^*$ is the solution of
    \begin{align*}
      d\tilde{m}_t^*=-a^*\tilde{m}_t^*+\gamma_+(\theta^*){c^*}'{{\sigma^*}'}^{-1}d\overline{W}_t,
    \end{align*}
   which is equivalent to
    \begin{align*}
      d\tilde{m}_t^*=-\alpha(\theta_2^*)\tilde{m}_t^*dt+\gamma_{+}(\theta^*){c^*}'{\Sigma^*}^{-1}d\tilde{Y}_t.
    \end{align*}

    Moreover, just as Proposition \ref{m-m-hat-diff}, the following proposition holds:
    \begin{proposition}\label{mu-m-tilde-diff}
      For any $n,i \in \mathbb{N}$ and $p>m_1+m_2$, we have
      \begin{align*}
        E\left[ \sup_{\theta_2 \in \Theta_2}|\mu_{t_i}(\theta_2)-\tilde{m}_i^n(\theta_2)|^p \right]^\frac{1}{p}\leq C_p(n^{-\frac{1}{2}}+h).
      \end{align*}
    \end{proposition}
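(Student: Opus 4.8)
The plan is to mimic the proof of Proposition \ref{m-m-hat-diff} verbatim, with $Y$ replaced by $\tilde{Y}$ throughout. Observe that $\mu_{t_i}(\theta_2)$, defined by (\ref{def-mu}), and $\tilde{m}_i^n(\theta_2)$, defined by (\ref{def-m-tilde}), stand in exactly the same relation to each other as $m_{t_i}(\theta_2)$ and $\hat{m}_i^n(\theta_2)$ do: one is a stochastic integral against $d\tilde{Y}_s$ with coefficient $\exp(-\alpha(\theta_1^*,\theta_2)(t_i-s))\gamma_+(\theta_1^*,\theta_2)c(\theta_2)'{\Sigma^*}^{-1}$ plus the initial term $\exp(-\alpha(\theta_1^*,\theta_2)t_i)m_0$, while the other is the corresponding Riemann sum with coefficient $\exp(-\alpha(\hat{\theta}_1^n,\theta_2)(t_i-t_{j-1}))\gamma_+(\hat{\theta}_1^n,\theta_2)c(\theta_2)'\Sigma(\hat{\theta}_1^n)^{-1}$ plus $\exp(-\alpha(\hat{\theta}_1^n,\theta_2)t_i)m_0$. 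First I would split $\mu_{t_i}(\theta_2)-\tilde{m}_i^n(\theta_2)$ into four pieces exactly as in (\ref{eq7}): (a) the initial-condition discrepancy $\{\exp(-\alpha(\theta_1^*,\theta_2)t_i)-\exp(-\alpha(\hat{\theta}_1^n,\theta_2)t_i)\}m_0$; (b) the time-discretization error of the stochastic integral carrying the exact coefficient; (c) the error from replacing $\gamma_+(\theta_1^*,\theta_2)c(\theta_2)'{\Sigma^*}^{-1}$ by $\gamma_+(\hat{\theta}_1^n,\theta_2)c(\theta_2)'\Sigma(\hat{\theta}_1^n)^{-1}$; and (d) the error from replacing $\exp(-\alpha(\theta_1^*,\theta_2)(t_i-t_{j-1}))$ by $\exp(-\alpha(\hat{\theta}_1^n,\theta_2)(t_i-t_{j-1}))$.

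Then I would bound each piece as in Proposition \ref{m-m-hat-diff}. Terms (a), (c), (d) are $O(n^{-1/2})$: for (a) apply the mean value theorem, Lemma \ref{matrix-exponential} and Theorem \ref{main-theorem-1} (which, taking $f(x)=|x|^p$, yields $E[|\hat{\theta}_1^n-\theta_1^*|^p]^{1/p}\le C_p n^{-1/2}$), as in (\ref{eq10}); for (c) use the mean value theorem and Theorem \ref{main-theorem-1} together with (\ref{ineq-lemma2}) to handle the sum against $d\tilde{Y}$, as in (\ref{eq12}); for (d) use the same integral representation of the difference of matrix exponentials, the uniform exponential decay from Corollary \ref{alpha-exponential}, and again (\ref{ineq-lemma2}) and Theorem \ref{main-theorem-1}, as in (\ref{eq13}). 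Term (b) is $O(h)$: it is the analogue of (\ref{eq11}), bounded via (\ref{ineq-lemma1}) applied with integrand $f_{j-1}(s,\theta_2)=\{\exp(-\alpha(\theta_1^*,\theta_2)(t_i-s))-\exp(-\alpha(\theta_1^*,\theta_2)(t_i-t_{j-1}))\}\gamma_+(\theta_1^*,\theta_2)c(\theta_2)'{\Sigma^*}^{-1}$, using the Lipschitz-in-$s$ estimates $|\exp(-\alpha(\theta_1^*,\theta_2)(t_i-s))-\exp(-\alpha(\theta_1^*,\theta_2)(t_i-t_{j-1}))|\le Ce^{-C(t_i-s)}h$ and the corresponding bound for $(t_i-\cdot)\exp(-\alpha(\theta_1^*,\theta_2)(t_i-\cdot))$, which were already established in the proof of Proposition \ref{m-m-hat-diff}. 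Summing the three geometric series over $j$ and adding the four contributions gives the claimed rate $C_p(n^{-1/2}+h)$.

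The one point that needs a separate check — and the only genuine (if minor) obstacle — is that inequalities (\ref{ineq-lemma1}) and (\ref{ineq-lemma2}) remain valid with $Y$ replaced by $\tilde{Y}$. Their proofs use the semimartingale decomposition (\ref{innovation}), $dY_t=c^*m_t^*dt+\sigma^*d\overline{W}_t$, together with the moment bound (\ref{m*-norm}), $\sup_t E[|m_t^*|^p]\le C_p$. For $\tilde{Y}$ we instead have (\ref{def-Y-tilde}), $d\tilde{Y}_t=c^*\tilde{m}_t^*dt+\sigma^*d\overline{W}_t$, so the same arguments apply once we know $\sup_t E[|\tilde{m}_t^*|^p]\le C_p$; this follows at once by combining (\ref{m*-norm}) with Proposition \ref{m*-m-diff}, which gives $E[|m_t^*-\tilde{m}_t^*|^p]^{1/p}\le C_p e^{-Ct}$. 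With this moment transfer in place, both lemmas carry over to $\tilde{Y}$ unchanged, and the remainder of the proof is a line-by-line transcription of the proof of Proposition \ref{m-m-hat-diff}.
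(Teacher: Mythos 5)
Your proposal is correct and coincides with the paper's approach: the paper states this proposition without proof, remarking only that it holds ``just as Proposition \ref{m-m-hat-diff},'' which is precisely the line-by-line transcription you describe. Your additional check that (\ref{ineq-lemma1}) and (\ref{ineq-lemma2}) transfer to $\tilde{Y}$ via the moment bound $\sup_t E[|\tilde{m}_t^*|^p]\le C_p$ (from (\ref{m*-norm}) and Proposition \ref{m*-m-diff}) is exactly the point the paper leaves implicit, and you handle it correctly.
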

    Together with (\ref{m-tilde-norm}), we obtain the following corollary.
    \begin{corollary}\label{mu-norm}
      For any $i \in \mathbb{N}$ and $p>m_1+m_2$, we have
      \begin{align*}
        E\left[ \sup_{\theta_2 \in \Theta_2}|\mu_{t_i}(\theta_2)|^p \right]^\frac{1}{p}\leq C_p.
      \end{align*}
    \end{corollary}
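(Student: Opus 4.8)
The plan is to read off the bound from the triangle inequality together with the two facts just recorded. Decomposing $\mu_{t_i}(\theta_2)=\{\mu_{t_i}(\theta_2)-\tilde{m}_i^n(\theta_2)\}+\tilde{m}_i^n(\theta_2)$ and applying Minkowski's inequality to the $L^p$-norm of $\sup_{\theta_2\in\Theta_2}|\cdot|$ gives
\begin{align*}
  E\left[\sup_{\theta_2\in\Theta_2}|\mu_{t_i}(\theta_2)|^p\right]^{\frac{1}{p}}\le{}&E\left[\sup_{\theta_2\in\Theta_2}|\mu_{t_i}(\theta_2)-\tilde{m}_i^n(\theta_2)|^p\right]^{\frac{1}{p}}\\
  &+E\left[\sup_{\theta_2\in\Theta_2}|\tilde{m}_i^n(\theta_2)|^p\right]^{\frac{1}{p}}.
\end{align*}
For the first term on the right I would invoke Proposition \ref{mu-m-tilde-diff}, which bounds it by $C_p(n^{-\frac{1}{2}}+h)$; since $n\ge 1$ and $h=h_n\le 1$ by [A1], this is at most $2C_p$, a constant free of $i$ and $n$. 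For the second term I would use (\ref{m-tilde-norm}) with $k=0$, which gives a bound uniform in $i$ and $n$. Adding the two bounds produces the asserted estimate.

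There is essentially no obstacle at this stage: all the substance lies in Proposition \ref{mu-m-tilde-diff} (proved exactly as Proposition \ref{m-m-hat-diff}, with $Y$, $m^*$, $\gamma_s^*$ replaced by $\tilde{Y}$, $m_t(\theta^*)$, $\gamma_+(\theta^*)$) and in the uniform moment bound (\ref{m-tilde-norm}) (proved exactly as Proposition \ref{m-hat-norm} via Corollary \ref{alpha-exponential} and the exponential decay of $|\exp(-\alpha(\theta_1,\theta_2)t)|$). The only point worth stating explicitly is that the resulting constant $C_p$ is independent of both $i$ and $n$, which is immediate from the two input bounds.
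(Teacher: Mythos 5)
Your proof is correct and coincides with the paper's own (one-line) argument: the corollary is stated as an immediate consequence of Proposition \ref{mu-m-tilde-diff} and the moment bound (\ref{m-tilde-norm}), combined exactly via the triangle/Minkowski inequality as you do. The observation that $C_p(n^{-1/2}+h)\le 2C_p$ under [A1] is the only detail the paper leaves implicit, and you have supplied it.
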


    \begin{proposition}\label{c-mu-diff}
      \begin{align*}
        E[{\Sigma^*}^{-1}[\{c(\theta_2)\mu_t(\theta_2)-c(\theta_2^*)\mu_t(\theta_2^*)\}^{\otimes 2}]],=-2\mathbb{Y}(\theta_2)+O(e^{-t_{i}})
      \end{align*}
      where $O(e^{-t})$ is some continuous function $r:\Theta \to \mathbb{R}$ such that
      \begin{align*}
        |r(\theta)|\leq Ce^{-Ct}.
      \end{align*}
    \end{proposition}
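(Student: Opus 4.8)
The plan is to write $v_t(\theta_2):=c(\theta_2)\mu_t(\theta_2)-c(\theta_2^*)\mu_t(\theta_2^*)$ as an exponentially small deterministic term plus a Wiener integral whose kernel is exactly the matrix appearing between the outer braces in (\ref{def-Y2}), and then to read off the claim from the It\^{o} isometry together with the cyclic invariance of the trace.

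First I would recall that $\mu_t(\theta_2^*)=\tilde{m}_t^*$, so that, by (\ref{eq-mu}), (\ref{def-Y-tilde}) and the equation satisfied by $\tilde{m}^*$ (together with (\ref{alpha-a-gamma}), which turns $-\alpha(\theta_2^*)+\gamma_+(\theta^*){c^*}'{\Sigma^*}^{-1}c^*$ into $-a^*$), the $2d_1$-dimensional process $Z_t(\theta_2)=(\mu_t(\theta_2)',\mu_t(\theta_2^*)')'$ solves the linear equation $dZ_t=\mathcal{A}(\theta_2)Z_t\,dt+\mathcal{B}(\theta_2)\,d\overline{W}_t$ with $Z_0=(m_0',m_0')'$, where, using ${\Sigma^*}^{-1}\sigma^*={{\sigma^*}'}^{-1}$,
\[
  \mathcal{A}(\theta_2)=\begin{pmatrix}-\alpha(\theta_2)&\gamma_+(\theta_2)c(\theta_2)'{\Sigma^*}^{-1}c^*\\ O&-a^*\end{pmatrix},\qquad
  \mathcal{B}(\theta_2)=\begin{pmatrix}\gamma_+(\theta_2)c(\theta_2)'\\ \gamma_+(\theta^*){c^*}'\end{pmatrix}{{\sigma^*}'}^{-1}.
\]
By It\^{o}'s formula $Z_t=e^{\mathcal{A}(\theta_2)t}Z_0+\int_0^te^{\mathcal{A}(\theta_2)(t-s)}\mathcal{B}(\theta_2)\,d\overline{W}_s$, so with $\mathcal{C}(\theta_2)=(c(\theta_2),-c^*)$ we get $v_t(\theta_2)=\mathcal{C}(\theta_2)Z_t=R_t(\theta_2)+\int_0^tK(t-s,\theta_2)\,d\overline{W}_s$, where $R_t(\theta_2)=\mathcal{C}(\theta_2)e^{\mathcal{A}(\theta_2)t}Z_0$ and $K(\tau,\theta_2)=\mathcal{C}(\theta_2)e^{\mathcal{A}(\theta_2)\tau}\mathcal{B}(\theta_2)$. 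Since $\mathcal{A}(\theta_2)$ is block upper triangular, the upper-right block of $e^{\mathcal{A}(\theta_2)\tau}$ equals $\int_0^\tau e^{-\alpha(\theta_2)(\tau-u)}\gamma_+(\theta_2)c(\theta_2)'{\Sigma^*}^{-1}c^*e^{-a^*u}\,du$; carrying out the product $\mathcal{C}(\theta_2)e^{\mathcal{A}(\theta_2)\tau}\mathcal{B}(\theta_2)$ and changing variables $u\mapsto\tau-u$ in this convolution gives $K(\tau,\theta_2)=M(\tau,\theta_2){{\sigma^*}'}^{-1}$, where $M(\tau,\theta_2)$ is precisely the matrix between the outer braces in (\ref{def-Y2}) with $s$ replaced by $\tau$.

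Taking expectations, $R_t(\theta_2)$ is deterministic and the Wiener integral is centred, so the It\^{o} isometry and ${{\sigma^*}'}^{-1}(\sigma^*)^{-1}={\Sigma^*}^{-1}$ yield
\[
  E\bigl[{\Sigma^*}^{-1}[v_t(\theta_2)^{\otimes2}]\bigr]
  =\mathrm{Tr}\bigl({\Sigma^*}^{-1}R_t(\theta_2)R_t(\theta_2)'\bigr)
  +\int_0^t\mathrm{Tr}\bigl({\Sigma^*}^{-1}M(\tau,\theta_2){\Sigma^*}^{-1}M(\tau,\theta_2)'\bigr)\,d\tau .
\]
On the other hand, expanding the tensor notation $A[B^{\otimes2}]=B'AB$ in (\ref{def-Y2}) and then using cyclic invariance of the trace, the identity ${{\sigma^*}'}^{-1}(\sigma^*)^{-1}={\Sigma^*}^{-1}$ and symmetry of $\Sigma^*$ gives $-2\mathbb{Y}_2(\theta_2)=\int_0^\infty\mathrm{Tr}\bigl({\Sigma^*}^{-1}M(s,\theta_2){\Sigma^*}^{-1}M(s,\theta_2)'\bigr)\,ds$. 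Subtracting, the asserted identity holds with
\[
  r(\theta_2)=\mathrm{Tr}\bigl({\Sigma^*}^{-1}R_t(\theta_2)R_t(\theta_2)'\bigr)-\int_t^\infty\mathrm{Tr}\bigl({\Sigma^*}^{-1}M(s,\theta_2){\Sigma^*}^{-1}M(s,\theta_2)'\bigr)\,ds .
\]

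It remains to estimate $r$ uniformly in $\theta_2$. The diagonal blocks of $\mathcal{A}(\theta_2)$ are $-\alpha(\theta_1^*,\theta_2)$ and $-a^*$, both uniformly stable on $\overline{\Theta}_2$ by Corollary \ref{alpha-exponential} and [A3], so Lemma \ref{matrix-exponential} gives $|e^{\mathcal{A}(\theta_2)\tau}|\leq C_1e^{-C_2\tau}$; since a convolution of two exponentially decaying kernels is again exponentially decaying, this yields $|R_t(\theta_2)|\leq Ce^{-Ct}$ and $|M(s,\theta_2)|\leq Ce^{-Cs}$ uniformly in $\theta_2$. Hence $|\mathrm{Tr}({\Sigma^*}^{-1}R_t(\theta_2)R_t(\theta_2)')|\leq C|R_t(\theta_2)|^2\leq Ce^{-Ct}$ and $\int_t^\infty|\mathrm{Tr}({\Sigma^*}^{-1}M(s,\theta_2){\Sigma^*}^{-1}M(s,\theta_2)')|\,ds\leq C\int_t^\infty e^{-Cs}\,ds\leq Ce^{-Ct}$, so $|r(\theta_2)|\leq Ce^{-Ct}$ uniformly; continuity of $r$ on $\Theta$ follows from the $C^4$-smoothness of $a,c,\alpha,\gamma_+$ and dominated convergence. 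I expect the only real work to be organizational: matching $\mathcal{C}(\theta_2)e^{\mathcal{A}(\theta_2)\tau}\mathcal{B}(\theta_2)$ with the somewhat involved integrand of (\ref{def-Y2}) (via the change of variables $u\mapsto\tau-u$ in the convolution), and keeping the matrix-algebra bookkeeping straight when passing between ${\Sigma^*}^{-1}[\,\cdot^{\otimes2}\,][({{\sigma^*}'}^{-1})^{\otimes2}]$ and a single trace; no analytic difficulty arises beyond the elementary exponential estimates above.
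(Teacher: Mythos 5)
Your proposal is correct and follows essentially the same route as the paper: both identify $c(\theta_2)\mu_t(\theta_2)-c^*\mu_t(\theta_2^*)$ as an exponentially small deterministic term plus a Wiener integral whose kernel is exactly the integrand of (\ref{def-Y2}), then apply the It\^{o} isometry and bound the tail $\int_t^\infty$ by the exponential decay of the kernel. The only difference is organizational — you extract the kernel from the upper-triangular block structure of the joint linear system for $(\mu_t(\theta_2),\mu_t(\theta_2^*))$, whereas the paper substitutes the explicit formula (\ref{def-tilde-m*}) for $\tilde{m}_s^*$ into (\ref{def-mu}) and interchanges the order of integration to arrive at the same expression (\ref{mu-formula}).
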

    \begin{proof}
      By (\ref{def-mu}) and (\ref{def-m-tilde}), we have 
      \begin{align}
        \mu_t(\theta)=&\exp(-\alpha(\theta_2)t)m_0\nonumber\\
        &+\int_0^t\exp(-\alpha(\theta_2)(t-s))\gamma_{+}(\theta_2)c(\theta_2)'{\Sigma^*}^{-1}c^*\tilde{m}_sds\nonumber\\
        &+\int_0^t\exp(-\alpha(\theta_2)(t-s))\gamma_{+}(\theta_2)c(\theta_2)'{{\sigma^*}'}^{-1}d\overline{W}_s\nonumber\\
        &=\exp(-\alpha(\theta_2)t)m_0\nonumber\\
        &+\int_0^t\exp(-\alpha(\theta_2)(t-s))\gamma_{+}(\theta_2)c(\theta_2)'{\Sigma^*}^{-1}c^*\nonumber\\
        &\times \left\{\exp(-a^*s)m_0+\int_0^s\exp(-a^*(s-u))\gamma_+(\theta^*){c^*}'{{\sigma^*}'}^{-1}d\overline{W}_u\right\}ds\nonumber\\
        &+\int_0^t\exp(-\alpha(\theta_2)(t-s))\gamma_{+}(\theta_2)c(\theta_2)'{{\sigma^*}'}^{-1}d\overline{W}_s\nonumber\\
        =&\exp(-\alpha(\theta_2)t)m_0\nonumber\\
        &+\int_0^t\exp(-\alpha(\theta_2)(t-s))\gamma_{+}(\theta_2)c(\theta_2)'{\Sigma^*}^{-1}c^*\exp(-a^*s)m_0ds\nonumber\\
        &+\int_0^t\int_0^s\exp(-\alpha(\theta_2)(t-s))\gamma_{+}(\theta_2)c(\theta_2)'{\Sigma^*}^{-1}c^*\nonumber\\
        &\qquad\qquad\qquad\times \exp(-a^*(s-u))\gamma_+(\theta^*){c^*}'{{\sigma^*}'}^{-1}d\overline{W}_uds\nonumber\\
        &+\int_0^t\exp(-\alpha(\theta_2)(t-s))\gamma_{+}(\theta_2)c(\theta_2)'{{\sigma^*}'}^{-1}d\overline{W}_s\nonumber\\
        \label{mu-formula}\begin{split}
          =&\exp(-\alpha(\theta_2)t)m_0\\
        &+\int_0^t\exp(-\alpha(\theta_2)(t-s))\gamma_{+}(\theta_2)c(\theta_2)'{\Sigma^*}^{-1}c^*\exp(-a^*s)m_0ds\\
        &+\int_0^t\left\{\int_s^t\exp(-\alpha(\theta_2)(t-u))\gamma_{+}(\theta_2)c(\theta_2)'{\Sigma^*}^{-1}c^*\right.\\
        &\qquad\qquad\qquad\exp(-a^*(u-s))\gamma_+(\theta^*){c^*}'{{\sigma^*}'}^{-1}du\\
        &\left.+\exp(-\alpha(\theta_2)(t-s))\gamma_{+}(\theta_2)c(\theta_2)'{{\sigma^*}'}^{-1}\right\}d\overline{W}_s.
        \end{split}        
      \end{align}
      Therefore
      \begin{align*}
        &E[{\Sigma^*}^{-1}[\{c(\theta_2)\mu_t(\theta_2)-c(\theta_2^*)\mu_t(\theta_2^*)\}^{\otimes 2}]]\\
        =&E[{\Sigma^*}^{-1}[\{c(\theta_2)\mu_t(\theta_2)-c(\theta_2^*)\tilde{m}_t^*\}^{\otimes 2}]]\\
        =&E\left[{\Sigma^*}^{-1}\left[ \left\{\int_0^t\left\{\int_s^tc(\theta_2)\exp(-\alpha(\theta_2)(t-u))\gamma_{+}(\theta_2)c(\theta_2)'{\Sigma^*}^{-1}c^*\right.\right.\right.\right.\\
        &\qquad\qquad\qquad\times \exp(-a^*(u-s))\gamma_+(\theta^*){c^*}'du\\
        &+c(\theta_2)\exp(-\alpha(\theta_2)(t-s))\gamma_{+}(\theta_2)c(\theta_2)'\\
        &\left.\left.\left.\left.-c^*\exp(-a^*(t-s))\gamma_+(\theta^*){c^*}'\right\}{{\sigma^*}'}^{-1}d\overline{W}_s\right\}^{\otimes 2} \right]  \right]+O(e^{-Ct})\\
        =&\mathrm{Tr}\int_0^t{\Sigma^*}^{-1}\left[ \left\{ \int_s^tc(\theta_2)\exp(-\alpha(\theta_2)(t-u))\gamma_{+}(\theta_2)c(\theta_2)'{\Sigma^*}^{-1}c^*\right.\right.\\
        &\qquad\qquad\qquad\times \exp(-a^*(u-s))\gamma_+(\theta^*){c^*}'du\\
        &+c(\theta_2)\exp(-\alpha(\theta_2)(t-s))\gamma_{+}(\theta_2)c(\theta_2)'\\
        &\left.\left.-c^*\exp(-a^*(t-s))\gamma_+(\theta^*){c^*}' \right\}^{\otimes 2} \right][({{\sigma^*}'}^{-1})^{\otimes 2}]ds+O(e^{-Ct})\\
        =&\mathrm{Tr}\int_0^t{\Sigma^*}^{-1}\left.\Biggl[ \left\{ \int_0^{s}c(\theta_2)\exp(-\alpha(\theta_2)u)\gamma_{+}(\theta_2)c(\theta_2)'{\Sigma^*}^{-1}c^*\right.\right.\\
        &\qquad\qquad\qquad\times \exp(-a^*(s-u))\gamma_+(\theta^*){c^*}'du\\
        &+c(\theta_2)\exp(-\alpha(\theta_2)s)\gamma_{+}(\theta_2)c(\theta_2)'\\
        &\left.\left.-c^*\exp(-a^*s)\gamma_+(\theta^*){c^*}' \right.\biggr\}^{\otimes 2} \right][({{\sigma^*}'}^{-1})^{\otimes 2}]ds+O(e^{-Ct}).
      \end{align*}
      
      Now we have
      \begin{align*}
        &\int_0^s|c(\theta_2)\exp(-\alpha(\theta_2)u)\gamma_{+}(\theta_2)c(\theta_2)'{\Sigma^*}^{-1}c^*\exp(-a^*(s-u))\gamma_+(\theta^*){c^*}'|du\\
        \leq &\int_0^sC_pe^{-Cu}e^{-C(s-u)}du\leq C_pse^{-Cs}\leq C_pe^{-Cs}
      \end{align*}
      and thus by (\ref{def-Y2})
      \begin{align*}
        &|E[{\Sigma^*}^{-1}[\{c(\theta_2)\mu_t(\theta_2)-c(\theta_2^*)\mu_t(\theta_2^*)\}^{\otimes 2}]]+2 \mathbb{Y}^2(\theta_2)|\\
        &=\left|\int_{t}^\infty{\Sigma^*}^{-1}\left[ \left\{ \int_0^{s}c(\theta_2)\exp(-\alpha(\theta_2)u)\gamma_{+}(\theta_2)c(\theta_2)'{\Sigma^*}^{-1}c^*\right.\right.\right.\\
        &\qquad\qquad\qquad\times \exp(-a^*(s-u))\gamma_+(\theta^*){c^*}'du\\
        &+c(\theta_2)\exp(-\alpha(\theta_2)s)\gamma_{+}(\theta_2)c(\theta_2)'\\
        &\left.\left.\left.-c^*\exp(-a^*s)\gamma_+(\theta^*){c^*}' \right\}^{\otimes 2} \right][({{\sigma^*}'}^{-1})^{\otimes 2}]ds\right|\\
        \leq &Ce^{-Ct}.
      \end{align*}
    \end{proof}

    \begin{proposition}\label{Y-tilde-Y-diff}
      For any $n \in \mathbb{N}$ and $p>m_1+m_2$, it holds
      \begin{align*}
        E\left[ \sup_{\theta_2 \in \Theta_2}|\tilde{\mathbb{Y}}_n^2(\theta_2)-\mathbb{Y}^2(\theta_2)|^p \right]^\frac{1}{p}\leq C_p\left( h+n^{-\frac{1}{2}}+{t_n}^{-\frac{1}{2}} \right).
      \end{align*}
    \end{proposition}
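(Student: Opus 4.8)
The plan is to expand $\tilde{\mathbb{H}}_n^2(\theta_2)-\tilde{\mathbb{H}}_n^2(\theta_2^*)$ by completing the square; since $\tilde{\mathbb{Y}}_n^2(\theta_2)=t_n^{-1}(\tilde{\mathbb{H}}_n^2(\theta_2)-\tilde{\mathbb{H}}_n^2(\theta_2^*))$ by (\ref{def-bold-Y-tilde}), this represents $\tilde{\mathbb{Y}}_n^2(\theta_2)$ as a martingale term plus an empirical average of a quadratic functional. I will show that the martingale term is $O(t_n^{-1/2})$ and that the empirical average converges to $\mathbb{Y}^2(\theta_2)$ with the asserted rate, all measured in the norm $Z\mapsto E[\sup_{\theta_2\in\Theta_2}|Z|^p]^{1/p}$, the supremum over $\theta_2$ being handled at the end via the Sobolev inequality.

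Put $\tilde{M}_{i-1}^n(\theta_2)=c(\theta_2)\tilde{m}_{i-1}^n(\theta_2)$. The two cross terms in (\ref{def-H2-tilde}) are scalar transposes of one another, so substituting $\tilde{\Delta}_iY$ from (\ref{def-tilde-delta}) and completing the square in $\tilde{M}_{i-1}^n(\theta_2)-\tilde{M}_{i-1}^n(\theta_2^*)$ gives
\begin{align*}
  \tilde{\mathbb{H}}_n^2(\theta_2)-\tilde{\mathbb{H}}_n^2(\theta_2^*)=\sum_{i=1}^n\Bigl\{-\tfrac{h}{2}{\Sigma^*}^{-1}\bigl[\bigl(\tilde{M}_{i-1}^n(\theta_2)-\tilde{M}_{i-1}^n(\theta_2^*)\bigr)^{\otimes 2}\bigr]+\bigl(\tilde{M}_{i-1}^n(\theta_2)-\tilde{M}_{i-1}^n(\theta_2^*)\bigr)'{\Sigma^*}^{-1}\sigma^*\Delta_i\overline{W}\Bigr\}
\end{align*}
up to a term whose contribution to $\tilde{\mathbb{Y}}_n^2$ is $O(n^{-1/2}+h)$ (present only if $\tilde{m}_{i-1}(\theta_2^*)$ in (\ref{def-tilde-delta}) differs from $\tilde{m}_{i-1}^n(\theta_2^*)$, and then bounded by Proposition \ref{mu-m-tilde-diff} and (\ref{m-tilde-norm})). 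For the martingale term I replace $\hat{\theta}_1^n$ by a free variable $\theta_1\in\Theta_1$: for fixed $(\theta_1,\theta_2)$ the matrix $Z_{i-1}(\theta_1,\theta_2):=c(\theta_2)\tilde{m}_{i-1}^n(\theta_1,\theta_2)-c(\theta_2^*)\tilde{m}_{i-1}^n(\theta_1,\theta_2^*)$ is $\mathcal{F}_{t_{i-1}}$-measurable, so Lemma \ref{pre-ineq} applies with $U={\Sigma^*}^{-1}\sigma^*$; since (\ref{m-tilde-norm}), extended to $(\theta_1,\theta_2)$-derivatives as in Proposition \ref{m-hat-norm}, bounds the moments of $Z_{i-1}$ and $\partial_{(\theta_1,\theta_2)}Z_{i-1}$ uniformly in $i,n$, and $E[(\sum_{i=1}^n|Z_{i-1}|^2h)^p]\le C_p(nh)^p$ by H\"older's inequality, Lemma \ref{pre-ineq} gives $E[\sup_{\Theta_1\times\Theta_2}|\sum_{i=1}^n Z_{i-1}{\Sigma^*}^{-1}\sigma^*\Delta_i\overline{W}|^p]^{1/p}\le C_pt_n^{1/2}$, so the martingale term is $O(t_n^{-1/2})$.

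It remains to control $-\frac{1}{2n}\sum_{i=1}^n{\Sigma^*}^{-1}[(\tilde{M}_{i-1}^n(\theta_2)-\tilde{M}_{i-1}^n(\theta_2^*))^{\otimes 2}]-\mathbb{Y}^2(\theta_2)$. First, replacing $\tilde{m}_{i-1}^n(\theta_2)$ by $\mu_{t_{i-1}}(\theta_2)$ and $\tilde{m}_{i-1}^n(\theta_2^*)$ by $\mu_{t_{i-1}}(\theta_2^*)=\tilde{m}_{t_{i-1}}^*$ costs $O(n^{-1/2}+h)$ by Proposition \ref{mu-m-tilde-diff}, Corollary \ref{mu-norm}, (\ref{m-tilde-norm}) and Cauchy--Schwarz, and removes the $\hat{\theta}_1^n$-dependence. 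By Proposition \ref{c-mu-diff} each summand of $-\frac{1}{2n}\sum_i{\Sigma^*}^{-1}[(c(\theta_2)\mu_{t_{i-1}}(\theta_2)-c^*\mu_{t_{i-1}}(\theta_2^*))^{\otimes 2}]$ has expectation $\mathbb{Y}^2(\theta_2)$ up to $O(e^{-Ct_{i-1}})$, and $\frac1n\sum_i e^{-Ct_{i-1}}\le C/(nh)=O(t_n^{-1})$, so the bias is $O(t_n^{-1})$. For the fluctuation I use the representation (\ref{mu-formula}): for fixed $\theta_2$,
\begin{align*}
  c(\theta_2)\mu_t(\theta_2)-c^*\mu_t(\theta_2^*)=\int_0^t\kappa(t-s,\theta_2)\,d\overline{W}_s+\rho_t(\theta_2),
\end{align*}
with $|\kappa(\tau,\theta_2)|\le Ce^{-C\tau}$ uniformly in $\theta_2$ (by Corollary \ref{alpha-exponential}, [A3] and Lemma \ref{matrix-exponential}), $|\rho_t(\theta_2)|\le Ce^{-Ct}$, and likewise for $\partial_{\theta_2}$. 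Setting $N_{i-1}(\theta_2)=\int_0^{t_{i-1}}\kappa(t_{i-1}-u,\theta_2)\,d\overline{W}_u$ and applying It\^o's formula to $r\mapsto N_r'{\Sigma^*}^{-1}N_r$, the centered average $\frac1n\sum_i({\Sigma^*}^{-1}[N_{i-1}^{\otimes 2}]-E[{\Sigma^*}^{-1}[N_{i-1}^{\otimes 2}]])$ becomes a single It\^o integral $\frac2n\int_0^{t_n}G_n(r,\theta_2)\,d\overline{W}_r$, where $G_n(r,\theta_2)=\sum_{i:t_{i-1}>r}(\int_0^r\kappa(t_{i-1}-u,\theta_2)d\overline{W}_u)'{\Sigma^*}^{-1}\kappa(t_{i-1}-r,\theta_2)$; from $\|\int_0^r\kappa(t_{i-1}-u,\theta_2)d\overline{W}_u\|_p\le C_pe^{-C(t_{i-1}-r)}$, $|\kappa(t_{i-1}-r,\theta_2)|\le Ce^{-C(t_{i-1}-r)}$ and summation of the geometric series over $\{i:t_{i-1}>r\}$ one gets $\|G_n(r,\theta_2)\|_p\le C_p/h$, whence Lemma \ref{BDG}(2) yields $\|\frac2n\int_0^{t_n}G_n(r,\theta_2)d\overline{W}_r\|_p\le\frac{C_p}{n}(t_n/h^2)^{1/2}=C_pt_n^{-1/2}$; the cross-terms involving $\rho$ are $O(t_n^{-1})$ by a direct bound, and repeating with $\partial_{\theta_2}$ and then applying the Sobolev inequality extends everything to $\sup_{\theta_2}$. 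Collecting the bias, the fluctuation and the two replacement errors yields the bound $C_p(h+n^{-1/2}+t_n^{-1/2})$.

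The main obstacle is the fluctuation estimate in the last paragraph. A naive treatment only gives $O(1)$: after division by $t_n$ the summands of the quadratic sum are $O(1)$ and do not telescope, and consecutive summands remain strongly correlated over blocks of $\sim h^{-1}$ indices. The decisive structural fact is that the process $t\mapsto\mu_t(\theta_2)$ (together with $\tilde{m}_t^*$) decorrelates on an $O(1)$ time-scale and not an $O(h)$ one --- equivalently, the kernels $\kappa$ decay exponentially in elapsed time rather than in number of steps --- so the effective sample size is $t_n=nh$ rather than $n$. The combination of the It\^o-integral representation with the geometric-series estimate $\|G_n(r,\theta_2)\|_p\le C_p/h$ is what converts this heuristic into the rate $t_n^{-1/2}$, uniformly in $\theta_2$.
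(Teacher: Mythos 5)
Your proposal is correct and follows essentially the same route as the paper: completing the square in $\tilde{\mathbb{H}}_n^2(\theta_2)-\tilde{\mathbb{H}}_n^2(\theta_2^*)$, bounding the martingale part by $O(t_n^{-1/2})$ via a Sobolev/Burkholder argument (your use of Lemma \ref{pre-ineq} is just a packaged form of the paper's direct estimate (\ref{eq21})), replacing $\tilde{m}^n$ by $\mu$ at cost $O(n^{-1/2}+h)$, extracting the bias from Proposition \ref{c-mu-diff}, and controlling the fluctuation by rewriting the centered quadratic sum as a single It\^o integral whose integrand is bounded by $C_p/h$ through the exponential decay of the kernels, exactly as in the paper's (\ref{eq17})--(\ref{eq19}). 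The structural observation you emphasize --- that the effective sample size is $t_n=nh$ because the kernels decay in elapsed time rather than in step count --- is precisely the mechanism the paper exploits.
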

    \begin{proof}
      By (\ref{def-Y-tilde}) and (\ref{def-bold-Y-tilde})
      \begin{align*}
        \tilde{\mathbb{Y}}_n^2(\theta_2)=&\frac{1}{2t_n}\sum_{i=1}^n\left\{-h{\Sigma^*}^{-1}[(c(\theta_2)\tilde{m}_{i-1}^n(\theta_2))^{\otimes 2}]+h{\Sigma^*}^{-1}[(c^*\tilde{m}_{i-1}^n(\theta_2^*))^{\otimes 2}]\right.\\
        &+\{\tilde{m}_{i-1}^n(\theta_2)'c(\theta_2)'-\tilde{m}_{i-1}^n(\theta_2^*)'{c^*}'\}{\Sigma^*}^{-1}(c^*\tilde{m}_{i-1}(\theta_2^*)h+\sigma^*\Delta_j\overline{W})\\
        &+\left.(\tilde{m}_{i-1}(\theta_2^*){c^*}'h+\Delta_j\overline{W}'{\sigma^*}'){\Sigma^*}^{-1}\{c(\theta_2)\tilde{m}_{i-1}^n(\theta_2)-c^*\tilde{m}_{i-1}^n(\theta_2^*)\}\right\}\\
        =&\frac{1}{2t_n}\sum_{i=1}^n\left\{ -h{\Sigma^*}^{-1}[(c(\theta_2)\tilde{m}_{i-1}^n(\theta_2)-c^*\tilde{m}_{i-1}^n(\theta_2^*))^{\otimes 2}]\right.\\
        &+\{\tilde{m}_{i-1}^n(\theta_2)'c(\theta_2)'-\tilde{m}_{i-1}^n(\theta_2^*)'{c^*}'\}{\Sigma^*}^{-1}\sigma^*\Delta_j\overline{W})\\
        &\left.+\Delta_j\overline{W}'{\sigma^*}'{\Sigma^*}^{-1}\{c(\theta_2)\tilde{m}_{i-1}^n(\theta_2)-c^*\tilde{m}_{i-1}^n(\theta_2^*)\}\right\}.
      \end{align*}
      Thus we have
      \begin{align}
        \label{eq15}\begin{split}
          &E\left[ \sup_{\theta_2 \in \Theta_2}|\tilde{\mathbb{Y}}_n^2(\theta_2)-\mathbb{Y}^2(\theta_2)|^p \right]^\frac{1}{p}\\
          \leq &\frac{h}{2t_n}E\left[ \sup_{\theta_2 \in \Theta_2}\left|\sum_{i=1}^n{\Sigma^*}^{-1}[(c(\theta_2)\tilde{m}_{i-1}^n(\theta_2)-c^*\tilde{m}_{i-1}^n(\theta_2^*))^{\otimes 2}]\right.\right.\\
        &\left.\left.-{\Sigma^*}^{-1}[(c(\theta_2)\mu_{t_{i-1}}(\theta_2)-c^*\mu_{t_{i-1}}(\theta_2^*))^{\otimes 2}]\right.\Biggr|^p \right]^\frac{1}{p}\\
        &+\frac{h}{2t_n}E\left[ \sup_{\theta_2 \in \Theta_2}\left|\sum_{i=1}^n\left\{{\Sigma^*}^{-1}[(c(\theta_2)\mu_{t_{i-1}}(\theta_2)-c^*\mu_{t_{i-1}}(\theta_2^*))^{\otimes 2}]\right.\right.\right.\\
        &\left.\left.\left.\qquad\qquad\qquad\qquad\qquad\qquad\qquad\qquad\qquad\qquad\qquad+2\mathbb{Y}^2(\theta)\right\}\right.\Biggr|^p \right]^\frac{1}{p}\\
        &+\frac{1}{2t_n}E\left[\sup_{\theta_2 \in \Theta_2}\left|\sum_{i=1}^n\{\tilde{m}_{i-1}^n(\theta_2)'c(\theta_2)'-\tilde{m}_{i-1}^n(\theta_2^*)'{c^*}'\}{\Sigma^*}^{-1}\sigma^*\Delta_j\overline{W}\right|^p \right]^\frac{1}{p}\\
        &+\frac{1}{2t_n}E\left[\sup_{\theta_2 \in \Theta_2}\left|\sum_{i=1}^n\Delta_j\overline{W}'{\sigma^*}'{\Sigma^*}^{-1}\{c(\theta_2)\tilde{m}_{i-1}^n(\theta_2)-c^*\tilde{m}_{i-1}^n(\theta_2^*)\right|^p \right]^\frac{1}{p}.
        \end{split}         
      \end{align}
      For the first term of this, making use of Proposition \ref{mu-m-tilde-diff}, Corollary \ref{mu-norm} and (\ref{m-tilde-norm}), we obtain
      \begin{align}
        \label{eq20}\begin{split}
          &\frac{h}{2t_n}E\left[ \sup_{\theta_2 \in \Theta_2}\left|\sum_{i=1}^n{\Sigma^*}^{-1}[(c(\theta_2)\tilde{m}_{i-1}^n(\theta_2)-c^*\tilde{m}_{i-1}^n(\theta_2^*))^{\otimes 2}]\right.\right.\\
        &\left.\left.-{\Sigma^*}^{-1}[(c(\theta_2)\mu_{t_{i-1}}(\theta_2)-c^*\mu_{t_{i-1}}(\theta_2^*))^{\otimes 2}]\right.\Biggr|^p \right]^\frac{1}{p}\\
        \leq &C_p\frac{h}{2t_n}\times (n^{-\frac{1}{2}}+h)\times n\leq C_p(n^{-\frac{1}{2}}+h),
        \end{split}        
      \end{align}
      just as we evaluate the fourth term of (\ref{eq16}). 

      Now we consider the second term. Due to the proof of Proposition \ref{c-mu-diff}, $c(\theta_2)\mu_{t_i}^n(\theta_2)-c^*\mu_{t_i}^n(\theta_2^*)$ has the form
      \begin{align*}
        c(\theta_2)\mu_{t_i}^n(\theta_2)-c^*\mu_{t}^n(\theta_2^*)
        =p_i(\theta_2)+\int_0^{t_i}q_i(s;\theta_2)d\overline{W}_s
      \end{align*}
      where
      \begin{align*}
        p_i(\theta_2)=&\exp(-\alpha(\theta_2)t_i)m_0-\exp(-\alpha(\theta_2^*)t_i)m_0\\
        &+\int_0^{t_i}\{\exp(-\alpha(\theta_2)(t_i-s))\gamma_{+}(\theta_2)c(\theta_2)'\\
        &-\exp(-\alpha(\theta_2^*)(t_i-s))\gamma_{+}(\theta_2^*){c^*}'\}{\Sigma^*}^{-1}c^*\exp(-a^*s)m_0ds,\\
        q_i(s;\theta_2)=&\int_s^{t_i}c(\theta_2)\exp(-\alpha(\theta_2)(t_i-u))\gamma_{+}(\theta_2)c(\theta_2)'\\
        &\times {\Sigma^*}^{-1}c^*\exp(-a^*(u-s))\gamma_+(\theta^*){c^*}'du\\
        &+c(\theta_2)\exp(-\alpha(\theta_2)(t_i-s))\gamma_{+}(\theta_2)c(\theta_2)'\\
        &-c^*\exp(-a^*(t_i-s))\gamma_+(\theta^*){c^*}'.
      \end{align*}
      Then if we set $\displaystyle \nu^i_t(\theta_2)=p_i(\theta_2)+\int_0^{t}q_i(s;\theta_2)d\overline{W}_s$, It\^{o}'s formula gives
      \begin{align*}
        &{\Sigma^*}^{-1}\left[\left\{ c(\theta_2)\mu_{t_i}^n(\theta_2)-c^*\mu_{t}^n(\theta_2^*) \right\}^{\otimes 2}\right]\\
        =&{\Sigma^*}^{-1}[( \nu^i_{t_i}(\theta_2) )^{\otimes 2}]
        =\int_0^{t_i}{\Sigma^*}^{-1}[( \nu^i_{t_i}(\theta_2) )^{\otimes 2}]\\
        =&{\Sigma^*}^{-1}[p_i(\theta_2)^{\otimes 2}]+2\int_{0}^{t_i}{\nu^i_{s}(\theta_2)}'{\Sigma^*}^{-1}q_i(s;\theta_2)d\overline{W}_s\\
        &+\mathrm{Tr}\int_0^{t_i}{\Sigma^*}^{-1}[q_i(s;\theta_2)^{\otimes 2}]ds\\
        =&E\left[ {\Sigma^*}^{-1}\left[\left\{ c(\theta_2)\mu_{t_i}^n(\theta_2)-c^*\mu_{t}^n(\theta_2^*) \right\}^{\otimes 2}\right] \right]\\
        &+2\int_{0}^{t_i}{\nu^i_{s}(\theta_2)(\theta_2)}'{\Sigma^*}^{-1}q_i(s;\theta_2)d\overline{W}_s\\
        =&-2\mathbb{Y}^2(\theta_2)+2\int_{0}^{t_i}{\nu^i_{s}(\theta_2)}'{\Sigma^*}^{-1}q_i(s;\theta_2)d\overline{W}_s+O(e^{-Ct_i}).
      \end{align*}
      Therefore
      \begin{align}
        \label{eq18}\begin{split}
          &\frac{h}{2t_n}E\left[ \sup_{\theta_2 \in \Theta_2}\left|\sum_{i=1}^n\left\{{\Sigma^*}^{-1}[(c(\theta_2)\mu_{t_{i-1}}^n(\theta_2)-c^*\mu_{t_{i-1}}^n(\theta_2^*))^{\otimes 2}]+2\mathbb{Y}^2(\theta)\right\}\right|^p \right]^\frac{1}{p}\\
        \leq &\frac{h}{t_n}E\left[ \sup_{\theta_2 \in \Theta_2}\left|\sum_{i=1}^n\int_{0}^{t_i}{\nu^i_{s}(\theta_2)}'{\Sigma^*}^{-1}q_i(s;\theta_2)d\overline{W}_s\right|^p \right]^\frac{1}{p}+\frac{1}{2t_n}\sum_{i=1}^nCe^{-Ct_i}h\\
        \leq &\frac{h}{t_n}E\left[ \sup_{\theta_2 \in \Theta_2}\left|\sum_{i=1}^n\int_{0}^{t_i}{\nu^i_{s}(\theta_2)}'{\Sigma^*}^{-1}q_i(s;\theta_2)d\overline{W}_s\right|^p \right]^\frac{1}{p}+\frac{C}{t_n}.
        \end{split}        
      \end{align}
      Now by Lemma \ref{differentiable-lemma} and the continuos differentiability of $p_i$ and $q_i$, we can assume $\displaystyle \nu^i_t(\theta_2)$ is continuously differentiable with respect to $\theta_2$ and almost surely
      \begin{align*}
        \partial_{\theta_2}\nu^i_t(\theta_2)=\partial_{\theta_2}p_i(\theta_2)+\int_0^t\partial_{\theta_2}q_i(s;\theta_2)ds.
      \end{align*}
      Thus by Lemma \ref{BDG} (2) we obtain for any $T>0,p\geq 2$ and $\theta_2,\theta_2' \in \Theta_2$
      \begin{align*}
        \sup_{0\leq t\leq T}E\left[ |\nu_t^i(\theta_2)-\nu_t^i(\theta_2')|^p \right]\leq C_p|\theta_2-\theta_2'|^p
      \end{align*}
      and 
      \begin{align*}
        \sup_{0\leq t\leq T}E\left[ |\partial_{\theta_2}\nu_t^i(\theta_2)-\partial_{\theta_2}\nu_t^i(\theta_2')|^p \right]\leq C_p|\theta_2-\theta_2'|^p.
      \end{align*}
      Then again by Lemma \ref{differentiable-lemma}, $\displaystyle \int_{0}^{t_i}{\nu^i_{s}(\theta_2)}'{\Sigma^*}^{-1}q_i(s;\theta_2)d\overline{W}_s$ is continuously differentiable and we have almost surely
      \begin{align*}
        \partial_{\theta_2}\int_{0}^{t_i}{\nu^i_{s}(\theta_2)}'{\Sigma^*}^{-1}q_i(s;\theta_2)d\overline{W}_s
        =\int_{0}^{t_i}\partial_{\theta_2}\{{\nu^i_{s}(\theta_2)}'{\Sigma^*}^{-1}q_i(s;\theta_2)\}d\overline{W}_s.
      \end{align*}
      Therefore the Sobolev inequality gives for any $p>m_1+m_2$
      \begin{align}
        \label{eq17}\begin{split}
          &E\left[ \sup_{\theta_2 \in \Theta_2}\left|\sum_{i=1}^n\int_{0}^{t_i}{\nu^i_{s}(\theta_2)}'{\Sigma^*}^{-1}q_i(s;\theta_2)d\overline{W}_s\right|^p \right]^\frac{1}{p}\\
          =&E\left[ \sup_{\theta_2 \in \Theta_2}\left|\int_{0}^{t_n}\sum_{i=1}^n{\nu^i_{s}(\theta_2)}'{\Sigma^*}^{-1}q_i(s;\theta_2)1_{[0,t_i]}(s)d\overline{W}_s\right|^p \right]^\frac{1}{p}\\
        \leq &C_p\sup_{\theta_2 \in \Theta_2}E\left[ \left|\int_{0}^{t_n}\sum_{i=1}^n{\nu^i_{s}(\theta_2)}'{\Sigma^*}^{-1}q_i(s;\theta_2)1_{[0,t_i]}(s)d\overline{W}_s\right|^p \right]^\frac{1}{p}\\
        &+C_p\sup_{\theta_2 \in \Theta_2}E\left[ \left|\int_{0}^{t_n}\sum_{i=1}^n\partial_{\theta_2}\{{\nu^i_{s}(\theta_2)}'{\Sigma^*}^{-1}q_i(s;\theta_2)\}1_{[0,t_i]}(s)d\overline{W}_s\right|^p \right]^\frac{1}{p}.
        \end{split}        
      \end{align}
      Now we have $|p_t(\theta_2)|\leq Ce^{-Ct_i},|q_i(s;\theta_2)|\leq Ce^{-C(t_i-s)}$ and hence
      \begin{align*}
        E\left[|{\nu^i_{s}(\theta_2)}|^p  \right]\leq C_p.
      \end{align*}
      Thus we obtain
      \begin{align*}
        &E\left[\left|\sum_{i=1}^n{\nu^i_{s}(\theta_2)}'{\Sigma^*}^{-1}q_i(s;\theta_2)1_{[0,t_i]}(s)\right|^p\right]^\frac{1}{p}\\
        \leq &\sum_{i=1}^n|{\Sigma^*}^{-1}q_i(s;\theta_2)|E\left[\left|{\nu^i_{s}(\theta_2)}'\right|^p\right]^\frac{1}{p}1_{[0,t_i]}(s)\\
        \leq &\sum_{i=1}^nC_pe^{-C(t_i-s)}1_{[s,\infty)}(t_i)\\
        \leq &\sum_{i=0}^\infty C_pe^{-Ct_i}=\frac{1}{h}\sum_{i=0}^\infty C_pe^{-Ct_i}h\\
        \leq &\frac{C_p}{h}\int_{t_{-1}}^{\infty}e^{-Ct}dt \leq \frac{C_p}{h},
      \end{align*}
      and therefore by Lemma \ref{BDG}
      \begin{align*}
        &E\left[\left|\int_0^{t_n}\sum_{i=1}^n{\nu^i_{s}(\theta_2)}'{\Sigma^*}^{-1}q_i(s;\theta_2)1_{[0,t_i]}(s)d\overline{W}_s\right|^p\right]\\
        \leq &{t_n}^{\frac{p}{2}-1}\int_0^{t_n}E\left[\left|\sum_{i=1}^n{\nu^i_{s}(\theta_2)}'{\Sigma^*}^{-1}q_i(s;\theta_2)1_{[0,t_i]}(s)\right|^p\right]ds\\
        \leq &\frac{C_p}{h}{t_n}^{\frac{p}{2}}.
      \end{align*}
      In the same way, we obtain
      \begin{align*}
        E\left[ \left|\int_{0}^{t_n}\sum_{i=1}^n\partial_{\theta_2}\{{\nu^i_{s}(\theta_2)}'{\Sigma^*}^{-1}q_i(s;\theta_2)\}1_{[0,t_i]}(s)d\overline{W}_s\right|^p \right]^\frac{1}{p}\leq \frac{C_p}{h}{t_n}^{\frac{p}{2}}.
      \end{align*}
      Hence by (\ref{eq17}), it follows 
      \begin{align*}
        E\left[ \sup_{\theta_2 \in \Theta_2}\left|\sum_{i=1}^n\int_{0}^{t_i}{\nu^i_{s}(\theta_2)}'{\Sigma^*}^{-1}q_i(s;\theta_2)d\overline{W}_s\right|^p \right]^\frac{1}{p}
        \leq \frac{C_p}{h}{t_n}^{\frac{p}{2}}, 
      \end{align*}
      and therefore by (\ref{eq18})
      \begin{align}
        \label{eq19}\begin{split}
          &\frac{h}{2t_n}E\left[ \sup_{\theta_2 \in \Theta_2}\left|\sum_{i=1}^n{\Sigma^*}^{-1}[(c(\theta_2)\mu_{t_{i-1}}^n(\theta_2)-c^*\mu_{t_{i-1}}^n(\theta_2^*))^{\otimes 2}]-\mathbb{Y}^2(\theta)\right|^p \right]^\frac{1}{p}\\
        \leq &C_p\frac{h}{t_n}\frac{{t_n}^{\frac{p}{2}}}{h}+\frac{C}{t_n}
        \leq C_p\frac{1}{{t_n}^\frac{1}{2}}.
        \end{split}        
      \end{align}

      Finally, as for the third and fourth terms in (\ref{eq15}), by the Sobolev inequality, Lemma \ref{BDG} and (\ref{m-tilde-norm}) it holds
      \begin{align}
        \label{eq21}\begin{split}
          &E\left[\sup_{\theta_2 \in \Theta_2}\left|\sum_{i=1}^n\{\tilde{m}_{i-1}^n(\theta_2)'c(\theta_2)'-\tilde{m}_{i-1}^n(\theta_2^*)'{c^*}'\}{\Sigma^*}^{-1}\sigma^*\Delta_j\overline{W}\right|^p \right]\\
        \leq &C_p\sup_{\theta_2 \in \Theta_2}\left(E\left[\left|\sum_{i=1}^n\{\tilde{m}_{i-1}^n(\theta_2)'c(\theta_2)'-\tilde{m}_{i-1}^n(\theta_2^*)'{c^*}'\}{\Sigma^*}^{-1}\sigma^*\Delta_j\overline{W}\right|^p \right]\right.\\
        &\left.+E\left[\left|\sum_{i=1}^n\partial_{\theta_2}\{\tilde{m}_{i-1}^n(\theta_2)'c(\theta_2)'\}{\Sigma^*}^{-1}\sigma^*\Delta_j\overline{W}\right|^p \right]\right)\\
        \leq &C_p\sup_{\theta_2 \in \Theta_2}\left({t_n}^{\frac{p}{2}-1}\sum_{i=1}^nE\left[\left|\{\tilde{m}_{i-1}^n(\theta_2)'c(\theta_2)'-\tilde{m}_{i-1}^n(\theta_2^*)'{c^*}'\}{\Sigma^*}^{-1}\sigma^*\right|^p \right]h\right.\\
        &\left.+{t_n}^{\frac{p}{2}-1}\sum_{i=1}^nE\left[\left|\partial_{\theta_2}\{\tilde{m}_{i-1}^n(\theta_2)'c(\theta_2)'\}{\Sigma^*}^{-1}\sigma^*\right|^p \right]h\right)\\
        \leq &C_p{t_n}^{\frac{p}{2}}.
        \end{split}        
      \end{align}

      We obtain the desired result by putting (\ref{eq15}), (\ref{eq20}), (\ref{eq19}) and (\ref{eq21}) together.      
    \end{proof}

    Now we set
    \begin{align}
      \label{def-captal-M}\tilde{M}_j^n(\theta_2)=c(\theta_2)\tilde{m}_j^n(\theta).
    \end{align}
    Then by  (\ref{def-tilde-delta}) and (\ref{def-tilde-gamma}), we have
    \begin{align*}
      \tilde{\Gamma}_n^2&=\frac{1}{t_n}\sum_{i=1}^n\left\{ {\Sigma^*}^{-1}[\partial_{\theta_2}^{\otimes 2}]\tilde{M}_{i}^n(\theta^*)h\right.\\
      &\left.-\partial_{\theta_2}^2\tilde{M}_{i-1}^n(\theta^*)'{{\sigma^*}'}^{-1}\Delta_j\overline{W}-\Delta_j\overline{W}'{\sigma^*}^{-1}\partial_{\theta_2}^2\tilde{M}_{i-1}^n(\theta^*) \right\}.
    \end{align*}

    Moreover, by (\ref{def-Gamma2}) and (\ref{mu-formula}), we obtain the following results in the same way as Propositions \ref{c-mu-diff} and \ref{Y-tilde-Y-diff}:
    \begin{align}
      &E\left[ {\Sigma^*}^{-1}[\partial_{\theta_2}^{\otimes 2}]\tilde{M}_{i}^n(\theta^*) \right]=\Gamma^2+O(e^{-Ct_i})\\
      &\label{Gamma-converge}E\left[ \left|\frac{1}{t_n}\sum_{i=1}^n{\Sigma^*}^{-1}[\partial_{\theta_2}^{\otimes 2}]\tilde{M}_{i}^n(\theta^*)h-\Gamma^2\right|^p \right]\leq C_p\left( h^p+n^{-\frac{1}{2}p}+\frac{1}{{t_n}^{\frac{p}{2}}} \right)\\
      &\label{Gamma-converge2}E\left[ |\tilde{\Gamma}_n-\Gamma^2|^p \right]\leq C_p\left( h^p+n^{-\frac{1}{2}p}+\frac{1}{{t_n}^{\frac{p}{2}}} \right).
    \end{align}

    \begin{proposition}\label{Delta_n-normal}
      It holds
      \begin{align*}
        \tilde{\Delta}_n^2 \xrightarrow{d} N(0,\Gamma^2).
      \end{align*}
    \end{proposition}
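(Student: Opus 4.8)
The plan is to obtain the convergence from a martingale central limit theorem. By (\ref{tilde-Delta-formula}) we have
\begin{align*}
  \tilde\Delta_n^2=\frac{1}{\sqrt{t_n}}\sum_{i=1}^n\partial_{\theta_2}\tilde M_{i-1}^n(\theta_2^*)'{{\sigma^*}'}^{-1}\Delta_i\overline W,
\end{align*}
but the coefficient $\partial_{\theta_2}\tilde M_{i-1}^n(\theta_2^*)$ is not $\mathcal{F}_{t_{i-1}}$-measurable, since it depends on $\hat\theta_1^n$, so this is not literally a martingale difference array. The first step is therefore to replace $\tilde m_{i-1}^n$ by the process $\mu_{t_{i-1}}$ of (\ref{def-mu}), which is built from $\theta_1^*$ and hence adapted to $\{\mathcal{F}_t\}$. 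Setting $\xi_{i-1}=\partial_{\theta_2}\{c(\theta_2)\mu_{t_{i-1}}(\theta_2)\}|_{\theta_2=\theta_2^*}$ and
\begin{align*}
  \Delta_n^{2,\mu}=\frac{1}{\sqrt{t_n}}\sum_{i=1}^n\xi_{i-1}'{{\sigma^*}'}^{-1}\Delta_i\overline W,
\end{align*}
the summands $\xi_{i-1}'{{\sigma^*}'}^{-1}\Delta_i\overline W$ do form a martingale difference array with respect to $\{\mathcal{F}_{t_i}\}$, because $\xi_{i-1}$ is $\mathcal{F}_{t_{i-1}}$-measurable while $\Delta_i\overline W$ is independent of $\mathcal{F}_{t_{i-1}}$ with $E[\Delta_i\overline W(\Delta_i\overline W)']=hI_{d_2}$. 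Using Proposition \ref{mu-m-tilde-diff}, Corollary \ref{mu-norm} and (\ref{m-tilde-norm}) one shows that $\tilde\Delta_n^2-\Delta_n^{2,\mu}\to 0$ in $L^p$ for every $p$, so it suffices to prove $\Delta_n^{2,\mu}\xrightarrow{d}N(0,\Gamma^2)$.

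For the martingale CLT I would verify the two standard hypotheses for $\Delta_n^{2,\mu}$. The conditional covariance is
\begin{align*}
  \sum_{i=1}^nE\left[\frac{1}{t_n}\left(\xi_{i-1}'{{\sigma^*}'}^{-1}\Delta_i\overline W\right)^{\otimes 2}\,\middle|\,\mathcal{F}_{t_{i-1}}\right]=\frac{h}{t_n}\sum_{i=1}^n{\Sigma^*}^{-1}[\partial_{\theta_2}^{\otimes 2}]c(\theta_2^*)\mu_{t_{i-1}}(\theta_2^*),
\end{align*}
and this converges to $\Gamma^2$ in $L^p$, hence in probability: by (\ref{Gamma-converge}) the analogous sum with $c(\theta_2^*)\mu_{t_{i}}(\theta_2^*)$ replaced by $\tilde M_i^n(\theta^*)$ tends to $\Gamma^2$, and the difference is controlled by Proposition \ref{mu-m-tilde-diff} and (\ref{m-tilde-norm}) (the index shift $i\mapsto i-1$ only produces an $O(h/t_n)$ boundary term). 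The conditional Lyapunov condition follows from a fourth-moment estimate: $E[|\Delta_i\overline W|^4]\le Ch^2$ and, by Corollary \ref{mu-norm} together with the derivative version of (\ref{m-tilde-norm}), $\sup_iE[|\xi_{i-1}|^4]<\infty$, so
\begin{align*}
  \sum_{i=1}^nE\left[\left|\frac{1}{\sqrt{t_n}}\xi_{i-1}'{{\sigma^*}'}^{-1}\Delta_i\overline W\right|^4\right]\le\frac{C}{{t_n}^2}\sum_{i=1}^nh^2\le\frac{Cnh^2}{{t_n}^2}=\frac{C}{n}\to 0,
\end{align*}
which implies the conditional Lindeberg condition. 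The martingale CLT then gives $\Delta_n^{2,\mu}\xrightarrow{d}N(0,\Gamma^2)$, and combined with the first step, $\tilde\Delta_n^2\xrightarrow{d}N(0,\Gamma^2)$.

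The step I expect to be the main obstacle is the reduction $\tilde\Delta_n^2\approx\Delta_n^{2,\mu}$: because $\partial_{\theta_2}\tilde M_{i-1}^n(\theta_2^*)$ carries $\hat\theta_1^n$ and is not previsible, the error $\frac{1}{\sqrt{t_n}}\sum_i(\partial_{\theta_2}\tilde M_{i-1}^n-\partial_{\theta_2}\{c\mu_{t_{i-1}}\})(\theta_2^*)'{{\sigma^*}'}^{-1}\Delta_i\overline W$ cannot be bounded by Burkholder--Davis--Gundy directly. I would split it into the part with $\hat\theta_1^n$ frozen at $\theta_1^*$ — a genuine martingale sum, which is $O(h)$ in $L^p$ by the discretization bound underlying Proposition \ref{mu-m-tilde-diff} — and the remaining $\hat\theta_1^n$-fluctuation, which factors through $\hat\theta_1^n-\theta_1^*$ (of order $n^{-1/2}$ in $L^p$ by Theorem \ref{main-theorem-1}) times a stochastic integral controlled uniformly in $\theta_1$ by the Sobolev-embedding estimates of Lemma \ref{pre-ineq} and (\ref{ineq-lemma2}), exactly in the manner of Propositions \ref{m-hat-m-tilde-sum-by-Y} and \ref{bold-H-diff}. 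An alternative that avoids this decomposition is to run the martingale CLT for $\Delta_n^2(\theta_1)$ at each fixed deterministic $\theta_1$ and combine it with stochastic equicontinuity in $\theta_1$ near $\theta_1^*$, using $\hat\theta_1^n\to\theta_1^*$.
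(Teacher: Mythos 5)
Your proof is correct and, at its core, uses the same device as the paper: the martingale central limit theorem applied to the representation (\ref{tilde-Delta-formula}), with the conditional covariance identified via (\ref{Gamma-converge}) and a fourth-moment bound of order $nh^2/t_n^2=1/n$ giving the Lindeberg condition. The genuine difference is your preliminary reduction from $\tilde{m}_{i-1}^n$ to $\mu_{t_{i-1}}$. The paper's own proof sets $\xi_i^n=t_n^{-1/2}\partial_{\theta_2}\tilde{M}_{i-1}^n(\theta_2^*)'{{\sigma^*}'}^{-1}\Delta_i\overline{W}$ and immediately computes $\sum_iE[(\xi_i^n)^{\otimes2}\mid\mathcal{F}_{t_{i-1}}]$ by pulling $\partial_{\theta_2}\tilde{M}_{i-1}^n$ out of the conditional expectation --- which, as you observe, is not justified as written, since $\tilde{m}_{i-1}^n$ is built from $\hat{\theta}_1^n$ and is therefore $\mathcal{F}_{t_n}$- but not $\mathcal{F}_{t_{i-1}}$-measurable. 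Your substitution of the adapted process $\mu_{t_{i-1}}$ of (\ref{def-mu}) produces an honest martingale difference array, and your two-part control of the replacement error --- the discretization part at frozen $\theta_1^*$ handled as a genuine martingale sum of size $O(h)$, and the $\hat{\theta}_1^n$-fluctuation written as $F(\hat{\theta}_1^n)-F(\theta_1^*)$ and bounded by $|\hat{\theta}_1^n-\theta_1^*|=O_{L^p}(n^{-1/2})$ times a uniform-in-$\theta_1$ estimate from Lemma \ref{pre-ineq} --- is exactly the mechanism the paper itself deploys in Propositions \ref{m-hat-m-tilde-sum-by-Y} and \ref{bold-H-diff}, so it stays within the available toolbox. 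The only point to make explicit is that transferring (\ref{Gamma-converge}) to the $\mu$-version of the conditional covariance requires the derivative analogue of Proposition \ref{mu-m-tilde-diff}, i.e.\ a bound on $\partial_{\theta_2}\{\tilde{m}_{i-1}^n(\theta_2)-\mu_{t_{i-1}}(\theta_2)\}$; this follows by the same argument as Propositions \ref{m-m-hat-diff} and \ref{mu-m-tilde-diff} but should be stated. In short: same skeleton as the paper, but your version closes an adaptedness gap that the paper's proof leaves implicit, at the cost of one extra (and manageable) error estimate.
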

    \begin{proof}
      Since $\tilde{\Delta}_n^2$ is given by the formula (\ref{tilde-Delta-formula}), we set
      \begin{align*}
        \xi_i^n=\frac{1}{\sqrt{t_n}}\partial_{\theta_2}\tilde{M}_{i-1}^n(\theta_2^*)'{{\sigma^*}'}^{-1}\Delta_{i} \overline{W}.
      \end{align*} 
      Then $(\xi_i^n)^{\otimes 2}$ is the matrix whose $(i,j)$ entry is
      \begin{align*}
        &\frac{1}{t_n}\frac{\partial}{\partial \theta_2^{i}}\tilde{M}_{i-1}^n(\theta_2^*)'{{\sigma^*}'}^{-1}\Delta_{i} \overline{W}\frac{\partial}{\partial \theta_2^{j}}\tilde{M}_{i-1}^n(\theta_2^*)'{{\sigma^*}'}^{-1}\Delta_{i} \overline{W}\\
        =&\frac{1}{t_n}\frac{\partial}{\partial \theta_2^{i}}\tilde{M}_{i-1}^n(\theta_2^*)'{{\sigma^*}'}^{-1}\Delta_{i} \overline{W}\Delta_{i} \overline{W}'{{\sigma^*}}^{-1}\frac{\partial}{\partial \theta_2^{j}}\tilde{M}_{i-1}^n(\theta_2^*).
      \end{align*}
      Hence it follows from (\ref{Gamma-converge})
      \begin{align*}
        \sum_{i=1}^nE\left[(\xi_i^n)^{\otimes 2}|\mathcal{F}_{t_{i-1}}  \right]
        =\sum_{i=1}^nE\left[{\Sigma^*}^{-1}[\partial_{\theta_2}^{\otimes 2}]\tilde{M}_{i-1}^n(\theta_2)|\mathcal{F}_{t_{i-1}}  \right]\xrightarrow{P}\Gamma^2~(n \to \infty).
      \end{align*}
      Moreover, we have for $\epsilon>0$ 
        \begin{align*}
          &\sum_{i=1}^nE[|\xi_i^n|^21_{\{|\xi_i^n|>\epsilon\}}|\mathcal{F}_{t_{i-1}}]\\
          \leq &\sum_{i=1}^nE[|\xi_i^n|^4|\mathcal{F}_{t_{i-1}}]^{\frac{1}{2}}P(|\xi_i^n|>\epsilon|\mathcal{F}_{t_{i-1}})^{\frac{1}{2}}\\
          \leq &\sum_{i=1}^nE[|\xi_i^n|^4|\mathcal{F}_{t_{i-1}}]^{\frac{1}{2}}\times \frac{1}{\epsilon^2}E[|\xi_i^n|^4|\mathcal{F}_{t_{i-1}}]^{\frac{1}{2}}\\
          =&\sum_{i=1}^n\frac{|{\sigma^*}^{-1}|^4}{\epsilon^2{t_n}^2}|\partial_{\theta}\tilde{M}_{i-1}^n(\theta^*)|^4E[(\Delta_i\overline{W})^4]\\
          \leq&\frac{|{\sigma^*}^{-1}|^4}{\epsilon^2{t_n}^2}\sum_{i=1}^n|\partial_{\theta}\tilde{M}_{i-1}^n(\theta^*)|^4h^2,
        \end{align*}
        and hence
        \begin{align*}
          E\left[ \sum_{i=1}^nE[|\xi_i^n|^21_{\{|\xi_i^n|>\epsilon\}}|\mathcal{F}_{t_{i-1}}] \right]
          \leq &\sum_{i=1}^n\frac{|{\sigma^*}^{-1}|^4}{\epsilon^2{t_n}^2}E[|\partial_{\theta}\tilde{M}_{i-1}^n(\theta^*)|^4]h^2\\
          \leq  &C_{\epsilon}\sum_{i=1}^n\frac{1}{{t_n}^2}h^2=\frac{C_{\epsilon}}{n}\to 0~(n \to \infty).
        \end{align*}

        Therefore we obtain the desired result by the martingale central limit theorem.
    \end{proof}

    \begin{proposition}\label{partial-3-norm}
      For any $p>m_1+m_2$, it holds
      \begin{align*}
        \sup_{n \in \mathbb{N}}E\left[ \sup_{\theta_2\in\Theta_2}\left|\frac{1}{t_n}\partial_{\theta_2}^3 \mathbb{H}_n^2(\theta_2)\right|^p \right]^\frac{1}{p}<\infty.
      \end{align*}
    \end{proposition}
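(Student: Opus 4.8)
The plan is to differentiate $\mathbb{H}_n^2(\theta_2)$ in (\ref{def-H2}) three times in $\theta_2$ and split the result according to the two structurally different pieces of $\mathbb{H}_n^2$: the quadratic-in-$\hat m$ term carrying a factor $h$, and the bilinear terms pairing $\hat m$ with $\Delta_iY$. Since neither $\Sigma(\hat\theta_1^n)^{-1}$ nor $\Delta_iY$ depends on $\theta_2$, the Leibniz rule exhibits $\partial_{\theta_2}^3\mathbb{H}_n^2(\theta_2)$ as a finite linear combination of sums over $i=1,\dots,n$ of terms of two types: (i) $h\,\Sigma(\hat\theta_1^n)^{-1}[\partial_{\theta_2}^{k_1}\{c(\theta_2)\hat m_{i-1}^n(\theta_2)\},\partial_{\theta_2}^{k_2}\{c(\theta_2)\hat m_{i-1}^n(\theta_2)\}]$ with $k_1+k_2=3$; and (ii) $\partial_{\theta_2}^{k}\{\hat m_{i-1}^n(\theta_2)'c(\theta_2)'\}\Sigma(\hat\theta_1^n)^{-1}\Delta_iY$ and its transpose, with $k\in\{0,1,2,3\}$. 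It then suffices to bound $t_n^{-1}$ times the $L^p$-norm of $\sup_{\theta_2\in\Theta_2}$ of each of these two sums.

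For the type (i) sums I would pull the supremum inside the sum, apply Minkowski's inequality, and reduce to showing $\sup_{i,n}E[\sup_{\theta_2}|h\,\Sigma(\hat\theta_1^n)^{-1}[\partial_{\theta_2}^{k_1}\{c\hat m_{i-1}^n\},\partial_{\theta_2}^{k_2}\{c\hat m_{i-1}^n\}]|^p]^{1/p}\leq C_ph$. Since $a,b,c,\sigma$ are of class $C^4$ on the compact sets $\overline\Theta_1,\overline\Theta_2$, the matrix $c(\theta_2)$ and its first three derivatives are bounded and $|\Sigma(\hat\theta_1^n)^{-1}|$ is bounded by [A3]; combined with Proposition~\ref{m-hat-norm}, which gives $\sup_iE[\sup_{\theta_2}|\partial_{\theta_2}^k\hat m_i^n(\theta_2)|^q]<\infty$ for all $q$ and $k=0,1,2,3$, Hölder's inequality yields the bound. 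Summing the $n$ terms and dividing by $t_n=nh$ produces $t_n^{-1}\cdot nC_ph=C_p$.

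For the type (ii) sums the point is that they are precisely the quantities controlled by Proposition~\ref{m-hat-dY-norm}, with the random matrix there taken to be $\Sigma(\hat\theta_1^n)^{-1}$; its moments $E[|\Sigma(\hat\theta_1^n)^{-1}|^{q}]$ are bounded uniformly in $n$ because $\Sigma^{-1}$ is continuous on the compact set $\overline\Theta_1$ and $\hat\theta_1^n\in\overline\Theta_1$. Proposition~\ref{m-hat-dY-norm} then gives the bound $C_p(nh+(nh)^{1/2})=C_p(t_n+t_n^{1/2})$, so dividing by $t_n$ leaves $C_p(1+t_n^{-1/2})$. Combining the two estimates, $E[\sup_{\theta_2}|t_n^{-1}\partial_{\theta_2}^3\mathbb{H}_n^2(\theta_2)|^p]^{1/p}\leq C_p(1+t_n^{-1/2})$; since $t_n=nh_n\to\infty$ by [A1] the right-hand side stays bounded as $n\to\infty$, and for each of the finitely many remaining $n$ the left-hand side is finite, because for fixed $n$ the expression $\mathbb{H}_n^2$ is a finite sum of random variables whose $\theta_2$-derivatives have finite $L^q$-norms, the supremum over $\theta_2$ being handled through the Sobolev inequality (\ref{2.3}). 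Hence the supremum over $n\in\mathbb{N}$ is finite. Since Propositions~\ref{m-hat-norm} and~\ref{m-hat-dY-norm} already carry the analytic weight, I expect this argument to be essentially bookkeeping; the one point to watch is checking that $\Sigma(\hat\theta_1^n)^{-1}$ legitimately plays the role of the $\mathcal{F}_{t_n}$-measurable, all-moments-bounded matrix in Proposition~\ref{m-hat-dY-norm} and that the Leibniz expansion introduces no hidden $\theta_2$-dependence.
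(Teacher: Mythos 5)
Your proposal is correct and follows essentially the same route as the paper: the same split of $\partial_{\theta_2}^3\mathbb{H}_n^2$ into the $h$-weighted quadratic term (bounded via Minkowski and Proposition \ref{m-hat-norm} by $C_pnh$) and the bilinear terms paired with $\Delta_jY$ (bounded via Proposition \ref{m-hat-dY-norm} by $C_p(nh+(nh)^{1/2})$), followed by division by $t_n$. Your extra care about the finitely many small-$n$ cases where $t_n^{-1/2}$ is not yet small is a point the paper glosses over, but it changes nothing substantive.
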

    \begin{proof}
      By (\ref{m-hat-norm}), we have
      \begin{align*}
        &E\left[ \sup_{\theta_2\in\Theta_2}\left|\sum_{i=1}^nh\partial_{\theta_2}^3\{\Sigma(\hat{\theta}_1^n)^{-1}[(\hat{M}_{j-1}^n(\theta_2))^{\otimes 2}]\} \right|^p \right]^\frac{1}{p}\\
        \leq &h\sum_{i=1}^nE\left[ \sup_{\theta_2\in\Theta_2}\left|\partial_{\theta_2}^3\{\Sigma(\hat{\theta}_1^n)^{-1}[(\hat{M}_{j-1}^n(\theta_2))^{\otimes 2}]\} \right|^p \right]^\frac{1}{p}\\
        \leq &C_pnh.
      \end{align*}
      Moreover, we obtain by Proposition \ref{m-hat-dY-norm} 
      \begin{align*}
        &E\left[\sup_{\theta_2\in\Theta_2}\left|\sum_{j=1}^n\partial_{\theta_2}^3\hat{M}_{j-1}^n(\theta_2)'\Sigma(\hat{\theta}_1^n)^{-1}\Delta_jY\right|^p \right]^\frac{1}{p}\\
        \leq &C_p(nh+(nh)^\frac{1}{2}),
      \end{align*}
      and in the same way
      \begin{align*}
        &E\left[\sup_{\theta_2\in\Theta_2}\left|\sum_{j=1}^n\Delta_jY\Sigma(\hat{\theta}_1^n)^{-1}\partial_{\theta_2}^3\hat{M}_{j-1}^n(\theta_2)\right|^p \right]^\frac{1}{p}\\
        \leq &C_p(nh+(nh)^\frac{1}{2}).
      \end{align*}
      Therefore it follows from (\ref{def-H2})
      \begin{align*}
        &E\left[ \sup_{\theta_2\in\Theta_2}\left|\frac{1}{t_n}\partial_{\theta_2}^3 \mathbb{H}_n^2(\theta_2)\right|^p \right]^\frac{1}{p}
        \leq &\frac{C_p}{t_n}\{nh+(nh)^\frac{1}{2}\}=C_p(1+{t_n}^{-\frac{1}{2}})\leq C_p.
      \end{align*}
    \end{proof}

    \begin{proof}[\bf{Proof of Theorem \ref{main-theorem-2}}]
      We set $\Delta_n^2$, $\Gamma_n^2$ and $\mathbb{Y}_n^2$ by
      \begin{align}
        &\mathbb{Y}_n^2(\theta_2)=\frac{1}{t_n}\{\mathbb{H}_n^2(\theta_2)-\mathbb{H}_n^2(\theta_2^*)\}\\
        &\Delta_n^2=\frac{1}{\sqrt{t_n}}\partial_{\theta}\mathbb{H}_n^2(\theta_2^*)\\
        &\Gamma_n^2=-\frac{1}{t_n}\partial_{\theta}^2\mathbb{H}_n^2(\theta_2^*).
      \end{align}
      Then by Proposition \ref{bold-H-diff} for any $n \in \mathbb{N}$ and $p>m_1+m_2$, it holds
      \begin{align}
        \label{Delta2-tilde-Delta2-diff}&E\left[ \left|\Delta_n^2-\tilde{\Delta}_n^2\right|^p \right]^\frac{1}{p}\leq C_p\left( n^\frac{1}{2}+h^\frac{1}{2}+(nh)^{-1} \right)\\
        &E\left[ \left|\Gamma_n^2-\tilde{\Gamma}_n^2\right|^p \right]^\frac{1}{p}\leq C_p\left( h^\frac{1}{2}+n^{-\frac{1}{2}}+(nh)^{-1} \right)
      \end{align}
      and
      \begin{align*}
        E\left[\sup_{\theta_2 \in \Theta_2}\left|\mathbb{Y}_n^2(\theta_2)-\tilde{\mathbb{Y}}_n^2(\theta_2)\right|^p \right]^\frac{1}{p}\leq C_p\left( h^\frac{1}{2}+n^{-\frac{1}{2}}+(nh)^{-1} \right).
      \end{align*}

      Together with Proposition \ref{tilde-Dalta-norm}, (\ref{Gamma-converge2}) and Proposition \ref{Y-tilde-Y-diff}, we have for any $p>m_1+m_2$ (therefore for any $p>0$)
      \begin{align}
        \label{proof-eq1}&\sup_{n \in \mathbb{N}}E\left[ |\Delta_n^2|^p \right]^\frac{1}{p}<\infty,\\
        \label{proof-eq2}&\sup_{n \in \mathbb{N}}E\left[ \left|{t_n}^\frac{1}{2}(\Gamma_n^2-\Gamma^2)\right|^p \right]^\frac{1}{p}<\infty
      \end{align}
      and
      \begin{align}
        \label{proof-eq3}\sup_{n \in \mathbb{N}}E\left[\sup_{\theta_2 \in \Theta_2}\left|{t_n}^\frac{1}{2}(\mathbb{Y}_n^2(\theta)-\mathbb{Y}^2(\theta_2))\right|^p \right]^\frac{1}{p}<\infty.
      \end{align}
      Moreover, by Proposition \ref{Delta_n-normal} and (\ref{Delta2-tilde-Delta2-diff}) we obtain
      \begin{align}
        \label{proof-eq4}\Delta_n \xrightarrow{d} N(0,\Gamma^2).
      \end{align}

      Then we have proved the theorem by the assumption [A5], Proposition \ref{partial-3-norm}, (\ref{proof-eq1})-(\ref{proof-eq4}) and Theorem 5 in \cite{YOSHIDA2011}.

    \end{proof}

    \section{One-dimensional case}\label{one-dimensional-case-section}
    In this section, we consider the special case where $d_1=d_2=1$. In this case, $a(\theta_2),b(\theta_2),c(\theta_2)$ and $\sigma(\theta_1)$ are scalar valued, so we set $m_1=1$ and $\sigma(\theta_1)=\theta_1$. Moreover, we assume $\Theta_1 \subset (\epsilon,\infty)$ for some $\epsilon>0$. Then (\ref{ARE}) can be reduced to
    \begin{align*}
      \frac{c(\theta_2)^2}{{\theta_1}^2}\gamma^2+2a(\theta_2)\gamma+b(\theta_2)^2=0,
    \end{align*}
    and the larger solution of this is
    \begin{align*}
      \gamma_+(\theta_1,\theta_2)=\frac{{\theta_1}^2a(\theta_2)}{c(\theta_2)^2}\left( \sqrt{1+\frac{b(\theta_2)^2c(\theta_2)^2}{{\theta_1}^2a(\theta_2)^2}}-1 \right).
    \end{align*}
    Thus we have
    \begin{align}
      \label{alpha-onedim}\alpha(\theta_1,\theta_2)=\sqrt{a(\theta_2)^2+\frac{b(\theta_2)^2c(\theta_2)^2}{{\theta_1}^2}}
    \end{align}
    by (\ref{def-alpha}). Furthermore, the eigenvalues of $H(\theta_1,\theta_2)$ in Assumption [A4] is $\pm \alpha(\theta_1,\theta_2)$ and hence one can remove Assumption [A4].     

    As for the estimation of $\theta_1$, one can obtain the explicit expression of $\hat{\theta}_1^n$. In fact, we have
    \begin{align*}
      \mathbb{H}_n^1(\theta_1)=-\frac{1}{2}\sum_{j=1}^n\left\{ \frac{1}{h{\theta_1}^2}(\Delta_jY)^{2}+2\log \theta_1 \right\}
    \end{align*}
    and hence
    \begin{align*}
      \frac{d}{d\theta_1}\mathbb{H}_n^1(\theta_1)=\frac{1}{h{\theta_1}^3}\sum_{j=1}^n(\Delta_jY)^{2}-\frac{n}{\theta_1}.
    \end{align*}
    Thus we obtain the formula
    \begin{align*}
      \hat{\theta}_1^n=\left( \frac{1}{t_n}\sum_{j=1}^n(\Delta_jY)^{2} \right)^\frac{1}{2}.
    \end{align*}    
    Moreover, $\mathbb{Y}_1(\theta_1)$ and $\Gamma^1$ can be written as
    \begin{align*}
      \mathbb{Y}_1(\theta_1)=-\frac{1}{2}\left( \frac{{\theta_1^*}^2}{{\theta_1}^2}-1-2\log\frac{{\theta_1^*}}{{\theta_1}} \right)
    \end{align*}
    and
    \begin{align*}
      \Gamma^1=\frac{1}{2}\left( \frac{2\theta_1^*}{{\theta_1^*}^2} \right)^2=\frac{2}{{\theta_1^*}^2}.
    \end{align*}
    Therefore noting that $\displaystyle x^2-1-2\log x\geq (x-1)^2~(x\geq 0)$ we have
    \begin{align*}
      \mathbb{Y}_1(\theta_1)\leq -\frac{1}{2}\left( \frac{{\theta_1^*}}{{\theta_1}}-1 \right)^2\leq \frac{(\theta_1-\theta_1^*)^2}{2\epsilon^2}
    \end{align*}
    and hence (\ref{Y1-ineq}) holds.

    As for the estimation of $\theta_2$, since we have 
    \begin{align}
      \label{gamma-alpha}\gamma(\theta_1,\theta_2)=\frac{{\theta_1}^2}{c(\theta_2)^2}\left\{ \alpha(\theta_1,\theta_2)-a(\theta_2) \right\}
    \end{align}
    by (\ref{def-alpha}), we obtain for $\alpha(\theta_2)\neq a^*$ 
    \begin{align}
      \label{Y2-onedim}\begin{split}
        &\mathbb{Y}_2(\theta_2)=-\frac{1}{2}\int_0^\infty\left\{-\frac{\{a(\theta_2)-a^*\}(\alpha(\theta_2^*)-a^*)}{\alpha(\theta_2)-a^*}e^{-a^*s}\right.\\
      &\left.+\frac{\{\alpha(\theta_2)-\alpha(\theta_2^*)\}\{\alpha(\theta_2)-a(\theta_2)\}}{\alpha(\theta_2)-a^*}e^{-\alpha(\theta_2)s}  \right\}^2ds\\
      =&-\frac{1}{4a^*\alpha(\theta_2)\{a^*+\alpha(\theta_2)\}}\\
      &\times \left[\{a^*\alpha(\theta_2)-a(\theta_2)\alpha(\theta_2^*)\}^2 +a^*\alpha(\theta_2)\left\{\alpha(\theta_2)-a(\theta_2)-\alpha(\theta_2^*)+a^* \right\}^2 \right]\\
      =&-\frac{a^*a(\theta)^2}{4\alpha(\theta_2)\{a^*+\alpha(\theta_2)\}}\left\{ \frac{\alpha(\theta_2)}{a(\theta_2)}-\frac{\alpha(\theta_2^*)}{a(\theta_2^*)} \right\}^2\\
      &-\frac{1}{4a^*\alpha(\theta_2)\{a^*+\alpha(\theta_2)\}}\left\{\alpha(\theta_2)-a(\theta_2)-\alpha(\theta_2^*)+a^* \right\}^2,
      \end{split}      
    \end{align}
    making use of (\ref{def-Y2}) and the identity
    \begin{align*}
      \int_0^\infty \left( pe^{-\alpha t}+qe^{-\beta t} \right)^2dt
      &=\frac{p^2}{2\alpha}+\frac{2pq}{\alpha+\beta}+\frac{q^2}{2\beta}\\
      &=\frac{1}{2\alpha\beta}\left\{ (\alpha q+\beta p)^2+\alpha\beta(p-q)^2 \right\},
    \end{align*}
    where $\alpha,\beta>0$ and $p,q \in \mathbb{R}$. Even if $\alpha(\theta_2)=a^*$, we obtain the same formula by letting $a^* \to \alpha(\theta_2)$ in (\ref{Y2-onedim}).

     Now we obtain a sufficient condition for (\ref{Y2-ineq}) by the following proposition. 

    \begin{proposition}
      Assume [A3], $\displaystyle \inf_{\theta_2 \in \Theta_2}|c(\theta_2)|>C$ and 
      \begin{align}
        \label{onedim-separate}|a(\theta_2)-a(\theta_2^*)|+|\alpha(\theta_2)-\alpha(\theta_2^*)|\geq C|\theta_2-\theta_2^*|.
      \end{align}
      Then it holds
      \begin{align}
        \label{one-dim-Y}Y(\theta_2)\leq -C|\theta_2-\theta_2^*|^2.
      \end{align}
    \end{proposition}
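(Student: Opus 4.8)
The plan is to argue directly from the closed form of $\mathbb{Y}_2(\theta_2)$ obtained in (\ref{Y2-onedim}),
\begin{align*}
  \mathbb{Y}_2(\theta_2)=&-\frac{a^*a(\theta_2)^2}{4\alpha(\theta_2)\{a^*+\alpha(\theta_2)\}}\left\{ \frac{\alpha(\theta_2)}{a(\theta_2)}-\frac{\alpha(\theta_2^*)}{a(\theta_2^*)} \right\}^2\\
  &-\frac{1}{4a^*\alpha(\theta_2)\{a^*+\alpha(\theta_2)\}}\left\{\alpha(\theta_2)-a(\theta_2)-\alpha(\theta_2^*)+a^* \right\}^2,
\end{align*}
showing that both prefactors are bounded below by a positive constant on $\overline{\Theta}_2$ and that the sum of the two squared brackets dominates $|\theta_2-\theta_2^*|^2$. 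On $\overline{\Theta}_2$, [A2]--[A3] and continuity give $0<C\leq a(\theta_2)\leq M$, and by (\ref{alpha-onedim}) together with $\theta_1^*>\epsilon$ also $0<C\leq \alpha(\theta_2)\leq M$; moreover
\begin{align*}
  \alpha(\theta_2)-a(\theta_2)=\frac{\alpha(\theta_2)^2-a(\theta_2)^2}{\alpha(\theta_2)+a(\theta_2)}=\frac{b(\theta_2)^2c(\theta_2)^2}{{\theta_1^*}^2\{\alpha(\theta_2)+a(\theta_2)\}}\geq C>0,
\end{align*}
using $b(\theta_2)^2\geq C$ from [A3] and $c(\theta_2)^2\geq C$ from the hypothesis; the same bound at $\theta_2^*$ gives $\alpha(\theta_2^*)-a^*\geq C>0$. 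Hence both prefactors are bounded below by a positive constant, and it remains to prove
\begin{align*}
  \left\{ \frac{\alpha(\theta_2)}{a(\theta_2)}-\frac{\alpha(\theta_2^*)}{a(\theta_2^*)} \right\}^2+\left\{\alpha(\theta_2)-a(\theta_2)-\alpha(\theta_2^*)+a^* \right\}^2\geq C|\theta_2-\theta_2^*|^2.
\end{align*}

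The main step is to realize the left-hand side as a uniformly nondegenerate linear image of the vector $(\alpha(\theta_2)-\alpha(\theta_2^*),\,a(\theta_2)-a^*)$. Writing $p=\alpha(\theta_2)-\alpha(\theta_2^*)$ and $q=a(\theta_2)-a^*$, a direct computation gives
\begin{align*}
  \frac{\alpha(\theta_2)}{a(\theta_2)}-\frac{\alpha(\theta_2^*)}{a(\theta_2^*)}=\frac{p}{a(\theta_2)}-\frac{\alpha(\theta_2^*)}{a(\theta_2)a^*}\,q,\qquad \alpha(\theta_2)-a(\theta_2)-\alpha(\theta_2^*)+a^*=p-q,
\end{align*}
so the pair of these two quantities equals $T(\theta_2)\begin{pmatrix}p\\ q\end{pmatrix}$ with $T(\theta_2)=\begin{pmatrix}1/a(\theta_2) & -\alpha(\theta_2^*)/(a(\theta_2)a^*)\\ 1 & -1\end{pmatrix}$. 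By the a priori bounds the entries of $T(\theta_2)$, hence its operator norm, are uniformly bounded, while $\det T(\theta_2)=\{\alpha(\theta_2^*)-a^*\}/\{a(\theta_2)a^*\}$ is uniformly bounded away from $0$; since $|\det T(\theta_2)|$ is the product of the two singular values and the larger one is uniformly bounded, the least singular value of $T(\theta_2)$ is bounded below by a positive constant uniform in $\theta_2$, and the displayed inequality above holds with $|\theta_2-\theta_2^*|^2$ replaced by $(\alpha(\theta_2)-\alpha(\theta_2^*))^2+(a(\theta_2)-a^*)^2$. Finally, the separation hypothesis (\ref{onedim-separate}) together with $2(s^2+t^2)\geq(|s|+|t|)^2$ yields $(\alpha(\theta_2)-\alpha(\theta_2^*))^2+(a(\theta_2)-a^*)^2\geq \tfrac12\{|a(\theta_2)-a^*|+|\alpha(\theta_2)-\alpha(\theta_2^*)|\}^2\geq C|\theta_2-\theta_2^*|^2$, which gives (\ref{one-dim-Y}).

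The only delicate point is the uniform lower bound $\alpha(\theta_2)-a(\theta_2)\geq C$: it is what makes $T(\theta_2)$ uniformly invertible and is the single place where the extra hypothesis $\inf_{\theta_2}|c(\theta_2)|>0$ is used beyond [A3]. Once the problem is linearized in this way no compactness or inverse-function-theorem argument is needed, since every remaining estimate is a one-line computation on the compact set $\overline{\Theta}_2$.
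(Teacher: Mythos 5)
Your proof is correct and rests on the same ingredients as the paper's: the closed form (\ref{Y2-onedim}), the uniform positivity of the prefactors, and the observation that $\alpha(\theta_2)-a(\theta_2)\geq C>0$ (equivalently $\alpha(\theta_2)/a(\theta_2)-1>C$, the one place where $\inf|c|>0$ enters) makes the passage from the two bracketed quantities back to $\bigl(a(\theta_2)-a^*,\alpha(\theta_2)-\alpha(\theta_2^*)\bigr)$ uniformly invertible. The only difference is presentational: the paper argues by contradiction, extracting a sequence and inverting the map via the identity $a=(\alpha-a)/(\alpha/a-1)$, whereas you invert the exact linear relation directly through a determinant/singular-value bound — a slightly more quantitative rendering of the same argument, not a different one.
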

    \begin{proof}
      Let us assume there is no constant $C$ satisfying (\ref{one-dim-Y}). Then there exists some sequence $\theta_2^{(n)} \in \Theta_2~(n \in \mathbb{N})$ such that
      \begin{align*}
        \left|\frac{\alpha(\theta_2^{(n)})}{a(\theta_2^{(n)})}-\frac{\alpha(\theta_2^*)}{a(\theta_2^*)}\right|\leq \frac{1}{n}|\theta_2^{(n)}-\theta_2^*|
      \end{align*}
      and
      \begin{align*}
        \left|\alpha(\theta_2^{(n)})-a(\theta_2^{(n)})-\alpha(\theta_2^*)+a^*\right|\leq \frac{1}{n}|\theta_2^{(n)}-\theta_2^*|.
      \end{align*}
      Thus if we set
      \begin{align*}
        A(\theta_2)=\alpha(\theta_2)-a(\theta_2)
      \end{align*}
      and
      \begin{align*}
        B(\theta_2)=\frac{\alpha(\theta_2)}{a(\theta_2)},
      \end{align*}
      it follows that 
      \begin{align*}
        &|a(\theta_2^{(n)})-a(\theta_2^*)|
        =\left|\frac{A(\theta_2^{(n)})}{B(\theta_2^{(n)})-1}-\frac{A(\theta_2^*)}{\displaystyle B(\theta_2^*)-1}\right|\\
        \leq &\frac{|A(\theta_2^{(n)})-A(\theta_2^*)|}{B(\theta_2^{(n)})-1}+\frac{|A(\theta_2^*)||B(\theta_2^{(n)})-B(\theta_2^*)|}{\{ B(\theta_2^{(n)})-1 \}\{ B(\theta_2^*)-1 \}}\\
        \leq &\frac{C}{n}|\theta_2^{(n)}-\theta_2^*|,
      \end{align*}
      noting that it holds $B(\theta_2)-1>C$ by the assumptions and (\ref{alpha-onedim}). In the same, way we have
      \begin{align*}
        |\alpha(\theta_2^{(n)})-\alpha(\theta_2^*)|\leq \frac{C}{n}|\theta_2^{(n)}-\theta_2^*|,
      \end{align*}
      but these contradict (\ref{onedim-separate}).
    \end{proof}

    We similarly obtain the explicit expression of $\Gamma^2$ by (\ref{gamma-alpha}):
    \begin{align*}
      \Gamma^2=&\frac{1}{{\theta_1^*}^2}\int_0^\infty[\partial_{\theta_2}^{\otimes 2}]\left\{\int_0^sc(\theta_2)\exp(-\alpha(\theta_2)u)\gamma_{+}(\theta_2)c(\theta_2)'{\Sigma^*}^{-1}c^*\right.\\
    &\qquad\qquad\qquad\exp(-a^*(s-u))\gamma_+(\theta^*){c^*}'du\\
    &\qquad\qquad\qquad\left.\left.+c(\theta_2)\exp(-\alpha(\theta_2)s)\gamma_{+}(\theta_2)c(\theta_2)'\right.\biggr\}\right|_{\theta_2=\theta_2^*}ds\\
    =&\int_0^\infty\{\partial_\theta\alpha(\theta^*)e^{-\alpha^*s}-\partial_\theta a(\theta^*)e^{-a^*s}\}^{\otimes 2}ds\\
    =&\frac{\{\partial_{\theta_2}\alpha(\theta^*)\}^{\otimes 2}}{2\alpha^*}+\frac{\{\partial_{\theta_2}a(\theta^*)\}^{\otimes 2}}{2a^*}\\
    &-\frac{\partial_{\theta_2}\alpha(\theta^*)\otimes \partial_{\theta_2}a(\theta^*)+\partial_{\theta_2} a(\theta^*)\otimes \partial_{\theta_2}\alpha(\theta^*)}{\alpha^*+a^*}\\
    =&\frac{1}{2\alpha^*}\left( \partial_{\theta_2}\alpha(\theta^*)-\frac{2\alpha^*}{\alpha^*+a^*}\partial_{\theta_2}a(\theta^*) \right)^{\otimes 2}+\frac{(\alpha^*)^2+(a^*)^2}{2(\alpha^*+a^*)a^*}\{\partial_{\theta_2}a(\theta^*)\}^{\otimes 2}.
    \end{align*}
    Hence $\Gamma^2$ is positive definite if and only if $\{\partial_{\theta_2}a(\theta^*)\}^{\otimes 2}$ or $\{\partial_{\theta_2}\alpha(\theta^*)\}^{\otimes 2}$ is positive definite. This does not happen if $m_2\geq 3$; in fact, one can take $x \in \mathbb{R}^{m_2}$ so that $x'\partial_{\theta_2}a(\theta^*)=x'\partial_{\theta_2}\alpha(\theta^*)$ if $m_2\geq 3$. Thus we need to assume $m_2 \leq 2$ in the one-dimensional case.

    Putting it all together, we obtain the following result.
    \begin{theorem}
      Let $m_1=1$, $m_2 \leq 2$, $\sigma(\theta_1)=\theta_2$ and $\Theta_1 \subset (\epsilon,\infty)$ for some $\epsilon>0$. Moreover, we assume [A1], [A2] and the following conditions:
      \begin{description}
        \item[{[B1]}] \begin{align*}
          &\inf_{\theta_2 \in \Theta_2}a(\theta_2)>0\\
          &\inf_{\theta_2 \in \Theta_2}|b(\theta_2)|>0\\
          &\inf_{\theta_2 \in \Theta_2}|c(\theta_2)|>0
        \end{align*}
        \item[{[B2]}] For any $\theta_1 \in \Theta_1$ and $\theta_2,\theta_2^* \in \Theta_2$,
        \begin{align*}
          |a(\theta_2,\theta_1)-a(\theta_2^*,\theta_1)|+|\alpha(\theta_2,\theta_1)-\alpha(\theta_2^*,\theta_1)|\geq C_{\theta_1}|\theta_2-\theta_2^*|.
        \end{align*}
        \item[{[B3]}] For any $\theta \in \Theta$, $\{\partial_{\theta_2}a(\theta)\}^{\otimes 2}$ or $\{\partial_{\theta_2}\alpha(\theta)\}^{\otimes 2}$ is positive definite.
      \end{description}
      (1) If we set
      \begin{align*}
        \hat{\theta}_1^n=\left( \frac{1}{t_n}\sum_{j=1}^n(\Delta_jY)^{2} \right)^\frac{1}{2},
      \end{align*}
      then for every $p>0$ and any continuous function $f:\mathbb{R}^d \to \mathbb{R}$ such that
      \begin{align*}
        \limsup_{|x|\to \infty}\frac{|f(x)|}{|x|^p}<\infty,
      \end{align*} 
      it holds that 
      \begin{align*}
        E[f(\sqrt{n}(\hat{\theta}^n_1-\theta_1^*))]\to E[f(Z)]~(n \to \infty),
      \end{align*}
      where $\displaystyle Z \sim N\left(0,\frac{{\theta_1^*}^2}{2}\right)$.
    
      In particular, it holds that 
      \begin{align*}
        \sqrt{n}(\hat{\theta}^n_1-\theta_1^*)\xrightarrow{d}N\left(0,\frac{{\theta_1^*}^2}{2}\right)~(n \to \infty).
      \end{align*}
      (2) Let us define $\gamma_+(\theta_1,\theta_2)$ and $\alpha(\theta_1,\theta_2)$ by
      \begin{align*}
        \gamma_+(\theta_1,\theta_2)=\frac{{\theta_1}^2a(\theta_2)}{c(\theta_2)^2}\left( \sqrt{1+\frac{b(\theta_2)^2c(\theta_2)^2}{{\theta_1}^2a(\theta_2)^2}}-1 \right).
      \end{align*}
      and
      \begin{align*}
        \alpha(\theta_1,\theta_2)=\sqrt{a(\theta_2)^2+\frac{b(\theta_2)^2c(\theta_2)^2}{{\theta_1}^2}},
      \end{align*}
      respectively, and set
      \begin{align*}
        \begin{split}
         &\hat{m}_i^n(\theta_2;m_0)=e^{-\alpha(\hat{\theta}_1^n,\theta_2)t_i}m_0\\
        &+\frac{1}{(\hat{\theta}_1^n)^2}\sum_{j=1}^ie^{-\alpha(\hat{\theta}_1^n,\theta_2)(t_i-t_{j-1})}\gamma_+(\hat{\theta}_1^n,\theta_2)c(\theta_2)\Delta_jY,
        \end{split}\\
        \begin{split}
          &\mathbb{H}_n^2(\theta_2;m_0)=\frac{1}{2}\sum_{i=1}^n\left\{-h(c(\theta_2)\hat{m}_{j-1}^n(\theta_2))^{2}+2\hat{m}_{j-1}^n(\theta_2)c(\theta_2)\Delta_jY\right\},
        \end{split}
      \end{align*}
      and
      \begin{align*}
         \begin{split}
          \Gamma^2=&\frac{\{\partial_{\theta_2}\alpha(\theta^*)\}^{\otimes 2}}{2\alpha^*}+\frac{\{\partial_{\theta_2}a(\theta^*)\}^{\otimes 2}}{2a(\theta^*)}\\
          &-\frac{\partial_{\theta_2}\alpha(\theta^*)\otimes \partial_{\theta_2}a(\theta^*)+\partial_{\theta_2} a(\theta^*)\otimes \partial_{\theta_2}\alpha(\theta^*)}{\alpha(\theta^*)+a(\theta^*)},
         \end{split}
      \end{align*}
      where $m_0 \in \mathbb{R}^{d_1}$ is an arbitrary initial value.

      Then, if $\hat{\theta}^n_2=\hat{\theta}^n_2(m_0)$ is a random variable satisfying
  \begin{align*}
    \mathbb{H}_n^2(\hat{\theta}^n_2)=\max_{\theta_2 \in \overline{\Theta}_2}\mathbb{H}_n^2(\theta_2)
  \end{align*} 
  for each $n \in \mathbb{N}$, then for any $p>0$ and continuous function $f:\mathbb{R}^d \to \mathbb{R}$ such that
  \begin{align*}
    \limsup_{|x|\to \infty}\frac{|f(x)|}{|x|^p}<\infty,
  \end{align*} 
  it holds that 
  \begin{align*}
    E[f(\sqrt{t_n}(\hat{\theta}^n_2-\theta_2^*))]\to E[f(Z)]~(n \to \infty),
  \end{align*}
  where $Z \sim N(0,(\Gamma^2)^{-1})$.

  In particular, it holds that 
  \begin{align*}
    \sqrt{t_n}(\hat{\theta}^n_2-\theta_2^*)\xrightarrow{d}N(0,(\Gamma^2)^{-1})~(n \to \infty).
  \end{align*}
    \end{theorem}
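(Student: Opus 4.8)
The plan is to derive the theorem from Theorems~\ref{main-theorem-1} and~\ref{main-theorem-2}: I will verify that, in the scalar setting $d_1=d_2=1$, $\sigma(\theta_1)=\theta_1$, $\Theta_1\subset(\epsilon,\infty)$, the hypotheses [A1], [A2], [B1]--[B3] imply [A1]--[A5] together with the positive definiteness of $\Gamma^2$, and I will carry out the one-dimensional computations sketched in the text to identify $\gamma_+$, $\alpha$, $\hat m_i^n$, $\mathbb H_n^2$, $\hat\theta_1^n$, $\Gamma^1$ and $\Gamma^2$ in closed form.

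First I would dispose of the structural assumptions. Since $\Sigma(\theta_1)=\theta_1^2\ge\epsilon^2$ the third inequality of [A3] is automatic, while $\lambda_{\min}(a(\theta_2))=a(\theta_2)$ and $\lambda_{\min}(b(\theta_2)^{\otimes 2})=b(\theta_2)^2$ reduce the other two to [B1]. For [A4], the controllability matrix has the nonzero entry $\theta_1^2c(\theta_2)^2$, hence full row rank; and the characteristic polynomial of $H(\theta_1,\theta_2)$ is $\lambda^2-\bigl(a(\theta_2)^2+b(\theta_2)^2c(\theta_2)^2/\theta_1^2\bigr)$, so its eigenvalues are $\pm\alpha(\theta_1,\theta_2)$ with $\alpha(\theta_1,\theta_2)=\sqrt{a(\theta_2)^2+b(\theta_2)^2c(\theta_2)^2/\theta_1^2}$, which are real and, by [B1], bounded away from the imaginary axis, so [A4] holds (and may in fact be dropped). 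Solving $c(\theta_2)^2\gamma^2/\theta_1^2+2a(\theta_2)\gamma+b(\theta_2)^2=0$ for its larger root yields the stated $\gamma_+(\theta_1,\theta_2)$, and $\alpha=a+\gamma\Sigma^{-1}c^2$ confirms the form of $\alpha(\theta_1,\theta_2)$; substituting these into (\ref{def-m-hat}) and (\ref{def-H2}) gives the displayed $\hat m_i^n$ and $\mathbb H_n^2$.

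Next I would verify [A5] and compute the asymptotic variances. For $\theta_1$: differentiating $\mathbb H_n^1(\theta_1)=-\tfrac12\sum_j\{(\Delta_jY)^2/(h\theta_1^2)+2\log\theta_1\}$ and equating to zero identifies the unique maximiser over $(0,\infty)$, $\hat\theta_1^n=(t_n^{-1}\sum_j(\Delta_jY)^2)^{1/2}$, which is consistent for $\theta_1^*\in\Theta_1$ and hence coincides with the maximiser over $\overline\Theta_1$ with probability tending to one; moreover $\mathbb Y_1(\theta_1)=-\tfrac12\bigl({\theta_1^*}^2/\theta_1^2-1-2\log(\theta_1^*/\theta_1)\bigr)$ and $\Gamma^1=2/{\theta_1^*}^2$, and the elementary inequality $x^2-1-2\log x\ge(x-1)^2$ for $x\ge0$ together with $\theta_1>\epsilon$ gives (\ref{Y1-ineq}), with $(\Gamma^1)^{-1}={\theta_1^*}^2/2$. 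For $\theta_2$: using $\gamma_+(\theta_1,\theta_2)=\theta_1^2\{\alpha(\theta_1,\theta_2)-a(\theta_2)\}/c(\theta_2)^2$ and the identity $\int_0^\infty(pe^{-\alpha t}+qe^{-\beta t})^2dt=(2\alpha\beta)^{-1}\{(\alpha q+\beta p)^2+\alpha\beta(p-q)^2\}$, the definition (\ref{def-Y2}) collapses to the closed form (\ref{Y2-onedim}) (the confluent case $\alpha(\theta_2)=a^*$ being recovered by continuity), and (\ref{def-Gamma2}) reduces to $\int_0^\infty\{\partial_{\theta_2}\alpha(\theta^*)e^{-\alpha^*s}-\partial_{\theta_2}a(\theta^*)e^{-a^*s}\}^{\otimes 2}\,ds$, which the same identity evaluates to the $\Gamma^2$ in the statement; writing $\Gamma^2$ as a sum of two $\otimes 2$ terms shows it is positive definite exactly when $\{\partial_{\theta_2}a(\theta^*)\}^{\otimes 2}$ or $\{\partial_{\theta_2}\alpha(\theta^*)\}^{\otimes 2}$ is, i.e.\ under [B3] (which forces $m_2\le2$). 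Finally, the Proposition preceding the theorem shows that [B1] and [B2] (the hypothesis $\inf_{\theta_2}|c(\theta_2)|>0$ being part of [B1]) imply (\ref{Y2-ineq}). With [A1]--[A5] verified and $\Gamma^2$ positive definite, Theorems~\ref{main-theorem-1} and~\ref{main-theorem-2} give both assertions.

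The step I expect to be the main obstacle is the passage from [B2] to (\ref{Y2-ineq}): one has to rule out that the two quantities $\alpha(\theta_2)/a(\theta_2)-\alpha(\theta_2^*)/a(\theta_2^*)$ and $\alpha(\theta_2)-a(\theta_2)-\alpha(\theta_2^*)+a^*$ appearing in (\ref{Y2-onedim}) are simultaneously $o(|\theta_2-\theta_2^*|)$ along a sequence. This is the contradiction argument of that Proposition: with $A(\theta_2)=\alpha(\theta_2)-a(\theta_2)$ and $B(\theta_2)=\alpha(\theta_2)/a(\theta_2)$, one uses $B(\theta_2)-1>C$, a consequence of [B1] and the explicit formula for $\alpha$, to invert $a=A/(B-1)$ and conclude that smallness of the two quantities forces smallness of both $a(\theta_2)-a(\theta_2^*)$ and $\alpha(\theta_2)-\alpha(\theta_2^*)$, contradicting [B2]. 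Secondary technical points are the uniform validity of (\ref{Y2-onedim}) across the confluent case $\alpha(\theta_2)=a^*$ and the observation that the explicit $\hat\theta_1^n$ may be substituted for the abstract maximiser of $\mathbb H_n^1$ when invoking Theorem~\ref{main-theorem-1}.
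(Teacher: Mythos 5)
Your proposal is correct and follows essentially the same route as the paper: Section \ref{one-dimensional-case-section} likewise reduces [A3]--[A4] to [B1] via the explicit eigenvalues $\pm\alpha(\theta_1,\theta_2)$ of $H$, derives the closed forms of $\gamma_+$, $\alpha$, $\hat\theta_1^n$, $\mathbb{Y}_1$, $\Gamma^1$, $\mathbb{Y}_2$ and $\Gamma^2$ by the same quadratic and integral identities, obtains (\ref{Y1-ineq}) from $x^2-1-2\log x\geq(x-1)^2$ and (\ref{Y2-ineq}) from the same contradiction argument based on $B(\theta_2)-1>C$, and then invokes Theorems \ref{main-theorem-1} and \ref{main-theorem-2}. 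The only point you make that the paper leaves implicit is the identification of the explicit $\hat\theta_1^n$ with the maximiser over $\overline{\Theta}_1$, which is a harmless refinement.
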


    \section{Simulations}\label{simulation-section}
    In this section, we will verify the results of the previous sections by computational simulations. We set $d_1=d_2=1$ and consider the equations
    \begin{align*}
      \begin{cases}
        dX_t=-aX_tdt+bdW_t^1\\
        dY_t=X_tdt+\sigma dW_t^2
      \end{cases}
    \end{align*}
    with $X_0=Y_0=0$, where we want to estimate $\theta_1=\sigma$ and $\theta_2=(a,b)$ from observations of $Y_t$.

    We generated sample data $Y_{t_i}~(i=0,1,\cdots,n)$ with $n=10^6$, $h=0.0001$ and true parameters $(a,b,\sigma)=(1.5,0.3,0.002)$, and performed three simulations:
    \begin{description}
      \item[{Simulation (i)}] $m_0=0,\gamma_0=0.1$.
      \item[{Simulation (ii)}] $m_0=1,\gamma_0=0.1$.
      \item[{Simulation (iii)}] $m_0=1,\gamma_0=0.1$ and removed first 100 terms of $\hat{m}_i^n$; i.e. we replaced $\mathbb{H}_n^2(\theta_2;m_0)$ with
      \begin{align*}
        \frac{1}{2}\sum_{i=101}^n\left\{-h(c(\theta_2)\hat{m}_{j-1}^n(\theta_2))^{2}+2\hat{m}_{j-1}^n(\theta_2)c(\theta_2)\Delta_jY\right\}.
      \end{align*}
    \end{description}
    For each simulation, we performed 10000 Monte Carlo replications. Table \ref{table:result-table} shows the means and standard deviations for estimators in each simulation, and one can observe asymptotic normalities of them in Figure \ref{hist}.

    We found that the wrong value of $m_0$ can affect the accuracy of our estimator, but it can be improved by leaving out first several terms of $\hat{m}_i^n$. One can observe from Figure \ref{X-m-graph} that $\hat{m}_i^n(\theta^*)$ well approximate $X_{t_i}$ except at the beginning. It will be interesting to consider the way to decide how many terms of $\hat{m}_i^n(\theta)$ should be removed. 
    
    \begin{table}[hbtp]
      \caption{The summary of the simulation results}
      \label{table:data_type}
      \centering
      \begin{tabular}{|c||c|c|c|}
        \hline
        &$\sigma$&$a$&$b$  \\
        \hline
        True value&0.02&1.5&0.3\\
        (Standard error)&$(1.414 \times 10^{-5})$&(0.2115)&(0.01324)\\ 
        \hline
        Simulation (i)  &   & \begin{tabular}{c}
          1.495\\(0.2123)
        \end{tabular}& \begin{tabular}{c}
          0.3011\\(0.01338)
        \end{tabular}\\
        Simulation (ii)  & \begin{tabular}{c}
          0.02007\\(1.640$\times 10^{-5}$)
        \end{tabular} & \begin{tabular}{c}
          1.715\\(0.2452)
        \end{tabular}&\begin{tabular}{c}
          0.3249\\(0.01338)
        \end{tabular}  \\
        Simulation (iii)  &   & \begin{tabular}{c}
          1.535\\(0.2177)
        \end{tabular}& \begin{tabular}{c}
          0.3059\\(0.01304)
        \end{tabular}\\
        \hline
      \end{tabular}
      \label{table:result-table}
    \end{table}

    \begin{figure}[H]
      \begin{minipage}[htbp]{0.6\linewidth}
        \centering
        \includegraphics[keepaspectratio, scale=0.4]{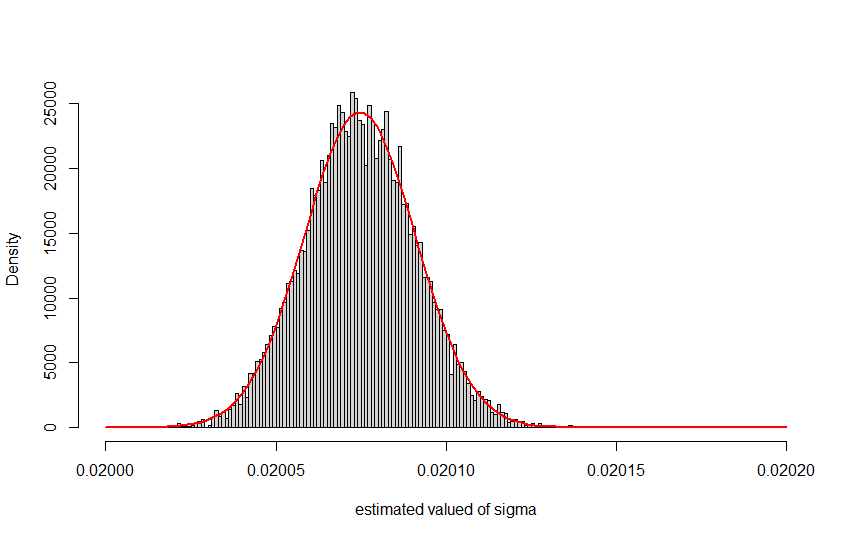}
        \subcaption{Estimated values of $\sigma$.}
      \end{minipage}\\
      \begin{minipage}[htbp]{0.6\linewidth}
        \centering
        \includegraphics[keepaspectratio, scale=0.4]{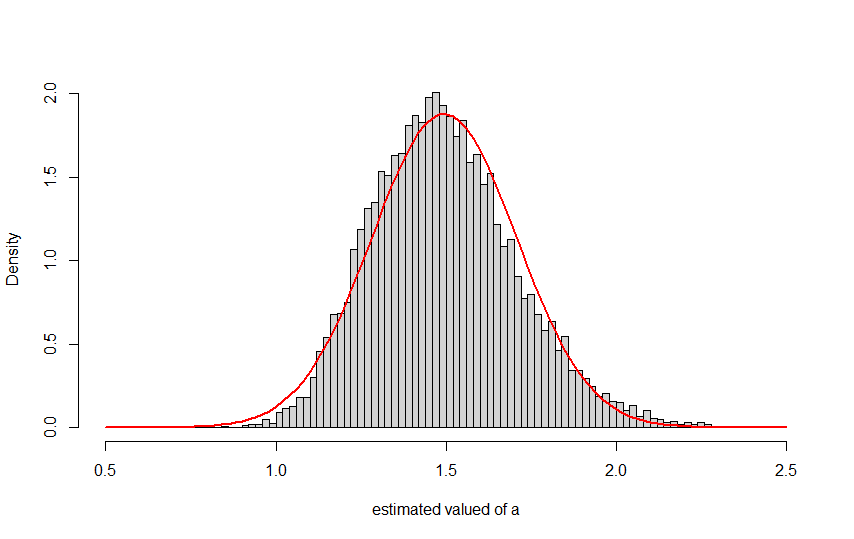}
        \subcaption{Estimated values of $a$.}
      \end{minipage}\\
      \begin{minipage}[htbp]{0.6\linewidth}
        \centering
        \includegraphics[keepaspectratio, scale=0.4]{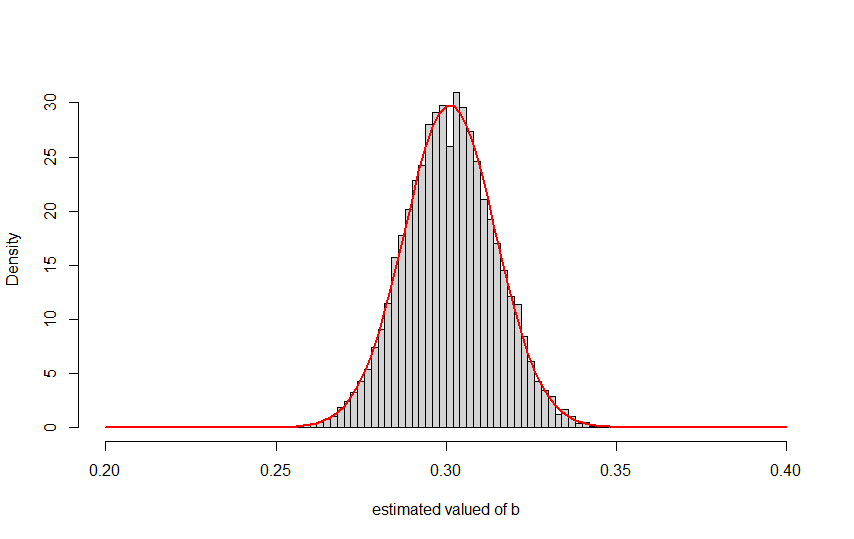}
        \subcaption{Estimated values of $b$.}
      \end{minipage}
      \caption{Histograms of estimators in Simulation (i). The red lines are the density of the normal distribution.}
      \label{hist}
    \end{figure}

    \begin{figure}[htbp]
      \centering
        \includegraphics[keepaspectratio, scale=0.5]{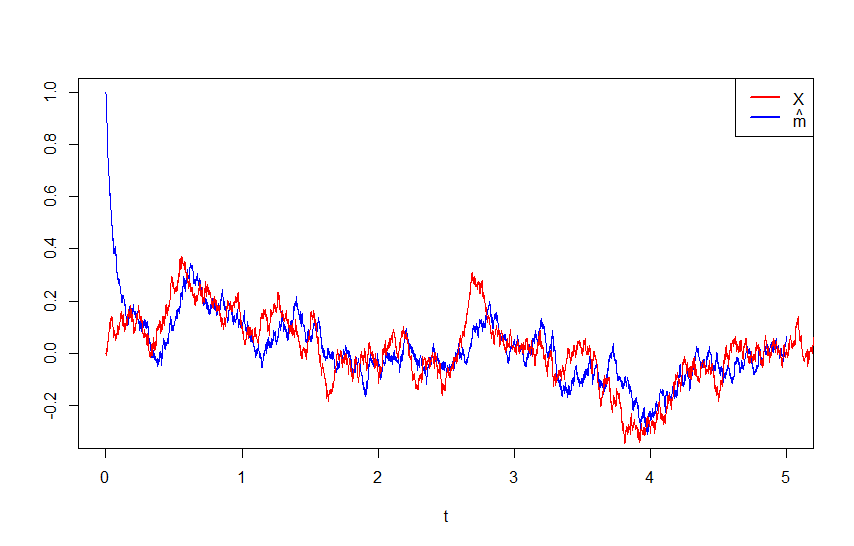}
      \caption{A path of $X_t$ and $\hat{m}_i^n(\theta^*)$ with $m_0=1$.}
      \label{X-m-graph}
    \end{figure}

    \section*{Acknowledgement}
    The author is grateful to N.Yoshida for important advice and useful discussions. I also thank Y.Koike for his help with accomplishing the computational simulations.

    \section*{Conflict of Interest}
    The corresponding author states that there is no conflict of interest.

\bibliographystyle{abbrvnat}

\bibliography{master}

\end{document}